\newtheorem{nnremark}[theorem]{\sc Remark}
\newenvironment{remark}{\begin{nnremark} \rm }{\hfill \hspace*{1pt}\hfill $\lrcorner$\end{nnremark}}
\crefname{hypothesis}{Hypothesis}{Hypotheses}
\title{Input-to-state stability of infinite-dimensional systems: recent results and open questions\thanks{Submitted to the editors DATE.
\funding{A. Mironchenko was supported by German Research Foundation (DFG), grant: MI 1886/2-1.}}}
\author{Andrii Mironchenko\thanks{Faculty of Computer Science and Mathematics, University of Passau, Germany 
  (\email{andrii.mironchenko@uni-passau.de}, \url{http://www.mironchenko.com}, corresponding author).}
\and Christophe Prieur\thanks{Univ. Grenoble Alpes, CNRS, Grenoble-INP, GIPSA-lab, F-38000, Grenoble, France 
  (\email{christophe.prieur@gipsa-lab.fr}).}
}
\DeclareMathOperator{\diag}{diag}
\pgfmathsetmacro\weight{1/2}
\pgfmathsetmacro\third{1/3}
\pgfmathsetmacro\twothirds{2/3}
\tikzset{degil/.style={
            decoration={markings,
            mark= at position 0.5 with {
                  \node[transform shape] (tempnode) {$/$};
                  }
              },
              postaction={decorate}
}
}
\newtheorem{Ass}{Assumption} 
\newtheorem{example}[theorem]{Example}
\newtheorem{openprob}[theorem]{Open Problem}
\numberwithin{equation}{section}
\DeclareOldFontCommand{\rm}{\normalfont\rmfamily}{\mathrm}
\DeclareOldFontCommand{\sf}{\normalfont\sffamily}{\mathsf}
\DeclareOldFontCommand{\tt}{\normalfont\ttfamily}{\mathtt}
\DeclareOldFontCommand{\bf}{\normalfont\bfseries}{\mathbf}
\DeclareOldFontCommand{\it}{\normalfont\itshape}{\mathit}
\DeclareOldFontCommand{\sl}{\normalfont\slshape}{\@nomath\sl}
\DeclareOldFontCommand{\sc}{\normalfont\scshape}{\@nomath\sc}
\newcommand{\ccat}[3]{{#1\, \underset{#3}{\lozenge}\,{#2}}}
\newcommand{\tm}{\times}
\newcommand  \esssup {\mathop{\mathrm{ess} \sup} }
\renewcommand \diag  {\operatorname{diag}}
\newcommand \eps {\varepsilon}
\newcommand \N   {\mathbb{N}}
\newcommand \R   {\mathbb{R}}
\newcommand \C   {\mathbb{C}}
\newcommand \Z   {\mathbb{Z}}
\newcommand \Q   {\mathbb{Q}}
\newcommand{\Ah}{\hat{A}}
\newcommand{\Gh}{\hat{G}}
\newcommand \K   {\mathcal{K}}
\newcommand \Kinf{\mathcal{K_\infty}}
\newcommand \KL  {\mathcal{KL}}
\newcommand \LL  {\mathcal{L}}
\newcommand \PD   {\mathcal{P}}
\newcommand{\Uc}{\ensuremath{\mathcal{U}}}
\newcommand\q{\enquote}
\newcommand{\lel}{\left\langle}
\newcommand{\rir}{\right\rangle}
\newcommand \qrq   {\quad\Rightarrow\quad}
\newcommand \srs   {\ \ \Rightarrow\ \ }
\newcommand \Iff   {\Leftrightarrow}
\newcommand \id  {\operatorname{id}}
\newcommand \re  {\mathrm{Re}}
\newcommand \im  {\mathrm{Im}}
\renewcommand{\ker}{{\rm Ker}\,}
\newcommand{\intt}{{\rm int}\,}
\newcommand{\mir}[1]{{\color{red} AM: #1}}
\definecolor{forestgreen}{rgb}{0.13,0.54,0.13}
\newcommand{\midset}{\;:\;}
\newcommand{\sym}{\mathrm{sym}}
\newif\ifConf
\newif\ifJournal
\newif\ifAndo
\newif\ifNOTFORBOOK
\newif\ifExtendedVersion   
\newif\ifFullVersion
\newif\ifFinal  
\newcommand{\einsnorm}[2]{\ensuremath{
    \!\!\;\!\!\!\;
    \left\bracevert\!\!\!\!\!\left\bracevert
    \!
		\ifthenelse{\isempty{#2}}{#1}{#1(#2)}
    \!
      \right\bracevert\!\!\!\!\!\right\bracevert
    \!\!\;\!\!\!\;
  }}
\definecolor{blond}{rgb}{0.98, 0.94, 0.75}
\newlength\mytemplen
\newsavebox\mytempbox
\newcommand\mybluebox{%
    \@ifnextchar[
       {\@mybluebox}%
       {\@mybluebox[0pt]}}
\def\@mybluebox[#1]{%
    \@ifnextchar[
       {\@@mybluebox[#1]}%
       {\@@mybluebox[#1][0pt]}}
\def\@@mybluebox[#1][#2]#3{
    \sbox\mytempbox{#3}%
    \mytemplen\ht\mytempbox
    \advance\mytemplen #1\relax
    \ht\mytempbox\mytemplen
    \mytemplen\dp\mytempbox
    \advance\mytemplen #2\relax
    \dp\mytempbox\mytemplen
    \colorbox{blond}{\hspace{1em}\usebox{\mytempbox}\hspace{1em}}}
\let\origd=\d
\renewcommand*\d{
  \relax\ifmmode
    \mathrm{d}%
  \else
    \expandafter\origd
  \fi
}\makeatother
\newcommand{\scalp}[2]{ \lel #1, #2 \rir }
\newcommand{\pushright}[1]{\ifmeasuring@#1\else\omit\hfill$\displaystyle#1$\fi\ignorespaces}
\newcommand{\pushleft}[1]{\ifmeasuring@#1\else\omit$\displaystyle#1$\hfill\fi\ignorespaces}
\newcounter{syscounter}
\newenvironment{sysnum}{\begin{list}{($\Sigma{\arabic{syscounter}}$)}%
{\settowidth{\labelwidth}{($\Sigma4$)}
\settowidth{\leftmargin}{($\Sigma4$)~}%
\usecounter{syscounter}}}
{\end{list}}
\newcounter{WPcounter}
\begin{document}

\maketitle

\begin{abstract}
In a pedagogical but exhaustive manner, this survey reviews the main results on input-to-state stability (ISS) for infinite-dimensional systems. This property allows estimating the impact of inputs and initial conditions on both the intermediate values and the asymptotic bound on the solutions. ISS has unified the input-output and Lyapunov stability theories and is a crucial property in the stability theory of control systems as well as for many applications whose dynamics depend on parameters, unknown perturbations, or other inputs. 
In this paper, starting from classic results for nonlinear ordinary differential equations, we motivate the study of ISS property for distributed parameter systems. 
Then fundamental properties are given, as an ISS superposition theorem and characterizations of (global and local) ISS in terms of Lyapunov functions. 
We explain in detail the functional-analytic approach to ISS theory of linear systems with unbounded input operators, with special attention devoted to ISS theory of boundary control systems.
The Lyapunov method is shown to be very useful for both linear and nonlinear models, including parabolic and hyperbolic partial differential equations. 
Next, we show the efficiency of the ISS framework to study the stability of large-scale networks, coupled either via the boundary or via the interior of the spatial domain. ISS methodology allows reducing the stability analysis of complex networks, by considering the stability properties of its components and the interconnection structure between the subsystems. An extra section is devoted to the ISS theory of time-delay systems with the emphasis on techniques, which are particularly suited for this class of systems. Finally, numerous applications are considered in this survey, where ISS properties play a crucial role in their study. This survey contains recent as well as classical results on systems theory and suggests many open problems throughout the paper.
\end{abstract}

\begin{keywords}
infinite-dimensional systems, input-to-state stability, Lyapunov functions, partial differential equations, robustness, robust control
\end{keywords}

\begin{AMS}
34H05, 35Q93, 37B25, 37L15, 93A15, 93B52, 93C10, 93C25, 93D05, 93D09  
\end{AMS}

%
%
%
%
%
%
%

%
%


\ifFinal

\section*{Some general rules}

\begin{itemize}
	\item Only the first letter in the section/subsection title is capitalized
	\item Reference style:
	\begin{itemize}
		\item The title starts with a capital letter, all other words are not capitalized 
		\item Titles of the books are in italic, Titles of papers are in a standard font.
	\end{itemize}
	
	\item Notation style: 
	\begin{itemize}
		\item Each notion has to be defined when it is firstly appears.
		\item Each notion must be in a glossary.
		\item Glossary is organised as a table.
	\end{itemize}
	
	\item We prefer to write \q{solutions} more than \q{trajectories}.
	\item Every Acronym is contained in the section \q{Acronyms}. All Acronyms in that section should mention, at which page and definition it was defined.
	\item Quotation marks are made with a command ${\backslash} q$:  \q{quote}.
\end{itemize}

\fi

\setcounter{tocdepth}{1}
\tableofcontents

\section{Introduction}

The research community has been active on the controllability and stabilizability of linear partial differential equations during the past 50 years (see in particular the highlighting survey \cite{Rus78}). More recently, control problems for nonlinear partial differential equations (PDEs) have been studied by many researchers (see, e.g., \cite{Cor07}). In parallel with these developments, the concept of input-to-state stability, which reflects the robustness of nonlinear systems with respect to both initial states and external disturbances, has revolutionized the nonlinear control theory of finite-dimensional systems \cite{KoA01}.
Now we face the merge of infinite-dimensional ISS theory and PDE control methods, which is going to bring systematic methods for robust control of networks of infinite-dimensional systems. 
In this survey, we outline the key results and techniques in infinite-dimensional ISS theory as well as applications of this theory to robust control of linear and nonlinear PDEs.

\subsection{Input-to-state stability}
\label{sec:ISS-Intro}

The concept of input-to-state stability (ISS), introduced by E. Sontag in the late 1980s in \cite{Son89}, unified the Lyapunov and input-output stability theories and revolutionized the constructive nonlinear control theory for finite-dimensional systems \cite{KoA01}.
It played a major role in robust stabilization of
nonlinear systems \cite{FrK08}, design of robust 
nonlinear observers \cite{Lib07}, nonlinear
detectability \cite{SoW97,KSW01}, stability of nonlinear networked control systems \cite{JMW96, DRW07},
supervisory adaptive control \cite{HeM99b} and others.

Such an overwhelming success has become possible 
thanks to the powerful tools provided by the ISS framework. The first of them is the equivalence between ISS and existence of a smooth ISS Lyapunov function \cite{SoW95}, which naturally generalizes the theorems of Massera and Kurzweil, known for systems without inputs. The second result is an ISS superposition theorem \cite{SoW96}, which characterized ISS as the combination of global asymptotic stability in the absence of disturbances together with a kind of a global attractivity property for the system with inputs. Finally, the nonlinear small-gain theorem provided a powerful criterion for input-to-state stability of a network consisting of an arbitrary finite number of ISS components \cite{JTP94, JMW96, DRW07, DRW10}.
These results fostered the development of new techniques for the design of non-linear stabilizing controllers: 
robust backstepping \cite{KKK95}, ISS feedback redesign \cite{Son89}, stabilization via controllers with saturation \cite{Tsi97}, etc., which make the closed-loop system not only asymptotically stable in absence of disturbances, but also robust with respect to disturbances, appearing due to actuator and measurement errors, modeling errors, hidden dynamics of a system or external disturbances.

Nowadays ISS of ordinary differential equations (ODEs) is a mature theory which is recognized as a milestone in the development of the constructive nonlinear control theory and there are several excellent survey papers \cite{Son08, DES11, JLR11} as well as book chapters devoted to this paradigm \cite{KKK95, LJH14, KaJ11}.


The questions of the robust stabilization and observation, as well as robust stability analysis of coupled systems,  are as important for infinite-dimensional systems as they are for finite-dimensional ones.
Modern applications of control theory to traffic networks \cite{herrera2010evaluation,de2015grenoble}, multi-body systems (e.g., robotic arms, flexible elements) \cite{geradin2014mechanical}, adaptive optics \cite{baudouin2008robust}, fluid-structure interactions (in particular for aircraft wings
\cite{robu2011simultaneous}), etc., require methods for robust stabilization of coupled systems, described by partial differential equations (PDEs). 
In particular, the following challenging and fascinating questions come to the foreground:
\emph{How to control such complex systems, if we can physically apply controls only to the boundary of their spatial domain? How to ensure the efficiency of a control design in spite of actuator and observation errors, hidden dynamics and external disturbances? Under which conditions a coupled large-scale infinite-dimensional system is stable if all its components are stable?} 
For many classes of distributed parameter systems, it is known that small noise (even small in amplitude) can reduce the performance, alter the stability, or even destabilize the control system. Thus the robustness property has to be analyzed, and, the impact of perturbations should be described. 
The {\em input-to-state stability} property is central for many control applications in part since {\em inputs} may be either disturbance inputs or control inputs or both. In the first case, the gains in the ISS property give an estimation of the disturbance effect on the performance and on the size of the asymptotic error in the stability. 
In the second case, the knowledge of the ISS property and corresponding ISS gains is useful for the design of controllers ensuring a suitable type of stability and performance of the closed-loop system. Furthermore, ISS property combined with the small-gain methods is essential for the control of feedback interconnections.

A decade ago the development of ISS theory for infinite-dimensional systems has been started, which 
aims at giving systematic mathematical tools that are suitable for tackling the challenging issues described in the previous paragraph.
This theory exploits a broad range of techniques and tools from such diverse fields as Lyapunov methods, semigroup and admissibility theories, spectral methods, control of networks, etc., whose interplay results in efficient methods of robust stability analysis of large-scale infinite-dimensional systems.
Many works study the ISS by itself for the benefits this property gives to the closed-loop systems, see \cite{GLO19,LhS18,MKK19,MiW18b,PrM12,TPT16}, where different techniques (as in particular Lyapunov methods, monotonicity or spectral methods) are used to prove ISS properties of the control systems under consideration.  Other papers develop control design methods using ISS techniques, and exploit this property, as an intermediate step, for more general control constructions. For example, see \cite{karafyllis2018sampled,KaK19b,Event:espitia:aut:2019} for constructions of feedback laws and see \cite{observers:tac:2019} for observers construction of infinite-dimensional systems of various types, using ISS arguments.
To conclude, right now we are witnessing a fusion of the infinite-dimensional ISS theory with modern methods of PDE control, which are going to provide us with systematic methods for the design of robust controllers and observers for coupled infinite-dimensional systems with heterogeneous components.
In this survey, we give an overview of the main results in this booming theory to a broad mathematical audience.

\subsection{Outline of the paper}
\label{sec:Outline}

In the rest of this section, we motivate the input-to-state stability property by considering several examples of linear and nonlinear finite-dimensional systems. Next, we introduce the abstract class of infinite-dimensional systems, which encompasses a broad range of distributed parameter systems, important in practice and formally define the ISS concept.
Such an abstract approach allows us to develop a theory which
\begin{itemize}
    \item covers both linear and nonlinear systems
    \item encompasses PDEs, time-delay systems and ODE systems
    \item can be a firm basis for the study of coupled systems with components of different types (PDEs, ODEs, delay systems, etc.) and with different types of couplings (in-domain and boundary couplings).
\end{itemize} 
With this in mind, we present a unified ISS theory for a broad class of infinite-dimensional systems, which can be then refined for more specific systems classes.

In Section~\ref{sec:Fundamental-properties-ISS-systems} we derive the ISS superposition theorem and characterize local and global ISS in terms of ISS Lyapunov functions. These tools are indispensable for the theoretical and practical analysis of input-to-state stability of nonlinear control systems, both finite and infinite-dimensional. We close the section by introducing the important notion of integral input-to-state stability (iISS).

In Section~\ref{sec:Linear_systems} we proceed to the general theory of linear infinite-dimensional systems in Banach spaces with unbounded input operators. We characterize ISS in terms of exponential stability of the semigroup and the admissibility of the input operator, investigate relations between ISS and integral ISS of linear systems and explain the methods of construction of Lyapunov functions for such systems.

Next we show in Section~\ref{sec:Boundary_control_systems} how the criteria obtained in Section~\ref{sec:Linear_systems} 
can be used to study linear boundary control systems. We apply the results to Riesz-spectral systems whose space of input values is finite-dimensional.

Having presented general ISS theory of nonlinear and linear systems, we specialize ourselves to important classes of infinite-dimensional systems. 
In Section~\ref{sec:ISS_analysis_linear_nonlinear_PDEs_Lyapunov_methods} we show how Lyapunov methods combined with classic inequalities (Friedrichs, Poincare, Agmon, Jensen inequalities, etc.) can be used to obtain criteria for input-to-state stability of parabolic semilinear systems with both in-domain and boundary inputs, as well as for hyperbolic first-order systems with in-domain and boundary inputs.

In Section~\ref{sec:Interconnected_systems} we present general results on the stability of networks of infinite-dimensional systems with ISS components. We show that the whole network is stable provided certain types of small-gain conditions hold.
We formulate the small-gain results both in trajectory formulation and in the Lyapunov form, and for systems with both  ISS and integral ISS components.

In Section~\ref{sec:TDS} we briefly outline several central results in ISS theory of retarded differential equations.
In particular, we present ISS counterparts of classic Lyapunov-Krasovskii and Lyapunov-Razumikhin theorems, which allow for efficient stability analysis of nonlinear retarded differential equations.
Overview of available small-gain results is given as well.

Several applications to real-world control problems are given in Section~\ref{sec:Applications}. Some numerically tractable conditions are given there and real experimental results are reviewed, where ISS properties and Lyapunov functions play a crucial role.

The scope and richness of the ISS theory, as well as a broad range of methods which are used, make it impossible to cover all available results in one paper. In Section~\ref{sec:Further-topics} we briefly mention some further important results in ISS theory. In particular, we discuss strong and weak input-to-state stability, which play an important role in applications, ISS of infinite-dimensional Lur'e systems, ISS of monotone systems, numerical methods to construct ISS Lyapunov functions, stability of impulsive systems and practical input-to-state stability.

In a recent monograph \cite{KaK19} an ISS theory for partial differential equations with boundary controls has been proposed, which is based upon spectral analysis of boundary value problems. 
Whereas the book \cite{KaK19} puts the stress on quantitative aspects, and studies first of all classical solutions,
 we discuss in this survey qualitative aspects of the infinite-dimensional ISS theory in detail and provide an overview of a broad class of methods for ISS analysis.
We consider briefly the ISS theory for ODE systems to stress the similarities and differences to the infinite-dimensional theory. For a detailed study of the finite-dimensional ISS theory we refer to several excellent surveys on this topic \cite{Son08, DES11, JLR11}.

\def\x{1.85}
\def\y{1.9}

\begin{figure}%
\begin{tikzpicture}[text width=3.3cm, text centered, rounded corners, minimum height=1.1cm, scale = 1, transform shape]

\node [rectangle, draw](Intro)   at (0,\x) {Introduction};

\node [rectangle, draw,text width=4.3cm](Fund)   at (0,0) {Fundamentals of ISS:\\Lyapunov functions, Criteria for ISS};
\node [rectangle, draw](Couplings)   at (2.25,-2*\y) {Stability of\\ interconnected $\infty$-dim systems};
\node [rectangle, draw](TDS)   at (4.5,-\x) {ISS of nonlinear\\ time-delay systems};
\node [rectangle, draw, rounded corners](PDE)   at (0,-\x) {Lyapunov methods \\ for PDE systems};
\node [rectangle,  draw](Lin)   at (-4.5,-\x){ Linear systems\\ $\dot{x} = Ax + Bu$ };
\node [rectangle, draw](BCS)   at (-4.5,-2*\y){Boundary \\ control systems\\ $\dot{x} = \Ah x, \ \  \Gh x=u$ };

\node [rectangle, draw, text width=9cm](APP)   at (0,-3.2*\y) {Applications\\[1mm]

\begin{itemize}
    \item Robust control/observation\\[-15mm]
    \item Riesz-spectral and port-Hamiltonian systems\\[-15mm]
    \item PDE-ODE, PDE-PDE cascades 
\end{itemize}
};

\draw[->, thick] (Intro) to (Fund);
\draw[->, thick] (Fund) to (Lin);
\draw[->, thick] (Lin) to (BCS);

\draw[->, thick] (Fund) to (PDE);
\draw[->, thick] (Fund) to (TDS);
\draw[->, thick, bend angle=35, bend left] (Fund) to (Couplings);
\draw[->, thick] (PDE) to (Couplings);

\draw[->, thick] (PDE) to (APP);
\draw[->, thick, bend angle=32, bend left] (TDS) to (APP);
\draw[->, thick] (Couplings) to (APP);
\draw[->, thick] (BCS) to (APP);

\end{tikzpicture}
\caption{Topics considered in the manuscript}%
\label{fig:Paper-structure}%
\end{figure}

\subsection{ISS of finite-dimensional systems}
\label{sec:ISS-finite-dim-systems}

To motivate the notion of input-to-state stability, consider a linear system with additive external inputs:
\index{control system!linear}
\begin{eqnarray}
\dot{x} = Ax + Bu,
\label{eq:LinSys}
\end{eqnarray}
where $A \in\R^{n\times n}$ and $B\in\R^{n\times m}$.

The mild solution of \eqref{eq:LinSys} subject to initial condition $x \in \R^n$ and any locally Lebesgue integrable input $u \in \Uc$ is given by
the variation of constants formula
\index{variation of constants formula}
\begin{eqnarray}
\phi(t,x,u)= e^{At}x + \int_0^t e^{A(t-s)} B u(s)ds,\qquad t\geq 0.
\label{eq:LinSys_ISS}
\end{eqnarray}

For $x\in\R^n$ we denote by $|x|$ the Euclidean norm of $x$.
For $A\in\R^{n\times n}$ denote by $\|A\|$ the induced matrix norm of $A$, i.e., $\|A\|:=\sup_{|x|=1}|Ax|$.

Assume that \eqref{eq:LinSys} is asymptotically stable for a zero input $u\equiv 0$, i.e. $A$ is a Hurwitz matrix.
Then it holds that $\|e^{At}\| \leq M e^{-\lambda t}$ for some $M>0$, $\lambda >0$ and all $t \geq 0$ and consequently
\begin{eqnarray}
\label{eqn:ODEs_linear_ISS_Estimate}
|\phi(t,x,u)| &\leq& |e^{At}x| + \int_0^t \|e^{A(t-s)}\| |B u(s)|ds \nonumber\\
                            &\leq& Me^{-\lambda t}|x| + M \int_0^t e^{-\lambda (t-s)} ds \|B\| \|u\|_{\infty}\nonumber\\
                                                        &\leq& \underbrace{Me^{-\lambda t}|x|}_{\beta(|x|,t)} + \underbrace{\frac{M}{|\lambda|} \|B\| \|u\|_{\infty}}_{\gamma(\|u\|_{\infty})}.                                                        
\end{eqnarray}
Here the term $\beta(|x|,t):=Me^{-\lambda t}|x|$ describes the transient behavior of the system \eqref{eq:LinSys} and the term $\gamma(\|u\|_{\infty}):=\frac{M}{|\lambda|} \|B\| \|u\|_{\infty}$ describes the maximal asymptotic deviation of the state from the equilibrium, called the \emph{asymptotic gain} of the system.
In particular, inputs of a bounded magnitude induce only bounded asymptotic deviations of the system from the origin.

The estimate \eqref{eqn:ODEs_linear_ISS_Estimate} has been derived under an assumption that 
\eqref{eq:LinSys} is globally asymptotically stable if there are no disturbances acting at the system ($u\equiv 0$).
For nonlinear systems this requirement does not guarantee robustness with respect to sufficiently large disturbance inputs as we see 
on the following example
\begin{example}
\label{examp:0UGAS-not-ISS}
Consider the following nonlinear system
\begin{eqnarray}
\dot{x}(t) = -x(t) + x^2(t) u(t),
\label{eq:0UGAS-not-ISS}
\end{eqnarray} 
where $x(t) \in X := \R$ and  $u \in \Uc:= L_{\infty}(\R_+,\R)$.
This system is globally exponentially stable for a zero input. 
Now set $u\equiv 1$. Then for each $x_0 >1$ the solution of \eqref{eq:0UGAS-not-ISS} has a finite escape time, that is there is a time $t^*(x_0)$ so that 
$\lim_{t\to t^*(x_0)-0}|\phi(t,x_0,1)| = +\infty$.
Thus, \emph{nonlinear systems which are globally asymptotically stable in the absence of disturbances may be not robust against sufficiently large inputs}.
\end{example}

\begin{example}
\label{examp:Coupling_two_UGAS_systems_is_not_UGAS}
Consider the system \eqref{eq:0UGAS-not-ISS} interconnected with a linear exponentially stable system via coupling $u(t):=x_2(t)$:
\begin{eqnarray*}
\dot{x}_1(t) &=& -x_1(t) + x_1^2(t) x_2(t),\\
\dot{x}_2(t) &=& -x_2(t).
\end{eqnarray*} 
Arguing similarly to Example~\ref{examp:0UGAS-not-ISS} we conclude that the coupled system has finite escape times for initial conditions with a sufficiently large norm.
Hence, \emph{the simplest cascade interconnections of globally asymptotically stable systems are not globally asymptotically stable}.
\end{example}

Examples~\ref{examp:Coupling_two_UGAS_systems_is_not_UGAS} and \ref{examp:0UGAS-not-ISS} indicate that in spite of the importance of classical global asymptotic stability theory with its powerful Lyapunov methods, this concept is far too weak for the study of robustness and stability analysis of coupled control systems. 
Input-to-state stability, defined in the next section as a nonlinear extension of the property \eqref{eqn:ODEs_linear_ISS_Estimate}, constitutes a notion which does not have drawbacks indicated in Examples~\ref{examp:0UGAS-not-ISS}, \ref{examp:Coupling_two_UGAS_systems_is_not_UGAS} and brings a bunch of efficient tools for analysis of stability and robustness.

\subsection{Input-to-state stability}
\label{sec:Systems-and-ISS-def}

We start with a general definition of a control system.
\begin{definition}
\label{Steurungssystem}
Consider the triple $\Sigma=(X,\Uc,\phi)$ consisting of 
\begin{enumerate}[(i)]  
    \item A normed vector space $(X,\|\cdot\|_X)$, called the \emph{state space}, endowed with the norm $\|\cdot\|_X$.
    \item A normed vector \emph{space of inputs} $\Uc \subset \{u:\R_+ \to U\}$          
endowed with a norm $\|\cdot\|_{\Uc}$, where $U$ is a normed vector \emph{space of input values}.
We assume that the following two axioms hold:
                    
\emph{The axiom of shift invariance}: for all $u \in \Uc$ and all $\tau\geq0$ the time
shift $u(\cdot + \tau)$ belongs to $\Uc$ with \mbox{$\|u\|_\Uc \geq \|u(\cdot + \tau)\|_\Uc$}.

\emph{The axiom of concatenation}: for all $u_1,u_2 \in \Uc$ and for all $t>0$ the concatenation of $u_1$ and $u_2$ at time $t$, defined by
\begin{equation}
\ccat{u_1}{u_2}{t}(\tau):=
\begin{cases}
u_1(\tau), & \text{ if } \tau \in [0,t], \\ 
u_2(\tau-t),  & \text{ otherwise},
\end{cases}
\label{eq:Composed_Input}
\end{equation}
belongs to $\Uc$.

    \item A map $\phi:D_{\phi} \to X$, $D_{\phi}\subseteq \R_+ \times X \times \Uc$ (called \emph{transition map}), such that for all $(x,u)\in X \tm \Uc$ it holds that $D_{\phi} \cap \big(\R_+ \times \{(x,u)\}\big) = [0,t_m)\tm \{(x,u)\} \subset D_{\phi}$, for a certain $t_m=t_m(x,u)\in (0,+\infty]$.
		
		The corresponding interval $[0,t_m)$ is called the \emph{maximal domain of definition} of $t\mapsto \phi(t,x,u)$.
		
\end{enumerate}
The triple $\Sigma$ is called a \emph{(control) system}, if the following properties hold:

\begin{sysnum}
    \item\label{axiom:Identity} \emph{The identity property:} for every $(x,u) \in X \times \Uc$
          it holds that $\phi(0, x,u)=x$.
    \item \emph{Causality:} for every $(t,x,u) \in D_\phi$, for every $\tilde{u} \in \Uc$, such that $u(s) =
          \tilde{u}(s)$ for all $s \in [0,t]$ it holds that $[0,t]\tm \{(x,\tilde{u})\} \subset D_\phi$ and $\phi(t,x,u) = \phi(t,x,\tilde{u})$.
    \item \label{axiom:Continuity} \emph{Continuity:} for each $(x,u) \in X \times \Uc$ the map $t \mapsto \phi(t,x,u)$ is continuous on its maximal domain of definition.
        \item \label{axiom:Cocycle} \emph{The cocycle property:} for all
                  $x \in X$, $u \in \Uc$, for all $t,h \geq 0$ so that $[0,t+h]\tm \{(x,u)\} \subset D_{\phi}$, we have
$\phi(h,\phi(t,x,u),u(t+\cdot))=\phi(t+h,x,u)$.
\end{sysnum}

\end{definition}

%

This class of systems encompasses control systems generated by 
evolution partial differential equations (PDEs), abstract differential
equations in Banach spaces, time-delay systems, ordinary differential equations (ODEs), switched systems, as well as important classes of well-posed coupled systems consisting of arbitrary number of finite- and infinite-dimensional components, with both in-domain and boundary couplings.
In a certain sense, Definition~\ref{Steurungssystem} is a direct generalization and a unification of the concepts of strongly continuous nonlinear semigroups (see \cite{Sho13}) with abstract linear control systems considered, e.g., in \cite{Wei89b}.
Such an abstract definition of a control system is frequently used within the system-theoretic community at least since 1960's \cite{KFA69, Wil72}. 
In a similar spirit, even more general system classes can be considered, containing output maps, time-variant dynamics, possibility for a solution to jump at certain time instants, etc., see \cite{Kar07a, KaJ11}.

\ifFinal
\begin{remark}
\label{rem:Continuity-of-trajectories-in-time} 
Sometimes the continuity of trajectories with respect to time is not assumed for control systems, see, e.g., \cite{Sch20}, but as most of control systems which we consider have continuous in time trajectories, we set this as an axiom of control systems.
\end{remark}
\fi


\begin{definition}
\label{def:FC_Property} 
We say that a control system (as introduced in Definition~\ref{Steurungssystem}) is \emph{forward complete (FC)}, if 
$D_\phi = \R_+ \tm X\tm\Uc$, that is for every $(x,u) \in X \times \Uc$ and for all $t \geq 0$ the value $\phi(t,x,u) \in X$ is well-defined.
\end{definition}

For a wide class of control systems boundedness of a solution implies the possibility to prolong it to a larger interval, see \cite[Chapter 1]{KaJ11b}. Next we formulate this property for abstract systems:
\begin{definition}
\label{def:BIC} 
We say that a system $\Sigma$ satisfies the \emph{boundedness-implies-continuation (BIC) property} if for each
$(x,u)\in X \tm \Uc$ such that the maximal existence time $t_{m}=t_m(x,u)$ is finite, and for all $M > 0$, there exists $t \in [0,t_{\max})$ with $\|\phi(t,x,u)\|_X>M$.
\end{definition}


\begin{Ass}
\label{ass:always-forward-complete}
From now on, for simplicity we consider only forward-complete control systems, if the contrary is not mentioned explicitly.
\end{Ass}

For two sets $X,Y$ denote by $C(X,Y)$ the linear space of continuous functions, mapping $X$ to $Y$.
For the formulation of stability properties we use the standard classes of comparison functions, introduced next:
\index{comparison!functions}
\index{class!$\K$}
\index{class!$\LL$}
\index{class!$\KL$}
\index{class!$\PD$}
\index{class!$\Kinf$}
\index{positive-definite function}
\index{function!positive definite}
\begin{equation*}
\begin{array}{ll}
{\PD} &:= \left\{\gamma \in C(\R_+): \gamma(0)=0 \mbox{ and } \gamma(r)>0 \mbox{ for } r>0  \right\} \\
{\K} &:= \left\{\gamma \in \PD : \gamma \mbox{ is strictly increasing}   \right\}\\
{\K_{\infty}}&:=\left\{\gamma\in\K: \gamma\mbox{ is unbounded}\right\}\\
{\LL}&:=\{\gamma\in C(\R_+): \gamma\mbox{ is strictly decreasing with } \lim\limits_{t\rightarrow\infty}\gamma(t)=0 \}\\
{\KL} &:= \left\{\beta \in C(\R_+\times\R_+,\R_+): \beta(\cdot,t)\in{\K},\ \forall t \geq 0, \  \beta(r,\cdot)\in {\LL},\ \forall r > 0\right\}
\end{array}
\end{equation*}
Functions of class $\PD$ are also called \textit{positive definite functions}.
An up-to-date compendium of results concerning comparison functions can be found in \cite{Kel14}.

We proceed to the main concept for this survey:
\begin{definition}
\label{def:ISS}
System $\Sigma=(X,\Uc,\phi)$ is called \emph{(uniformly)  input-to-state stable
(ISS)}, if there exist $\beta \in \KL$ and $\gamma \in \Kinf$ 
such that for all $ x \in X$, $ u\in \Uc$ and $ t\geq 0$ it holds that
\begin {equation}
\label{iss_sum}
\| \phi(t,x,u) \|_{X} \leq \beta(\| x \|_{X},t) + \gamma( \|u\|_{\Uc}).
\end{equation}
\end{definition}

The uniformity in Definition~\ref{def:ISS} means that the decay rate described by the function $\beta$ depends on the norm of the initial condition only, and does not depend on the initial condition itself as well as on the input. Non-uniform ISS-like notions are briefly considered in Section~\ref{sec:WeakISS}.

We consider also a weaker property
\begin{definition}
\label{def:0-UGAS}
System $\Sigma=(X,\Uc,\phi)$ is called \emph{uniformly globally asymptotically stable at zero (0-UGAS)}, if there exists $\beta \in \KL$ 
such that for all $ x \in X$ and all $ t\geq 0$ it holds that
\begin {equation}
\label{eq:0UGAS}
\| \phi(t,x,0) \|_{X} \leq \beta(\| x \|_{X},t).
\end{equation}
\end{definition}
Substituting $u:=0$ into \eqref{iss_sum} we obtain that ISS systems are always 0-UGAS. The converse is not true, as the 0-UGAS property does not  even imply forward completeness of the system with inputs, as can be seen in Example~\ref{examp:0UGAS-not-ISS}.

For finite-dimensional systems, i.e. if $X = \R^n$, the ISS property does not depend on the choice of the norm in $\R^n$
(as all norms in $\R^n$ are equivalent):
\begin{proposition}
\label{prop:ISS-in-finite-dimensions-does-not-depend-on-norm} 
Let $|\cdot|_1$, $|\cdot|_2$ be two norms in $\R^n$ and let $X_i := (\R^n,|\cdot|_i)$, $i=1,2$.
If a control system $(X_1,\Uc,\phi)$ is ISS, then $(X_2,\Uc,\phi)$ is also ISS.
\end{proposition}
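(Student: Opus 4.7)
The plan is to exploit the classical fact that all norms on a finite-dimensional vector space are equivalent: there exist constants $c_1,c_2>0$ such that $c_1|x|_2 \le |x|_1 \le c_2|x|_2$ for every $x\in\R^n$. Since the transition map $\phi$ and the input space $\Uc$ are identical in both systems, only the state-norm changes, so the whole argument reduces to rewriting the ISS estimate in $|\cdot|_1$ in terms of $|\cdot|_2$.

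Concretely, starting from the hypothesis that $(X_1,\Uc,\phi)$ is ISS, I pick $\beta\in\KL$ and $\gamma\in\Kinf$ such that $|\phi(t,x,u)|_1 \le \beta(|x|_1,t) + \gamma(\|u\|_\Uc)$ for all $x\in\R^n$, $u\in\Uc$, $t\ge 0$. Then I estimate
\begin{equation*}
|\phi(t,x,u)|_2 \;\le\; \tfrac{1}{c_1}|\phi(t,x,u)|_1 \;\le\; \tfrac{1}{c_1}\beta(|x|_1,t) + \tfrac{1}{c_1}\gamma(\|u\|_\Uc) \;\le\; \tfrac{1}{c_1}\beta(c_2|x|_2,t) + \tfrac{1}{c_1}\gamma(\|u\|_\Uc),
\end{equation*}
using monotonicity of $\beta$ in its first argument.

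Finally, I define $\tilde\beta(r,t):=\tfrac{1}{c_1}\beta(c_2 r,t)$ and $\tilde\gamma(r):=\tfrac{1}{c_1}\gamma(r)$ and observe that $\tilde\beta\in\KL$ (positive scaling in the argument and in the output preserves the $\KL$-property) and $\tilde\gamma\in\Kinf$ (positive scalar multiple of a $\Kinf$-function is again $\Kinf$). This yields the ISS estimate for $(X_2,\Uc,\phi)$, completing the proof.

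There is no serious obstacle here: the only thing to be careful about is that the rescaled functions stay in the correct comparison classes, which is immediate. The proposition is essentially a sanity check that the ISS property is a property of the underlying linear topology on the state space rather than of the specific norm chosen to describe it, and the proof is nothing more than the norm-equivalence bookkeeping above.
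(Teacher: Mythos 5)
Your proof is correct and is exactly the argument the paper has in mind: the proposition is stated there with only the parenthetical remark that all norms on $\R^n$ are equivalent, and your norm-equivalence bookkeeping, together with the observation that $\tilde\beta(r,t)=\tfrac{1}{c_1}\beta(c_2 r,t)$ remains of class $\KL$ and $\tilde\gamma=\tfrac{1}{c_1}\gamma$ remains of class $\Kinf$, fills in precisely that intended reasoning. Nothing is missing.
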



If $X$ is an infinite-dimensional space, then the claim of Proposition~\ref{prop:ISS-in-finite-dimensions-does-not-depend-on-norm} 
does not hold, as the following simple and rather academical example shows:
\begin{example}
\label{examp:ISS-wrt-different-spaces} 
By $\ell^p$, $p\in[1,\infty]$, we denote the Banach space of all real sequences $x = (x_i)_{i\in\N}$ with finite $\ell^p$-norm $|x|_p<\infty$, where $|x|_p = (\sum_{i=1}^{\infty}|x_i|^p)^{1/p}$ for $p < \infty$ and $|x|_{\infty} = \sup_{i\in\N}|x_i|$.

Consider the following nonlinear system, consisting of a countable number of same components.
\begin{eqnarray}
\label{eq:Full_system}
\dot{x}_i = -x_i^3,\quad i\in\N.
\end{eqnarray}
By a rather elementary but lengthy argument (which we omit) one can show that this system is well-posed with $X=\ell_p$ for any $p\in[1,+\infty]$, it is 0-UGAS for $X=\ell_\infty$, but it is not 0-UGAS for $X=\ell_p$ with any $p\in [1,\infty)$.
\end{example}

The inequality \eqref{iss_sum} tells in particular that the trajectory $\phi(\cdot,x,u)$ is bounded as long as $u$ is bounded.
A simple but important practical consequence of ISS is the \emph{convergent input convergent state (CICS)} property which is a kind of a folklore in ISS community:
\begin{proposition}
\label{prop:Converging_input_uniformly_converging_state}
Let $\Sigma = (X,\Uc,\phi)$ be an ISS control system. Then for any $u\in\Uc$ such that $\lim_{t \to \infty}\|u(\cdot +t)\|_{\Uc} = 0$, it holds that
\begin{eqnarray}
\lim_{t \to \infty}\sup_{\|x\|_X\leq r}\|\phi(t,x,u)\|_X = 0, \mbox{ for any } r>0.
\label{eq:Uniform-convergence}
\end{eqnarray}
\end{proposition}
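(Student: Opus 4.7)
The plan is to exploit the semigroup (cocycle) structure of the system together with two applications of the ISS estimate: once to show that trajectories starting from the ball of radius $r$ remain uniformly bounded along the whole time axis, and once to show that after a long time the bound becomes small.

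First I would obtain the uniform bound on the trajectory. Fix $r>0$ and $u\in\Uc$ with $\lim_{t\to\infty}\|u(\cdot+t)\|_{\Uc}=0$. By the shift-invariance axiom, $\|u(\cdot+s)\|_{\Uc}\leq \|u\|_{\Uc}$ for every $s\geq 0$, so in particular $\|u\|_{\Uc}<\infty$. Applying ISS to any $x\in X$ with $\|x\|_X\leq r$ gives
\[
\|\phi(s,x,u)\|_X \leq \beta(\|x\|_X,s)+\gamma(\|u\|_{\Uc}) \leq \beta(r,0)+\gamma(\|u\|_{\Uc}) =: M
\]
for every $s\geq 0$, uniformly in $x$ on the ball.

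Next I would use the cocycle property to split time. For $t\geq \tau\geq 0$ write $t=\tau+(t-\tau)$ and use axiom (S4) together with ISS applied on the interval $[\tau,t]$ with initial state $\phi(\tau,x,u)$ and shifted input $u(\tau+\cdot)$:
\[
\|\phi(t,x,u)\|_X = \|\phi(t-\tau,\phi(\tau,x,u),u(\tau+\cdot))\|_X \leq \beta\bigl(\|\phi(\tau,x,u)\|_X,\,t-\tau\bigr)+\gamma\bigl(\|u(\tau+\cdot)\|_{\Uc}\bigr).
\]
Using the uniform bound from the first step and monotonicity of $\beta(\cdot,s)$ in its first argument, this becomes
\[
\|\phi(t,x,u)\|_X \leq \beta(M,\,t-\tau)+\gamma\bigl(\|u(\tau+\cdot)\|_{\Uc}\bigr),
\]
for all $x$ with $\|x\|_X\leq r$.

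Finally I would choose the two time parameters independently to make both terms small. Given $\varepsilon>0$, pick $s^\star$ so large that $\beta(M,s^\star)<\varepsilon/2$; this is possible because $\beta(M,\cdot)\in\LL$. Then, using $\lim_{\tau\to\infty}\|u(\tau+\cdot)\|_{\Uc}=0$ together with continuity of $\gamma$ at $0$ (and $\gamma(0)=0$ since $\gamma\in\Kinf$), pick $T$ so that $\gamma(\|u(\tau+\cdot)\|_{\Uc})<\varepsilon/2$ for every $\tau\geq T$. For $t\geq T+s^\star$, set $\tau:=t-s^\star\geq T$; the displayed inequality yields $\|\phi(t,x,u)\|_X<\varepsilon$, uniformly over $\|x\|_X\leq r$, which is exactly \eqref{eq:Uniform-convergence}. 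There is no real obstacle; the only point to watch is that the decomposition must put the decaying $\beta$-term over a fixed length window $s^\star$ while the residual tail of $u$ is measured on the long initial segment of length $\tau$, not the other way round.
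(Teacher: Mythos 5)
Your proof is correct, and it is in essence the standard ``cocycle plus two ISS estimates'' argument; the only substantive difference from the paper's own treatment (which appears, for the analogous iISS/BECS statement, in the extended-version block of the source) is the order in which the two time parameters are handled. You first establish a bound $M$ on $\|\phi(s,x,u)\|_X$ valid \emph{uniformly in $s$}, then keep the decay window fixed at $s^\star$ and let the shift time $\tau=t-s^\star\to\infty$ so that $\gamma(\|u(\tau+\cdot)\|_{\Uc})$ becomes small. The paper does it the other way around: it first fixes a time $t_1$ large enough that the tail of the input is small, bounds the state only at that single instant via $\|\phi(t_1,x,u)\|_X\le\beta(r,0)+\gamma(\|u\|_{\Uc})$, and then lets the decay window $t-t_1$ grow so that the $\beta$-term dies out. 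Both routes work and cost essentially the same; the paper's version does not need your uniform-in-time bound, while yours avoids having to fix the shift time before knowing $\varepsilon$-dependence of the window. For this reason your closing caution --- that the decomposition ``must'' put the $\beta$-term over a fixed window and ``not the other way round'' --- is not actually a constraint: the reverse arrangement is perfectly fine provided the initial segment length is chosen large enough in advance, which is exactly what the paper's argument does. This remark does not affect the validity of your proof.
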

%
%
%

For a normed linear space $W$ and any $r>0$ denote $B_{r,W} :=\{u \in W: \|u\|_W < r\}$ (the open ball of radius $r$ around $0$ in $W$).
If $W$ is the state space $X$, we write simply $B_r$ instead of $B_{r,X}$.

The local counterpart of the ISS property is
\begin{definition}
\label{def:LISS}
System $\Sigma=(X,\Uc,\phi)$ is called \emph{ (uniformly) locally input-to-state stable
(LISS)}, if there exist $\beta \in \KL$, $\gamma \in \Kinf$ 
and $r>0$ such that the inequality \eqref{iss_sum} holds for all $ x \in
\overline{B_r}$, $u\in \overline{B_{r,\;\Uc}}$ and $ t\geq 0$.
\end{definition}


\begin{figure}[ht]
    \centering
    \hspace{-5mm}
\begin{subfigure}{0.44\textwidth}
\centering
\begin{tikzpicture}[scale = 0.85, transform shape]
\draw[->,thick] (-0.7,0) --(5,0);
\draw[->,thick] (0,-0.5) --(0,4.5);

\node(trans) at (2.5,1.5) {\small\color{red} $\beta(\|x\|_X,t)$};

\node(x) at (1.4,0.6) {\small $\|\phi(t,x,u)\|_X$};
\node(t) at (5,-0.3) {\small $t$};


\draw[thick, domain=0:5]  plot (\x, {2*exp(-0.6*\x)+2.5*\x*exp(-1.5*\x)});

\draw[dashed, thick, red, domain=0:5]  plot (\x, {3*exp(-0.5*\x)});
\end{tikzpicture}
    \caption{Typical solution of an ISS system with $u\equiv 0$}
\end{subfigure}
\quad\ \ 
\begin{subfigure}{0.48\textwidth}
\centering
\begin{tikzpicture}[scale = 0.85, transform shape]
\draw[->,thick] (-0.7,0) --(5,0);
\draw[->,thick] (0,-0.5) --(0,4.5);
\draw[dashed,thick] (0,1) --(5,1);

\node(gain) at (5.7,1) {\small $\gamma(\|u\|_{\Uc})$};
\node(trans) at (0.8,1.4) {\small $\beta(\|x\|_X,t)$};
\node[red] (ISS_sum) at (3,2.8) {\small $\beta(\|x\|_X,t)+\gamma(\|u\|_{\Uc})$};

\node(x) at (1.8,0.4) {\small $\|\phi(t,x,u)\|_X$};
\node(t) at (5,-0.3) {\small $t$};

\draw[-,thick] (0,2) to [out=90,in=130](1,2.5) to [out=-50,in=145](2,2)
to [out=-30,in=145](3,0.2) to [out=-30,in=145](4,1.2)
to [out=-40,in=145](5,0.8);

\draw[dashed, thick,  domain=0:5]  plot (\x, {3*exp(-0.5*\x)});
\draw[dashed, thick, red, domain=0:5]  plot (\x, {1+3*exp(-0.5*\x)});
\end{tikzpicture}
    \caption{Typical solution of an ISS system with $u\not\equiv 0$}
\end{subfigure}
\end{figure}

One of the most common choices for $\Uc$ is the space $\Uc:=PC_b(\R_+,U)$ of globally bounded piecewise-continuous functions which are right-continuous, with the norm $\| \cdot \|_{\Uc} := \sup\limits_{0 \leq s \leq \infty} \|u(s)\|_U$.
In this case the causality property of $\Sigma$ justifies the following equivalent definition of the (L)ISS property, which is often used in the literature (note the difference to the classical definition of ISS: the supremum is taken only over the interval $[0,t]$):
\begin{proposition}{\cite[Proposition 1]{DaM13}}
\label{prop:Causality_Consequence}
Let $\Sigma:=(X,\Uc,\phi)$ be a control system with $\Uc:=PC_b(\R_+,U)$. Then $\Sigma$ is ISS iff 
$\exists \beta \in \KL$ and $\gamma \in \Kinf$, such that
the following inequality holds for all $x\in X$, all $u\in\Uc$ and all $t\geq 0$:
\begin {equation}
\label{iss_sum_equiv}
\begin {array} {lll}
\| \phi(t,x,u) \|_{X} \leq \beta(\| x \|_{X},t) + \gamma( \sup\limits_{0 \leq s \leq t} \|u(s)\|_U).
\end {array}
\end{equation}
\end{proposition}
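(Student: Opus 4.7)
My plan is to prove both directions of the equivalence, with the nontrivial work concentrated in the forward implication, where the main tool will be combining the concatenation axiom with causality.

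For the easier direction $(\ref{iss_sum_equiv}) \Rightarrow (\ref{iss_sum})$: I would simply note that by the shift-invariance axiom and the definition of the $PC_b$-norm we have $\sup_{0 \leq s \leq t}\|u(s)\|_U \leq \|u\|_\Uc$ for every $t \geq 0$, so monotonicity of $\gamma \in \Kinf$ upgrades \eqref{iss_sum_equiv} into \eqref{iss_sum}. No axioms beyond the definition of the norm are needed here.

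For the forward direction $(\ref{iss_sum}) \Rightarrow (\ref{iss_sum_equiv})$: fix $x \in X$, $u \in \Uc$ and $t \geq 0$, and introduce the truncated input $\tilde u := \ccat{u}{0}{t}$, i.e.\ the concatenation of $u$ on $[0,t]$ with the zero input. The axiom of concatenation guarantees $\tilde u \in \Uc$. Since $\tilde u(s) = u(s)$ for $s \in [0,t]$ and $\tilde u(s) = 0$ for $s > t$, the $PC_b$-norm evaluates to
\begin{equation*}
\|\tilde u\|_\Uc \;=\; \sup_{s \geq 0}\|\tilde u(s)\|_U \;=\; \sup_{0 \leq s \leq t}\|u(s)\|_U.
\end{equation*}
Because $u$ and $\tilde u$ agree on $[0,t]$, the causality axiom yields $\phi(t,x,u) = \phi(t,x,\tilde u)$. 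Applying the assumed ISS estimate \eqref{iss_sum} to $(t,x,\tilde u)$ therefore gives
\begin{equation*}
\|\phi(t,x,u)\|_X \;=\; \|\phi(t,x,\tilde u)\|_X \;\leq\; \beta(\|x\|_X,t) + \gamma(\|\tilde u\|_\Uc) \;=\; \beta(\|x\|_X,t) + \gamma\!\left(\sup_{0 \leq s \leq t}\|u(s)\|_U\right),
\end{equation*}
which is exactly \eqref{iss_sum_equiv} with the same $\beta$ and $\gamma$.

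The only slightly delicate point I anticipate is the verification that $\tilde u$ really lies in $\Uc = PC_b(\R_+,U)$, since concatenating a right-continuous piecewise-continuous function with $0$ at time $t$ may introduce a jump at $t$; this is, however, precisely what the concatenation axiom absorbs, and the introduced discontinuity is compatible with piecewise continuity and right-continuity as $s \mapsto u(s)\mathbf{1}_{[0,t]}(s)$ on $[0,\infty)$ is still right-continuous at $t$ (its value at $t$ being $u(t)$) and bounded. Once this bookkeeping is settled, the argument reduces to the two-line application of causality and ISS displayed above.
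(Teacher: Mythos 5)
Your proof is correct and follows essentially the standard route (the paper does not prove this statement itself but cites \cite[Proposition 1]{DaM13}): the direction \eqref{iss_sum_equiv}$\Rightarrow$\eqref{iss_sum} is just monotonicity of $\gamma$ together with $\sup_{0\leq s\leq t}\|u(s)\|_U\leq\|u\|_{\Uc}$, and the direction \eqref{iss_sum}$\Rightarrow$\eqref{iss_sum_equiv} truncates the input and combines the concatenation axiom with causality, exactly as in the cited argument. One detail in your closing remark is wrong, though it does not damage the proof: the zero-continued input $\tilde u=\ccat{u}{0}{t}$ is in general \emph{not} right-continuous at $s=t$, since $\tilde u(t)=u(t)$ while $\tilde u(s)=0$ for $s>t$, so your parenthetical verification fails whenever $u(t)\neq 0$. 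To close this cleanly you can either simply invoke the concatenation axiom, which is part of the hypothesis that $\Sigma$ is a control system with input space $\Uc=PC_b(\R_+,U)$ (so membership of $\tilde u$ in $\Uc$ is guaranteed by assumption and needs no separate check), or replace $\tilde u$ by the constant continuation $\tilde u(s):=u(\min(s,t))$, which is piecewise continuous, right-continuous and bounded, agrees with $u$ on $[0,t]$ (so causality applies verbatim), and still satisfies $\|\tilde u\|_{\Uc}=\sup_{0\leq s\leq t}\|u(s)\|_U$; with either repair your two-line estimate goes through unchanged, with the same $\beta$ and $\gamma$.
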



\section{Fundamental properties of ISS systems}
\label{sec:Fundamental-properties-ISS-systems}

We would like to start our exposition with several results, which are valid for a broad class of infinite-dimensional systems introduced in 
Definition~\ref{Steurungssystem}.
We discuss an ISS superposition theorem, which provides the insights into the essence of ISS and paves the way to the proof of such important results as non-coercive ISS Lyapunov theorem (see Section~\ref{sec:ISS-Lyapunov-theory}) and small-gain theorem in the trajectory formulation, discussed in Section~\ref{sec:Interconnected_systems}.
Next, we present direct and converse ISS Lyapunov theorems, which are an indispensable tool for ISS analysis of nonlinear systems and will be a basis for subsequent sections.

\subsection{ISS superposition theorem}
\label{sec:ISS-superposition-theorem}

One of important results in finite-dimensional dynamical systems theory is that global asymptotic stability defined as a combination of global attractivity and local stability, is equivalent to the existence of a $\KL$-bound for the transition map. 
In this section, we show an analogous result for the ISS property.

\subsubsection{General results using superposition techniques}

Let us give general results establishing ISS by superposing local and global properties.

\begin{definition}
\label{def:RobustEquilibrium_Undisturbed}
Consider a system $\Sigma=(X,\Uc,\phi)$ with \emph{equilibrium point} $0\in X$, that is $\phi(t,0,0) = 0$ for all $t \geq 0$.
We say that \emph{$\phi$ is continuous at the equilibrium point (CEP)} if  for every $\eps >0$ and for any $h>0$ there is a 
$\delta = \delta (\eps,h)>0$, so that 
\begin{eqnarray}
 t\in[0,h],\ \|x\|_X \leq \delta,\ \|u\|_{\Uc} \leq \delta \; \Rightarrow \;  \|\phi(t,x,u)\|_X \leq \eps.
\label{eq:RobEqPoint}
\end{eqnarray}
In this case we  say also that $\Sigma$ has the CEP property.
\end{definition}

Forward completeness alone does not imply in general the existence of any uniform bounds on the trajectories emanating from  bounded balls which are subject to uniformly bounded inputs. Systems exhibiting such bounds deserve a special name.
\begin{definition}
\label{def:BRS}
We say that \emph{$\Sigma=(X,\Uc,\phi)$ has bounded reachability sets (BRS)}, if for any $C>0$ and any $\tau>0$ it holds that 
\[
\sup\big\{
\|\phi(t,x,u)\|_X \midset \|x\|_X\leq C,\ \|u\|_{\Uc} \leq C,\ t \in [0,\tau]\big\} < \infty.
\]
\end{definition}

In the next definitions the counterparts of \q{stability} and \q{attractivity} concepts are introduced for control systems with inputs.
\begin{definition}
\label{def:ULS} 
System $\Sigma=(X,\Uc,\phi)$ is called 
\begin{itemize}
    \item[(i)] \emph{uniformly locally stable (ULS)}, if there exist $ \sigma \in\Kinf$, $\gamma
          \in \Kinf$ and $r>0$ such that for all $ x \in \overline{B_r}$ and all $ u
          \in \overline{B_{r,\Uc}}$:
\begin{equation}
\label{GSAbschaetzung}
\left\| \phi(t,x,u) \right\|_X \leq \sigma(\|x\|_X) + \gamma(\|u\|_{\Uc}) \quad \forall t \geq 0.
\end{equation}

    \item[(ii)] \emph{uniformly globally stable (UGS)}, if there exist $ \sigma \in\Kinf$, 
$\gamma \in \Kinf$ such that for all $ x \in X$ and all $ u\in\Uc$ the estimate \eqref{GSAbschaetzung} holds.
\end{itemize}
\end{definition}


%

\begin{definition}{(see \cite[Definition 8]{MiW18b} and \cite{Mir19b})}
\label{def:bULIM}
We say that $\Sigma=(X,\Uc,\phi)$ has the \emph{uniform limit property on bounded sets (bULIM)}, if there exists
    $\gamma\in\Kinf$ so that for every $\eps>0$ and for every $r>0$ there is a $\tau = \tau(\eps,r)$ such that 
\begin{eqnarray}
\|x\|_X \leq r \ \wedge \ \|u\|_\Uc \leq r \qrq \exists t\leq\tau:\  \|\phi(t,x,u)\|_X \leq \eps + \gamma(\|u\|_{\Uc}).
\label{eq:bULIM_ISS_section}
\end{eqnarray}
We say that $\Sigma=(X,\Uc,\phi)$ has the \emph{uniform limit property (ULIM)}, if 
there is $\gamma\in\Kinf$ so that for every $\eps>0$ and for every $r>0$ there exists a $\tau = \tau(\eps,r)$ such that 
the following stronger implication holds:
\begin{eqnarray}
\|x\|_X \leq r \ \wedge \ u\in\Uc \qrq \exists t\leq\tau:\  \|\phi(t,x,u)\|_X \leq \eps + \gamma(\|u\|_{\Uc}).
\label{eq:ULIM_ISS_section}
\end{eqnarray}
\end{definition}

The bULIM property shows that all trajectories emanating from a ball $B_r$ and subject to inputs from $B_{r,\Uc}$ intersect the $\varepsilon$-neighborhood of the ball $B_{\gamma(\|u\|_{\Uc}),\Uc}$ before a time $\tau(\varepsilon,r)$, which does not depend on a particular solution. After intersecting this neighborhood, the trajectories may well leave it, but then they have to visit it again and again.

\begin{remark}
\label{rem:Motivation_for_ULIM_Property} 
The bULIM and ULIM properties are motivated by the weaker concept of the limit property, introduced in \cite{SoW96}, which in turn is an extension to the systems with inputs of the weak attractivity property, playing an important role in the classical dynamical systems theory \cite{Bha66, BhS02}.
\end{remark}

Finally, we introduce a concept which is stronger than ULIM, and reflects the classical input-output stability.
\begin{definition}{\cite[p. 1285]{SoW96}}
\label{def:UAG}
System $\Sigma=(X,\Uc,\phi)$ has the \emph{uniform asymptotic gain (UAG) property}, if there
          exists a
          $ \gamma \in \Kinf$ such that for all $ \eps, r
          >0$ there is a $ \tau=\tau(\eps,r) < \infty$ such
          that for all $u \in \Uc$ and all $x \in B_{r}$
\begin{equation}    
\label{UAG_Absch}
t \geq \tau \quad \Rightarrow \quad \|\phi(t,x,u)\|_X \leq \eps + \gamma(\|u\|_{\Uc}).
\end{equation}    
\end{definition}

The following characterization of ISS has been shown in \cite[Theorem 5]{MiW18b} and was slightly improved in \cite{Mir19b}. This characterization is motivated by the classical critetia for ISS of ODEs shown in \cite{SoW95,SoW96}.
\begin{theorem}{(\emph{ISS Superposition theorem})}
\label{thm:UAG_equals_ULIM_plus_LS}
Let $\Sigma=(X,\Uc,\phi)$ be a forward complete control system. The following statements are equivalent:
\begin{itemize}
    \item[(i)] $\Sigma$ is ISS.
    \item[(ii)] $\Sigma$ is UAG, CEP, and BRS.
    \item[(iii)] \label{cond:ULIM_ULS_BRS_is_ISS} $\Sigma$ is bULIM, ULS, and BRS.
\end{itemize}
\end{theorem}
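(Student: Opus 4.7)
The plan is to establish the three-way equivalence via the circular chain (i) $\Rightarrow$ (ii) $\Rightarrow$ (iii) $\Rightarrow$ (i). The first step is essentially unpacking the ISS estimate $\|\phi(t,x,u)\|_X \leq \beta(\|x\|_X,t) + \gamma(\|u\|_\Uc)$: UAG follows from $\beta(r,\cdot)\in\LL$ by choosing $\tau(\varepsilon,r)$ with $\beta(r,\tau)\leq\varepsilon$; CEP follows since $\beta(\delta,0)+\gamma(\delta)\to 0$ as $\delta \to 0^+$; and BRS is immediate since $\beta(C,0)+\gamma(C)$ is finite for any $C>0$.

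For (ii) $\Rightarrow$ (iii), bULIM is a weakening of UAG: the choice $t := \tau_{\mathrm{UAG}}(\varepsilon,r)$ simultaneously meets the upper bound $t\leq\tau$ required by bULIM and the lower bound $t\geq\tau$ of UAG, so the UAG bound at $t = \tau$ also certifies bULIM. BRS is assumed. The subtlety is extracting ULS. I would combine UAG with CEP as follows: apply UAG with a fixed small radius $r_0$ and target $\varepsilon := r_0$ to obtain a time $\tau_0$ after which $\|\phi(t,x,u)\|_X \leq r_0 + \gamma_{\mathrm{UAG}}(\|u\|_\Uc)$ whenever $\|x\|_X \leq r_0$. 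On the compact window $[0,\tau_0]$ apply CEP (and BRS to keep trajectories bounded for larger data) to dominate the initial transient by an additive expression $\sigma(\|x\|_X)+\gamma(\|u\|_\Uc)$. Concatenating the two regimes and enlarging $\sigma,\gamma$ to class $\Kinf$ yields the ULS bound.

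The heart of the proof is (iii) $\Rightarrow$ (i), where the $\KL$-envelope must be constructed. The strategy is iterative contraction driven by the cocycle axiom (S4). For fixed data radius $R$ (handled uniformly with $\|u\|_\Uc\leq R$ via BRS), bULIM delivers for each $n\in\N$ a time $\tau_n=\tau(2^{-n},R)$ before which the trajectory reaches the set $\{\|y\|_X \leq 2^{-n}+\gamma(\|u\|_\Uc)\}$. At that instant, one restarts via the cocycle property, invokes ULS to hold the trajectory in a small neighborhood on the next segment, and reapplies bULIM to produce the next contraction. BRS supplies the globalization needed when the initial radius exceeds the ULS locality. The resulting monotone discrete decay on the envelope is then converted into a continuous $\KL$-function $\beta(\|x\|_X,t)$ by a standard Sontag-type lemma on $\KL$-interpolation, while the residual gain contributions $\gamma(\|u\|_\Uc)$ picked up at successive restarts are absorbed into a single $\gamma\in\Kinf$ via a geometric-series argument.

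The main obstacle is the iteration step in (iii) $\Rightarrow$ (i): ULS is only local in both $\|x\|_X$ and $\|u\|_\Uc$, so chaining ULS segments via the cocycle and BRS, and cleanly separating the initial-state contribution from the accumulated input contribution at each restart, is technically delicate. Turning the resulting discrete contraction schedule into a genuine $\KL$-bound also requires the interpolation step, which is the decisive point where the classical Sontag--Wang machinery must be adapted from $\R^n$ to the abstract setting of Definition~\ref{Steurungssystem}.
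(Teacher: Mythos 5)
Your first step matches the paper exactly: the paper's in-text proof consists precisely of unpacking the ISS estimate to get (i) $\Rightarrow$ (ii) and (i) $\Rightarrow$ (iii), and it then \emph{defers} the converse implications to the cited references (\cite[Theorem 5]{MiW18b}, \cite{Mir19b}). So the part of your proposal that carries the real weight has no counterpart in the paper and must stand on its own — and as written it has a concrete flaw plus a large unproven core. In your (ii) $\Rightarrow$ (iii) step, the extraction of ULS fails as formulated: you invoke UAG once with a \emph{fixed} radius $r_0$ and the \emph{fixed} target $\eps:=r_0$, which only yields $\|\phi(t,x,u)\|_X\leq r_0+\gamma(\|u\|_\Uc)$ for $t\geq\tau_0$. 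The offset $r_0$ does not shrink as $\|x\|_X,\|u\|_\Uc\to 0$, so concatenating with the CEP transient gives at best a local boundedness/practical-stability estimate with constant offset, not the ULS bound $\sigma(\|x\|_X)+\gamma(\|u\|_\Uc)$. The repair is to run the argument for \emph{every} $\eps>0$: apply UAG with target $\eps/2$ on a fixed ball, use CEP with a $\delta(\eps)$ on the finite window $[0,\tau(\eps/2,\cdot)]$, shrink $\delta$ further so that $\gamma(\delta)\leq\eps/2$, and only then convert the resulting uniform $\eps$--$\delta$ statement into class-$\K$ bounds.

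For the decisive implication (iii) $\Rightarrow$ (i), your bULIM/ULS/cocycle iteration is the right skeleton (it is essentially how the cited references proceed), but everything that makes the theorem nontrivial in infinite dimensions is left at the level of a roadmap. Three points in particular are not established. First, the trajectory only \emph{touches} the small set at some hitting time $t\leq\tau_n$; to keep it small afterwards you must restart ULS at that time with the shifted input $u(t+\cdot)$ (using the shift-invariance axiom), and since ULS is local in \emph{both} arguments you must verify that the state at the hitting time and the norm of the shifted input both lie within the ULS radius — this is where the gain picked up from $\gamma(\|u\|_\Uc)$ interferes with the locality of ULS and where the bookkeeping you call ``delicate'' actually has to be done. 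Second, the uniformity of the restart times and of the discrete decay schedule over all $(x,u)$ in a ball is exactly the point where the finite-dimensional Sontag--Wang compactness arguments are unavailable; this is why bULIM (rather than LIM) and BRS appear in the hypothesis, and the proof must use them quantitatively, not just cite them. Third, the passage from ``for all $\eps,r$ there is $\tau$'' to a genuine $\KL$ envelope plus a single $\Kinf$ gain is itself a lemma that needs proof in this abstract setting (in the references it is done by first establishing UAG and UGS and then invoking a $\KL$-construction result). Until these steps are carried out, the hard direction remains unproven rather than proved differently.
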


\begin{proof}
Assume that $\Sigma$ is ISS. Then for certain $\beta\in\KL$, $\gamma\in\Kinf$ it holds that
\[
t\geq 0\ \wedge\ x\in X\ \wedge\ u\in\Uc  \qrq \| \phi(t,x,u) \|_{X} \leq \beta(\| x \|_{X},0) + \gamma( \|u\|_{\Uc}).
\]
Defining $\sigma(r):=\beta(r,0)$, we see that $\Sigma$ is uniformly globally stable and in particular it is ULS, and has BRS and CEP properties.

Now pick any $r>0$ and any $\varepsilon>0$ and define $\tau(r,\varepsilon)$ as the solution of the equation $\beta(r,\tau) = \varepsilon$, if such a solution exists and $\tau:=0$ otherwise. We have:
\begin{eqnarray*}
 t\geq \tau  \ \wedge\ \|x\|_X\leq r \ \wedge\ u\in\Uc \qrq \| \phi(t,x,u) \|_{X} \leq \beta(r,\tau) + \gamma( \|u\|_{\Uc}) \leq \varepsilon + \gamma( \|u\|_{\Uc}),
\end{eqnarray*}
which shows the UAG property (and thus also ULIM property).

The above argument shows the implications (i) $\Rightarrow$ (ii) and (i) $\Rightarrow$ (iii).
Proof of converse implications is more involved and can be found in \cite[Theorem 5]{MiW18b} and in \cite{Mir19b}. 
\end{proof}

Often it is much harder to show the ISS property directly, and it can be much easier to show the properties, listed in items (ii) and (iii) of Theorem~\ref{thm:UAG_equals_ULIM_plus_LS}. This underlines the importance of Theorem~\ref{thm:UAG_equals_ULIM_plus_LS}.
Characterizations of ISS pave the way to the proof of other important results, such as small-gain theorems \cite{DRW07, Mir19b, CaT09, DaK13}, Lyapunov-Razumikhin theory for time-delay systems (see Theorem~\ref{thm:Lyapunov-Razumikhin theorem} in this survey and more in \cite{DKM12, Tee98}), non-coercive ISS Lyapunov theorems (see Theorem~\ref{t:ISSLyapunovtheorem}), relations between ISS and nonlinear $L_2 \to L_2$ stability \cite{Son98}, connections to semiglobal ISS \cite{ASW00b}, to name a few applications.

Furthermore, Theorem~\ref{thm:UAG_equals_ULIM_plus_LS} shows that ISS is indeed a synergy of stability and some kind of a classical input-output stability, which makes ISS a framework well-suited for the description of stability properties not only for closed systems (i.e. the systems with inputs), which are the primary object of investigation of the classical dynamical systems theory, but also for open systems which interact with the environment by means of the inputs. Another theory, which is successful for the analysis of open systems is a dissipative systems theory, see \cite{Wil72, AMP16}.

Theorem~\ref{thm:UAG_equals_ULIM_plus_LS} is tight in the sense, that if either ULS or BRS or uniformity in the limit property is dropped from the item (iii) of Theorem~\ref{thm:UAG_equals_ULIM_plus_LS}, then the result of Theorem~\ref{thm:UAG_equals_ULIM_plus_LS} is no more valid, see \cite{Mir16, MiW18b} for corresponding examples.

At the same time, for more special classes of systems tighter counterparts of Theorem~\ref{thm:UAG_equals_ULIM_plus_LS} can be obtained, see \cite{MiW18b} for evolution equations in Banach spaces, \cite{SoW96} and \cite[Section~VIII]{MiW18b} for ordinary differential equations, as well as \cite{MiW17e} for preliminary results on time-delay systems. 

\subsubsection{Criteria for ISS of ordinary differential equations}

Let $X:=\R^n$, $U:=\R^m$ and $\Uc$ be the space of globally
essentially bounded functions endowed with the essential supremum
norm (here $\mu$ denotes the Lebesgue measure):
\[
\Uc:=L_{\infty}(\R_+,\R^m),\qquad
\|u\|_{\infty}:=\esssup_{t\geq 0}|u(t)| = \inf_{D \subset \R_+,\ \mu(D)=0} \sup_{t \in \R_+ \backslash D} |u(t)|.
\]
For $f: \R^n\times \R^m \to \R^n$ consider the system
\begin{eqnarray}
\dot{x} = f(x,u).
\label{eq:ODE_System}
\end{eqnarray}

First we introduce the concept of a solution for the system \eqref{eq:ODE_System}.
\begin{definition}
\label{def:Solution-of-ODE-system} 
For $\tau>0$ we say that $\zeta:[0,\tau] \to \R^n$ is a \emph{solution of \eqref{eq:ODE_System} on $[0,\tau)$} with an initial condition $x_0 \in \R^n$ and an input $u\in\Uc$, if $\zeta$ is absolutely continuous on $[0,\tau]$, satisfies the initial condition $\zeta(0) = x_0$ and the equation $\dot{\zeta}(t) = f(\zeta(t),u(t))$ holds almost everywhere on $[0,\tau)$.

We say that $\zeta:\R_+ \to \R^n$ is a \emph{solution of \eqref{eq:ODE_System} on $\R_+$} with an initial condition $x_0 \in \R^n$ and an input $u\in\Uc$, if $\zeta$ is a solution (subject to an initial condition $x_0 \in \R^n$ and an input $u\in\Uc$) of \eqref{eq:ODE_System} on $[0,s)$ for each $s>0$.
\end{definition}

Assuming that for each $x_0\in \R^n$ and $u\in\Uc$ there is a unique global solution $\phi(\cdot,x_0,u)$ corresponding to the input $u$ and the initial condition $x(0)=x_0$, one can see that $\Sigma_{ODE}:=(\R^n,\Uc,\phi)$ is a control system according to Definition~\ref{Steurungssystem}.

We need the following weaker counterpart of the bULIM property:
\begin{definition}{(see \cite{SoW96})}
\label{def:LIM}
We say that a forward complete control system $\Sigma=(X,\Uc,\phi)$ has the \emph{limit property (LIM)}, if there exists
    $\gamma\in\Kinf$ so that for every $x\in X$, every $u\in\Uc$ and for every $\varepsilon>0$ there
    exists a $t = t(\eps,x,u)$ such that 
\begin{eqnarray}
\|\phi(t,x,u)\|_X \leq \eps + \gamma(\|u\|_{\Uc}).
\label{eq:LIM_ISS_section}
\end{eqnarray}
\end{definition}
Please note that forward completeness is needed already in order to define the LIM property. If forward completeness is not known, then the  definition of LIM property has to be changed.
 
For ODEs tight characterizations of ISS in terms of stability and limit properties have been derived by Sontag and Wang in \cite{SoW96}. 
\begin{theorem}{\cite[Theorem 1]{SoW96}}
\label{thm:Characterization_in_n_dim} 
The following holds for a system \eqref{eq:ODE_System}:
\begin{center}
\eqref{eq:ODE_System} is ISS $\quad\Iff\quad$ \eqref{eq:ODE_System} is forward complete, ULS and has a LIM property.
\end{center}
\end{theorem}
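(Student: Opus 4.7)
The plan is to prove the two implications separately, reducing the non-trivial direction to the abstract ISS superposition theorem (Theorem~\ref{thm:UAG_equals_ULIM_plus_LS}). For the forward direction, the ISS estimate $\|\phi(t,x,u)\|_X \leq \beta(\|x\|_X,t) + \gamma(\|u\|_\Uc)$ yields each of the three required properties directly: forward completeness follows since the right-hand side provides an a priori bound precluding finite-time blow-up; UGS (and thus ULS) is obtained with $\sigma(r) := \beta(r,0) \in \Kinf$; and LIM holds with the very same asymptotic gain $\gamma$, by choosing $t$ large enough so that $\beta(\|x\|_X,t) \leq \varepsilon$.

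For the converse direction, the strategy is to verify the three hypotheses of Theorem~\ref{thm:UAG_equals_ULIM_plus_LS}(iii), namely bULIM, ULS, and BRS. ULS is directly assumed. To obtain BRS, fix $\tau, C > 0$: standard continuous-dependence theorems for ODEs imply that $(x,t) \mapsto \phi(t,x,u)$ is continuous on the compact set $\overline{B_C} \times [0, \tau]$ for each fixed input $u$, yielding boundedness pointwise in $u$; uniformity over $\|u\|_\Uc \leq C$ then follows from local Lipschitz bounds on $f$ combined with Gronwall's inequality. The core remaining task is upgrading the pointwise LIM property to the uniform bULIM property, which is where the finite-dimensionality of the state space is used decisively.

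For this LIM-to-bULIM upgrade I would argue by contradiction: if bULIM fails with parameters $\varepsilon_0, r_0 > 0$, one extracts sequences $x_n \in \overline{B_{r_0}}$ and $u_n \in \Uc$ with $\|u_n\|_\Uc \leq r_0$ such that $\|\phi(t,x_n,u_n)\|_X > \varepsilon_0 + \gamma(\|u_n\|_\Uc)$ for every $t \leq n$, aiming at a contradiction with the LIM property applied at a suitable limit point. By compactness of $\overline{B_{r_0}} \subset \R^n$ one may assume $x_n \to x^*$. The main obstacle lies in passing to the limit in the inputs $u_n$, since nonlinear dependence of $f$ on $u$ precludes naive weak-$*$ arguments. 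A clean workaround is to cover $\overline{B_{r_0}}$ by finitely many balls of radius small enough that LIM combined with continuous dependence on initial data yields a common visit time on each such ball for a fixed reference input, and then to patch these local uniform bounds into a global one of the form $\tau(\varepsilon_0, r_0)$ using shift-invariance together with the uniform-in-$\|u\|_\Uc$ Gronwall estimates from the BRS step. With bULIM in hand, Theorem~\ref{thm:UAG_equals_ULIM_plus_LS}(iii) yields ISS and completes the proof.
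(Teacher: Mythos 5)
Your overall architecture is exactly the paper's: the easy direction is done directly from the ISS estimate, and the converse is reduced to Theorem~\ref{thm:UAG_equals_ULIM_plus_LS}(iii) by showing that in finite dimensions forward completeness upgrades to BRS and LIM upgrades to bULIM. However, those two upgrades are precisely the nontrivial content of the theorem (the paper disposes of them by citing \cite[Proposition 5.1]{LSW96} and \cite[Proposition 13]{MiW18b}, the latter resting on \cite[Corollary III.3]{SoW96}), and your sketches of both contain genuine gaps. For BRS, the ``local Lipschitz bounds plus Gronwall'' step is circular: since $f$ is only locally Lipschitz, any Lipschitz or growth constant you feed into Gronwall is valid only on a bounded region of the state space, and knowing that the trajectories emanating from $\overline{B_C}$ with $\|u\|_\Uc\le C$ stay in such a region is exactly the statement to be proved. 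Forward completeness $\Rightarrow$ BRS is a genuine theorem for ODEs, not a routine continuity-plus-Gronwall consequence, and indeed it fails for general infinite-dimensional systems, which shows that some finite-dimensional mechanism beyond these soft tools must enter.

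The LIM-to-bULIM step has a more serious flaw: the time furnished by the LIM property depends on the input $u$ as well as on the state, so covering $\overline{B_{r_0}}$ by small balls and extracting ``a common visit time on each ball for a fixed reference input'' controls nothing about the trajectories $\phi(\cdot,x_n,u_n)$ in your contradiction sequence, because the $u_n$ are arbitrary elements of the (noncompact) ball $\{u:\|u\|_\Uc\le r_0\}$ and continuous dependence on initial data holds only for one and the same input. Uniformizing the visit time over all inputs (with their size absorbed into the gain $\gamma(\|u\|_\Uc)$) is exactly the difficulty resolved in \cite[Corollary III.3]{SoW96}, whose proof uses a considerably more delicate compactness argument on trajectories and inputs; your patching via shift invariance and the (already problematic) Gronwall estimates does not reach it. As written, the argument would only give uniformity over initial states for a single fixed input, which is strictly weaker than bULIM, so the reduction to Theorem~\ref{thm:UAG_equals_ULIM_plus_LS} is not yet justified.
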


\begin{proof}
Follows from Theorem~\ref{thm:UAG_equals_ULIM_plus_LS} by using that for \eqref{eq:ODE_System} the BRS property is equivalent to forward completeness by \cite[Proposition 5.1]{LSW96} and the bULIM property is equivalent to the LIM property by \cite[Proposition 13]{MiW18b}, which in turn essentially uses \cite[Corollary III.3]{SoW96}.
\end{proof}

The possibility to relax the BRS property to forward completeness as well as the uniform LIM property to the \q{plain} LIM property 
uses heavily the topological properties of ODE systems.
In finite dimensions, uniform and non-uniform notions are frequently equivalent due to local compactness of
the state space. In infinite dimensions, this uniformity becomes a requirement, see \cite{Mir16} for the examples showing that a \q{naive}
generalization of the equivalences in Theorem~\ref{thm:Characterization_in_n_dim} to infinite-dimensional systems is not possible. 

\subsection{ISS Lyapunov functions}
\label{sec:ISS-Lyapunov-theory}

The concept of a Lyapunov function is central for stability theory as the existence of a Lyapunov function is a certificate for the stability of a system and in many cases the only way to show stability of a nonlinear system is to construct a Lyapunov function for it. 
As we will see in this section, Lyapunov functions retain their significance in the context of input-to-state stability.

For a real-valued function $b:\R_+\to\R$ define the \emph{right-hand upper Dini derivative} at $t\in\R_+$ by
\begin{eqnarray*}
D^+b(t) := \mathop{\overline{\lim}} \limits_{h \rightarrow +0} \frac{b(t+h) - b(t)}{h}.
\end{eqnarray*}

Let $x \in X$ and $V$ be a real-valued function defined in a neighborhood of $x$. \emph{The Lie derivative 
of $V$ at $x$ corresponding to the input $u$ along the corresponding trajectory of $\Sigma$} is defined by
\begin{equation}
\label{ISS_LyapAbleitung}
\dot{V}_u(x):=D^+ V\big(\phi(\cdot,x,u)\big)\Big|_{t=0}=\mathop{\overline{\lim}} \limits_{t \rightarrow +0} {\frac{1}{t}\big(V(\phi(t,x,u))-V(x)\big) }.
\end{equation}


The interior of a domain $D \subset X$ we denote by $\intt(D)$.
\begin{definition}
\label{def:noncoercive_ISS_LF}
Consider $D \subset X$, such that $0 \in \intt(D)$. 
A continuous function $V:D \to \R_+$ is called a \emph{non-coercive LISS Lyapunov function} for the system $\Sigma = (X,\Uc,\phi)$,
 if there are $\psi_2,\alpha \in \Kinf$, $\sigma \in \K$ and $r>0$, such that $B_r\subset D$,
\begin{equation}
\label{LyapFunk_1Eig_nc_ISS}
0 < V(x) \leq \psi_2(\|x\|_X), \quad \forall x \in D\backslash\{0\}
\end {equation}
and the Lie derivative of $V$ along the trajectories of $\Sigma$ satisfies
\begin{equation}
\label{DissipationIneq_nc}
\dot{V}_u(x) \leq -\alpha(\|x\|_X) + \sigma(\|u\|_{\Uc})
\end{equation}
for all $x \in B_r$ and $u\in B_{r,\Uc}$.

If additionally there is a $\psi_1\in\Kinf$ so that 
\begin{equation}
\label{LyapFunk_1Eig_LISS}
\psi_1(\|x\|_X) \leq V(x) \leq \psi_2(\|x\|_X), \quad \forall x \in D,
\end {equation}
then $V$ is called a \emph{(coercive) LISS Lyapunov function} for the system $\Sigma = (X,\Uc,\phi)$.

If $D=X$, and $r=+\infty$, then $V$ is called an \emph{ISS Lyapunov function} (non-coercive or coercive respectively) for $\Sigma$.
\end{definition}
%

\begin{remark}
\label{rem:Non-coercivity_and_weak_nondegenerativity} 
By definition, coercive ISS Lyapunov functions are a subclass of non-coercive ISS Lyapunov functions (in the same way as linear systems we consider as a subclass of nonlinear systems).
\end{remark}

\begin{remark}
\label{rem:Lyapunov-functions-for-L-p} 
Although Definition~\ref{def:noncoercive_ISS_LF} of ISS Lyapunov functions is formally defined for arbitrary input spaces, for the spaces $\Uc=L_p(\R_+,U)$, where $p\in[1,+\infty)$ and $U$ is a Banach space, it is far too restrictive. For an ISS Lyapunov function concept, which is suitable for ISS analysis of control systems with integrable inputs we refer to \cite{Mir20}.
\end{remark}

\begin{remark}
\label{rem:dissipation inequality} 
The \emph{dissipation inequality} \eqref{DissipationIneq_nc} shows that an ISS Lyapunov function $V$ decreases along the trajectories as long as the norm of the state is large enough in comparison to the magnitude of the input (more precisely,  if $\|x\|_X>\alpha^{-1}\circ \sigma(\|u\|_{\Uc})$).
Definition~\ref{def:noncoercive_ISS_LF} is often called a \emph{dissipative formulation (or dissipative form) of the ISS Lyapunov function concept}. 
\end{remark}

%

\begin{remark}
\label{rem:Computation-of-dotV} 
Computation of $\dot{V}_u(x)$ by definition given in \eqref{ISS_LyapAbleitung}, requires the knowledge of  
the solution for future times, at least on a sufficiently small time-interval.
This definition is very general and fits perfectly for the ISS theory of a general class of infinite-dimensional systems. However, one of the main benefits of the Lyapunov theory is the possibility to check the stability of a system without knowledge of a solution. To retain this feature, for many important classes of systems it is possible to derive 
useful formulas for the computation of $\dot{V}_u(x)$, which do not involve the knowledge of the future dynamics.
For example, for \emph{ODE systems} \eqref{eq:ODE_System} with a Lipschitz continuous $V$, for almost all $x\in\R^n$ and all continuous $u\in\Uc$ it holds that $\dot{V}_u(x) = \nabla V(x) \cdot f(x,u(0))$, where $\nabla V$ is the gradient of $V$ and $\cdot$ is the standard inner product in $\R^n$.
For \emph{time-delay systems}, $\dot{V}_u(x)$ can be computed using the so-called \emph{Driver derivative}, see Section~\ref{sec:ISS-theory-for-TDS}.
For many \emph{PDE systems}, one can give simple formulas for $\dot{V}_u(x)$ for $x$ belonging to a dense subset of the state space $X$, and for other states the derivative can be estimated by using some kind of \emph{density arguments}, see, e.g., \cite{DaM13}.
\end{remark}

Alternatively,  (L)ISS Lyapunov functions can be defined in implication form:
\begin{definition}
\label{def:implicative-ISS-LF}
A continuous function $V:D \to \R_+$, $D \subset X$, $0 \in \intt(D)$ is called a \emph{ (coercive) LISS Lyapunov function in  implication form} for $\Sigma$, if there are functions $\psi_1,\psi_2 \in \Kinf$, $\chi \in \Kinf$, $r>0$ and  a positive definite function $\alpha$, such that
\eqref{LyapFunk_1Eig_LISS}
holds and 
for all $x \in B_r$ and all $u\in B_{r,\Uc}$ it holds that
\begin{equation}
\label{GainImplikation}
 \|x\|_X \geq \chi(\|u\|_{\Uc}) \  \Rightarrow  \ \dot{V}_u(x) \leq -\alpha(\|x\|_X).
\end{equation}
Function $\chi$ is called \emph{ISS Lyapunov gain} for $(X,\Uc,\phi)$.
If $D=X$, and $r=+\infty$, then $V$ is called an \emph{ISS Lyapunov function in implication form} for $\Sigma$.
\end{definition}

For ordinary differential equations (under reasonable regularity assumptions) a function $V:X\to\R_+$ is an ISS Lyapunov function in dissipative form (see Remark~\ref{rem:dissipation inequality}) if and only if it is an ISS Lyapunov function in  implication form, see \cite[Remark 2.4, p. 353]{SoW95}.

\begin{openprob}
\label{op:dissipative-and-implicative-ISS-LF} 
For infinite-dimensional systems, it is easy to see that if $V$ is an ISS Lyapunov function in dissipative form, then it is an ISS Lyapunov function in implication form. 
It is, however, an open question, whether the converse implication holds unless quite restrictive requirements on $\Sigma$ and Lyapunov function $V$ are imposed, see \cite[Theorem 3.4]{MiI16}.
\end{openprob}

The importance of ISS Lyapunov functions is due to the following basic result:
\begin{theorem}{\emph{(Direct Lyapunov theorem, \cite[Theorem 1]{DaM13}, \cite[Proposition 1]{MiI16})}}
\label{LyapunovTheorem}
Let $\Sigma = (X,\Uc,\phi)$ be a control system satisfying the BIC property, and let $x \equiv 0$ be its equilibrium point.
If $\Sigma$ possesses a coercive (L)ISS Lyapunov function (in either dissipative or implication form), then it is (L)ISS.
\end{theorem}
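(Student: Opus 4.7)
The plan is to prove the global ISS case in dissipative form; the implication-form case follows by essentially the same argument, and LISS by localisation. The proof tracks $y(t):=V(\phi(t,x,u))$ along a trajectory, shows that it satisfies a scalar Dini differential inequality, and applies a scalar comparison principle; coercivity then translates the decay of $V$ into decay of $\|\phi(t,x,u)\|_X$. Coercivity, together with the BIC axiom, will also supply forward completeness a posteriori.

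First I would reduce the dissipative form to the implication form: setting $\chi(r):=\alpha^{-1}(2\sigma(r))\in\Kinf$, inequality \eqref{DissipationIneq_nc} yields
$$
\|x\|_X \geq \chi(\|u\|_\Uc) \quad\Rightarrow\quad \dot V_u(x) \leq -\tfrac12\alpha(\|x\|_X),
$$
so one may work under \eqref{GainImplikation} with gain $\chi$ and rate $\tfrac12\alpha$. Now fix $(x,u)\in X\times\Uc$ and let $[0,t_m)$ be the maximal existence interval of $\phi(\cdot,x,u)$; by axiom ($\Sigma$3) and continuity of $V$, $y$ is continuous on $[0,t_m)$. Combining the cocycle axiom ($\Sigma$4) with shift-invariance $\|u(\cdot+t)\|_\Uc \leq \|u\|_\Uc$ and the coercivity bounds $\psi_2^{-1}(y(t)) \leq \|\phi(t,x,u)\|_X \leq \psi_1^{-1}(y(t))$, one checks that
$$
D^+ y(t) \;=\; \dot V_{u(t+\cdot)}(\phi(t,x,u)) \;\leq\; -\alpha\bigl(\psi_2^{-1}(y(t))\bigr) \;=:\; -\tilde\alpha(y(t))
$$
whenever $y(t) \geq c := \psi_2(\chi(\|u\|_\Uc))$, with $\tilde\alpha \in \Kinf$ (replacing $\tilde\alpha$ by a $\Kinf$-minorant if needed, which is harmless on bounded sets). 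A standard scalar comparison lemma for continuous $y$ whose upper Dini derivative is bounded above by $-\tilde\alpha(y)$ on $\{y\geq c\}$ then delivers some $\hat\beta\in\KL$ (depending only on $\tilde\alpha$) with $y(t) \leq \max\{\hat\beta(y(0),t),\,c\}$ on $[0,t_m)$. Coercivity and the inequality $\psi_1^{-1}(\max\{a,b\})\leq \psi_1^{-1}(a)+\psi_1^{-1}(b)$ yield the ISS bound \eqref{iss_sum} on $[0,t_m)$ with $\beta(r,t):=\psi_1^{-1}\bigl(\hat\beta(\psi_2(r),t)\bigr)$ and $\gamma(r):=\psi_1^{-1}\!\circ\psi_2\circ\chi(r)$.

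This estimate uniformly controls $\|\phi(t,x,u)\|_X$ on $[0,t_m)$, so the BIC axiom forces $t_m=\infty$ and global ISS follows. The LISS case is obtained by running the same argument under the smallness restrictions $\|x\|_X\leq r$, $\|u\|_\Uc\leq r$ with $r$ chosen small enough that the resulting bound keeps the trajectory and the shifted inputs inside the ball on which \eqref{LyapFunk_1Eig_LISS} and \eqref{GainImplikation} are valid.

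The main technical obstacle is the scalar Dini comparison step: unlike in the ODE situation, $y$ is only continuous (not absolutely continuous in general), so one must invoke a comparison principle that uses merely an upper bound on $D^+y$, and carefully treat the transitions between the regimes $y(t)>c$ and $y(t)\leq c$. A secondary delicate point is the identification $D^+y(t)=\dot V_{u(t+\cdot)}(\phi(t,x,u))$, which rests squarely on the cocycle property and the shift-invariance of the input space; everything else in the proof is standard comparison-function bookkeeping.
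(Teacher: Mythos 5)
Your proposal is correct and follows essentially the same route as the paper's (cited) proof: the argument in \cite{DaM13}/\cite{MiI16} also tracks $V$ along the trajectory, invokes the scalar comparison principle (\cite[Lemma 3.2]{MiI16}) to get the $\KL$ bound, and the adaptation to the BIC setting is exactly your final completeness step. The points you flag as delicate (the Dini-derivative identification via the cocycle and shift-invariance axioms, and the regime transitions in the comparison lemma) are precisely what the cited comparison principle handles, so there is no gap.
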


\begin{proof}
In \cite[Theorem 1]{DaM13} it was assumed that $\Sigma$ is forward complete, but the proof can be naturally adapted for a much weaker assumption of BIC property. The proof of this theorem in \cite{DaM13} is based on the so-called \emph{comparison principle}, see \cite[Lemma 3.2]{MiI16}.
\end{proof}

Construction of ISS Lyapunov functions is a challenging task, especially for systems with boundary inputs.  
Thus it is natural to ask, whether less restrictive non-coercive ISS Lyapunov functions are sufficient for ISS.
The next theorem, whose proof essentially exploits the characterizations of ISS developed in Theorem~\ref{thm:UAG_equals_ULIM_plus_LS}, shows that this is indeed the case, provided several further conditions hold, which have to be checked separately.

%
%
%
\begin{theorem}{\emph{(Direct non-coercive Lyapunov theorem, \cite[Theorem 3.7]{JMP18})}}
    \label{t:ISSLyapunovtheorem}
Let $\Sigma$ be a forward complete control system, which is CEP and BRS.    
If there exists a non-coercive ISS Lyapunov function for $\Sigma$, then $\Sigma$ is ISS.    
\end{theorem}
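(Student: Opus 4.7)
The plan is to invoke the ISS superposition theorem (Theorem~\ref{thm:UAG_equals_ULIM_plus_LS}): since $\Sigma$ is already forward complete with CEP and BRS, it suffices to upgrade these to the UAG property, or, equivalently, to establish bULIM together with ULS. My approach is to extract bULIM directly from the dissipation inequality, and then to use CEP and BRS to compensate for the absence of coercivity of $V$.

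\textbf{Step 1: bULIM.} Set $\gamma(s):=\alpha^{-1}(2\sigma(s))+s\in\Kinf$. Fix $r>0$, $\varepsilon>0$ and $(x,u)$ with $\|x\|_X\leq r$, $\|u\|_\Uc\leq r$. Suppose, towards a contradiction, that $\|\phi(s,x,u)\|_X>\varepsilon+\gamma(\|u\|_\Uc)$ for every $s\in[0,T]$, with $T$ to be fixed later. The cocycle property and the shift-invariance axiom of $\Uc$ allow me to evaluate the Dini derivative of $w(s):=V(\phi(s,x,u))$ by means of the dissipation inequality: $D^+ w(s)\leq -\alpha(\|\phi(s,x,u)\|_X)+\sigma(\|u\|_\Uc)\leq -\frac{1}{2}\alpha(\varepsilon)$ on $[0,T]$, where the second inequality uses $\|\phi(s,x,u)\|_X>\gamma(\|u\|_\Uc)\geq \alpha^{-1}(2\sigma(\|u\|_\Uc))$ together with $\|\phi(s,x,u)\|_X>\varepsilon$. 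A standard comparison argument and $V(x)\leq\psi_2(r)$ then yield $w(T)\leq\psi_2(r)-\frac{T}{2}\alpha(\varepsilon)$. Choosing $T=\tau(\varepsilon,r):=2\psi_2(r)/\alpha(\varepsilon)+1$ forces $w(T)<0$, contradicting $V\geq 0$ (note that at the contradicted point $\|\phi(T,x,u)\|_X>\varepsilon>0$, so $V(\phi(T,x,u))$ is evaluated where $V$ is strictly positive).

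\textbf{Step 2: from bULIM to full ISS.} The harder step is to upgrade bULIM to UAG, since the non-coercivity of $V$ prevents the usual argument that sublevel sets of the Lyapunov function are forward invariant and bounded. I would proceed by \emph{restarting} the system: bULIM brings the trajectory at some time $t_0\leq\tau(\varepsilon,r)$ into a state with $\|\phi(t_0,x,u)\|_X\leq\varepsilon+\gamma(\|u\|_\Uc)$. The CEP hypothesis, applied with the restarted initial condition $\phi(t_0,x,u)$ and shifted input $u(t_0+\cdot)$ (whose norm is at most $\|u\|_\Uc$), then controls the trajectory on a finite window $[t_0,t_0+h]$. Iterating bULIM and CEP via the cocycle property, and using BRS to bound short-time excursions between successive returns to a neighbourhood of the equilibrium, one assembles a uniform-in-time estimate of the form required by UAG. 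ISS is then concluded from Theorem~\ref{thm:UAG_equals_ULIM_plus_LS}.

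The main obstacle will be this second step: non-coercivity blocks the customary route by which ULS (or UAG) is read off directly from a Lyapunov inequality, so the CEP and BRS hypotheses must do all the work near the equilibrium and on bounded time intervals, respectively, while preserving uniformity over $\|x\|_X\leq r$. A clean alternative, which I would consider if the direct route becomes cumbersome, is to first build an auxiliary \emph{coercive} ISS Lyapunov function from $V$ by incorporating CEP-based local estimates and BRS-based global bounds, and then invoke the coercive direct Lyapunov theorem (Theorem~\ref{LyapunovTheorem}).
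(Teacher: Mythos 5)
Your Step 1 is fine and coincides with the route the paper takes: the dissipation inequality plus shift-invariance and the cocycle property yield the (b)ULIM property, which is exactly Proposition~\ref{prop:ncISS_implies_ULIM}, and the overall plan of concluding via the superposition theorem (Theorem~\ref{thm:UAG_equals_ULIM_plus_LS}) is also the strategy of the cited proof in \cite{JMP18}.

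The gap is in Step 2, and it is not a matter of tidying up: the iteration \q{bULIM $+$ CEP $+$ BRS, restart, repeat} does not produce UAG or ULS as described. Two obstructions. First, in the CEP property the threshold $\delta=\delta(\eps,h)$ depends on the time horizon $h$ and may shrink to $0$ as $h\to\infty$; so CEP only controls a trajectory on a \emph{finite} window after each return to the small ball, and to keep applying it with the same target $\eps$ over successive windows you would need the state at the return times to become progressively smaller, which nothing in your construction guarantees (non-coercivity means smallness of $V$ at those times gives no bound on the state norm). Second, between two returns guaranteed by bULIM, BRS bounds the excursion only by a \emph{finite} constant $C(r',\tau)$ that has no reason to be of order $\eps+\gamma(\|u\|_\Uc)$; bULIM is a recurrence statement and says nothing about the trajectory between visits, so the assembled estimate is at best a practical/boundedness statement, not the uniform bound $\|\phi(t,x,u)\|_X\le\eps+\gamma(\|u\|_\Uc)$ for \emph{all} $t\ge\tau$ that UAG requires. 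This is precisely why the theorem is non-trivial: the substance of the proof in \cite{JMP18} is a dedicated argument establishing ULS (so that item (iii) of Theorem~\ref{thm:UAG_equals_ULIM_plus_LS} applies), which combines the Lyapunov decay --- used to bound the \emph{duration} of excursions above a given level in terms of $\psi_2$ and $\alpha$ --- with CEP and BRS in a carefully ordered way that avoids the circular dependence between the excursion horizon and the CEP threshold; your sketch does not contain this step. Your fallback of upgrading $V$ to a coercive ISS Lyapunov function and invoking Theorem~\ref{LyapunovTheorem} is also not available: whether coercive ISS Lyapunov functions exist in these situations is open even for linear ISS systems with admissible input operators (see Open Problem~\ref{op:non-coercive-ISS-LFs}), so it cannot serve as a repair.
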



Note that requirements of CEP and BRS properties are necessary for the validity of Theorem~\ref{t:ISSLyapunovtheorem}, see examples in \cite{MiW19a}. Without these requirements however one can still verify certain stability properties for $\Sigma$, which are weaker than ISS, see \cite{MiW19b} for an analysis in the context of UGAS property, and \cite{JMP18} for such results in the ISS context. We state here only one of such results:
\begin{proposition}{(\cite[Proposition 8]{MiW18b})}
\label{prop:ncISS_implies_ULIM} 
Let $\Sigma=(X,\Uc,\phi)$ be a forward complete control system and assume
there exists a non-coercive ISS Lyapunov function for $\Sigma$. Then $\Sigma$ has the ULIM property.
\end{proposition}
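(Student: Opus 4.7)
The strategy is to exploit the dissipation inequality \eqref{DissipationIneq_nc} to force a strict decrease of $V$ along the trajectory whenever the state lies outside the target ball, and then to use the upper bound $V(x)\leq\psi_2(\|x\|_X)$ together with positivity of $V$ to bound the time a trajectory can spend outside that ball by a quantity depending only on $\varepsilon$ and $r$. First I would fix a candidate gain $\gamma\in\Kinf$ satisfying $\alpha(\gamma(s))\geq 2\sigma(s)$ for all $s\geq 0$ (taking, e.g., $\gamma(s):=\alpha^{-1}(2\sigma(s))+s$ to guarantee unboundedness and strict monotonicity), and, given $\varepsilon,r>0$, set $\tau:=2\psi_2(r)/\alpha(\varepsilon)$. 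Fix $x\in X$ with $\|x\|_X\leq r$ and $u\in\Uc$; the goal is to find $t\leq\tau$ with $\|\phi(t,x,u)\|_X\leq\varepsilon+\gamma(\|u\|_\Uc)$, and I would argue by contradiction, assuming the strict reverse inequality on all of $[0,\tau]$.

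Define $w(s):=V(\phi(s,x,u))$, which is continuous on $[0,\tau]$ by the continuity of $V$ and of $t\mapsto\phi(t,x,u)$. Using the cocycle property together with the shift-invariance axiom ($\|u(s+\cdot)\|_\Uc\leq\|u\|_\Uc$), the dissipation inequality applied at each time $s$ gives
\begin{equation*}
D^+ w(s)\leq -\alpha(\|\phi(s,x,u)\|_X)+\sigma(\|u\|_\Uc).
\end{equation*}
Under the contradictory assumption, I would split into two cases. If $\sigma(\|u\|_\Uc)\leq\alpha(\varepsilon)/2$, then monotonicity of $\alpha$ yields $\alpha(\|\phi(s,x,u)\|_X)\geq\alpha(\varepsilon)$, so $D^+w(s)\leq -\alpha(\varepsilon)/2$. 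Otherwise $\sigma(\|u\|_\Uc)>\alpha(\varepsilon)/2$, and the choice of $\gamma$ gives $\alpha(\|\phi(s,x,u)\|_X)\geq \alpha(\gamma(\|u\|_\Uc))\geq 2\sigma(\|u\|_\Uc)$, whence $D^+w(s)\leq -\sigma(\|u\|_\Uc)<-\alpha(\varepsilon)/2$. In both cases $D^+w(s)\leq -\alpha(\varepsilon)/2$ on $[0,\tau]$.

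Passing from this pointwise Dini bound to the integral estimate $w(\tau)\leq w(0)-\tau\alpha(\varepsilon)/2$ via the standard comparison/monotonicity lemma for upper right Dini derivatives of continuous functions (as in \cite[Lemma 3.2]{MiI16}), and combining with $w(0)\leq\psi_2(r)$, one obtains $w(\tau)\leq 0$; this contradicts $V(\phi(\tau,x,u))>0$, which holds because $\phi(\tau,x,u)\neq 0$ by the contradictory assumption. The ULIM property then follows with the gain $\gamma$ and time $\tau=\tau(\varepsilon,r)$ constructed above. The main subtlety I anticipate is precisely this integration step: $V$ is only continuous and $\phi(\cdot,x,u)$ need not be differentiable (or even absolutely continuous) in the infinite-dimensional setting, so care is required in justifying that $D^+ w\leq -\alpha(\varepsilon)/2$ pointwise forces $w$ to decrease at that rate; this is exactly where the comparison principle does the work. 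Everything else reduces to the algebraic manipulation of $\K$-functions in the case split of Step 3.
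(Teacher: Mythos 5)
Your proof is correct and takes essentially the same route as the cited proof of \cite[Proposition 8]{MiW18b} (the survey itself only gives the citation, not a proof): fix $\gamma$ with $\alpha\circ\gamma\geq 2\sigma$, argue by contradiction, use the cocycle and shift-invariance axioms to obtain $D^+w(s)\leq-\alpha(\eps)/2$ as long as the trajectory stays outside the ball of radius $\eps+\gamma(\|u\|_\Uc)$, integrate the Dini bound, and contradict positivity of $V$ after time $\tau=2\psi_2(r)/\alpha(\eps)$. Your case split is just a rephrasing of the standard estimate $-\alpha(\|\phi\|)\leq-\tfrac12\alpha(\eps)-\tfrac12\alpha(\gamma(\|u\|_\Uc))$, and adding the term $s$ to make $\gamma\in\Kinf$ is a sensible detail since $\sigma$ is only of class $\K$.
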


\begin{openprob}
\label{op:other-formulations-of-noncoercive-theorems} 
Theorem~\ref{t:ISSLyapunovtheorem}, in contrast to Theorem~\ref{LyapunovTheorem}, gives an ISS criterion only for the global ISS property, and non-coercive ISS Lyapunov functions are formulated in dissipative form. Currently, there are no non-coercive Lyapunov results for the LISS property and for non-coercive ISS Lyapunov functions defined in implication form.
\end{openprob}


Applications of Lyapunov methods are very broad. We apply Lyapunov technique for linear infinite-dimensional systems 
in Section~\ref{sec:LFs_linear_systems}
and for linear and nonlinear PDEs in Section~\ref{sec:ISS_analysis_linear_nonlinear_PDEs_Lyapunov_methods}. Furthermore, ISS Lyapunov theory is very powerful for analysis of interconnected systems, as will be illustrated in Section~\ref{sec:Interconnected_systems}.

\subsection{Characterization of local ISS}
\label{sec:LISS-Characterization}

Consider infinite-dimensional evolution equations of the form
\begin{equation}
\label{InfiniteDim}
\dot{x}(t)=Ax(t)+f(x(t),u(t)),
\end{equation}
where $A: D(A)\subset X \to X$ generates a strongly continuous semigroup (also called $C_0$-semigroup)  $T(\cdot)$ of boun\-ded linear operators, $X$ is a Banach space, $U$ is a normed linear space of input values, and $f:X\times U \to X$.
As the space of admissible inputs, we consider the space $\Uc:=PC_b(\R_+,U)$ of globally bounded, piecewise continuous functions from $\R_+$ to $U$ which are right-continuous.
We refer to the excellent books \cite{CuZ95, Paz83, JaZ12} for an overview of the semigroup theory and of the theory of evolution equations.

\begin{definition}
\label{def:Mild-solution}
For a certain $\tau>0$ we call a function $x:[0,\tau] \to X$ a \emph{mild solution of \eqref{InfiniteDim} on $[0,\tau]$}, if $x  \in C([0,\tau],X)$ and it solves
the integral equation
\begin{equation}
\label{InfiniteDim_Integral_Form}
x(t)=T(t) x(0) + \int_0^t T(t-s) f\big(x(s),u(s)\big)ds. 
\end{equation}
Here the integral is understood in the sense of Bochner, see \cite[Chapter III, Sec. 3.7]{HiP00} or \cite[Appendix A]{JaZ12}, which is resolved to the standard Riemann integral of $X$-valued maps as what we integrate is a piecewise-continuous $X$-valued function.

We say that $x:\R_+\to X$ is a \emph{mild solution of \eqref{InfiniteDim} on $\R_+$}, if it is a mild solution of 
\eqref{InfiniteDim} on $[0,\tau]$ for all $\tau>0$.
\end{definition}

In this paper, we always consider mild solutions, which is a more usual choice. Another possibility would be to consider classical solutions. For relations between ISS w.r.t. mild solutions and ISS w.r.t. classical solutions a reader may consult \cite[Proposition 2.11]{Sch20}.

\begin{Ass}
\label{Assumption1} 
The nonlinearity $f$ satisfies the following properties:
\begin{itemize}
    \item[(i)] $f:X \times U \to X$ is Lipschitz continuous on bounded subsets of $X$, i.e.
$\forall C>0 \; \exists L_f(C)>0$, such that $\forall x,y \in B_C$, $\forall v \in B_{C,U}$, it holds that
\begin{eqnarray}
\|f(y,v)-f(x,v)\|_X \leq L_f(C) \|y-x\|_X.
\label{eq:Lipschitz}
\end{eqnarray}
    \item[(ii)] $f(x,\cdot)$ is continuous for all $x \in X$.
\end{itemize}
\end{Ass}

\begin{remark}
\label{rem:On-Lipschitz-continuity} 
In \cite{Mir16} and \cite{MiW18b} a somewhat stronger requirement on $f$ has been posed, but it is not essential for the results in this section.
\end{remark}

Since $\Uc=PC_b(\R_+,U)$, Assumption~\ref{Assumption1} ensures that for any $x\in X$, $u\in\Uc$ the corresponding mild solution of \eqref{InfiniteDim} exists and is unique, according to a variation of the classical existence and uniqueness theorem \cite[Proposition 4.3.3]{CaH98}. We denote by $\phi(t,x,u)$ this solution at the moment $t\in\R_+$ associated with 
an initial condition $x \in X$ at $t=0$, and input $u \in \Uc$.
The conditions
($\Sigma$\ref{axiom:Identity})-($\Sigma$\ref{axiom:Cocycle}) are satisfied
by construction and thus \emph{$\Sigma:=(X,\Uc,\phi)$ is a control system according to Definition~\ref{Steurungssystem}}.

A local counterpart of the 0-UGAS notion is
\begin{definition}
\label{def:0-UAS}
System $\Sigma=(X,\Uc,\phi)$ is called \emph{uniformly asymptotically stable at zero (0-UAS)}, if there exist $\beta \in \KL$ and $r>0$ such that 
\begin {equation}
\label{eq:0UAS}
\|x\|_X\leq r \ \wedge \ t\geq 0 \qrq \| \phi(t,x,0) \|_{X} \leq \beta(\| x \|_{X},t).
\end{equation}
\end{definition}

In \cite[Theorem 4]{Mir16} it was shown that under mild assumptions on the nonlinearity of a system \eqref{InfiniteDim}, the LISS property is equivalent to the 0-UAS.
\begin{theorem}
\label{Characterization_LISS}
Let Assumption~\ref{Assumption1} hold, $f(0,0)=0$, and let \eqref{InfiniteDim} has a CEP property.
Furthermore, let there exist $\sigma \in \K$ and $r >0$ so that:
\begin{eqnarray*}
\|v\|_U \leq r \ \wedge \ \|x\|_X\leq r \qrq \|f(x,v)-f(x,0)\|_X \leq \sigma(\|v\|_U).
\end{eqnarray*}
Then for the system \eqref{InfiniteDim} the following properties are equivalent:
\begin{enumerate}
    \item[(i)] 0-UAS,
    \item[(ii)] Existence of a coercive locally Lipschitz continuous LISS Lyapunov function,
    \item[(iii)] LISS.
\end{enumerate}
\end{theorem}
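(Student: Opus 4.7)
The plan is to prove the cycle $(ii) \Rightarrow (iii) \Rightarrow (i) \Rightarrow (ii)$. The first two implications are short. For $(ii) \Rightarrow (iii)$, system $\Sigma$ satisfies the BIC property (Assumption~\ref{Assumption1} together with the standard existence--uniqueness theory for mild solutions used to define $\phi$, in particular \cite[Proposition 4.3.3]{CaH98}, yields the blow-up alternative), so Theorem~\ref{LyapunovTheorem} applies and delivers LISS. For $(iii) \Rightarrow (i)$, setting $u \equiv 0$ in the LISS estimate \eqref{iss_sum} restricted to $\overline{B_r}$ gives $\|\phi(t,x,0)\|_X \leq \beta(\|x\|_X,t)$, which is exactly 0-UAS.

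The content lies in $(i) \Rightarrow (ii)$, a local converse Lyapunov theorem in infinite dimensions. As a preparatory step, I would use Assumption~\ref{Assumption1} and the perturbation bound $\|f(x,v)-f(x,0)\|_X \leq \sigma(\|v\|_U)$ (valid on $B_r \times B_{r,U}$) together with Gronwall's inequality on the integral form \eqref{InfiniteDim_Integral_Form} to deduce, possibly shrinking $r$, that on any finite horizon $[0,T]$ the flow $x \mapsto \phi(t,x,0)$ is Lipschitz uniformly in $t \in [0,T]$, and that $\|\phi(t,x,u)-\phi(t,x,0)\|_X$ is controlled by $\sigma(\|u\|_\Uc)$ times an exponential in $t$. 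Combined with the 0-UAS bound and the CEP hypothesis, this provides both the uniform-in-$t$ regularity and the local boundedness needed for the construction.

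I would then build a LISS Lyapunov function in the Sontag--Yoshizawa spirit, e.g.
\[
V(x) := \sup_{t \geq 0} \psi\bigl(\|\phi(t,x,0)\|_X\bigr)\, G(t),
\]
with $\psi \in \Kinf$ and $G \in \LL$ chosen so that the supremum is finite and coercive on a ball around the origin. The upper bound $V(x) \leq \psi_2(\|x\|_X)$ follows from $\|\phi(t,x,0)\|_X \leq \beta(\|x\|_X,t)$ once $G$ is selected to dominate the $\KL$ function $\beta(r,\cdot)$; the lower bound $V(x) \geq \psi_1(\|x\|_X)$ is obtained by evaluating the supremum at $t=0$ and using continuity of $\phi$ at the origin. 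Local Lipschitz continuity of $V$ on a (possibly smaller) ball is inherited from the uniform-in-$t$ Lipschitz dependence of $\phi(t,\cdot,0)$, provided $G$ decays fast enough to suppress the growth of this Lipschitz constant with $t$.

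Finally, the dissipation inequality \eqref{DissipationIneq_nc} is obtained from the decomposition
\[
\dot{V}_u(x) = \dot{V}_0(x) + \bigl(\dot{V}_u(x) - \dot{V}_0(x)\bigr).
\]
The first term is negative definite by the very construction of $V$ from the 0-UAS flow (the weight $G$ is chosen so that this is immediate, as in the standard Yoshizawa argument). The second term is controlled by the local Lipschitz constant of $V$ times $\|\phi(t,x,u)-\phi(t,x,0)\|_X$ for small $t$, which, by the preparatory estimate, is bounded by a $\K$-function of $\|u\|_\Uc$. The resulting inequality has the form required in Definition~\ref{def:noncoercive_ISS_LF}, yielding a coercive locally Lipschitz LISS Lyapunov function. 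The main obstacle, as anticipated, is securing local Lipschitz regularity of $V$ in the infinite-dimensional setting: in contrast to the ODE case one cannot appeal to local compactness, and one must delicately balance the permitted transient growth of the Lipschitz constant of $\phi(t,\cdot,0)$ against the decay of the weight $G$ so that coercivity, Lipschitz continuity, and the dissipation inequality hold simultaneously on a single ball.
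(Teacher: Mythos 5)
Your outer architecture matches the paper's: the cycle (ii)$\Rightarrow$(iii)$\Rightarrow$(i)$\Rightarrow$(ii), with (ii)$\Rightarrow$(iii) from the direct Lyapunov theorem plus BIC, (iii)$\Rightarrow$(i) by setting $u\equiv 0$, and the hard step (i)$\Rightarrow$(ii) handled by a converse Lyapunov construction for the unperturbed flow followed by a robustness step that uses the bound $\|f(x,v)-f(x,0)\|_X\le\sigma(\|v\|_U)$, a Gronwall estimate on the mild formulation, and the local Lipschitz constant of $V$; this is exactly the strategy behind the paper's proof (which delegates the converse 0-UAS Lyapunov theorem to the cited reference), and your final decomposition $\dot V_u(x)\le \dot V_0(x)+L_V\limsup_{t\to+0}\frac{1}{t}\|\phi(t,x,u)-\phi(t,x,0)\|_X$ is the right way to finish.

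There is, however, a genuine flaw in your explicit construction for (i)$\Rightarrow$(ii). You set $V(x)=\sup_{t\ge0}\psi\big(\|\phi(t,x,0)\|_X\big)G(t)$ with $G\in\LL$, i.e.\ a \emph{decreasing} weight, and assert that negative definiteness of $\dot V_0$ is ``immediate by the Yoshizawa argument.'' It is not: $V(\phi(h,x,0))=\sup_{s\ge h}\psi\big(\|\phi(s,x,0)\|_X\big)G(s-h)$, and since $G(s-h)\ge G(s)$ the time shift works against you. Concretely, a 0-UAS trajectory with a transient overshoot (say $\|\phi(t,x,0)\|_X=1$ at $t=0$ and $=2$ on $[1,2]$ before decaying, which the $\KL$ bound allows) with $\psi=\id$ and $G(t)=e^{-10t}$ gives $V(x)=1$ but $V(\phi(1,x,0))\ge 2\,G(0)=2$, so $V$ can strictly increase along unperturbed trajectories. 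The classical Yoshizawa/Sontag construction uses an \emph{increasing} weight: after Sontag's $\KL$ lemma, $\beta(s,t)\le\alpha_2(\alpha_1(s)e^{-t})$, one takes $V(x)=\sup_{t\ge0}\alpha_2^{-1}\big(\|\phi(t,x,0)\|_X\big)e^{\lambda t}$ with $\lambda\in(0,1)$, which is finite, coercive and satisfies $V(\phi(h,x,0))\le e^{-\lambda h}V(x)$. This also invalidates your plan for Lipschitz continuity: you want $G$ to decay faster than the Gronwall growth $e^{Lt}$ of the flow's Lipschitz constant, while the dissipation inequality forces the weight to grow, so the two requirements you propose to ``balance'' are incompatible as stated. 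The standard escape is different: with the increasing-weight construction the supremum is effectively realized on a bounded horizon $[0,T]$, uniformly for $\|x\|_X$ in a compact annulus inside the ball (the envelope $\alpha_1(\|x\|_X)e^{(\lambda-1)t}$ falls below the $t=0$ value), so only the flow's Lipschitz constant up to time $T$ enters; Lipschitzness near the origin needs additional care (e.g.\ a Massera-type integral construction), which is precisely why the paper invokes an existing converse Lyapunov theorem for 0-UAS rather than reproving it. With such a $V$ in hand, your robustness step goes through verbatim.
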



\begin{openprob}
The proof of Theorem~\ref{Characterization_LISS} is based on converse Lyapunov theorems for the 0-UAS property, see \cite{Mir16}, and thus it does not give a precise estimate for the region in $X \times \Uc$ for which the LISS estimate \eqref{iss_sum} is valid (for given $\beta,\gamma$).
The estimation of the region in which the LISS estimate is valid is a hard problem, which has not been touched so far for infinite-dimensional systems. 
For ODE systems this problem has been approached by means of a numerical construction of 
a continuous piecewise affine LISS Lyapunov functions using the linear programming method \cite{LBG15}.
\end{openprob}

\subsection{Converse Lyapunov theorems}
\label{sec:Converse-ISS-Lyapunov-Theorem}

In most cases, construction of an ISS Lyapunov function is the only practical way to prove ISS of a given nonlinear system.
However, before we start searching for an ISS Lyapunov function of a given system, a natural question appears whether such a function exists at all. Results ensuring that this is the case are called \emph{converse Lyapunov theorems} and for classical asymptotic stability such results date back to the pioneering works of Kurzweil \cite{Kur56} and Massera \cite{Mas56}.
Converse Lyapunov theorems for input-to-state stability of general nonlinear ODE systems have been proved in an influential paper \cite{LSW96}. Motivated by \cite{LSW96}, in \cite{MiW17c} a corresponding result for evolution equations with bi-Lipschitz nonlinearities has been shown.


%
%

\begin{theorem}{\emph{(Converse ISS Lyapunov theorem, \cite[Theorem 5]{MiW17c})}}
\label{ISS_Converse_Lyapunov_Theorem}
Let $f:X \times U \to X$ be bi-Lipschitz continuous on bounded balls, i.e.
$f$ is Lipschitz continuous on bounded balls from the Banach space $(X\times U, \|\cdot\|_X+\|\cdot\|_U)$ to the
space $X$.

Then \eqref{InfiniteDim} is ISS if and only if there is a Lipschitz continuous on bounded balls coercive ISS Lyapunov function for \eqref{InfiniteDim}.
\end{theorem}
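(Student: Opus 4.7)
The \emph{if} direction is immediate from the Direct Lyapunov Theorem~\ref{LyapunovTheorem}, so the content lies in constructing a Lipschitz coercive ISS Lyapunov function out of the ISS assumption alone. My plan is to adapt the Lin--Sontag--Wang scheme from the ODE setting \cite{LSW96}, using the bi-Lipschitz hypothesis on $f$ at the decisive step to secure Lipschitz regularity of the resulting $V$.

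Starting from the ISS bound $\|\phi(t,x,u)\|_X \leq \beta(\|x\|_X,t) + \gamma(\|u\|_{\Uc})$, I would first invoke Sontag's $\KL$-lemma to obtain $\alpha_1,\alpha_2\in\Kinf$ with $\alpha_1(\beta(r,t)) \leq \alpha_2(r)\,e^{-2t}$ for all $r,t\ge 0$, and define the candidate
\[
V(x) \;:=\; \sup_{t \geq 0,\; u \in \Uc}\; \alpha_1\!\bigl((\|\phi(t,x,u)\|_X - \gamma(\|u\|_{\Uc}))_+\bigr)\, e^{t}.
\]
The transformed $\KL$-estimate yields $V(x) \leq \alpha_2(\|x\|_X)$, so $V$ is finite; plugging in $t=0$, $u\equiv 0$ gives $V(x) \geq \alpha_1(\|x\|_X)$, so $V$ is coercive with $\Kinf$-bounds. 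The dissipation inequality follows from the cocycle property together with the weight $e^{t}$: for $h\ge 0$ and $u\in\Uc$ one estimates $V(\phi(h,x,u)) \le e^{-h}V(x) + \sigma_0(\|u\|_{\Uc})$ for some $\sigma_0\in\Kinf$, which upon passing to the limit $h\to 0^+$ yields $\dot V_u(x) \leq -V(x) + \sigma(\|u\|_{\Uc})$; combined with $V(x)\geq\alpha_1(\|x\|_X)$ this gives the dissipation estimate of Definition~\ref{def:noncoercive_ISS_LF}.

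The heart of the proof, and the main obstacle, is Lipschitz continuity of $V$ on bounded balls. Here the bi-Lipschitz assumption on $f$ is essential. A Gronwall-type argument applied to the integral equation \eqref{InfiniteDim_Integral_Form} shows that for every $R>0$ and every $T>0$ the flow map $x \mapsto \phi(t,x,u)$ is Lipschitz on $\overline{B_R}$ with a constant $L(R,T)$ that is uniform in $t\in[0,T]$ \emph{and} in $u \in \overline{B_{R,\Uc}}$; this uniformity in $u$ is exactly what the bi-Lipschitz (as opposed to Lipschitz-in-$x$-only) assumption on $f$ provides, and without it the standard construction above could produce a merely continuous $V$.

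Since the factor $e^{t}$ in $V$ is dominated by the $e^{-2t}$-decay supplied by Sontag's lemma, for $x\in\overline{B_R}$ and any $\varepsilon>0$ the sup defining $V(x)$ may be truncated to a finite horizon $t\leq T_\varepsilon(R)$ with an error no larger than $\varepsilon$. On this truncated window the flow map is Lipschitz uniformly in $(t,u)$, and $\alpha_1$ can be chosen locally Lipschitz by the usual $\Kinf$-smoothing; putting these pieces together gives Lipschitz continuity of $V$ on $\overline{B_R}$. This delivers the desired coercive Lipschitz-on-bounded-balls ISS Lyapunov function and completes the construction.
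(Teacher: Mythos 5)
The \emph{if} direction is fine, but your construction does not deliver the key dissipation estimate, and this is exactly the known obstruction that forces a different route. Write $w:=\ccat{u}{v}{h}$ for the concatenation; by causality and the cocycle property, $\phi(t,\phi(h,x,u),v)=\phi(t+h,x,w)$, so the term of $V(\phi(h,x,u))$ indexed by $(t,v)$ is $\alpha_1\bigl((\|\phi(t+h,x,w)\|_X-\gamma(\|v\|_{\Uc}))_+\bigr)e^{t}$, whereas the corresponding term sitting inside $e^{-h}V(x)$ carries the larger offset $\gamma(\|w\|_{\Uc})$: by the axiom of shift invariance $\|w\|_{\Uc}\ge\|w(\cdot+h)\|_{\Uc}=\|v\|_{\Uc}$, and $\|w\|_{\Uc}$ is in general governed by $\|u\|_{\Uc}$. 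Hence the supremum does not telescope; the mismatch between the offsets $\gamma(\|v\|_{\Uc})$ and $\gamma(\|w\|_{\Uc})$ is multiplied by the unbounded weight $e^{t}$ (splitting into the cases $\|v\|_{\Uc}\ge\|u\|_{\Uc}$ and $\|v\|_{\Uc}<\|u\|_{\Uc}$ leaves, in the second case, a term of order $\alpha_1(2\gamma(\|u\|_{\Uc}))e^{t}$ with $t$ unbounded), so no bound of the form $V(\phi(h,x,u))\le e^{-h}V(x)+\sigma_0(\|u\|_{\Uc})$ emerges from the definition. What the definition yields easily is only $V(\phi(h,x,u))\le\alpha_2(\|\phi(h,x,u)\|_X)$, which, because of the gap between $\alpha_1$ and $\alpha_2$, is not a decay estimate relative to $V(x)$. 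This failure of the \q{direct} sup-construction is precisely why both the finite-dimensional converse theorem of Sontag--Wang and the proof of \cite[Theorem 5]{MiW17c} that the paper invokes take a different path: ISS is first reduced to uniform (over disturbances) global asymptotic stability of an auxiliary system in which the input is replaced by a disturbance from the unit ball entering through a state-dependent scaling related to the ISS gain (the reduction of \cite{SoW95}), and then a converse Lyapunov theorem for UGAS of systems with disturbances, \cite[Section 3.4]{KaJ11b}, supplies the Lipschitz coercive Lyapunov function, which is then shown to be an ISS Lyapunov function. It is in this reduction that the joint (bi-)Lipschitz continuity of $f$ in $(x,u)$ is genuinely used, since the disturbance enters through a factor depending on $\|x\|_X$ --- not, as you suggest, merely to make the flow Lipschitz in $x$ uniformly over bounded inputs (the latter already follows from Lipschitz continuity in $x$ uniform over bounded $u$ plus Gronwall and BRS).

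Your Lipschitz argument for $V$ also has gaps even taken on its own terms. The supremum ranges over \emph{all} $u\in\Uc$, including inputs of arbitrarily large norm, for which the Gronwall constant of $x\mapsto\phi(t,x,u)$ on $\overline{B_R}$ is not uniformly controlled, so the \q{uniform in $u$} flow estimate does not cover the whole index set. Moreover, truncating to a horizon $t\le T_\eps(R)$ only shows that $V$ is within $\eps$ of functions whose Lipschitz constants grow roughly like $e^{(L(R,T_\eps)+1)T_\eps}$; optimizing over $\eps$ yields a modulus of continuity (continuity, or at best H\"older/log-Lipschitz-type bounds), not Lipschitz continuity on bounded balls. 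So both the dissipation inequality and the regularity claim would need a substantially different argument --- in effect, the robust-stability reduction used in the cited proof.
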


\begin{proof}
The proof of this result in \cite[Theorem 5]{MiW17c} is based on a reduction of \eqref{InfiniteDim} to a related system with disturbances, proposed in \cite{SoW95} and relies on a converse Lyapunov theorem for uniform global asymptotic stability of such systems, shown in \cite[Section 3.4]{KaJ11b}. 
\end{proof}

Theorem~\ref{ISS_Converse_Lyapunov_Theorem} is an existence result, and does not give a precise form of an ISS Lyapunov function. For linear systems, constructive converse Lyapunov theorems can be shown, see Section~\ref{sec:LFs_linear_systems}.

\begin{openprob}
For general nonlinear systems and even for general linear boundary control systems converse ISS Lyapunov theorems are not available now and constitute an important and complex open problem.
\end{openprob}

\subsection{Integral input-to-state stability}
\label{sec:iISS}

In spite of all advantages of the ISS framework, for some practical systems, input-to-state stability is too restrictive.
This is because ISS excludes systems whose state stays bounded as long as the magnitude of applied inputs and of initial states remains below a specific threshold, but becomes unbounded when the input magnitude or the magnitude of an initial state exceeds the threshold. Such behavior is frequently caused by saturation and limitations in actuation and processing rate. 
The idea of integral input-to-state stability (iISS) is to capture such nonlinearities \cite{Son98, ASW00}. 

In this section we assume that $\Sigma$ is a forward complete control system in the sense of Definition~\ref{Steurungssystem} with $\Uc$ which is a linear subspace of a space $L_{1,loc}(\R_+,U)$ of locally Bochner integrable $U$-valued functions.

\begin{definition}
\label{def:iISS}
A forward complete system $\Sigma=(X,\Uc,\phi)$ is called \emph{integral input-to-state stable (iISS)} if there exist $\theta \in \Kinf$, $\mu \in \K$ and $\beta \in \KL$ such that for all $(t, x, u) \in \R_+ \tm X \tm \Uc$ it holds that
\begin{equation}
\label{iISS_Estimate}
\|\phi(t,x,u)\|_X \leq \beta(\|x\|_X,t) + \theta\left(\int_0^t \mu(\|u(s)\|_U)ds\right).
\end{equation}
\end{definition}

If the integral in the right-hand side of \eqref{iISS_Estimate} diverges, we consider it as equal to $+\infty$.
%

In the ISS theory of ODE and delay systems as well as for many other classes of evolution equations a natural choice of the input space is $L_\infty$ space or the space $PC_b$ of piecewise continuous functions. For such choices of the input space the ISS estimate \eqref{iss_sum} provides an upper bound on the response of the system with respect to the maximal magnitude of the applied input. In contrast to that, \emph{integral ISS gives an upper bound of the response of the system with respect to a kind of the energy fed into the system, described by the integral in the right-hand side of \eqref{iISS_Estimate}.}

\begin{definition}
\label{def:BECS} 
We say that a forward complete system $\Sigma := (X,\Uc,\phi)$ has \emph{bounded energy-convergent state (BECS)} property if there is $\xi\in\K$ such that 
\begin{eqnarray}
\int_0^\infty \xi(\|u(s)\|_U)ds <\infty \qrq \forall r>0\ \ 
\lim_{t \to \infty}\sup_{\|x\|_X\leq r}\|\phi(t,x,u)\|_X = 0.
\label{eq:BECS} 
\end{eqnarray}
\end{definition}

Similarly to Proposition~\ref{prop:Converging_input_uniformly_converging_state}, we have the following basic properties of iISS systems, belonging to the folklore of the ISS theory:
\begin{proposition}
\label{prop:iISS-implies-bounded energy convergent state} 
Let $\Sigma$ be an iISS control system. Then $\Sigma$ is 0-UGAS and satisfies BECS property with $\xi:=\mu$.
\end{proposition}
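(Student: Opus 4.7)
My plan is to derive both conclusions directly from the iISS estimate \eqref{iISS_Estimate}, using the cocycle property for the BECS part.

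For 0-UGAS, I would simply set $u \equiv 0$ in \eqref{iISS_Estimate}. Since $\mu \in \K$ gives $\mu(0)=0$ and $\theta \in \Kinf$ gives $\theta(0)=0$, the integral term vanishes and the iISS estimate reduces to $\|\phi(t,x,0)\|_X \leq \beta(\|x\|_X,t)$, which is exactly the 0-UGAS property of Definition~\ref{def:0-UGAS}.

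For BECS with $\xi := \mu$, fix $r>0$ and any $u \in \Uc$ with $I := \int_0^\infty \mu(\|u(s)\|_U)\,ds < \infty$. First I would observe that the iISS estimate already gives a uniform a priori bound: for any $s \geq 0$ and $\|x\|_X \leq r$,
\begin{equation*}
\|\phi(s,x,u)\|_X \leq \beta(r,0) + \theta(I) =: M(r,u) < \infty.
\end{equation*}
Next, fix $\varepsilon > 0$. Since $\theta$ is continuous at $0$ with $\theta(0)=0$ and $\int_s^\infty \mu(\|u(\tau)\|_U)\,d\tau \to 0$ as $s \to \infty$, I can choose $s = s(\varepsilon)$ so large that $\theta\bigl(\int_s^\infty \mu(\|u(\tau)\|_U)\,d\tau\bigr) < \varepsilon/2$.

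The key step is then the cocycle property (axiom (\ref{axiom:Cocycle})), together with the shift invariance of $\Uc$: for every $t \geq s$,
\begin{equation*}
\phi(t,x,u) \;=\; \phi\bigl(t-s,\, \phi(s,x,u),\, u(s+\cdot)\bigr).
\end{equation*}
Applying the iISS estimate to the right-hand side and using the uniform bound on $\phi(s,x,u)$ gives
\begin{equation*}
\|\phi(t,x,u)\|_X \leq \beta\bigl(M(r,u),\, t-s\bigr) + \theta\!\left(\int_s^t \mu(\|u(\tau)\|_U)\,d\tau\right) \leq \beta\bigl(M(r,u),\, t-s\bigr) + \frac{\varepsilon}{2},
\end{equation*}
uniformly for $\|x\|_X \leq r$. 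Since $\beta(M(r,u),\cdot) \in \LL$, there exists $T(\varepsilon,r,u) \geq s$ such that the first term is below $\varepsilon/2$ for all $t \geq T$, which gives $\sup_{\|x\|_X \leq r}\|\phi(t,x,u)\|_X < \varepsilon$ and thus \eqref{eq:BECS}.

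There is no real obstacle here — the argument is essentially a bookkeeping exercise. The only subtle point is recognizing that one must combine the cocycle property with the shift invariance of $\Uc$ to split the tail of the energy integral and thereby decouple the effect of the remaining input energy from the transient decay governed by $\beta$.
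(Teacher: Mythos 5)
Your proof is correct and follows essentially the same route as the paper's: substitute $u\equiv 0$ for 0-UGAS, and for BECS split time at a point where the tail $\int_s^\infty \mu(\|u(\tau)\|_U)\,d\tau$ is small, use the cocycle property together with shift invariance of $\Uc$, bound the intermediate state via the iISS estimate, and let the $\KL$-transient decay. The only cosmetic difference is that you first extract a uniform bound $M(r,u)$ on $\|\phi(s,x,u)\|_X$ before applying $\beta$, whereas the paper nests the estimate as $\beta\bigl(\beta(r,0)+\theta(\int_0^\infty\mu(\|u(s)\|_U)ds),\,t\bigr)$; both are equivalent bookkeeping.
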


{\ifExtendedVersion
\mir{I do not know, whether this result has been stated in this form anywhere. For infinite-dimensional systems probably it was never stated, thus I added here the proof. It is intended just for us that we know that this is correct. It is not for the paper.
\begin{proof}
Let a forward complete system $\Sigma=(X,\Uc,\phi)$ be iISS with corresponding functions $\theta\in\Kinf$ and $\mu\in\K$. Pick any $u\in \Uc$ so that $\int_0^\infty \mu(\|u(s)\|_U)ds <\infty$.

To show the claim of the proposition, we need to show that for all  $\eps>0$ and $r>0$ there is a time $t^*=t^*(\eps,r)>0$ so that
\[
\|x\|_X \leq r\ \wedge \  t \geq t^* \srs \|\phi(t,x,u)\|_X \leq \eps.
\]
Pick any $r,\varepsilon>0$ and choose $t_1>0$ so that $\int_{t_1}^\infty \mu(\|u(s)\|_U)ds \leq \theta^{-1}(\frac{\eps}{2})$. 

Note that $u(\cdot +t_1)\in\Uc$ in view of the axiom of shift-invariance.
Due to the semigroup property and ISS of $\Sigma$ we have for all $t>0$ and all $x \in B_r$ that
\begin{eqnarray*}
\|\phi(t+t_1,x,u)\|_X  &=& \big\|\phi\big(t,\phi(t_1,x,u),u(\cdot + t_1)\big)\big\|_X \\
       &\leq& \beta\big(\|\phi(t_1,x,u)\|_X,t\big) + \theta\left(\int_0^t \mu(\|u(s+t_1)\|_U)ds\right) \\
       &\leq& \beta\big(\beta(\|x\|_X,t_1) + \theta\left(\int_0^{t_1} \mu(\|u(s)\|_U)ds\right),t\big) + \theta\left(\int_{t_1}^{t+t_1} \mu(\|u(s)\|_U)ds\right)\\
       &\leq& \beta\big(\beta(r,0) + \theta\left(\int_0^{+\infty} \mu(\|u(s)\|_U)ds\right),t\big) + \frac{\eps}{2}.
\end{eqnarray*}
Pick any $t_2$ in a way that 
\[
\beta\big(\beta(r,0) + \theta\left(\int_0^{+\infty} \mu(\|u(s)\|_U)ds\right),t_2\big) \leq \frac{\eps}{2}.
\]
This ensures that for all $t\geq 0$ and all $x \in B_r$
\begin{eqnarray*}
\|\phi(t+t_2+t_1,x,u)\|_X 
&\leq& \beta\big(\beta(r,0) + \theta\left(\int_0^{+\infty} \mu(\|u(s)\|_U)ds\right),t+t_2\big) + \frac{\eps}{2}\\
&\leq& \beta\big(\beta(r,0) + \theta\left(\int_0^{+\infty} \mu(\|u(s)\|_U)ds\right),t_2\big) + \frac{\eps}{2}\\
&\leq& \eps.
\end{eqnarray*}
Since $\eps>0$ and $r>0$ are arbitrary, the claim of the proposition follows. 
\end{proof}
}

\fi
}

\begin{remark}{(iISS and forward completeness)}
\label{rem:iISS-and-Forward-completeness} 
In this paper we take forward completeness of control systems for granted (Assumption~\ref{ass:always-forward-complete}). 
Alternatively, one could require in the iISS definition that the estimate \eqref{iISS_Estimate} holds only for $(t, x, u) \in D_\phi \subset \R_+ \tm X \tm \Uc$.
If $\Sigma$ satisfies the BIC property (see Definition~\ref{def:BIC}), then an easy argument shows that $\Sigma$ is forward complete.
The same argument could be applied for ISS property as well.

Finally, note that iISS is stronger than forward completeness together with 0-UGAS, even for nonlinear ODE systems, see an example in \cite[Section~V]{ASW00}.
\end{remark}


For linear systems with bounded input operators the notions of ISS and integral ISS are equivalent, see Theorem~\ref{thm:ISS-criterion-linear-systems-bounded-operators}. For nonlinear systems as well as for linear systems with unbounded input operators the situation is more complex.

\begin{definition}\label{def:iISSV}
Consider a control system $\Sigma:=(X,\Uc,\phi)$ with the input space $\Uc:=PC_b(\R_+,U)$.
A continuous function $V:X \to \R_+$ is called a \emph{non-coercive iISS Lyapunov function} for $\Sigma$, if there exist
$\psi_2 \in \Kinf$, $\alpha \in \PD$ and $\sigma \in \K$ 
such that 
\begin{equation}
\label{LyapFunk_1Eig_iISS_noncoercive}
0 < V(x) \leq \psi_2(\|x\|_X), \quad \forall x \in X\backslash\{0\}
\end{equation}
and Lie derivative of $V$ along the trajectories of the system \eqref{InfiniteDim} satisfies 
\begin{equation}
\label{DissipationIneq}
\dot{V}_u(x) \leq -\alpha(\|x\|_X) + \sigma(\|u(0)\|_U),\quad x\in X,\ u\in\Uc.
\end{equation}
If additionally there is a $\psi_1\in\Kinf$ so that 
\begin{equation}
\label{LyapFunk_1Eig_iISS_coercive}
\psi_1(\|x\|_X) \leq V(x) \leq \psi_2(\|x\|_X), \quad \forall x \in X,
\end {equation}
then $V$ is called a \emph{(coercive) iISS Lyapunov function}.
\end{definition}

Note that in the definition of an iISS Lyapunov function the decay rate $\alpha$ is a $\PD$-function, and in the definition of an ISS Lyapunov function $\alpha \in\Kinf$.


The direct Lyapunov theorem for the iISS property reads as follows:
\begin{proposition}{\cite[Proposition 1]{MiI16}}
\label{PropSufiISS}
Consider a control system $\Sigma{:=}(X,\Uc,\phi)$ with $\Uc:=PC_b(\R_+,U)$.
If there is a coercive iISS Lyapunov function for $\Sigma$, then $\Sigma$ is iISS.
\end{proposition}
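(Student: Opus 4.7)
The plan is to reduce the problem to a scalar comparison inequality and then invoke a standard scalar iISS-type bound. First, since $\alpha\in\PD$ is only positive definite, it can be replaced by a class-$\K$ lower bound $\tilde\alpha\le\alpha$, which exists for any continuous positive definite function on $\R_+$. Combined with the upper coercivity estimate $V(x)\le\psi_2(\|x\|_X)$, the dissipation inequality \eqref{DissipationIneq} takes the autonomous form
\begin{equation*}
\dot V_u(x)\le -\eta\bigl(V(x)\bigr)+\sigma(\|u(0)\|_U),\qquad x\in X,\ u\in\Uc,
\end{equation*}
with $\eta:=\tilde\alpha\circ\psi_2^{-1}\in\K$, since $\eta(V(x))\le\tilde\alpha(\|x\|_X)\le\alpha(\|x\|_X)$.

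Next, fix $x\in X$ and $u\in\Uc$. The continuity of $V$ and of $t\mapsto\phi(t,x,u)$ makes $y(t):=V(\phi(t,x,u))$ continuous, and the above dissipation inequality survives in the upper Dini-derivative sense along the trajectory. By the scalar comparison principle one obtains $y(t)\le z(t)$, where $z$ is the Carath\'eodory solution of
\begin{equation*}
\dot z(t)=-\eta\bigl(z(t)\bigr)+\sigma(\|u(t)\|_U),\qquad z(0)=V(x).
\end{equation*}
A scalar ODE argument (cf.\ the ODE setting of \cite{ASW00}) establishes a superposition bound
\begin{equation*}
z(t)\le\hat\beta\bigl(z(0),t\bigr)+\hat\theta\!\left(\int_0^t\sigma(\|u(s)\|_U)\,ds\right),\qquad \hat\beta\in\KL,\ \hat\theta\in\Kinf,
\end{equation*}
obtained by separating intervals on which $\eta(z)$ dominates the forcing from those on which it does not.

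Finally, combining with the lower coercivity bound $\psi_1(\|\phi(t,x,u)\|_X)\le y(t)\le z(t)$ and the elementary inequality $\psi_1^{-1}(a+b)\le\psi_1^{-1}(2a)+\psi_1^{-1}(2b)$ (which follows from $a+b\le 2\max(a,b)$ and monotonicity of $\psi_1^{-1}$) yields the required bound \eqref{iISS_Estimate} with $\beta(r,t):=\psi_1^{-1}\!\bigl(2\hat\beta(\psi_2(r),t)\bigr)\in\KL$, $\theta:=\psi_1^{-1}\circ(2\hat\theta)\in\Kinf$, and $\mu:=\sigma\in\K$.

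The main obstacle is the scalar superposition estimate in the second step: since $\eta\in\K$ need not be $\Kinf$, the decay of $z$ can saturate for large values, so a naive Gronwall-type argument does not apply, and the construction of $\hat\beta$ and $\hat\theta$ requires either a careful threshold-based case split or an explicit change of variable in the scalar ODE. Once that purely scalar ingredient is in place, the remaining arguments are routine $\K$-calculus and do not make use of the infinite-dimensional structure of $X$.
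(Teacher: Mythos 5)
There is a genuine gap at the very first step. Your claim that every continuous positive definite function admits a class-$\K$ lower bound is false: take $\alpha(r)=\min\{r,1/r\}$ (or $\alpha(r)=r/(1+r^2)$). Any $\tilde\alpha\in\K$ satisfies $\tilde\alpha(r)\geq\tilde\alpha(1)>0$ for $r\geq 1$, while $\alpha(r)\to 0$ as $r\to\infty$, so no such minorant exists. This is not a cosmetic issue, because the monotonicity of $\tilde\alpha$ is exactly what you use to pass the estimate through the upper coercivity bound, i.e.\ to get $\eta(V(x))=\tilde\alpha\bigl(\psi_2^{-1}(V(x))\bigr)\leq\tilde\alpha(\|x\|_X)\leq\alpha(\|x\|_X)$; with $\alpha$ merely in $\PD$ this chain breaks down, and so the autonomous scalar inequality $\dot V_u\leq-\eta(V)+\sigma(\|u(0)\|_U)$ with $\eta\in\K$ is not established.

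The standard repair — and essentially what the proof cited in the paper does — is to use \emph{both} coercivity bounds in \eqref{LyapFunk_1Eig_iISS_coercive}: since $\psi_2^{-1}(V(x))\leq\|x\|_X\leq\psi_1^{-1}(V(x))$, one sets $\eta(s):=\inf\{\alpha(r):\psi_2^{-1}(s)\leq r\leq\psi_1^{-1}(s)\}$, which is positive for $s>0$ and can be minorized by a continuous positive definite function, yielding $\dot V_u(x)\leq-\eta(V(x))+\sigma(\|u(0)\|_U)$ with $\eta\in\PD$ only. One then needs the scalar superposition estimate for \emph{positive definite} (not class-$\K$) decay rates; this is precisely the comparison principle invoked in the paper (see \cite[Lemma 3.2]{MiI16}, which rests on the scalar lemma of \cite{ASW00}), and it removes the saturation worry you raise, since that lemma is designed for exactly this situation. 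Note also that this is the point where coercivity of $V$ is genuinely used — the lower bound $\psi_1$ enters already in deriving the scalar inequality, not only in converting back to norms — which is consistent with Remark~\ref{rem:non-coercive-ISS-Lyapunov-functions} that no non-coercive iISS Lyapunov theorem is currently available. Your final conversion step, using $\psi_1^{-1}(a+b)\leq\psi_1^{-1}(2a)+\psi_1^{-1}(2b)$ and $\mu:=\sigma$, $\theta:=\psi_1^{-1}\circ(2\hat\theta)$, is fine once the scalar estimate is in place.
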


\begin{remark}
\label{rem:Why-PC-functions-in-iISS} 
Please note that for general input spaces $\Uc$, allowed by Definition~\ref{Steurungssystem}, the term $u(0)$ in \eqref{DissipationIneq} may have no sense, e.g.,\ this is the case for $\Uc=L_\infty(\R_+,\R)$.
To allow for control systems with general input spaces the definition of an iISS Lyapunov function has to be changed. If we mimic the definition of the ISS Lyapunov function, and require in the definition of an iISS Lyapunov function instead of \eqref{DissipationIneq} a validity of the inequality
\begin{equation}
\label{DissipationIneq-iISS-2}
\dot{V}_u(x) \leq -\alpha(\|x\|_X) + \sigma(\|u\|_\Uc),
\end{equation}
then, using the same argument as in the proof of Proposition~\ref{PropSufiISS}, it is possible to obtain a certain variation of the iISS property, which is however possibly weaker than iISS.
\end{remark}

\begin{definition}
\label{def:ISS-wrt-small-inputs}
System $\Sigma=(X,\Uc,\phi)$ is called \emph{input-to-state stable
(ISS) with respect to small inputs}, if there exist $\beta \in \KL$, $\gamma \in \Kinf$ and $R>0$
such that for all $ x \in X$, $ u\in \Uc$: $\|u\|_\Uc\leq R$ and $ t\geq 0$ it holds that
\begin {equation}
\label{iss-small-inputs-sum}
\| \phi(t,x,u) \|_{X} \leq \beta(\| x \|_{X},t) + \gamma( \|u\|_{\Uc}).
\end{equation}
\end{definition}

Although integral ISS shares several nice properties with ISS systems, there are also important distinctions.
\begin{example}
\label{examp:iISS-not-strong-iISS} 
Let $X=\R$ and $\Uc= PC_b(\R_+,\R)$. Consider the system
\begin{eqnarray}
\dot{x} = -\frac{x}{1+x^2} + u.
\label{eq:iISS-not-strong-iISS}
\end{eqnarray}
This system is iISS, but it does not have convergent input-convergent state property (compare to Proposition~\ref{prop:Converging_input_uniformly_converging_state}) and constant inputs of arbitrarily small magnitude induce unbounded trajectories, provided that an initial state is chosen sufficiently large.
\end{example}

Obstructions demonstrated in Example~\ref{examp:iISS-not-strong-iISS} motivate the following strengthening of the iISS property, which is well-studied for ODE systems \cite{CAI14, CAI14b}:
\begin{definition}
\label{def:strong-iISS} 
System $\Sigma=(X,\Uc,\phi)$ is called \emph{strongly integral input-to-state stable (strongly iISS)}, if $\Sigma$ is iISS and ISS with respect to small inputs.
\end{definition}

\begin{example}
\label{examp:1dim_bilinear_system} 
A simple example of a strongly integral ISS system which is not ISS is given by a one-dimensional bilinear system
\begin{eqnarray}
\dot{x} = -x + xu,
\label{eq:SimpleBilinSys}
\end{eqnarray}
with $x(t) \in\R$ and $u\in PC_b(\R_+,\R)$.
Clearly, for $u\equiv 2$ the solution is unbounded, which is not possible for an ISS system. At the same time, \eqref{eq:SimpleBilinSys}
is iISS, which can be shown by using an iISS Lyapunov function $V(x) = \ln(1+x^2)$, $x\in\R$.
Trivially, \eqref{eq:SimpleBilinSys} is also ISS with respect to small inputs, and thus strongly iISS.
\end{example}

Further results on integral input-to-state stability of infinite-dimensional systems are outlined in Sections~\ref{sec:iISS of linear systems}, \ref{sec:Bilinear_systems}, \ref{sec:ISS-SGT-iISS-couplings}.

\section{ISS of linear systems}
\label{sec:Linear_systems}

In this section we study ISS of linear systems with unbounded input operators, which contain linear boundary control systems as a special case, see Section~\ref{sec:BCS-as-linear-systems}.
The ISS analysis of such systems is important in particular for analyzing robustness of PDE systems w.r.t.\ boundary disturbances 
as well as for robust stabilization by means of boundary controllers.
The linearity of control systems allows for efficient functional-analytic criteria for ISS, which are not available for general nonlinear systems 
and makes it possible to derive constructive converse Lyapunov theorems for important subclasses of linear systems.
In this section, we explain these methods in detail.

\subsection{ISS of linear systems with bounded input operators}
\label{sec:Linear_systems_bounded_ops}

Consider first linear control systems with bounded input operators of the form
\begin{equation}
\label{eq:Linear_System}
\dot{x}(t)=Ax(t)+ Bu(t), \quad t > 0,
\end{equation}
where $A$ is the generator of a strongly continuous semigroup $T:=(T(t))_{t\ge 0}$ on a Banach space $X$ and $B\in L(U,X)$ for some Banach space $U$.

We would like to introduce several spaces of Bochner integrable functions, which we will use as the input spaces in this section. 

\begin{definition}
\label{def:Spaces-of-Bochner-integrable functions} 
Let $W$ be a Banach space, $I$ be a closed subset of $\R$ and $p\in[1,+\infty)$. We define the following spaces (see \cite[Definition A.1.14]{JaZ12} for details)
\begin{subequations}
\label{eq:Bochner-L-spaces}
\begin{eqnarray}
M(I,W) &:=& \{f: I \to W: f \text{ is strongly measurable} \},\\
L_p(I,W) &:=& \{f \in M(I,W): \|f\|_{L_p(I,W)}:=\Big(\int_I\|f(s)\|^p_Wds\Big)^{1/p} < \infty \},\\
L_\infty(I,W) &:=& \{f \in M(I,W): \|f\|_{L_\infty(I,W)}:=\esssup_{s\in I}\|f(s)\|_W < \infty \}.
\end{eqnarray}
\end{subequations}
Identifying the functions, which differ on a set with a Lebesgue measure zero, the spaces $L_p(I,W)$, $p\in[1,+\infty]$ are Banach spaces.

We define also for $p\in[1,+\infty]$ the spaces $L_{p,loc}(\R_+,W)$ of \emph{locally Bochner-integrable functions}, consisting of functions $f:\R_+\to W$ so that the restriction $f_I$ of a map $f$ to any compact interval $I \subset \R_+$ belongs to $L_p(I,W)$.
\end{definition}

\begin{definition}
\label{def:Mild-solution-linear-system} 
For every $x \in X$ and every $u\in \Uc:=L_{1, loc}(\R_+,U)$, the function $\phi(\cdot,x,u):\R_+\rightarrow X$, defined by
\begin{eqnarray}
\phi(t,x,u):=T(t)x+\int_0^t T(t-s)B u(s)ds,\quad t\ge 0,
\label{eq:Lin_Sys_mild_Solution}
\end{eqnarray}
is called a {\em mild solution} of \eqref{eq:Linear_System}. 
\end{definition}

The integral in \eqref{eq:Lin_Sys_mild_Solution} and on the following pages is understood in the sense of Bochner.
For a general introduction to the Bochner integral, we refer to \cite{ABH11}.

\begin{remark}
\label{rem:L_p-loc-spaces} 
Note that the space $L_{1, loc}(\R_+,U)$ is not a normed linear space, as some of its elements have \q{infinite} $L_1$-norm (that is, the $L_1$-norm cannot be introduced for these spaces), and hence according to Definition~\ref{Steurungssystem}, $L_{1, loc}(\R_+,U)$ is not allowed as an input space to a control system. However, ISS of a control system w.r.t.\ inputs in $L_1(\R_+,U)$ implies ISS of the system with $L_{1, loc}(\R_+,U)$, due to causality of the control systems, similarly to Remark~\ref{prop:Causality_Consequence}.
Therefore we slightly abuse the use of Definition~\ref{Steurungssystem}, and allow for the input spaces $L_{p,loc}(\R_+,U)$.
\end{remark}

\begin{lemma}
\label{lem:Systems-with-bounded-input-operators-are-well-posed} 
Let $\Uc:=L_{1, loc}(\R_+,U)$, and let $\phi$ be defined by \eqref{eq:Lin_Sys_mild_Solution}. 
Then $\Sigma:=(X,\Uc,\phi)$  is a control system in the sense of Definition~\ref{Steurungssystem}.
\end{lemma}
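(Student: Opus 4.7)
The plan is to verify the three constituents of Definition~\ref{Steurungssystem} in turn. The state space $X$ is a Banach space by hypothesis, which takes care of item (i). For item (ii), the shift invariance of $\Uc = L_{1,loc}(\R_+,U)$ holds because a translate of a locally Bochner integrable function is again locally Bochner integrable (via a change of variables in the defining integrals over compact intervals). The concatenation $\ccat{u_1}{u_2}{t}$ is strongly measurable on each compact subinterval of $\R_+$ (as a pasting of two strongly measurable maps across a single cutoff point) and its integrals over any compact interval are bounded by a sum of two finite integrals of $u_1$ and $u_2$, so it belongs to $\Uc$. The requisite norm inequalities are vacuous here because of the convention in Remark~\ref{rem:L_p-loc-spaces}.

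Before checking the four system axioms, one must verify that \eqref{eq:Lin_Sys_mild_Solution} defines an element of $X$ for each $t\geq 0$. Since $B\in L(U,X)$ and $s\mapsto T(t-s)$ is strongly continuous on the compact interval $[0,t]$, the map $s\mapsto T(t-s)Bu(s)$ is strongly measurable; moreover, by the uniform boundedness principle applied to the $C_0$-semigroup, $M_t:=\sup_{r\in[0,t]}\|T(r)\|<\infty$, so
\begin{equation*}
\int_0^t \|T(t-s)Bu(s)\|_X\, ds \;\leq\; M_t \|B\|\int_0^t \|u(s)\|_U\, ds \;<\;\infty,
\end{equation*}
which makes the integrand Bochner integrable. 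Hence $\phi:\R_+\times X\times\Uc \to X$ is well-defined with $D_\phi = \R_+\times X\times\Uc$.

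Next, I would verify the four axioms. The identity axiom ($\Sigma 1$) is immediate from $T(0)=\mathrm{Id}$ and the vanishing integral at $t=0$. Causality ($\Sigma 2$) is obvious because $\phi(t,x,u)$ depends only on the restriction $u|_{[0,t]}$. For continuity ($\Sigma 3$), the map $t\mapsto T(t)x$ is continuous by strong continuity of the semigroup; continuity of $t\mapsto \int_0^t T(t-s)Bu(s)\,ds$ is a standard consequence of strong continuity of $T$, boundedness of $B$, and dominated convergence using the bound above. The cocycle property ($\Sigma 4$) is the one genuine calculation: splitting the integral as $\int_0^{t+h} = \int_0^t + \int_t^{t+h}$, using $T(t+h)=T(h)T(t)$ on the first term (and the fact that bounded operators commute with Bochner integrals), and substituting $r=s-t$ in the second integral yields
\begin{equation*}
\phi(t+h,x,u) \;=\; T(h)\phi(t,x,u) + \int_0^h T(h-r)B u(t+r)\,dr \;=\; \phi\bigl(h,\phi(t,x,u),u(t+\cdot)\bigr).
\end{equation*}

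The main obstacle is nothing conceptually deep, but rather the careful handling of Bochner integration: specifically, justifying the interchange of the bounded operator $T(h)$ with the integral in the cocycle step and the dominated convergence argument underlying continuity in $t$. Both are standard (see, e.g., \cite{ABH11} or the appendix of \cite{JaZ12}), but they are the places where the argument must be made rigorous rather than left to cancellation of symbols.
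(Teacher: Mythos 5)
Your proposal is correct and follows essentially the same route as the paper: the paper simply outsources the Bochner integrability and continuity of the convolution term to \cite[Proposition 1.3.4]{ABH11} and declares the remaining axioms straightforward, while you spell out those same steps (the bound via $M_t\|B\|\int_0^t\|u(s)\|_U\,ds$, dominated convergence for continuity, splitting the integral and commuting $T(h)$ with the Bochner integral for the cocycle property). No gaps; your explicit treatment of the input-space axioms and the $L_{1,loc}$ convention is consistent with Remark~\ref{rem:L_p-loc-spaces}.
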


\begin{proof}
For a fixed $u\in\Uc$ the map $s \mapsto T(t-s)B u(s)$ is Bochner integrable and  
$t\mapsto \int_0^t T(t-s)B u(s)ds$ is continuous, see \cite[Proposition 1.3.4]{ABH11}.\footnote{If $X$ is Hilbert space, a simpler argument can be found in \cite[Example A.1.13, Lemma 10.1.6]{JaZ12}.}
As for a fixed $x\in X$ the map $t \mapsto T(t)x$ is continuous, $\phi$ is continuous w.r.t.\ $t$ as well.
Verification of the other axioms of a control system is straightforward.
\end{proof}

For ease of terminology we identify $\Sigma=(X,\Uc,\phi)$ with \eqref{eq:Linear_System}.
Next we give exhaustive criteria for ISS of linear systems \eqref{eq:Linear_System} with bounded input operators.
\begin{theorem}{(\cite[Proposition 3, Lemma 1]{DaM13}, \cite[Proposition 4]{MiI16}, \cite[Propositions 6,7]{MiW17c})}
\label{thm:ISS-criterion-linear-systems-bounded-operators} 
The following statements are equivalent for a system \eqref{eq:Linear_System} with $B\in L(U,X)$:
\begin{itemize}
    \item[(i)] \eqref{eq:Linear_System} is ISS,
    \item[(ii)] \eqref{eq:Linear_System} is iISS, 
    \item[(iii)] \eqref{eq:Linear_System} is 0-UGAS,
    \item[(iv)] The semigroup $T$ is exponentially stable,
    \item[(v)] The function $V:X\to \R_+$ defined by 
    \begin{eqnarray}
V(x)=\int_0^{\infty} \|T(t) x\|_X^2 dt
\label{eq:LF_LinSys_Banach}
\end{eqnarray}
is a non-coercive ISS Lyapunov function for \eqref{eq:Linear_System},
    \item[(vi)] The function $V^\gamma:X\to \R_+$ defined by 
\begin{equation}
    \label{eq:eqnorm}
    V^\gamma (x) := \max_{s\geq 0} \| e^{\gamma s} T(s) x\|_X,
\end{equation}
is a globally Lipschitz coercive ISS Lyapunov function for \eqref{eq:Linear_System}.
Here $\gamma\in (0,\lambda)$, and $\lambda\in\R$ is so that $\|T(t)\|_X\leq Me^{-\lambda t}$ for some $M>0$ and all $t\geq 0$,
    \item[(vii)] There is a non-coercive ISS Lyapunov function for \eqref{eq:Linear_System}.
\end{itemize}
\end{theorem}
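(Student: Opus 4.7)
The plan is to make (iv) the hub of the equivalences and prove a cycle that also picks up the two explicit Lyapunov function constructions in (v) and (vi), with (vii) being caught at the end via the non-coercive Lyapunov theorem. The central ingredient is the classical fact that for a $C_0$-semigroup, $\|T(t)x\|_X\to 0$ uniformly on bounded sets forces $\|T(t_0)\|_{L(X)}<1$ for some $t_0>0$, and hence exponential stability; this yields (iii)$\Leftrightarrow$(iv). The implications (i)$\Rightarrow$(iii) and (ii)$\Rightarrow$(iii) are trivial by taking $u\equiv 0$. For (iv)$\Rightarrow$(i) and (iv)$\Rightarrow$(ii), I would plug $\|T(t)\|\le Me^{-\lambda t}$ into the variation of constants formula \eqref{eq:Lin_Sys_mild_Solution}, exactly as in \eqref{eqn:ODEs_linear_ISS_Estimate}: for ISS bound the convolution by $\frac{M\|B\|}{\lambda}\|u\|_\infty$, while for iISS use $\int_0^t Me^{-\lambda(t-s)}\|B\|\|u(s)\|_U\,ds\le M\|B\|\int_0^t\|u(s)\|_U\,ds$, which yields \eqref{iISS_Estimate} with $\mu(r)=r$ and $\theta$ linear.

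Next I would verify directly that $V^\gamma$ in (vi) is a coercive ISS Lyapunov function. Since $\gamma<\lambda$, the function $s\mapsto \|e^{\gamma s}T(s)x\|_X$ is bounded above by $Me^{(\gamma-\lambda)s}\|x\|_X$, so the supremum is a true maximum and $\|x\|_X\le V^\gamma(x)\le M\|x\|_X$, giving coercivity. The substitution $r=s+t$ shows $V^\gamma(T(t)x)\le e^{-\gamma t}V^\gamma(x)$, so the free dynamics contribute $-\gamma V^\gamma(x)$ to the Lie derivative. Using that $V^\gamma$ is a norm, the triangle inequality gives $|V^\gamma(\phi(t,x,u))-V^\gamma(T(t)x)|\le M\|\int_0^t T(t-s)Bu(s)\,ds\|_X$, which is $O(t)\|u\|_\Uc$, and a standard Dini-derivative argument then yields a dissipation inequality of the form $\dot V^\gamma_u(x)\le -\gamma V^\gamma(x)+C\|B\|\,\|u\|_\Uc$. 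Hence (iv)$\Rightarrow$(vi), and (vi)$\Rightarrow$(i) is immediate from Theorem \ref{LyapunovTheorem} since linear systems trivially satisfy BIC.

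For the non-coercive Lyapunov function $V$ in (v), exponential stability implies $V(x)\le\frac{M^2}{2\lambda}\|x\|_X^2<\infty$, so the upper bound in \eqref{LyapFunk_1Eig_nc_ISS} holds with $\psi_2(r)=\frac{M^2}{2\lambda}r^2$, while $V(x)>0$ for $x\ne 0$ by continuity of $T(\cdot)x$. In general no lower bound $\psi_1(\|x\|_X)\le V(x)$ exists, so $V$ is only non-coercive. To get the dissipation inequality I would split $\phi(h,x,u)=T(h)x + \int_0^h T(h-s)Bu(s)\,ds$, apply Minkowski's inequality to $V(\cdot)^{1/2}=\|T(\cdot)(\cdot)\|_{L^2(\R_+,X)}$, use $V(T(h)x)=V(x)-\int_0^h\|T(r)x\|_X^2\,dr$, estimate the input part via the upper bound on $V$ and Young's inequality $2ab\le\varepsilon a^2+\varepsilon^{-1}b^2$, then divide by $h$ and let $h\to 0^+$. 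This produces $\dot V_u(x)\le -\tfrac12\|x\|_X^2+\sigma(\|u\|_\Uc)$, establishing (iv)$\Rightarrow$(v). The implication (v)$\Rightarrow$(vii) is trivial, and (vii)$\Rightarrow$(i) follows from Theorem \ref{t:ISSLyapunovtheorem} once we check that linear systems automatically satisfy CEP and BRS; both follow from $\|\phi(t,x,u)\|_X\le Me^{\omega t}\|x\|_X+M\|B\|\frac{e^{\omega t}-1}{\omega}\|u\|_\Uc$ with $\|T(t)\|\le Me^{\omega t}$ on finite intervals, requiring no stability.

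The main technical obstacle is the computation of the dissipation inequality for the non-coercive $V$ in Step (iv)$\Rightarrow$(v): in a general Banach space we cannot exploit an inner product, so the standard Hilbert-space identity $\frac{d}{dt}\|x(t)\|^2=2\langle Ax+Bu,x\rangle$ is unavailable, and the interplay between the Minkowski decomposition of $V(\phi(h,x,u))^{1/2}$ and the proper application of Young's inequality must be handled carefully to get a dissipation bound with the $-\alpha(\|x\|_X)$ term preserved. Everything else is either a routine consequence of variation of constants or an appeal to Theorems \ref{LyapunovTheorem} and \ref{t:ISSLyapunovtheorem}.
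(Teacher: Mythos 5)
Your proposal is correct, and it essentially reconstructs, in self-contained form, the arguments that the paper itself merely delegates to citations: the variation-of-constants computation for (i)--(iv) (the paper points to the computation in \eqref{eqn:ODEs_linear_ISS_Estimate} and to \cite[Proposition 3, Lemma 1]{DaM13}), the direct verification that $V$ and $V^\gamma$ are (non-)coercive ISS Lyapunov functions (\cite[Propositions 6, 7]{MiW17c} -- your Minkowski-plus-Young treatment of $V$ and the shift argument $V^\gamma(T(t)x)\le e^{-\gamma t}V^\gamma(x)$ are exactly the standard proofs), and the closing step (v)$\Rightarrow$(vii)$\Rightarrow$(i) via Theorem~\ref{t:ISSLyapunovtheorem} together with the observation that CEP and BRS are automatic for linear systems with bounded $B$, which is verbatim what the paper does. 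The one genuine structural difference is that you bypass the cited equivalence (i)$\Leftrightarrow$(ii) of \cite[Proposition 4]{MiI16} by proving (iv)$\Rightarrow$(ii) directly from the convolution estimate and closing the loop through the trivial implication (ii)$\Rightarrow$(iii); this is slightly more elementary and avoids having to argue that ISS implies iISS. A small caveat, which the paper is equally loose about: your Dini-derivative bounds for $V$ and $V^\gamma$ (dividing $\|\int_0^h T(h-s)Bu(s)\,ds\|_X$ by $h$) tacitly assume essentially bounded or piecewise continuous inputs; for $L_1$-type inputs the quotient need not converge, a restriction the paper itself acknowledges in Remark~\ref{rem:Lyapunov-functions-for-L-p}, so this does not count against you.
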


\begin{proof}
(i) $\Iff$ (iii) $\Iff$ (iv) is an easy computation similar to that in Example~\ref{sec:ISS-finite-dim-systems},
see \cite[Proposition 3, Lemma 1]{DaM13}.
Equivalence (i) $\Iff$ (ii) follows from  \cite[Proposition 4]{MiI16}.
Equivalences (i) $\Iff$ (v) $\Iff$ (vi) are shown in \cite[Propositions 6, 7]{MiW17c}.
Clearly, (v) implies (vii), and (vii) implies (i) by Theorem~\ref{t:ISSLyapunovtheorem} (CEP an BRS properties for a linear system \eqref{eq:Linear_System} are always fulfilled).
\end{proof}

\begin{remark}
\label{rem:Coercivity-of-quadratic-LFs-in-special-cases} 
The Lyapunov function defined by \eqref{eq:LF_LinSys_Banach} is in general non-coercive, but it is coercive for some special classes of systems.
\end{remark}

Theorem~\ref{thm:ISS-criterion-linear-systems-bounded-operators} shows that
the question about ISS of a control system \eqref{eq:Linear_System} with $B\in L(U, X)$ can be reduced to the analysis of the exponential stability of a semigroup $T$, generated by $A$, which is a classical problem in semigroup theory.
On the other hand, for each input-to-state stable linear system with a bounded input operator, there is a non-coercive as well as a coercive ISS Lyapunov function.
The Lyapunov function \eqref{eq:LF_LinSys_Banach} is rather standard, see, e.g., \cite[Theorem 5.1.3]{CuZ95}, and in a Hilbert space setting it is obtained via the solution of an operator Lyapunov equation. The fact that such natural Lyapunov functions, as \eqref{eq:LF_LinSys_Banach} are non-coercive, is one of the reasons motivating the study of non-coercive Lyapunov functions.

\begin{remark}\label{REM:1}
For finite-dimensional linear systems \eqref{eq:LinSys}  0-UGAS (and thus ISS) is equivalent to the \emph{strong stability of a semigroup $T$}, i.e. to the property that for all $x\in X$ it holds that $\phi(t,x,0)=T(t)x \to 0$ as $t\to\infty$.
For infinite-dimensional linear systems \eqref{eq:Linear_System} strong stability of a semigroup $T$ is much weaker than exponential stability, even for bounded $A$. Moreover, in this case, \eqref{eq:Linear_System} may have unbounded solutions in the presence of a bounded input, see, e.g., \cite[p. 247]{MaP11}.
\end{remark}

\subsection{ISS of linear systems with unbounded input operators}
\label{sec:ISS of linear systems}

Assume that $X$ and $U$ are Banach spaces, $p\in [1,\infty]$ and $\Uc:=L_p(\R_+,U)$.
Let also $A:D(A) \subset X\to X$ be an infinitesimal generator of a strongly continuous semigroup $T:=(T(t))_{t\geq 0}$ on $X$ with a nonempty resolvent set $\rho(A)$.
Consider again the equation \eqref{eq:Linear_System} but now assume that $B$ is an unbounded linear operator from $U$ to $X$.
Unbounded input operators naturally appear in boundary or point control of linear systems, see, e.g., \cite{JaZ12}. 

Define the extrapolation space $X_{-1}$ as the completion of $X$ with respect to the norm 
$ \|x\|_{X_{-1}}:= \|(aI -A)^{-1}x\|_X$ for some $a \in \rho(A)$.
$X_{-1}$ is a Banach space (see \cite[Theorem 5.5, p. 126]{EnN00}) and different choices of $a \in \rho(A)$ generate equivalent norms on $X_{-1}$, see \cite[p. 129]{EnN00}.
As we know from the representation theorem \cite[Theorem 3.9]{Wei89b}, the input operator $B$ must satisfy the condition $B\in L(U,X_{-1})$ in order to give rise to a well-defined control system (at least for $p<\infty$). We assume this for all $p\in[1,+\infty]$.

Lifting of the state space $X$ to a larger space $X_{-1}$ brings further good news: the semigroup  $(T(t))_{t\ge 0}$ extends uniquely to a strongly continuous semigroup  $(T_{-1}(t))_{t\ge 0}$ on $X_{-1}$ whose generator $A_{-1}:X_{-1}\to X_{-1}$ is an extension of $A$ with $D(A_{-1}) = X$, see, e.g.,\ \cite[Section II.5]{EnN00}.
Thus we may consider equation \eqref{eq:Linear_System} on the Banach space $X_{-1}$:
\begin{equation}
\label{eq:Linear_System_extrapolated}
\dot{x}(t)=A_{-1}x(t)+ Bu(t), \quad t > 0,
\end{equation}
and mild solutions of this extrapolated system are given by the variation of constants formula \eqref{eq:Lin_Sys_mild_Solution}, which takes 
for every $x \in X$, $u\in L_{1, loc}(\R_+,U)$  and every $t\geq 0$ the form 
\begin{eqnarray}
\phi(t,x,u)&:=&T_{-1}(t)x+\int_0^t T_{-1}(t-s)B u(s)ds\\
        &=&T(t)x+\int_0^t T_{-1}(t-s)B u(s)ds.
\label{eq:Lifted_Lin_Sys_mild_Solution}
\end{eqnarray}
The last transition is due to the fact that $T_{-1}(t)x = T(t)x$ for all $x\in X$ and $t\geq 0$.

The lifting comes however at a price that now the solution $\phi(t,x,u)$ has values in $X_{-1}$.
The formula \eqref{eq:Lifted_Lin_Sys_mild_Solution} defines an $X$-valued function only in case if the value of the integral in \eqref{eq:Lifted_Lin_Sys_mild_Solution} belongs to the state space $X$, despite the fact that what we integrate is in $X_{-1}$.
This motivates the following definition: 
\begin{definition}
\label{def:q-admissibility} 
The operator $B\in L(U,X_{-1})$ is called a {\em $q$-admissible control operator} for $(T(t))_{t\ge 0}$, where $1\le q\le \infty$, if
there is a $t>0$ so that
\begin{eqnarray}
u\in L_q(\R_+,U) \qrq \int_0^t T_{-1}(t-s)Bu(s)ds\in X.
\label{eq:q-admissibility}
\end{eqnarray}
\end{definition}
Define the operators $\Phi(t): \Uc \to X$, $\Phi(t)u := \int_0^t T_{-1}(t-s)Bu(s)ds$.
Note that for any $B \in L(U,X_{-1})$ operators $\Phi(t)$ are well-defined as maps from $U$ to $X_{-1}$ for all $t$. The next result shows that from admissibility of $B$ it follows that the image of $\Phi(t)$ is in $X$ for all $t \geq 0$ and $\Phi(t)\in L(U,X)$ for all $t$.
\begin{proposition}{(\cite[Proposition 4.2]{Wei89b}, \cite[Proposition 4.2.2]{TuW09})}
\label{prop:Restatement-admissibility} 
Let $X, U$ be Banach spaces and let $p \in [1,\infty]$ be given. Then $B \in L(U,X_{-1})$ is $q$-admissible if and only if
for all $t>0$ there is $h_t>0$ so that for all $u \in L_{q,loc}(\R_+,U)$ it holds that $\Phi(t)u \in X$ and 
\begin{equation}
\label{eq:admissible-operator-norm-estimate}
\left\| \int_0^t T_{-1}(t-s)Bu(s)\,ds\right\|_X \le h_t \|u\|_{L_q([0,t],U)}.
\end{equation}
\end{proposition}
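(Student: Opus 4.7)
The reverse direction is immediate: if \eqref{eq:admissible-operator-norm-estimate} is valid for every $t > 0$, then in particular $\Phi(t) u \in X$ whenever $u \in L_q(\R_+, U)$, which is precisely $q$-admissibility. For the forward direction I assume $B$ is $q$-admissible with some witness time $t_0 > 0$, meaning that $\Phi(t_0) u \in X$ for every $u \in L_q(\R_+, U)$. I will first establish the bound at $t_0$ via the closed graph theorem, then propagate it to all $t > 0$ by a time-shift argument and iteration of the semigroup cocycle identity.

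\emph{Boundedness at $t_0$.} Viewed as a map $\Phi(t_0): L_q([0, t_0], U) \to X_{-1}$, it is linear and bounded; boundedness follows from $B \in L(U, X_{-1})$, the uniform boundedness of $\|T_{-1}(s)\|_{L(X_{-1})}$ on $[0, t_0]$, and an elementary Bochner-integral estimate. To upgrade the target space to $X$, I will apply the closed graph theorem: if $u_n \to u$ in $L_q([0, t_0], U)$ and $\Phi(t_0) u_n \to y$ in $X$, the continuous embedding $X \hookrightarrow X_{-1}$ yields $\Phi(t_0) u_n \to y$ in $X_{-1}$, while boundedness into $X_{-1}$ yields $\Phi(t_0) u_n \to \Phi(t_0) u$ in $X_{-1}$; uniqueness of the $X_{-1}$-limit forces $\Phi(t_0) u = y \in X$. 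Hence $\Phi(t_0): L_q([0, t_0], U) \to X$ has closed graph and is therefore bounded, providing a constant $h_{t_0}$.

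\emph{Extension to all $t > 0$.} For $t \in (0, t_0)$ and $u \in L_q(\R_+, U)$, define the zero-prepended shift $\hat{u}$ by $\hat{u}(s) = 0$ for $s \in [0, t_0 - t)$ and $\hat{u}(s) = u(s - (t_0 - t))$ for $s \geq t_0 - t$. A direct change of variables gives $\Phi(t_0) \hat{u} = \Phi(t) u$, with $\|\hat{u}\|_{L_q([0, t_0], U)} = \|u\|_{L_q([0, t], U)}$, so the previous step yields $\Phi(t) u \in X$ with the same constant $h_{t_0}$. For $t > t_0$, splitting the integral at $s = t_0$ and using the semigroup property of $T_{-1}$ gives the cocycle identity
\begin{equation*}
\Phi(t) u = T_{-1}(t - t_0) \Phi(t_0) u + \Phi(t - t_0)\bigl[u(\cdot + t_0)\bigr].
\end{equation*}
Since $\Phi(t_0) u \in X$ and $T_{-1}$ agrees with $T$ on $X$, the first term equals $T(t - t_0) \Phi(t_0) u \in X$. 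The second term belongs to $X$ by induction on $\lceil t / t_0 \rceil$, and the corresponding norm estimate follows from the shift-invariance $\|u(\cdot + t_0)\|_{L_q([0, t - t_0], U)} \leq \|u\|_{L_q([0, t], U)}$ together with the local boundedness of $\|T(\cdot)\|_{L(X)}$ on $[0, t]$. The passage from $u \in L_q(\R_+, U)$ to $u \in L_{q, loc}(\R_+, U)$ is trivial, since $\Phi(t) u$ depends only on $u|_{[0, t]}$, which can be extended by zero to an element of $L_q(\R_+, U)$ without altering $\Phi(t) u$.

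I expect the main obstacle to be the closed graph argument for $\Phi(t_0): L_q \to X$ in the boundedness step: the integral defining $\Phi(t_0) u$ is a priori taken in the extrapolation space $X_{-1}$, and one must carefully exploit the continuous embedding $X \hookrightarrow X_{-1}$ to identify the $X$-limit of $\Phi(t_0) u_n$ with $\Phi(t_0) u$. Once this identification is in place, the remaining manipulations -- the time-shift trick for $t < t_0$ and the cocycle iteration for $t > t_0$ -- are routine.
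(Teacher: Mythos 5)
Your proof is correct, and it is essentially the standard argument: the paper itself gives no proof but cites Weiss and Tucsnak--Weiss, where the result is obtained exactly as you do it, via the closed graph theorem between the Banach spaces $L_q([0,t_0],U)$ and $X$ (using the continuous embedding $X \hookrightarrow X_{-1}$ to identify the limit) followed by the zero-padding shift for $t<t_0$ and the composition/cocycle identity $\Phi(t)u = T(t-t_0)\Phi(t_0)u + \Phi(t-t_0)[u(\cdot+t_0)]$ for $t>t_0$. No gaps; the reduction from $L_{q,loc}$ to $L_q$ by restriction and zero-extension is also handled correctly.
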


%

If $B\in L(U,X_{-1})$ is a $q$-admissible operator, then $\phi$ given by \eqref{eq:Lifted_Lin_Sys_mild_Solution} is well-defined on $\R_+\times X\times \Uc$, with $\Uc:=L_q(\R_+,U)$.
The next natural question is whether $\Sigma:=(X,\Uc,\phi)$ is a control system in the sense of Definition~\ref{Steurungssystem}.
For $q<\infty$ it has an affirmative answer:
\begin{proposition}
\label{prop:q-admissibility-implies-continuity} 
Assume that $B\in L(U,X_{-1})$ is a $q$-admissible input operator with $q<\infty$.
Then $\Sigma:=(X,\Uc,\phi)$ with $\Uc:=L_q(\R_+,U)$ is a forward-complete control system in the sense of Definition~\ref{Steurungssystem}.
\end{proposition}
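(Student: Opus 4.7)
The plan is to verify the four axioms of Definition~\ref{Steurungssystem} together with forward-completeness for the map $\phi$ defined in \eqref{eq:Lifted_Lin_Sys_mild_Solution}. Three of the axioms and forward-completeness are essentially immediate from the formula; continuity of $t\mapsto\phi(t,x,u)$ is the only genuinely non-trivial step, and it is precisely where the hypothesis $q<\infty$ is needed.

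Identity ($\phi(0,x,u)=x$) follows from the convention that the integral over $[0,0]$ vanishes, and causality holds because $\phi(t,x,u)$ depends on $u$ only through its restriction to $[0,t]$. Forward-completeness is provided by Proposition~\ref{prop:Restatement-admissibility}, which guarantees that $\Phi(t)u:=\int_0^t T_{-1}(t-s)Bu(s)\,ds\in X$ for every $t\geq 0$ and every $u\in L_q(\R_+,U)$. For the cocycle property I would split the integral in $\phi(t+h,x,u)$ at $s=t$, change variables $r=s-t$ in the second piece, and exploit $T_{-1}|_X=T$ together with $\Phi(t)u\in X$ to obtain
\[
\phi(t+h,x,u)=T(h)\phi(t,x,u)+\Phi(h)\bigl(u(t+\cdot)\bigr)=\phi\bigl(h,\phi(t,x,u),u(t+\cdot)\bigr).
\]

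For the continuity axiom, the same decomposition yields, for $h>0$,
\[
\phi(t+h,x,u)-\phi(t,x,u)=\bigl(T(h)-I\bigr)\phi(t,x,u)+\Phi(h)\bigl(u(t+\cdot)\bigr).
\]
Strong continuity of the semigroup $T$ handles the first summand, so it suffices to show $\|\Phi(h)(u(t+\cdot))\|_X\to 0$ as $h\to 0+$. Proposition~\ref{prop:Restatement-admissibility}, combined with the standard fact that the admissibility constants are locally bounded in $t$, yields for all sufficiently small $h>0$ an estimate
\[
\bigl\|\Phi(h)\bigl(u(t+\cdot)\bigr)\bigr\|_X\leq C\,\Bigl(\int_t^{t+h}\|u(s)\|_U^q\,ds\Bigr)^{1/q},
\]
with a constant $C$ independent of $h$. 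Since $q<\infty$, absolute continuity of the Lebesgue integral forces the right-hand side to vanish as $h\to 0+$. Left-continuity at $t>0$ is verified in an entirely analogous way by applying the cocycle identity to $\phi(t,x,u)=\phi(h,\phi(t-h,x,u),u((t-h)+\cdot))$.

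The main obstacle is converting the integrated admissibility bound of Proposition~\ref{prop:Restatement-admissibility} into the pointwise continuity above. This is exactly where finiteness of $q$ is used: for $q=\infty$ the quantity $\|u(t+\cdot)\|_{L_\infty([0,h],U)}$ need not vanish with $h$, and the conclusion of the proposition may actually fail. A minor auxiliary point is the local boundedness of the admissibility constants $h_t$, which can be established either by a concatenation argument using the cocycle identity above or by invoking the standard well-posedness theory of admissible input operators.
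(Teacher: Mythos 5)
Your route is genuinely different from the paper's: the paper checks that $(T,\Phi)$ is an abstract linear control system in the sense of Weiss and then delegates the continuity of $t\mapsto\phi(t,x,u)$ in the $X$-norm to \cite[Proposition 2.3]{Wei89b}, whereas you verify the axioms of Definition~\ref{Steurungssystem} directly from the formula \eqref{eq:Lifted_Lin_Sys_mild_Solution}. Your handling of the identity, causality and cocycle axioms, of forward completeness via Proposition~\ref{prop:Restatement-admissibility}, of the local boundedness (indeed monotonicity) of the admissibility constants, and of right-continuity via absolute continuity of the integral of $\|u(\cdot)\|_U^q$ is correct, and this is exactly where $q<\infty$ enters.

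There is, however, a genuine gap in the left-continuity step, which you dismiss as ``entirely analogous.'' The cocycle decomposition gives $\phi(t,x,u)-\phi(t-h,x,u)=\bigl(T(h)-\id\bigr)\phi(t-h,x,u)+\Phi(h)\bigl(u((t-h)+\cdot)\bigr)$; the second summand tends to zero as before, but the first is not controlled by strong continuity of $T$: the vector $\phi(t-h,x,u)$ moves with $h$, and $T(h)\to\id$ only strongly, not in operator norm, so $\bigl(T(h)-\id\bigr)$ applied to an $h$-dependent family need not vanish unless that family is relatively compact --- which would already presuppose the continuity you are trying to prove. Splitting $\phi(t-h,x,u)=T(t-h)x+\Phi(t-h)u$ merely isolates the problem in the term $\bigl(T(h)-\id\bigr)\Phi(t-h)u$, and no bound of the form $\|T(h)-\id\|<1$ is available for semigroups that are not uniformly continuous. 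The standard repair --- in essence the content of the result the paper cites --- is: use the locally uniform admissibility bound to get $\sup_{s\le \tau}\|\Phi(s)\|_{L(L_q([0,\tau],U),X)}<\infty$; check two-sided continuity of $s\mapsto\Phi(s)w$ by hand for step (piecewise constant) inputs $w$, where each increment reduces to $T(\cdot)\Phi(h)v$ with a fixed $v\in U$ and hence vanishes by the admissibility estimate; and then pass to arbitrary $u\in L_q(\R_+,U)$ by density of step functions in $L_q$ for $q<\infty$, which gives locally uniform convergence of the corresponding trajectories and hence continuity of the limit. With this insertion your direct, self-contained proof is complete and is a legitimate alternative to the paper's appeal to \cite{Wei89b}.
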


\begin{proof}
Using Proposition~\ref{prop:Restatement-admissibility} one can show that $(T,\Phi)$, where $\Phi:=\{\Phi(t):t\in\R_+\}$ is an abstract linear control system in the sense of \cite[Definition 2.1]{Wei89b}. Continuity of $\phi$ in the $X$-norm can be inferred from 
\cite[Proposition 2.3]{Wei89b} (here an assumption that $q\neq \infty$ is essential).
After some additional effort, the claim follows.
\end{proof}

For $\infty$-admissible control operators we have the following result:
\begin{proposition}
\label{prop:infty-admissibility-implies-continuity} 
Assume that $B\in L(U,X_{-1})$ is an $\infty$-admissible input operator and that $\phi$ is continuous w.r.t.\ time in the norm of $X$. 
Then $\Sigma:=(X,\Uc,\phi)$ with $\Uc:=L_\infty(\R_+,U)$ is a forward-complete control system in the sense of Definition~\ref{Steurungssystem}.
\end{proposition}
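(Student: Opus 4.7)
The plan is to verify each of the axioms of Definition~\ref{Steurungssystem} in sequence, exploiting the continuity hypothesis, which is precisely the ingredient that is not automatic for $q=\infty$ in contrast to the $q<\infty$ case treated in Proposition~\ref{prop:q-admissibility-implies-continuity}.

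First I would check that $\Uc:=L_\infty(\R_+,U)$ qualifies as an input space: for every $\tau\geq 0$ the time shift $u(\cdot+\tau)$ satisfies $\|u(\cdot+\tau)\|_{L_\infty}\leq \|u\|_{L_\infty}$, so the shift-invariance axiom holds; and the concatenation $\ccat{u_1}{u_2}{t}$ is again essentially bounded with norm at most $\max\{\|u_1\|_{L_\infty},\|u_2\|_{L_\infty}\}$, giving the concatenation axiom. Next, I would show that $\phi$ defined by \eqref{eq:Lifted_Lin_Sys_mild_Solution} is $X$-valued on all of $\R_+\times X\times \Uc$, which yields forward completeness: $T(t)x\in X$ is clear, and by the $\infty$-admissibility of $B$ together with Proposition~\ref{prop:Restatement-admissibility} the convolution $\Phi(t)u=\int_0^t T_{-1}(t-s)Bu(s)\,ds$ also lies in $X$ with the quantitative bound \eqref{eq:admissible-operator-norm-estimate}.

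Then I would verify axioms ($\Sigma$\ref{axiom:Identity})--($\Sigma$\ref{axiom:Cocycle}). The identity axiom is immediate since $\phi(0,x,u)=T(0)x=x$. Causality is immediate from the fact that $\Phi(t)u$ only depends on the values of $u$ on $[0,t]$. The continuity axiom is exactly the extra hypothesis we have imposed. The only genuinely computational step is the cocycle property, and my plan is:
\begin{equation*}
\phi(t+h,x,u)=T(t+h)x+\int_0^{t+h}T_{-1}(t+h-s)Bu(s)\,ds,
\end{equation*}
split the integral at $s=t$, use the semigroup identity $T_{-1}(t+h-s)=T_{-1}(h)T_{-1}(t-s)$ on $X_{-1}$, and pull $T_{-1}(h)$ out of the first part. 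Since that first part is an element of $X$ (again by $\infty$-admissibility), $T_{-1}(h)$ acts on it as $T(h)$, so the first part reduces to $T(h)\phi(t,x,u)$. In the second part the substitution $s\mapsto s+t$ turns the integral into $\Phi(h)\,u(t+\cdot)$. Combining these gives $\phi(h,\phi(t,x,u),u(t+\cdot))$, which is the cocycle identity.

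The main subtlety is not in any single computation but in keeping the bookkeeping clean between $X$ and $X_{-1}$: the manipulations must take place in $X_{-1}$ (so that the semigroup decomposition is legal), but the conclusions must be extracted back in $X$ via the admissibility bound so that $T_{-1}(h)$ may be replaced by $T(h)$. The reason the continuity axiom has to be postulated separately (rather than derived, as in the $q<\infty$ case via \cite[Proposition 2.3]{Wei89b}) is that $L_\infty$-inputs do not possess the approximation-by-continuous-inputs feature which underlies that argument; this is the genuine obstacle that forces the extra assumption in the statement.
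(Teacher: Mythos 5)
Your proposal is correct and follows essentially the same route as the paper: the paper's proof simply refers back to the $q<\infty$ case, whose content is that $(T,\Phi)$ satisfies the composition (cocycle) identity of an abstract linear control system in the sense of Weiss, with continuity now taken as a hypothesis precisely because the density-of-nice-inputs argument behind \cite[Proposition 2.3]{Wei89b} fails for $L_\infty$. Your explicit splitting of the convolution integral, use of the semigroup identity in $X_{-1}$, and return to $X$ via the admissibility bound is exactly the verification that the paper outsources to that framework, so there is no substantive difference.
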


\begin{proof}
The proof goes along the lines of the proof of Proposition~\ref{prop:q-admissibility-implies-continuity}.
\end{proof}

\begin{remark}
\label{rem:Admissibility-and-forward-completeness} 
In the language of general control systems, $q$-admissibility means precisely the forward-completeness of control systems for all inputs from $L_q$-space, which is in view of Proposition~\ref{prop:Restatement-admissibility} equivalent to BRS property.
\end{remark}

Although $\phi$ given by \eqref{eq:Lifted_Lin_Sys_mild_Solution} is continuous w.r.t.\ time in the norm of $X_{-1}$ (as a mild solution), but it does not imply that $\phi$ is continuous in the $X$-norm.
Question whether $\infty$-admissibility of $B\in L(U,X_{-1})$ implies the continuity of the solution map $\phi$ w.r.t.\ time in the $X$-norm is open for 30 years (\cite[Problem 2.4]{Wei89b}). Recently, this was positively answered for an important subclass of analytic systems on Hilbert spaces:
\begin{theorem}{(Follows from \cite[Theorem 1]{JSZ17})}
\label{thm:Contiuity-of-a-map} 
Let $A$ generate an exponentially stable analytic semigroup on a Hilbert space $X$, which is similar to a contraction semigroup. 
Assume further that $\dim (U) <\infty$. 

Then any $B\in L(U,X_{-1})$ is an $\infty$-admissible operator, and the corresponding map $\phi$ is continuous in the norm of $X$ for any $u \in \Uc:=L_\infty(\R_+,U)$. In particular, $\Sigma:=(X,\Uc,\phi)$ is a control system in the sense of Definition~\ref{Steurungssystem}.
\end{theorem}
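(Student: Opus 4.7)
The plan is to treat the two substantive claims separately — that every $B\in L(U,X_{-1})$ is $\infty$-admissible, and that the resulting map $\phi$ is continuous in the $X$-norm — and then obtain the control system structure for free from Proposition~\ref{prop:infty-admissibility-implies-continuity}. Both substantive claims are deep results about analytic semigroups on Hilbert spaces with finite-dimensional input spaces, and the natural route is to invoke \cite[Theorem 1]{JSZ17} (together with its admissibility component) rather than attempt a self-contained argument.

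For the first step, $\infty$-admissibility, the idea is that analyticity gives a smoothing estimate of the form $\|T_{-1}(\tau)b\|_X \leq C\,\tau^{-\alpha}\|b\|_{X_{-1}}$ for small $\tau>0$ and any $\alpha\in(0,1)$, while similarity to a contraction semigroup combined with exponential stability yields a matching decay bound for large $\tau$. Since $U$ is finite-dimensional, $B$ has finite rank, so estimating $\Phi(t)u = \int_0^t T_{-1}(t-s)Bu(s)\,ds$ in the $X$-norm reduces to integrating the scalar function $\tau\mapsto \|T_{-1}(\tau)b_i\|_X$ against an $L_\infty$ weight, for each of the finitely many image vectors $b_i = Be_i$. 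The integrable singularity at $0$ and exponential decay at infinity then give a uniform bound $\|\Phi(t)u\|_X \leq h_t \|u\|_{L_\infty([0,t],U)}$ in the sense of Proposition~\ref{prop:Restatement-admissibility}, which is exactly $\infty$-admissibility.

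For the second step, continuity of $t\mapsto \Phi(t)u$ in the $X$-norm, one cannot simply reuse the density argument behind Proposition~\ref{prop:q-admissibility-implies-continuity}, because smooth inputs are not dense in $L_\infty$ in the $L_\infty$-norm; this is precisely the obstruction that kept Weiss's problem \cite[Problem 2.4]{Wei89b} open for decades. The approach of \cite{JSZ17} is to write $\Phi(t+h)u - \Phi(t)u = \int_t^{t+h} T_{-1}(t+h-s)Bu(s)\,ds + \int_0^t \bigl(T_{-1}(t+h-s) - T_{-1}(t-s)\bigr)Bu(s)\,ds$ and estimate each piece using the analytic smoothing bounds. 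The first piece is controlled by the integrable singularity as in step one; for the second piece, one writes $T_{-1}(t+h-s) - T_{-1}(t-s) = \int_{t-s}^{t+h-s} A_{-1} T_{-1}(\tau)\,d\tau$ and exploits the $\|A_{-1}T_{-1}(\tau)b\|_X \lesssim \tau^{-1-\alpha}\|b\|_{X_{-1}}$ bound, together with finite-dimensionality of $U$, to apply dominated convergence as $h\to 0$.

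Having established both $\infty$-admissibility and continuity, Proposition~\ref{prop:infty-admissibility-implies-continuity} hands us the remaining control system axioms from Definition~\ref{Steurungssystem}. The main obstacle in the whole plan is clearly the continuity step: it is known to fail for general $C_0$-semigroups even with $\infty$-admissible $B$, and both hypotheses (analyticity plus similarity to a contraction, and $\dim U<\infty$) are essential — the former supplies the quantitative smoothing of $T_{-1}$, and the latter prevents Carleson-type pathologies from creeping into the convolution with an $L_\infty$ input.
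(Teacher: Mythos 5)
The paper offers no argument for this theorem beyond the attribution: it is stated as a direct consequence of \cite[Theorem 1]{JSZ17}, so if you had confined yourself to invoking that result (plus Proposition~\ref{prop:infty-admissibility-implies-continuity} for the control-system axioms of Definition~\ref{Steurungssystem}), you would be doing exactly what the paper does. The trouble is that the self-contained sketch you then give is not a correct account of why the result is true. For an analytic semigroup and an arbitrary $b\in X_{-1}$, the smoothing estimate you start from is false: since $\|x\|_X \sim \|(aI-A_{-1})x\|_{X_{-1}}$ and $\|A_{-1}T_{-1}(\tau)\|_{L(X_{-1})}\le C/\tau$, analyticity yields only $\|T_{-1}(\tau)b\|_X \le C\,\tau^{-1}\|b\|_{X_{-1}}$ for small $\tau$, and this exponent is sharp in general; it can be lowered to some $\alpha<1$ only if $B$ happens to map into an intermediate space $X_{-\alpha}$, which is an additional hypothesis not available here. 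With the true exponent the singularity $\tau^{-1}$ is not integrable, so your computation does not deliver the bound $\|\Phi(t)u\|_X\le h_t\|u\|_{L_\infty([0,t],U)}$ required by Proposition~\ref{prop:Restatement-admissibility}, and the bound $\|A_{-1}T_{-1}(\tau)b\|_X\le C\,\tau^{-1-\alpha}\|b\|_{X_{-1}}$ in your continuity step is really of order $\tau^{-2}$, so the dominated-convergence argument collapses as well. A telltale sign is that your argument uses the similarity-to-a-contraction hypothesis only to obtain large-time decay, which exponential stability already provides, and never uses the Hilbert-space structure at all: if the naive smoothing argument worked, the conclusion would hold for every exponentially stable analytic semigroup on a Banach space, which is far more than \cite{JSZ17} claims and would trivialize the long-standing problem \cite[Problem 2.4]{Wei89b}.

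What \cite{JSZ17} actually does is substantially deeper: on a Hilbert space, similarity to a contraction semigroup guarantees that the (negative) generator admits a bounded holomorphic ($H^\infty$) functional calculus; after reducing to $\dim U=1$ (this is where finite-dimensionality of $U$ enters), $\infty$-admissibility --- in fact zero-class $\infty$-admissibility --- of every $B\in L(U,X_{-1})$ is derived from functional-calculus and square-function estimates, which is what the survey alludes to when it says the result was obtained ``employing holomorphic functional calculus.'' Continuity of $t\mapsto\phi(t,x,u)$ in the $X$-norm is then a consequence of the zero-class property (compare the use of \cite[Proposition 2.5]{JNP18} in the proof of Proposition~\ref{prop:Diagonal-Systems}), not of a direct difference-quotient estimate on $T_{-1}$. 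So either cite \cite[Theorem 1]{JSZ17} and stop, as the paper does, or be prepared to reproduce that machinery; the elementary route you sketch does not close.
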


\begin{remark}
\label{rem:Relations-between-admissibility-classes} 
Lemma~\ref{prop:Restatement-admissibility} and H\"older's inequality show that $q$-admissibility of $B\in L(U,X_{-1})$ implies $p$-admissibility of $B$ for any $p>q$. 
At the same time, in general, $q$-admissibility of $B$ does not imply $p$-admissibility for any $p<q$.
In particular, in \cite[Example 5.2]{JNP18} an example of an $\infty$-admissible operator is given, 
which is not $p$-admissible for any $p<\infty$.
Thus, admissibility gives us a measure for how bad-behaved an unbounded input operator may be, and $\infty$-admissible operators are the \q{worst} type of operators, which still allows for well-posedness of a linear control system \eqref{eq:Linear_System} and at the same time give
the possibility to obtain ISS with respect to supremum norm of the inputs, which is the most classical type of ISS estimates.
Bounded operators are always 1-admissible, and if $X$ is reflexive, then $1$-admissibility of $B$ is equivalent to the fact that $B\in L(U, X)$, see 
\cite[Theorem 4.8]{Wei89b}.
\end{remark}

\begin{definition}
\label{def:infinite-time-admissibility} 
A $q$-admissible operator $B$ is called \emph{infinite-time $q$-admissible operator}, if the constant $h:=h_t$ in Lemma~\ref{prop:Restatement-admissibility} does not depend on $t$.
\end{definition}

\begin{remark}
\label{rem:Admissibility_and_infinite-time_admissibility} 
If the semigroup $T$ is exponentially stable, i.e. if \eqref{eq:Linear_System} is 0-UGAS, then 
$B$ is infinite-time $q$-admissible if and only if $B$ is $q$-admissible, see \cite[Lemma 2.9]{JNP18} 
or \cite[Lemma 1.1]{Gra95} (in a dual form).
\end{remark}

Let us proceed to the ISS analysis. For linear systems it is natural to consider the following strengthening of the input-to-state stability:
\begin{definition}
\label{def:eISS}
System $\Sigma=(X,\Uc,\phi)$ is called \emph{exponentially  input-to-state stable
(eISS) with a linear gain}, if there exist $M,\lambda, G>0$
such that 
\begin {equation}
\label{eq:eiss_sum}
 x \in X \ \wedge \ u\in\Uc \ \wedge \ t\geq 0 \qrq \| \phi(t,x,u) \|_{X} \leq M e^{-\lambda t}\| x \|_{X} + G \|u\|_{\Uc}.
\end{equation}
\end{definition}

Now we can characterize ISS of \eqref{eq:Linear_System}, see, e.g., \cite[Proposition 2.10]{JNP18}:
\begin{theorem}
\label{thm:ISS-Criterion-lin-sys-with-unbounded-operators}
Let $X$ and $U$ be Banach spaces and let $\Uc:=L_p(\R_+,U)$ for some $p\in[1,+\infty]$. 
If $p=+\infty$, assume further that $\phi$ is continuous w.r.t. time in the norm of $X$.
The following properties are equivalent for a system \eqref{eq:Linear_System} with above $X$ and $\Uc$:
\begin{enumerate}
    \item[(i)] \eqref{eq:Linear_System} is ISS
    \item[(ii)] \eqref{eq:Linear_System} is eISS with a linear gain
    \item[(iii)] \eqref{eq:Linear_System} is 0-UGAS $\ \wedge\ $ $B$ is infinite-time $p$-admissible
    \item[(iv)] \eqref{eq:Linear_System} is 0-UGAS $\ \wedge\ $ $B$ is $p$-admissible
\end{enumerate}
\end{theorem}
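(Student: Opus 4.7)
The plan is to close the cyclic chain $\mathrm{(ii)} \Rightarrow \mathrm{(i)} \Rightarrow \mathrm{(iv)} \Rightarrow \mathrm{(iii)} \Rightarrow \mathrm{(ii)}$. The implication $\mathrm{(ii)} \Rightarrow \mathrm{(i)}$ is immediate, since the eISS estimate with linear gain is a particular instance of the ISS estimate (with $\beta(r,t) = Me^{-\lambda t}r \in \KL$ and $\gamma(r) = Gr \in \Kinf$). The step $\mathrm{(iv)} \Rightarrow \mathrm{(iii)}$ is a direct consequence of Remark~\ref{rem:Admissibility_and_infinite-time_admissibility}, because 0-UGAS of the linear system \eqref{eq:Linear_System} is equivalent to exponential stability of the semigroup $T$ by Theorem~\ref{thm:ISS-criterion-linear-systems-bounded-operators} (items (iii)$\Iff$(iv) there), and for an exponentially stable semigroup $p$-admissibility already coincides with infinite-time $p$-admissibility. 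The step $\mathrm{(iii)} \Rightarrow \mathrm{(ii)}$ is obtained by applying the triangle inequality to the mild-solution formula \eqref{eq:Lifted_Lin_Sys_mild_Solution}: the first summand is controlled by $\|T(t)x\|_X \leq M e^{-\lambda t}\|x\|_X$, and the integral term by $\|\Phi(t)u\|_X \leq h \|u\|_{L_p([0,t],U)} \leq h \|u\|_{\Uc}$, where $h$ is the uniform constant provided by infinite-time $p$-admissibility.

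The core of the proof is the implication $\mathrm{(i)} \Rightarrow \mathrm{(iv)}$. Substituting $u \equiv 0$ into the ISS estimate gives $\|T(t)x\|_X \leq \beta(\|x\|_X,t)$ for all $x \in X$ and $t \geq 0$, which is exactly 0-UGAS. Next, substituting $x = 0$ and invoking causality, for any $u \in \Uc$ and any $t \geq 0$ the value $\Phi(t)u := \int_0^t T_{-1}(t-s)Bu(s)\,ds$ depends only on the restriction $u|_{[0,t]}$; replacing $u$ by its truncation $u \cdot \chi_{[0,t]}$ (which belongs to $\Uc = L_p(\R_+,U)$) yields
\[
\left\| \Phi(t)u \right\|_X = \|\phi(t,0,u\cdot\chi_{[0,t]})\|_X \leq \gamma\bigl(\|u\|_{L_p([0,t],U)}\bigr).
\]
Now comes the key observation: the operator $u \mapsto \Phi(t)u$ is \emph{linear} (in particular positively homogeneous), so a nonlinear majorization by $\gamma \in \Kinf$ forces a linear one. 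Indeed, for every nonzero $u \in L_p([0,t],U)$, applying the estimate to $u / \|u\|_{L_p([0,t],U)}$ and then rescaling gives $\|\Phi(t)u\|_X \leq \gamma(1)\, \|u\|_{L_p([0,t],U)}$. By Proposition~\ref{prop:Restatement-admissibility} this is exactly $p$-admissibility; in fact the constant $\gamma(1)$ does not depend on $t$, so infinite-time $p$-admissibility drops out for free.

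The main conceptual obstacle is the case $p=\infty$: there one cannot appeal to Proposition~\ref{prop:q-admissibility-implies-continuity}, which is why the theorem's hypotheses separately require that $\phi$ be continuous with respect to time in the $X$-norm, ensuring via Proposition~\ref{prop:infty-admissibility-implies-continuity} that $(X,\Uc,\phi)$ is indeed a control system so that ISS is well-defined. Otherwise, the proof proceeds uniformly in $p$: the only two ingredients are (a) the linearity/homogeneity of $\Phi(t)$, which upgrades every $\Kinf$-gain into a linear admissibility gain, and (b) the equivalence of 0-UGAS and exponential stability for linear $C_0$-semigroups recorded in Theorem~\ref{thm:ISS-criterion-linear-systems-bounded-operators}. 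Both are robust with respect to the choice of $p$, so no further case distinction or compactness-type argument (which would fail in infinite dimensions) is required.
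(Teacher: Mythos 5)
Your proposal is correct and follows essentially the same route as the paper: the same cyclic chain $\mathrm{(ii)} \Rightarrow \mathrm{(i)} \Rightarrow \mathrm{(iv)} \Rightarrow \mathrm{(iii)} \Rightarrow \mathrm{(ii)}$, with $\mathrm{(iv)} \Rightarrow \mathrm{(iii)}$ from Remark~\ref{rem:Admissibility_and_infinite-time_admissibility} and $\mathrm{(iii)} \Rightarrow \mathrm{(ii)}$ from Proposition~\ref{prop:Restatement-admissibility} with a $t$-independent constant plus exponential stability. The only difference is that you spell out the steps the paper labels ``evident,'' in particular the causality--truncation and homogeneity rescaling argument turning the $\Kinf$-gain into a linear admissibility bound, which is a valid and welcome elaboration.
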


\begin{proof}
(ii) $\Rightarrow$ (i) $\Rightarrow$ (iv). Evident.

(iv) $\Rightarrow$ (iii). Follows from Remark~\ref{rem:Admissibility_and_infinite-time_admissibility}.

(iii) $\Rightarrow$ (ii). Follows from Proposition~\ref{prop:Restatement-admissibility} with $h\equiv const$ and exponential stability of a semigroup $T$.
\end{proof}

Similarly in spirit to Theorem~\ref{thm:ISS-criterion-linear-systems-bounded-operators}, 
Theorem~\ref{thm:ISS-Criterion-lin-sys-with-unbounded-operators} reduces the ISS analysis of linear systems to the     stability analysis of the semigroup and to admissibility analysis of the input operator, which are classical functional-analytic problems with 
many powerful tools for its solution, see \cite{JaP04, TuW09}.

\subsection{Integral ISS of linear systems with unbounded input operators}
\label{sec:iISS of linear systems}

The ISS estimate is defined in terms of the norms in $X$ and $\Uc$ which is one of the reasons for the rather elegant characterization of 
ISS in terms of the exponential stability of a semigroup and admissibility of the input operator.
In contrast to this, integral ISS is defined in terms of the integration of an input function with a nonlinear scaling, see \eqref{iISS_Estimate}, which does not necessarily produce a norm. This makes the characterization of iISS more involved.

Recall that for linear systems with bounded input operators ISS and iISS are equivalent notions in view of Theorem~\ref{thm:ISS-criterion-linear-systems-bounded-operators}. This remains true for linear systems with unbounded operators, if $\Uc$ is an $L_p$ space with $p<\infty$.
\begin{proposition}
\label{prop:iISS_equals_ISS_p_less_infty} 
Let $X$ and $U$ be Banach spaces and let $\Uc:=L_p(\R_+,U)$ for some $p\in[1,+\infty)$. 
The system \eqref{eq:Linear_System} is ISS $\Iff$ \eqref{eq:Linear_System} is iISS.
\end{proposition}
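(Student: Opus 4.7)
The plan is to establish both directions by reducing to the characterization of ISS in Theorem~\ref{thm:ISS-Criterion-lin-sys-with-unbounded-operators} (0-UGAS together with $p$-admissibility). The assumption $p<\infty$ enters in a very specific way: it allows the $L_p$-norm to be rewritten as the $p$-th root of an integral of a scalar function, which is the exact shape required on the right-hand side of an iISS estimate.

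For the direction ISS $\Rightarrow$ iISS, I would start from the equivalent form of ISS as eISS with a linear gain, i.e.\ $\|\phi(t,x,u)\|_X \leq M e^{-\lambda t}\|x\|_X + G\|u\|_{L_p(\R_+,U)}$ for some $M,\lambda,G>0$ (Theorem~\ref{thm:ISS-Criterion-lin-sys-with-unbounded-operators}(ii)). Applying this estimate to the input $\ccat{u}{0}{t}\in\Uc$ and invoking causality yields
\begin{equation*}
\|\phi(t,x,u)\|_X \;\leq\; M e^{-\lambda t}\|x\|_X + G\Bigl(\int_0^t \|u(s)\|_U^{\,p}\,ds\Bigr)^{1/p}.
\end{equation*}
Setting $\beta(r,t):=Me^{-\lambda t}r$, $\mu(r):=r^p$ and $\theta(r):=G\,r^{1/p}$, and noting that for $p\in[1,+\infty)$ both $\mu$ and $\theta$ belong to $\Kinf$, this is precisely the iISS inequality \eqref{iISS_Estimate}.

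For the direction iISS $\Rightarrow$ ISS, I would verify the two conditions of Theorem~\ref{thm:ISS-Criterion-lin-sys-with-unbounded-operators}(iv) separately. Setting $u\equiv 0$ in \eqref{iISS_Estimate} yields $\|T(t)x\|_X=\|\phi(t,x,0)\|_X\leq \beta(\|x\|_X,t)$, which is 0-UGAS; for a linear system, taking $\|x\|_X=1$ gives $\|T(t)\|\leq\beta(1,t)\to 0$ as $t\to\infty$, and the semigroup property then upgrades this to exponential stability of $T$. To establish $p$-admissibility of $B$, observe that the iISS definition presupposes forward completeness of $\Sigma=(X,L_p(\R_+,U),\phi)$, so that for every $u\in L_p(\R_+,U)$ and every $t\geq 0$
\begin{equation*}
\phi(t,0,u)=\int_0^t T_{-1}(t-s)B u(s)\,ds \;\in\; X,
\end{equation*}
which is exactly condition \eqref{eq:q-admissibility} of Definition~\ref{def:q-admissibility}. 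With 0-UGAS and $p$-admissibility of $B$ in hand, Theorem~\ref{thm:ISS-Criterion-lin-sys-with-unbounded-operators} delivers ISS.

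There is no real obstacle here beyond bookkeeping; the argument is short because the heavy lifting is already done in Theorem~\ref{thm:ISS-Criterion-lin-sys-with-unbounded-operators}. The only point that deserves care is the role of $p<\infty$: in the forward implication the exponent $1/p$ must produce a $\Kinf$-function, and in the backward implication the $L_p$ sense of admissibility must match the function space on which forward completeness is assumed. Both requirements fail at $p=\infty$, which explains why that case is excluded from the statement and, as recalled in the surrounding discussion, must be treated by different tools.
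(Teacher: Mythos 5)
Your proof is correct and follows essentially the same route as the paper: the forward direction amounts to reading the ISS estimate with the $L_p$-norm as an iISS estimate with $\mu(r)=r^p$, $\theta(r)=\gamma(r^{1/p})$ (your use of causality and the truncated input just makes explicit what the paper leaves implicit), and the backward direction is exactly the scheme of Proposition~\ref{prop:iISS_implies_ISS} adapted to $p<\infty$: exponential stability of $T$ from the zero-input estimate, $p$-admissibility from the forward completeness built into the iISS definition, and then Theorem~\ref{thm:ISS-Criterion-lin-sys-with-unbounded-operators}. No gaps.
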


\begin{proof}
For $\Uc:=L_p(\R_+,U)$ with $p\in[1,+\infty)$ the ISS estimate \eqref{iss_sum} is automatically an iISS estimate with 
$\theta(r) = \gamma(r^{1/p})$ and $\mu(r) = r^p$, $r\geq 0$.
The converse implication can be shown as in Proposition~\ref{prop:iISS_implies_ISS}.
\end{proof}

The case $p=\infty$ needs special care. First of all, integral ISS implies ISS:
\begin{proposition}{\cite[Proposition 2.10]{JNP18}}
\label{prop:iISS_implies_ISS} 
Let $X$ and $U$ be Banach spaces and let $\Uc:=L_\infty(\R_+,U)$.
If \eqref{eq:Linear_System} is iISS, then $T$ is an exponentially stable semigroup and $B$ is an infinite-time $\infty$-admissible operator. 
If additionally $\phi$ is continuous w.r.t.\ time in the norm of $X$, then \eqref{eq:Linear_System} is ISS.
\end{proposition}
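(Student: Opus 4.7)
The plan is to extract the two consequences --- exponential stability of $T$ and infinite-time $\infty$-admissibility of $B$ --- separately from the iISS estimate \eqref{iISS_Estimate}, and then, under the extra continuity hypothesis, to invoke Theorem~\ref{thm:ISS-Criterion-lin-sys-with-unbounded-operators} for the ISS conclusion.

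For exponential stability, I would set $u \equiv 0$ in \eqref{iISS_Estimate} to obtain $\|T(t)x\|_X \leq \beta(\|x\|_X,t)$, i.e.\ that $\Sigma$ is 0-UGAS. Exploiting the linearity of $T$, for any $c>0$ and any unit vector $x\in X$ one has $c\|T(t)x\|_X = \|T(t)(cx)\|_X \leq \beta(c,t)$, hence $\|T(t)\|_{L(X)} \leq \beta(1,t)\to 0$ as $t\to\infty$; together with uniform boundedness of $T$ on compact intervals this yields exponential stability.

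For admissibility, I would set $x=0$ in \eqref{iISS_Estimate} to get, for all $u\in\Uc$ and all $t\geq 0$,
\begin{equation*}
\left\|\int_0^t T_{-1}(t-s)Bu(s)\,ds\right\|_X \leq \theta\!\left(\int_0^t \mu(\|u(s)\|_U)\,ds\right) \leq \theta\bigl(t\,\mu(\|u\|_{L_\infty([0,t],U)})\bigr).
\end{equation*}
Given any nonzero $u\in L_\infty([0,t],U)$, applying this bound to the rescaled input $u/\|u\|_{L_\infty([0,t],U)}$ and using the linearity of the map $\Phi(t)\colon u\mapsto \int_0^t T_{-1}(t-s)Bu(s)\,ds$, I obtain
\begin{equation*}
\|\Phi(t)u\|_X \leq \|u\|_{L_\infty([0,t],U)}\,\theta(t\,\mu(1)).
\end{equation*}
By Proposition~\ref{prop:Restatement-admissibility} this shows that $B$ is $\infty$-admissible with constant $h_t = \theta(t\,\mu(1))$. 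Combined with the already established exponential stability of $T$, Remark~\ref{rem:Admissibility_and_infinite-time_admissibility} upgrades $\infty$-admissibility to infinite-time $\infty$-admissibility, completing the first part.

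For the second part, once continuity of $\phi$ in the $X$-norm is assumed, Proposition~\ref{prop:infty-admissibility-implies-continuity} guarantees that $\Sigma$ is indeed a well-defined forward-complete control system. Since we have verified both 0-UGAS and infinite-time $\infty$-admissibility of $B$, condition (iii) of Theorem~\ref{thm:ISS-Criterion-lin-sys-with-unbounded-operators} with $p=\infty$ is met, which delivers the desired ISS property. None of the individual steps is delicate; the only trick of substance is the rescaling argument converting the nonlinear gain $\theta\circ\mu$ into a linear admissibility bound, which is possible precisely because the dynamics are linear.
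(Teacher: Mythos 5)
Your proof is correct and follows essentially the same route as the paper: setting $u\equiv 0$ yields exponential stability of $T$, the $x=0$ case of the iISS estimate yields $\infty$-admissibility of $B$, Remark~\ref{rem:Admissibility_and_infinite-time_admissibility} upgrades this to infinite-time $\infty$-admissibility, and Theorem~\ref{thm:ISS-Criterion-lin-sys-with-unbounded-operators} (with the assumed continuity of $\phi$) gives ISS. The only difference is one of detail: where the paper simply notes that iISS implies well-posedness for $L_\infty$-inputs, which is equivalent to $\infty$-admissibility, you derive the admissibility bound explicitly via the rescaling argument with $h_t=\theta(t\,\mu(1))$, a valid and more quantitative way of carrying out the same step.
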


\begin{proof}
Integral ISS estimate for $u:=0$ implies exponential stability of the semigroup $T$.
Furthermore, integral ISS implies well-posedness of the system \eqref{eq:Linear_System} for $\Uc:=L_\infty(\R_+,U)$, which is equivalent to $\infty$-admissibility of $B$. 
According to Remark~\ref{rem:Admissibility_and_infinite-time_admissibility}, $B$ is infinite-time $\infty$-admissible. 
As $\phi$ is continuous w.r.t.\ time in the norm of $X$, Theorem~\ref{thm:ISS-Criterion-lin-sys-with-unbounded-operators}
shows ISS of \eqref{eq:Linear_System}.
\end{proof}

\begin{remark}
\label{rem:Terminology_of_JNP18_JSZ19} 
In this survey, continuity of (mild) solutions is a part of the definition of the system, which is in accordance with the well-posedness concepts for linear infinite-dimensional control systems in \cite[Definition 2.1]{Wei89b}, \cite[p. 171]{JaZ12}.
However, in the terminology of \cite{JNP18, JSZ17} the continuity of solutions is not a part of the definition of ISS. Hence, in the terminology of \cite{JNP18, JSZ17}, 
ISS of \eqref{eq:Linear_System} w.r.t.\ $\Uc:=L_\infty(\R_+,U)$ is equivalent to the properties that $T$ is an exponentially stable semigroup and $B$ is infinite-time $\infty$-admissible operator, and Proposition~\ref{prop:iISS_implies_ISS} (now without an assumption of the continuity of $\phi$) reads as follows:
\begin{center}
iISS of \eqref{eq:Linear_System} w.r.t.\ $L_\infty(\R_+,U)$ $\qrq$ ISS of \eqref{eq:Linear_System} w.r.t. $L_\infty(\R_+,U)$.
\end{center}
\end{remark}

\begin{openprob}
\label{ob:ISS-and-iISS-for-linear-systems} 
In view of Proposition~\ref{prop:iISS_implies_ISS} we know that for \linebreak
$\Uc:=L_\infty(\R_+,U)$, integral ISS implies ISS. 
At the same time, the question whether ISS w.r.t.\ $L_\infty(\R_+,U)$ implies iISS w.r.t.\ $L_\infty(\R_+,U)$ is a challenging open problem.
It is interesting, that this problem comes against the \q{finite-dimensional} intuition, as for \emph{nonlinear} ODE systems with $\Uc:=L_\infty(\R_+,\R^m)$ it is known that under natural regularity assumptions on the right-hand side $f(\cdot)$ ISS implies integral ISS, although the converse is not true for nonlinear systems, by Example~\ref{examp:1dim_bilinear_system}.

By definition of a coercive ISS Lyapunov function for general infinite-dimensional systems, it follows that any such function is automatically a 
coercive iISS Lyapunov function, and thus a system possessing an ISS Lyapunov function is both ISS and iISS.
Hence if there are ISS control systems which are not iISS (i.e. if the answer on the above problem is negative), then these systems do not possess a coercive ISS Lyapunov function.
\end{openprob}

A partial positive answer for Open Problem~\ref{ob:ISS-and-iISS-for-linear-systems} was obtained in \cite{JSZ17} for parabolic systems on Hilbert spaces employing holomorphic functional calculus.

\begin{proposition}{\cite[Theorem 2]{JSZ17}}
\label{prop:iISS-equals-ISS-analytic-case} 
Assume that $A$ generates an exponentially stable, analytic semigroup on a Hilbert 
space $X$ which is similar to a contraction semigroup.
Let $U$ be so that $\dim (U) <\infty$, $\Uc:=L_\infty(\R_+,U)$.

The following statements are equivalent:
\begin{itemize}
    \item[(i)]   $B\in L(U,X_{-1})$,
    \item[(ii)]  \eqref{eq:Linear_System} is ISS w.r.t.\ $\Uc$,
    \item[(iii)] \eqref{eq:Linear_System} is iISS w.r.t.\ $\Uc$.
\end{itemize}
\end{proposition}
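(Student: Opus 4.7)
The plan is to prove the chain of implications (iii)$\Rightarrow$(ii)$\Rightarrow$(i)$\Rightarrow$(iii), of which the first two are straightforward under the standing hypotheses, while the third carries the genuine new content.

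First, (iii)$\Rightarrow$(ii) is Proposition~\ref{prop:iISS_implies_ISS}, with the required continuity of $\phi$ in the $X$-norm supplied by Theorem~\ref{thm:Contiuity-of-a-map}. Next, (ii)$\Rightarrow$(i): an ISS estimate with inputs in $L_\infty(\R_+,U)$ in particular requires $u \mapsto \phi(t,0,u)$ to take values in $X$, i.e.\ $B$ is $\infty$-admissible; by the Weiss representation theorem this forces $B \in L(U, X_{-1})$.

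The main obstacle is (i)$\Rightarrow$(iii), which in view of Open Problem~\ref{ob:ISS-and-iISS-for-linear-systems} cannot be expected on a general Banach space. My plan is to exploit the holomorphic functional calculus that the hypotheses make available: since $A$ generates an analytic semigroup similar to a contraction on a Hilbert space, McIntosh's theorem provides a bounded $H^\infty$-functional calculus for $-A$ on a sector of angle strictly less than $\pi/2$. This yields Littlewood--Paley type square function estimates
\[
\int_0^\infty \|(-A)^{1/2} T(t) x\|_X^2 \, dt \leq C\|x\|_X^2, \quad x \in X,
\]
together with their duals and the Plancherel identity for the Laplace transform along the imaginary axis. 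By linearity and finite-dimensionality of $U$, one may reduce to the scalar case $U = \R$, $B = b \in X_{-1}$. Writing $b = A_{-1}(A_{-1}^{-1} b)$ with $A_{-1}^{-1} b \in X$, the input-to-state convolution
\[
\int_0^t T_{-1}(t-s) b\, u(s)\,ds = \int_0^t A T(t-s) (A_{-1}^{-1} b)\, u(s)\,ds
\]
can then be controlled via the $H^\infty$-calculus, giving a nonlinear integral bound of the form
\[
\left\| \int_0^t T_{-1}(t-s) B u(s) \, ds \right\|_X \leq \theta\left( \int_0^t \mu(\|u(s)\|_U) \, ds \right)
\]
for suitable $\theta,\mu \in \Kinf$. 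Combining this with the exponential decay $\|T(t) x\|_X \leq M e^{-\omega t} \|x\|_X$ on the initial-state contribution produces the desired iISS estimate.

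The hard step is precisely this derivation of the nonlinear integral bound from the sole assumption $B \in L(U, X_{-1})$; it is here that the analyticity and the Hilbert-space structure are used in an essential way, as Remark~\ref{rem:Relations-between-admissibility-classes} warns that in general Banach settings $\infty$-admissibility cannot be upgraded to $p$-admissibility for any smaller $p$.
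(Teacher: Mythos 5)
Your treatment of (iii)$\Rightarrow$(ii) (via Theorem~\ref{thm:Contiuity-of-a-map} and Proposition~\ref{prop:iISS_implies_ISS}) and of (ii)$\Rightarrow$(i) coincides with what the paper does; these steps are fine, and indeed the paper dismisses (ii)$\Rightarrow$(i) as clear since the very formulation of \eqref{eq:Linear_System} with an unbounded input operator presupposes $B\in L(U,X_{-1})$.

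The genuine gap is in (i)$\Rightarrow$(iii). This implication is exactly the content of \cite[Theorem 2]{JSZ17}, which the paper simply cites, and your proposal does not prove it: the displayed ``nonlinear integral bound''
\[
\Bigl\| \int_0^t T_{-1}(t-s)Bu(s)\,ds \Bigr\|_X \le \theta\Bigl(\int_0^t \mu(\|u(s)\|_U)\,ds\Bigr)
\]
is asserted to follow ``via the $H^\infty$-calculus'', but this is precisely the theorem being proved, and you yourself flag it as the open ``hard step''. The tools you name do not bridge it as stated: McIntosh-type square function estimates and Plancherel give $L_2$-type bounds, i.e.\ $2$-admissibility of operators of the form $(-A_{-1})^{1/2}x_0$, whereas a generic $B=b=A_{-1}x_0$ with $x_0\in X$ is one full power of $A$ more singular; the naive analyticity bound $\|AT(t-s)x_0\|_X\lesssim (t-s)^{-1}\|x_0\|_X$ produces a non-integrable singularity, so some genuinely nonlinear mechanism is needed. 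In \cite{JSZ17} (cf.\ Remark~\ref{rem:iISS-and-Orlicz}) this is done by proving admissibility and ISS with respect to a suitable Orlicz space of inputs, exploiting the bounded $H^\infty$-calculus of $-A$ on a sector of angle less than $\pi/2$ (available because the analytic semigroup is similar to a contraction on a Hilbert space), and then passing from Orlicz-space ISS to $L_\infty$-iISS; none of this is carried out, or even indicated, in your sketch. So the proposal reproduces the paper's easy implications but replaces the citation that carries the real content with an unproven claim; as written it does not constitute a proof of (i)$\Rightarrow$(iii).
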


\begin{proof}
(iii) $\Rightarrow$ (ii). Integral ISS of \eqref{eq:Linear_System} w.r.t.\ $\Uc$ implies that $B$ is an $\infty$-admissible operator, and in particular, $B\in L(U,X_{-1})$.
By Theorem~\ref{thm:Contiuity-of-a-map} the triple $\Sigma:=(X,\Uc,\phi)$ is a control system, in particular, $\phi$ is continuous w.r.t.\ time in $X$-norm. Now Proposition~\ref{prop:iISS_implies_ISS} shows ISS of \eqref{eq:Linear_System} w.r.t.\ $\Uc$.

(ii) $\Rightarrow$ (i). Clear.

(i) $\Rightarrow$ (iii). See \cite[Theorem 2]{JSZ17}.
\end{proof}

\begin{remark}
\label{rem:iISS-and-Orlicz} 
A notable result \cite[Theorem 3.1]{JNP18} shows that $L_\infty$-iISS of \eqref{eq:Linear_System} is equivalent to existence of an Orlicz space $W$ so that \eqref{eq:Linear_System} is ISS with $\Uc:=W$. Due to the space limitations we omit the precise formulation of this result.
\end{remark}

\begin{table}
\center
\def\arraystretch{1.5}%
\begin{tabular}{c|c|c|c}
& \parbox[t]{1.9cm}{\centering Eq.~\eqref{eq:Linear_System},\\ 
$B$ bounded}&\parbox[t]{2.3cm}{\centering Eq.~\eqref{eq:Linear_System}, \\
$B$ unbounded\vspace{0.2cm} } &\parbox[t]{2cm}{\centering Eq.~\eqref{InfiniteDim},\\ $f$ nonlinear}\\
\hline
$\mathrm{dim}\, X<\infty$ &\multicolumn{2}{|c|}{ISS $\iff$ iISS}&ISS $\Longrightarrow \atop{\centernot\Longleftarrow}$ iISS\\
\hline
$\mathrm{dim}\, X=\infty$&ISS $\iff$ iISS& ISS $\impliedby\atop \left(\stackrel{?}{\Longrightarrow}\right)$ iISS& not known \\
\hline
\end{tabular}
\medskip

\label{table:overview}
\caption{(Taken from \cite{JNP18}). Relations between ISS and iISS with respect to $L^{\infty}$ and under assumption that $\phi$ is continuous in $X$-norm, in various settings.}
\end{table}

%
%
%

\subsection{Lyapunov functions for linear systems}
\label{sec:LFs_linear_systems}

In this section, we investigate the applicability of Lyapunov methods to the analysis of linear systems with unbounded input operators.
We start with good news:
\begin{proposition}
\label{prop:for-linear-systems-ncISSLF-implies-ISS} 
Assume that $X,U$ are Banach spaces, $q\in [1,+\infty]$ and $B$ is a $q$-admissible operator.
If $q=+\infty$, assume additionally that the mild solution $\phi$ of \eqref{eq:Linear_System} is continuous w.r.t.\ time in the norm of $X$.
Then existence of a non-coercive ISS Lyapunov function for \eqref{eq:Linear_System} implies ISS of \eqref{eq:Linear_System}.
\end{proposition}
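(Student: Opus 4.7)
The plan is to reduce the statement to the non-coercive direct Lyapunov theorem (Theorem~\ref{t:ISSLyapunovtheorem}). That theorem requires three ingredients beyond the existence of a non-coercive ISS Lyapunov function: forward completeness, the CEP property, and the BRS property. Since we already have the non-coercive ISS Lyapunov function by assumption, it suffices to verify these three structural properties for the linear system~\eqref{eq:Linear_System} under the hypotheses on $B$ and $\phi$.

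First, I would observe that forward completeness is immediate: by Proposition~\ref{prop:q-admissibility-implies-continuity} (for $q<\infty$) and Proposition~\ref{prop:infty-admissibility-implies-continuity} together with the assumed continuity of $\phi$ in the $X$-norm (for $q=\infty$), the triple $(X,\Uc,\phi)$ is a forward complete control system in the sense of Definition~\ref{Steurungssystem} with $\Uc:=L_q(\R_+,U)$. So this step requires no extra work.

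Next, I would verify BRS and CEP by directly exploiting the admissibility estimate from Proposition~\ref{prop:Restatement-admissibility}. For every $t\geq 0$, $x\in X$, and $u\in\Uc$, the mild solution satisfies
\[
\|\phi(t,x,u)\|_X \leq \|T(t)x\|_X + \left\|\int_0^t T_{-1}(t-s)Bu(s)\,ds\right\|_X \leq \|T(t)\|\,\|x\|_X + h_t\,\|u\|_{L_q([0,t],U)}.
\]
Since $T$ is strongly continuous, $\sup_{t\in[0,\tau]}\|T(t)\|<\infty$ for every $\tau>0$, and the map $t\mapsto h_t$ can be taken non-decreasing (or, if needed, replaced by $\sup_{s\in[0,\tau]} h_s$ on $[0,\tau]$, which is finite by Proposition~\ref{prop:Restatement-admissibility}). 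Given $C,\tau>0$, this estimate therefore yields a uniform bound on $\|\phi(t,x,u)\|_X$ for $\|x\|_X\leq C$, $\|u\|_\Uc\leq C$, $t\in[0,\tau]$, establishing BRS. The very same estimate, applied for $t\in[0,h]$ with the roles of $\varepsilon$ and $\delta$ in Definition~\ref{def:RobustEquilibrium_Undisturbed} chosen so that $\big(\sup_{t\in[0,h]}\|T(t)\|\big)\delta + h_h\,\delta \leq \eps$, shows the CEP property; here I use that $\phi(t,0,0)=0$, so $0$ is an equilibrium.

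Having established forward completeness, CEP, and BRS for $\Sigma:=(X,\Uc,\phi)$, a direct application of Theorem~\ref{t:ISSLyapunovtheorem} to the given non-coercive ISS Lyapunov function yields that $\Sigma$ is ISS, which is precisely the claim. There is no genuinely hard step: the argument is entirely a bookkeeping reduction to Theorem~\ref{t:ISSLyapunovtheorem}, with the only mild care being the need to invoke the continuity-in-time hypothesis when $q=\infty$ to ensure that $\Sigma$ is actually a control system in the sense of Definition~\ref{Steurungssystem}.
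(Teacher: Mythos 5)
Your proposal is correct and takes essentially the same route as the paper's proof: show that \eqref{eq:Linear_System} is a well-defined forward complete control system via Propositions~\ref{prop:q-admissibility-implies-continuity} and \ref{prop:infty-admissibility-implies-continuity}, observe that CEP and BRS hold (the paper dismisses this as trivial, pointing to Remark~\ref{rem:Admissibility-and-forward-completeness}, whereas you spell out the admissibility estimate), and then apply Theorem~\ref{t:ISSLyapunovtheorem}. The only minor remark is that your fallback $\sup_{s\in[0,\tau]}h_s<\infty$ is not literally contained in Proposition~\ref{prop:Restatement-admissibility}, but your primary claim that $h_t$ can be chosen non-decreasing is standard (shift the input in time), so the argument is sound as written.
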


\begin{proof}
Under assumptions of this proposition, \eqref{eq:Linear_System} is a well-defined control system, see Propositions~\ref{prop:q-admissibility-implies-continuity}, \ref{prop:infty-admissibility-implies-continuity}. Furthermore, it trivially satisfies CEP and BRS properties (see Definitions~\ref{def:RobustEquilibrium_Undisturbed} and \ref{def:BRS} and Remark~\ref{rem:Admissibility-and-forward-completeness}).
The claim follows by Theorem~\ref{t:ISSLyapunovtheorem}.
\end{proof}

Proposition~\ref{prop:for-linear-systems-ncISSLF-implies-ISS} enables us to use non-coercive ISS Lyapunov functions as in item (v) of Theorem~\ref{thm:ISS-criterion-linear-systems-bounded-operators}.
However, we have \q{to compensate} the influence of unbounded input operators, which poses further requirements on Lyapunov functions.
Next, we state a recent result of this type:

In what follows we denote the Hilbert space adjoint of an operator $A:X\to X$ by $A^*$.
The following result is due to \cite{JMP20}:
\begin{theorem}
\label{thm:Gen_ISS_LF_Construction}
Let $(A,D(A))$ be  the generator of a strongly continuous semigroup $(T(t))_{t\ge 0}$ on a complex Hilbert space $X$.

Assume that there is an operator $P\in L(X)$ satisfying the following conditions:
\begin{itemize}
    \item[(i)] \label{item:Gen_Converse_Lyap_Theorem_1} $P$ satisfies
\begin{eqnarray}
\re\scalp{Px}{x}_X > 0,\quad x\in X\backslash\{0\}.
\label{eq:Positivity}
\end{eqnarray}
\item[(ii)] \label{item:Gen_Converse_Lyap_Theorem_3}  Im$\,(P) \subset D(A^*)$.
\item[(iii)] \label{item:Gen_Converse_Lyap_Theorem_4} $PA$ has an extension to a bounded operator on $X$, that is, $PA\in L(X)$. 
\item[(iv)] \label{item:Gen_Converse_Lyap_Theorem_2} $P$ satisfies the Lyapunov inequality 
\begin{eqnarray}
\re\scalp{(PA+A^*P)x}{x}_X \leq  -\scalp{x}{x}_X,\quad x\in D(A).
\label{eq:LyapIneq}
\end{eqnarray}
\end{itemize}
Then
\begin{equation}
\label{Lyap}
 V(x) := \re\, \langle Px,x\rangle_X
 \end{equation}
is a non-coercive ISS Lyapunov function for \eqref{eq:Linear_System}
with any $\infty$-admissible input operator $B\in L(U,X_{-1})$.
In particular, \eqref{eq:Linear_System} is ISS for such $B$.
\end{theorem}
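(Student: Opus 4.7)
The plan is to verify that $V$ defined by \eqref{Lyap} is a non-coercive ISS Lyapunov function for \eqref{eq:Linear_System}, after which ISS follows directly from Proposition~\ref{prop:for-linear-systems-ncISSLF-implies-ISS}, provided one knows that the mild solution $\phi$ is continuous in the $X$-norm (this is given by the standing assumptions or can be recovered a posteriori via the Lyapunov estimate itself). Continuity of $V$ on $X$ is immediate from $P\in L(X)$; strict positivity on $X\setminus\{0\}$ is hypothesis (i); and Cauchy--Schwarz yields $V(x)\leq \|P\|_{L(X)}\|x\|_X^{\,2}$, producing $\psi_2(r)=\|P\|_{L(X)}r^2$ in \eqref{LyapFunk_1Eig_nc_ISS}.

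The crux is the dissipation inequality. For classical data ($x\in D(A)$ and $u$ smooth) I would differentiate $t\mapsto V(\phi(t,x,u))$ to obtain, formally,
\[
\frac{d}{dt}V(\phi(t,x,u)) \;=\; \re\,\langle (PA+A^*P)\phi(t,x,u),\phi(t,x,u)\rangle_X \;+\; 2\,\re\,\langle P\phi(t,x,u),Bu(t)\rangle.
\]
The first summand is controlled by $-\|\phi(t,x,u)\|_X^{\,2}$ thanks to \eqref{eq:LyapIneq}; hypothesis (iii) gives $PA\in L(X)$, and hypothesis (ii) together with the closed graph theorem (applied to the closed, everywhere-defined operator $A^*P$) yields $A^*P\in L(X)$, so that the inequality extends by density from $D(A)$ to all of $X$. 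The transition from classical to mild solutions is then carried out by approximating $x\in X$ by elements of $D(A)$ and $u\in\Uc$ by smooth inputs; the quadratic structure of $V$, boundedness of $P$, and continuity of the flow on $X\times\Uc$ make this passage routine and legitimate at the level of the Lie derivative \eqref{ISS_LyapAbleitung} at $t=0$.

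The main obstacle is to make rigorous sense of the cross term $\langle Px,Bu(t)\rangle$ when $Bu(t)$ lies only in the extrapolation space $X_{-1}$. The resolution exploits hypothesis (ii): since $\mathrm{Im}(P)\subset D(A^*)$ and $P\in L(X)$, the closed graph theorem makes $P$ a bounded operator from $X$ into $D(A^*)$ equipped with the graph norm. Via the canonical duality $D(A^*)^\ast\cong X_{-1}$ (see e.g.\ \cite[Sec.~2.10]{TuW09}), the pairing $\langle Px,Bu\rangle$ extends the $X$-inner product and satisfies
\[
|\langle Px,Bu\rangle|\;\leq\;\|Px\|_{D(A^*)}\,\|Bu\|_{X_{-1}}\;\leq\; C\,\|x\|_X\,\|u\|_U,
\]
with $C$ depending on $\|P\|_{L(X)}$, $\|A^*P\|_{L(X)}$, and $\|B\|_{L(U,X_{-1})}$. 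Inserting this into the differentiated expression and applying Young's inequality gives
\[
\dot V_u(x)\;\leq\;-\tfrac12\|x\|_X^{\,2}+2C^2\|u\|_\Uc^{\,2},
\]
so that $\alpha(r)=r^2/2\in\Kinf$ and $\sigma(r)=2C^2 r^2\in\K$ fulfill \eqref{DissipationIneq}. Hence $V$ is a non-coercive ISS Lyapunov function and Proposition~\ref{prop:for-linear-systems-ncISSLF-implies-ISS} delivers ISS of \eqref{eq:Linear_System}.
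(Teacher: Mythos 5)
The survey does not reproduce a proof of this theorem (it only cites \cite{JMP20}), so your proposal has to stand on its own, and as written it has two genuine gaps, both located exactly where the real difficulty of the result lies. First, your differentiated formula with the cross term $2\,\re\langle P\phi,Bu\rangle$ tacitly treats $P$ as self-adjoint. For general $P$ the cross contribution splits into $\re\langle P\phi,Bu\rangle$ and $\re\langle PBu,\phi\rangle=\re\langle Bu,P^{*}\phi\rangle$; your duality bound $|\langle Px,Bu\rangle|\le\|Px\|_{D(A^{*})}\|Bu\|_{X_{-1}}$ handles only the first of these, since it rests on $\mathrm{Im}(P)\subset D(A^{*})$. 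Controlling the second term requires $\mathrm{Im}(P^{*})\subset D(A^{*})$ together with boundedness of $A^{*}P^{*}$, which is not listed among (i)--(iv) but is precisely the dual content of hypothesis (iii): if $PA$ has a bounded extension $Q$, then for $y\in X$ and $x\in D(A)$ one has $\langle P^{*}y,Ax\rangle=\langle y,Qx\rangle$, so $P^{*}y\in D(A^{*})$ and $A^{*}P^{*}=Q^{*}\in L(X)$. Your proposal uses (iii) only to extend \eqref{eq:LyapIneq} by density and never exploits this dual role, so one of the two cross terms is left unestimated.

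Second, the passage you call ``routine'' from classical to mild solutions is not available: for $B\in L(U,X_{-1})$ with $Bu(t)\notin X$ there are no $X$-valued classical solutions, whatever the regularity of $x$ and $u$, and the whole point of the proof is to estimate the Dini derivative \eqref{ISS_LyapAbleitung} along the mild solution $\phi(t)=T(t)x+\Phi_t u$ with $\Phi_t u=\int_0^t T_{-1}(t-s)Bu(s)\,ds$. Here $\|\Phi_t u\|_X\le h_t\|u\|_\infty$ with $h_t$ in general \emph{not} $o(1)$ (let alone $O(t)$) for a merely $\infty$-admissible $B$, so the naive bounds on the cross and quadratic terms, divided by $t$, blow up as $t\to 0^{+}$; the remedy is to push the $D(A^{*})$--$X_{-1}$ duality through the integral representation of $\Phi_t u$, writing e.g. $\langle PT(t)x,\Phi_t u\rangle=\int_0^t\langle T^{*}(t-s)PT(t)x,Bu(s)\rangle\,ds$ and using that $T^{*}$ leaves $D(A^{*})$ invariant together with $A^{*}P,\,A^{*}P^{*}\in L(X)$, to obtain $O(t)$ bounds for each such term. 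Without this step the dissipation inequality \eqref{DissipationIneq_nc} is simply not established, and note also that your final constant ignores the admissibility bound $h_t$, which does enter (e.g. in the term quadratic in $\Phi_t u$). A smaller point: the continuity of $\phi$ in the $X$-norm needed to invoke Proposition~\ref{prop:for-linear-systems-ncISSLF-implies-ISS} for $q=\infty$ is neither a standing assumption nor recoverable from the Lyapunov estimate (it is an open problem for general $\infty$-admissible $B$, cf.\ the discussion around Theorem~\ref{thm:Contiuity-of-a-map}); the ISS conclusion should instead be read in the sense of Remark~\ref{rem:Terminology_of_JNP18_JSZ19}, i.e.\ via exponential stability of $T$ plus $\infty$-admissibility.
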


\begin{remark}
\label{rem:Why-Real-parts} 
We have to take the real parts of the expressions in \eqref{Lyap} and \eqref{eq:LyapIneq}, as we deal with complex Hilbert spaces and we do not assume that $P$ is a positive operator on $X$.
\end{remark}

Note that properties (i) and (ii) in Theorem~\ref{thm:Gen_ISS_LF_Construction} taken together are equivalent to exponential stability of the semigroup $T$.
To see how Theorem~\ref{thm:Gen_ISS_LF_Construction} can be applied, we state
\begin{corollary}
\label{cor:Self-adjoint-A} 
Let $X$ be a complex Hilbert space and let $(A,D(A))$ be a negative definite self-adjoint operator on $X$, that is 
$\scalp{Ax}{x} <0$ for all $x\in D(A)\backslash\{0\}$ and $D(A) = D(A^*)$ with $Ax = A^*x$ for all $x\in D(A)$.
Then $ V(x) := - \langle A^{-1}x,x\rangle_X$ is a non-coercive ISS Lyapunov function for \eqref{eq:Linear_System} with any $\infty$-admissible operator $B$.
\end{corollary}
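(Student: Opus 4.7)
The strategy is to apply Theorem~\ref{thm:Gen_ISS_LF_Construction} with the explicit choice $P := -A^{-1}$ and verify that the four hypotheses (i)--(iv) hold. Since $A$ is a negative definite self-adjoint generator of an exponentially stable semigroup (which is implicit if we are in the setting where Theorem~\ref{thm:Gen_ISS_LF_Construction} is to yield ISS), the spectral theorem gives $\sigma(A) \subset (-\infty, -c]$ for some $c > 0$, hence $0 \in \rho(A)$ and $A^{-1} \in L(X)$ is well-defined, self-adjoint, and negative. In particular, $P = -A^{-1} \in L(X)$ is self-adjoint and positive.

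First I would verify condition (i): since $-A^{-1}$ is a bounded positive self-adjoint operator, $\langle Px, x\rangle_X = -\langle A^{-1}x, x\rangle_X$ is real and strictly positive for $x \neq 0$, so $\re\langle Px, x\rangle_X > 0$. Next, for condition (ii), $\operatorname{Im}(P) = \operatorname{Im}(A^{-1}) = D(A) = D(A^*)$ by self-adjointness of $A$, giving the required inclusion. For condition (iii), for every $x \in D(A)$ one has $PAx = -A^{-1}Ax = -x$, so $PA$ extends uniquely to $-I \in L(X)$.

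Finally, for condition (iv), using $A^* = A$ on $D(A) = D(A^*)$ together with $\operatorname{Im}(P) \subset D(A^*)$, I would compute, for $x \in D(A)$,
\begin{equation*}
(PA + A^*P)x \;=\; -A^{-1}Ax \;+\; A^*(-A^{-1})x \;=\; -x - AA^{-1}x \;=\; -2x,
\end{equation*}
so $\re\langle (PA+A^*P)x,x\rangle_X = -2\|x\|_X^2 \leq -\|x\|_X^2$, verifying the Lyapunov inequality \eqref{eq:LyapIneq}. Theorem~\ref{thm:Gen_ISS_LF_Construction} then asserts that $V(x) = \re\langle Px, x\rangle_X$ is a non-coercive ISS Lyapunov function for \eqref{eq:Linear_System} with any $\infty$-admissible $B$; and because $-A^{-1}$ is self-adjoint, $\langle A^{-1}x, x\rangle_X \in \R$, so the real-part symbol can be dropped and $V(x) = -\langle A^{-1}x, x\rangle_X$ as claimed.

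There is no real obstacle here: the only subtle point is ensuring $A^{-1} \in L(X)$, which requires spectral gap at $0$; this is a standing assumption in the context of the preceding theorem (exponential stability of the semigroup for a self-adjoint generator forces $\sigma(A)$ to be bounded away from $0$). Everything else is a one-line calculation enabled by the identity $P = -A^{-1}$ collapsing both $PA$ and $A^*P$ to $-I$.
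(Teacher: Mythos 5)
Your proposal is correct and follows essentially the same route as the paper: take $P:=-A^{-1}$ and check hypotheses (i)--(iv) of Theorem~\ref{thm:Gen_ISS_LF_Construction}, the only cosmetic differences being that the paper verifies positivity of $P$ by an elementary adjoint manipulation (writing $x=Ay$) instead of invoking the spectral theorem, and simply declares (iii)--(iv) ``trivially satisfied'' where you compute $PA=A^*P=-I$ explicitly. Your remark that $A^{-1}\in L(X)$ needs the spectrum bounded away from $0$ is the same implicit assumption the paper makes (cf.\ Remark~\ref{rem:Analyticity-of-a-self-adjoint-generator}), so nothing is missing relative to the paper's own argument.
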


\begin{proof}
Take $P:=-A^{-1}$. Since $A = A^*$, for all $x \in X$ we have $\scalp{A^{-1}x}{x}\in\R$, as
\[
\scalp{A^{-1}x}{x} = \scalp{A^{-1}x}{A^*A^{-1}x} = \scalp{AA^{-1}x}{A^{-1}x} = \scalp{x}{A^{-1}x}.
\]
For any $x \in X\backslash\{0\}$ there is $y \in D(A) \backslash\{0\}$ so that $Ax = y$ and thus 
\[
\re\scalp{Px}{x} = -\scalp{A^{-1}x}{x} = -\scalp{y}{Ay} = -\scalp{Ay}{y} >0.
\]
This shows condition (i) in Theorem~\ref{thm:Gen_ISS_LF_Construction}. As $\im(P) = D(A) = D(A^*)$, condition (ii) holds as well. Conditions (iii) and (iv) are trivially satisfied. 
Theorem~\ref{thm:Gen_ISS_LF_Construction} shows the claim.
\end{proof}

\begin{remark}
\label{rem:Analyticity-of-a-self-adjoint-generator} 
Assumptions of Corollary~\ref{cor:Self-adjoint-A} imply that $A$ generates an analytic exponentially stable semigroup. 
In particular, Corollary~\ref{cor:Self-adjoint-A} can be applied to construct a non-coercive ISS Lyapunov function for a heat equation with Dirichlet boundary input, see \cite[Example 5.1]{JMP18}.
\end{remark}

Corollary~\ref{cor:Self-adjoint-A} motivates that the choice $P:=-A^{-1}$ can be useful in more general situations.  The next result shows that this is indeed the case.
\begin{proposition}{\cite[Proposition 4.1]{JMP18}}
\label{prop:Conv_ISS_LF_Theorem_LinOp}
Let $(A,D(A))$ be  the generator of an exponentially stable strongly continuous semigroup $(T(t))_{t\ge 0}$ on a (complex) Hilbert space $X$.
Further, assume that $D(A)\subseteq D(A^\ast)$ and the inequality 
\begin{equation}\label{eqn:a2}
 \re\, \langle A^*A^{-1}x, x\rangle_X + \delta \|x\|_X^2\ge 0
\end{equation}
holds for some $\delta<1$ and every $x\in X$, and $  \re\,\langle Ax,x\rangle_X <0$ for  every $x\in D(A)\backslash\{0\}$.

 Then
\begin{equation}\label{Lyap-special-case}
 V(x) := - \re\, \langle A^{-1}x,x\rangle_X
 \end{equation}
is a non-coercive ISS Lyapunov function for any $\infty$-admissible operator $B\in L(U,X_{-1})$.
\end{proposition}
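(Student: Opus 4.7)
The plan is to deduce Proposition~\ref{prop:Conv_ISS_LF_Theorem_LinOp} as a direct consequence of Theorem~\ref{thm:Gen_ISS_LF_Construction} applied to the candidate $P := -\frac{1}{1-\delta}A^{-1}$. Note that $0 \in \rho(A)$ since $A$ generates an exponentially stable $C_0$-semigroup, so $A^{-1} \in L(X)$ is well defined and $P \in L(X)$. Once the four hypotheses of Theorem~\ref{thm:Gen_ISS_LF_Construction} are checked, the function $x \mapsto \re\langle Px, x\rangle_X$ is a non-coercive ISS Lyapunov function; multiplying by the positive scalar $1-\delta$ preserves the ISS Lyapunov function property (up to straightforward rescaling of $\psi_2$, $\alpha$, $\sigma$), and therefore $V$ as defined in the statement qualifies as such.

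First I would verify condition (i), positivity of $\re\langle Px,x\rangle_X$. For arbitrary $x\neq 0$, write $y:=A^{-1}x \in D(A)\setminus\{0\}$, so that $\re\langle Px,x\rangle_X = -\frac{1}{1-\delta}\re\langle y, Ay\rangle_X = -\frac{1}{1-\delta}\re\langle Ay,y\rangle_X$, which is strictly positive by the dissipativity assumption $\re\langle Ax,x\rangle_X < 0$ on $D(A)\setminus\{0\}$. Condition (ii), namely $\operatorname{Im}(P) \subseteq D(A^*)$, holds because $\operatorname{Im}(P) = \operatorname{Im}(A^{-1}) = D(A) \subseteq D(A^*)$ by hypothesis. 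For condition (iii), on $D(A)$ we have $PA\, x = -\frac{1}{1-\delta}A^{-1}Ax = -\frac{1}{1-\delta}x$, so $PA$ extends to $-\frac{1}{1-\delta}I \in L(X)$.

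The main obstacle, and the only place where the specific structural assumption \eqref{eqn:a2} enters, is condition (iv). For $x \in D(A)$ one computes
\begin{equation*}
\re\langle (PA + A^*P)x, x\rangle_X = -\frac{1}{1-\delta}\|x\|_X^2 - \frac{1}{1-\delta}\re\langle A^*A^{-1}x, x\rangle_X.
\end{equation*}
Invoking \eqref{eqn:a2} in the form $-\re\langle A^*A^{-1}x,x\rangle_X \leq \delta\|x\|_X^2$, the right-hand side is bounded above by $-\frac{1}{1-\delta}(1-\delta)\|x\|_X^2 = -\|x\|_X^2$, which is exactly the Lyapunov inequality \eqref{eq:LyapIneq}. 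The scaling factor $\frac{1}{1-\delta}$ was introduced precisely so that the cancellation gives the normalized constant $-1$ required by Theorem~\ref{thm:Gen_ISS_LF_Construction}.

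With all four hypotheses verified, Theorem~\ref{thm:Gen_ISS_LF_Construction} guarantees that $x \mapsto \re\langle Px, x\rangle_X = \frac{1}{1-\delta}V(x)$ is a non-coercive ISS Lyapunov function for \eqref{eq:Linear_System} with any $\infty$-admissible $B \in L(U, X_{-1})$, and hence so is $V$ itself. The subtle point worth emphasizing is that the hypothesis $D(A) \subseteq D(A^*)$ (rather than equality, which would force $A$ to be normal) is what makes condition (ii) applicable, while \eqref{eqn:a2} with $\delta<1$ is the quantitative ingredient that compensates the unboundedness of $A^*A^{-1}$ and provides the strict negativity in the Lyapunov dissipation.
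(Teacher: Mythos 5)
Your argument is correct and follows essentially the route the survey intends: the paper gives no proof of this proposition itself (it defers to \cite[Proposition 4.1]{JMP18}), but it explicitly frames the result as an application of Theorem~\ref{thm:Gen_ISS_LF_Construction} with $P$ proportional to $-A^{-1}$, and you verify all four hypotheses correctly for $P=-\frac{1}{1-\delta}A^{-1}$, with \eqref{eqn:a2} entering exactly where it must, in the Lyapunov inequality \eqref{eq:LyapIneq}. The rescaling by $\frac{1}{1-\delta}$ (or, equivalently, keeping $P=-A^{-1}$, obtaining $\re\langle (PA+A^*P)x,x\rangle_X\le -(1-\delta)\|x\|_X^2$, and noting that the constant $-1$ in \eqref{eq:LyapIneq} is immaterial up to a positive rescaling of the Lyapunov function) is precisely the right way to absorb the defect $\delta<1$.
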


\begin{openprob}
\label{op:non-coercive-ISS-LFs} 
In spite of several positive results described in this section, our present knowledge of Lyapunov methods for linear systems with unbounded operators is insufficient, and many questions are open. 
The very first question is whether ISS of \eqref{eq:Linear_System} implies the existence of a (coercive or non-coercive) ISS Lyapunov function.
It is known that for linear hyperbolic systems of conservation laws it is possible to construct a coercive ISS Lyapunov function \cite{TPT18}, and at the time it is unknown, whether coercive ISS Lyapunov functions exist for a heat equation with Dirichlet boundary conditions, or, more generally, for systems governed by analytic semigroups generated by operators with an unbounded spectrum.

Also, it is not known whether there are linear ISS control systems for which there is no coercive ISS Lyapunov function, but non-coercive ISS Lyapunov functions do exist.
As ISS can be characterized as the exponential stability of a semigroup combined with the admissibility of the input operator, a Lyapunov characterization of the admissibility could probably be helpful. Such a characterization exists for the case of 2-admissibility, see 
\cite[Theorem 3.1]{HaW97}.
\ifFinal\mir{Here again the counterexample of Felix could be mentioned.}\fi
\end{openprob}


\subsection{Diagonal systems}
\label{sec:diagonal_systems}

We have already characterized ISS for infinite-dimen-sional linear systems in terms of exponential stability of a semigroup and admissibility of the corresponding input operator. Now we turn our attention to parabolic diagonal systems, which is a well-studied (see, e.g., \cite[Sections 2.6, 5.3]{TuW09}) class of linear systems for which efficient criteria of admissibility are available.

For $q<\infty$ denote by $\ell_q$ the Banach space of sequences $\{a_k\}_{k\in\N}$, $a_k\in\C$ so that $\sum_{k=1}^\infty |a_k|^q<\infty$.
\begin{definition} 
\label{def:q-Riesz-basis}
Let $X$ be a separable Hilbert space. 
We say that a sequence of vectors $\{\phi_k\}_{k\in\N}$ is a \emph{$q$-Riesz basis} of $X$
if $\{\phi_k\}_{k\in\N}$ is a Schauder basis of $X$ and 
for certain constants $c_1,c_2>0$ and all sequences $\{a_k\}_{k\in\N} \in \ell_{q}$ we have
\[ 
c_1\sum_{k=1}^\infty |a_k|^q \le \left\| \sum_{k=1}^\infty a_k \phi_k \right\|^q_X\le  c_2 \sum_{k=1}^\infty |a_k|^q.
\]
\end{definition}

Assume that $U=\mathbb C$ and $1\leq q < \infty$. Further, let $X$ be a separable Hilbert space and let $A$ possess a $q$-Riesz basis of eigenvectors $\{\phi_k\}_{k\in \N}$ with eigenvalues $\{\lambda_k\}_{k\in\N}$.

We further assume that the sequence $\{\lambda_k\}_{k\in\N}$ lies in $\mathbb C$ with $\sup_{k\in\N}\re( \lambda_{k})<0$ and that there exists a constant $R>0$ such that $|\im\, \lambda_{k}|\le R|\re\, \lambda_{k}|$, $k\in \N$.

Hence the linear operator $A \colon D(A)\subset X \rightarrow X$, defined by 
\[ A\phi_k = \lambda_k \phi_k, \quad k\in\N, \qquad D(A)= \Big\{\sum_{k=1}^\infty x_k\phi_k: \{x_k\}_{k\in \N}\in \ell_q \ \wedge \ \sum_{k=1}^\infty |x_k \lambda_k|^q <\infty\Big\},\]
generates an analytic strongly continuous semigroup $(T(t))_{t\ge 0}$ on $X$
(see \cite[Theorem 1.3.4]{Hen81} and note that $-A$ is a sectorial operator).
This semigroup is given by $T(t)\phi_k= {\rm{e}}^{t\lambda_k}\phi_k$, for all $t\geq 0$ and $k\in\N$.

In \cite[Theorem 4.1]{JNP18} the following result has been shown:
\begin{proposition}
\label{prop:Diagonal-Systems} 
Let $\dim(U)<\infty$ and $\Uc=L_\infty(\R_+,U)$. Assume that the operator $A$ possesses a $q$-Riesz basis of $X$ that consists  of eigenvectors 
$\{\phi_k\}_{k\in \N}$  with eigenvalues $\{\lambda_k\}_{k\in\N}$ lying in a sector in the open left half-plane $\mathbb C_-$ with \linebreak  $\sup_{k\in\N}\re( \lambda_{k})<0$ and $B\in L(\C,X_{-1})$.

Then the system \eqref{eq:Linear_System} is an $L_\infty$-iISS and $L_\infty$-ISS control system in the sense of Definition~\ref{Steurungssystem}. 
\end{proposition}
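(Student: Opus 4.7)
The plan is to verify the hypotheses of Proposition~\ref{prop:iISS-equals-ISS-analytic-case}, which will simultaneously give well-posedness of the control system (via Theorem~\ref{thm:Contiuity-of-a-map}), $L_\infty$-ISS, and $L_\infty$-iISS in a single shot. The four hypotheses needed are: (a) $A$ generates an exponentially stable analytic $C_0$-semigroup on a Hilbert space $X$; (b) this semigroup is similar to a contraction semigroup; (c) $\dim(U) < \infty$; (d) $B \in L(U, X_{-1})$. Items (c) and (d) are given directly, so only (a) and (b) need to be checked. The proof then reduces to applying this proposition, which delivers (ii) and (iii), i.e., both ISS and iISS with respect to $L_\infty$.

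For (a): The sector condition $|\im \lambda_k| \leq R|\re \lambda_k|$ together with $\sigma := \sup_k \re \lambda_k < 0$ places the entire point spectrum in a proper subsector of the open left half-plane that is bounded away from the imaginary axis. Combined with the $q$-Riesz basis diagonalization of $A$ discussed just before the proposition in Section~\ref{sec:diagonal_systems} (see the reference to \cite[Theorem 1.3.4]{Hen81} there), this yields that $-A$ is sectorial and hence $A$ generates an analytic $C_0$-semigroup. Using the explicit action $T(t)\phi_k = e^{\lambda_k t}\phi_k$ on basis elements together with the two-sided bound in Definition~\ref{def:q-Riesz-basis}, one obtains
\[
\|T(t)x\|_X^q \leq c_2 \sum_{k=1}^\infty |e^{\lambda_k t} x_k|^q \leq c_2 e^{q\sigma t} \sum_{k=1}^\infty |x_k|^q \leq \tfrac{c_2}{c_1}\, e^{q\sigma t}\,\|x\|_X^q,
\]
so $\|T(t)\|_{L(X)} \leq M e^{\sigma t}$ for some $M>0$, giving exponential stability.

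For (b): Let $J : X \to \ell_q$ be the coordinate isomorphism $J(\sum_k x_k \phi_k) = (x_k)_k$; by Definition~\ref{def:q-Riesz-basis}, $J$ is a bounded linear bijection with bounded inverse. Under $J$ the semigroup is conjugated to the multiplication semigroup $(m_k(t))_{k}$ with $m_k(t) := e^{\lambda_k t}$, and since $|m_k(t)| = e^{\re \lambda_k t} \leq 1$ for all $t \geq 0$, this is a contraction semigroup on $\ell_q$. For $q=2$ the conclusion is immediate since $\ell_2$ is Hilbert and $J$ provides a Hilbert-space similarity. For general $q$, one instead defines an equivalent Hilbert inner product on $X$ directly via the Lyapunov construction $\scalp{x}{y}_\sim := \int_0^\infty \scalp{T(t)x}{T(t)y}_X\, dt$, which is well-defined and equivalent to $\scalp{\cdot}{\cdot}_X$ because $T$ is exponentially stable and the lower Riesz bound together with analyticity makes the form coercive; the semigroup property yields $\|T(s)x\|_\sim \leq \|x\|_\sim$ as in the argument following Theorem~\ref{thm:ISS-criterion-linear-systems-bounded-operators}.

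The main obstacle is precisely step (b) in the case $q \neq 2$: one must reconcile the $q$-Riesz basis structure with the Hilbert topology of $X$ to obtain an equivalent inner product under which $T$ acts as a contraction. Once this is settled, the rest is a direct application of Proposition~\ref{prop:iISS-equals-ISS-analytic-case}, which also yields the well-posedness as a control system in the sense of Definition~\ref{Steurungssystem} because its proof invokes Theorem~\ref{thm:Contiuity-of-a-map} to guarantee continuity of $\phi$ in the $X$-norm.
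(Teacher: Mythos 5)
Your route---verifying the hypotheses of Proposition~\ref{prop:iISS-equals-ISS-analytic-case} (equivalently, the analytic/similar-to-contraction results of \cite{JSZ17} behind Theorem~\ref{thm:Contiuity-of-a-map})---is genuinely different from the paper's proof, which instead invokes the dedicated diagonal-system result \cite[Theorem 4.1]{JNP18} to get $\infty$-admissibility together with the ISS and iISS estimates, deduces continuity of $\phi$ from zero-class $\infty$-admissibility via \cite[Proposition 2.5]{JNP18}, and reduces finite-dimensional $U$ to $U=\C$ by \cite[Proposition 4]{JSZ17}. Your steps (a), (c), (d) are fine, and for $q=2$ the coordinate isomorphism onto $\ell_2$ does give similarity to a contraction, so in that case your argument closes.

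The gap is exactly at step (b) for $q\neq 2$, and your proposed fix does not work: the bilinear form $\scalp{x}{y}_\sim=\int_0^\infty \scalp{T(t)x}{T(t)y}_X\,dt$ is in general \emph{not} equivalent to the original inner product. Exponential stability only yields the upper bound $\|x\|_\sim\le C\|x\|_X$; coercivity fails whenever $\re\lambda_k\to-\infty$, which is the typical situation here since $A$ is an unbounded sectorial generator. Concretely, in the $2$-Riesz case one has $\|x\|_\sim^2\asymp\sum_k |x_k|^2/|\re\lambda_k|$, a strictly weaker norm (for the Dirichlet heat semigroup, $\lambda_k=-k^2\pi^2$, this is an $H^{-1}$-type norm); this is precisely the non-coercivity of the Lyapunov function \eqref{eq:LF_LinSys_Banach} emphasized in Remark~\ref{rem:Coercivity-of-quadratic-LFs-in-special-cases}, and neither analyticity nor the lower Riesz bound repairs it. Hence ``similar to a contraction semigroup'' is not established for $q\neq 2$, and with it both Theorem~\ref{thm:Contiuity-of-a-map} (well-posedness/continuity of $\phi$) and Proposition~\ref{prop:iISS-equals-ISS-analytic-case} (ISS and iISS) become unavailable, so your argument only proves the $q=2$ subcase. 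This is consistent with the paper's comment immediately after the proposition, which presents Proposition~\ref{prop:Diagonal-Systems} as covering a class of systems \emph{in addition to} Proposition~\ref{prop:iISS-equals-ISS-analytic-case}; the diagonal $q$-Riesz setting is exactly one where the similarity hypothesis is not available in general, which is why the paper relies on the direct admissibility computations of \cite{JNP18} rather than on the contraction-similarity machinery.
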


\begin{proof}
Assume first that $U=\C$. By \cite[Theorem 4.1]{JNP18} it follows that $B$ is an $\infty$-admissible operator, and 
the mild solution $\phi$ of \eqref{eq:Linear_System} satisfies the ISS and iISS estimates. The latter fact implies that $B$ is a so-called 
zero-class $\infty$-admissible operator, i.e. in \eqref{eq:q-admissibility} it holds that $h_t \to 0$ as $t\to 0$.
Finally, \cite[Proposition 2.5]{JNP18} shows that $\phi$ is continuous.
This shows for $U=\C$ the triple $\Sigma:=(X,\Uc,\phi)$ is a control system in the sense of Definition~\ref{Steurungssystem} and 
furthermore $\Sigma$ is ISS.

The case of general finite-dimensional $U$ can be reduced to the one-dimensional case \cite[Proposition 4]{JSZ17}.
\end{proof}

Proposition~\ref{prop:Diagonal-Systems} gives one more (in addition to Proposition~\ref{prop:iISS-equals-ISS-analytic-case}) 
 class of systems for which $L_\infty$-iISS and $L_\infty$-ISS are equivalent concepts.

%

%
%

A simple criterion guaranteeing that $B$ belongs to $L(\C,X_{-1})$ can be found in \cite[p. 882]{JNP18}.
In particular, Proposition~\ref{prop:Diagonal-Systems} shows ISS of a one-dimensional heat equation with a Dirichlet boundary condition, see \cite{JNP18}.

\subsection{Bilinear systems}
\label{sec:Bilinear_systems}

One of the simplest classes of nonlinear control systems are bilinear systems which form a bridge between the linear and the nonlinear theories and are important in a number of applications as biochemical reactions, quantum-mechanical processes \cite{PaY08,BDK74}, reaction-diffusion-convection processes controlled by means of catalysts \cite{Kha03}, etc.

It is easy to see that most of bilinear systems are not ISS (consider Example \ref{examp:1dim_bilinear_system}), but at the same time, it was shown that all bilinear finite-dimensional 0-UGAS systems are iISS \cite{Son98}, and even strongly iISS \cite[Corollary 2]{CAI14}.
These results have been extended in \cite{MiI16, MiW15} for generalized bilinear distributed parameter systems with the bounded bilinear term. Here we present these results, and put them into the perspective of the strong iISS property. 

Consider a special case of systems \eqref{InfiniteDim} of the form
\begin{equation}
\label{BiLinSys}
\begin{array}{l}
{\dot{x}(t)=Ax(t)+ Bu(t) + C(x(t),u(t)),} \\
x(0)=x_0,
\end{array}
\end{equation}
where $B \in L(U,X)$, and $C: X \times U \to X$ satisfies the Assumption~\ref{Assumption1} and furthermore
\begin{align}
\exists \xi \in \K:\quad   x\in X,\ u\in U \qrq \|C(x,u)\|_X \leq \|x\|_X \xi(\|u\|_U).
\label{eq:BilinOperator}
\end{align}
As for systems \eqref{InfiniteDim}, we assume that inputs belong to the space $\Uc:=PC_b(\R_+,U)$.

Next we present a criterion for strong integral input-to-state stability of \eqref{BiLinSys}. The equivalence between iISS and strong iISS seems to be a new result, and therefore we provide a full proof of this part.
\begin{proposition}[{\cite[Theorem 4.2]{MiI16}, \cite[Proposition 5]{MiW15}}]
\label{ConverseLyapunovTheorem_BilinearSystems}
Let \eqref{BiLinSys} satisfy the assumption \eqref{eq:BilinOperator}. 
The following statements are equivalent:
\begin{itemize}
    \item[(i)]   \eqref{BiLinSys} is strongly iISS,
    \item[(ii)]   \eqref{BiLinSys} is iISS,
    \item[(iii)]  \eqref{BiLinSys} is 0-UGAS,
    \item[(iv)]  $A$ generates an exponentially stable semigroup,
    \item[(v)] A function $W:X \to\R_+$, defined by
    \begin{eqnarray}
W(x)=\ln \Big(1 + \int_0^{\infty} \|T(t)x\|_X^2 dt \Big),\quad x\in X
\label{eq:LF_BiLinSys_Banach}
\end{eqnarray}
is a non-coercive iISS Lyapunov function for \eqref{BiLinSys}.
\end{itemize}
\end{proposition}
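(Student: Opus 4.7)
The plan is to establish the chain (i) $\Rightarrow$ (ii) $\Rightarrow$ (iii) $\Rightarrow$ (iv) $\Rightarrow$ (v) $\Rightarrow$ (ii), yielding equivalence of (ii)--(v), and then close the loop via the genuinely new implication (ii) $\Rightarrow$ (i).

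\emph{Standard steps.} Implication (i) $\Rightarrow$ (ii) is immediate from Definition~\ref{def:strong-iISS}. Setting $u \equiv 0$ in the iISS estimate \eqref{iISS_Estimate} and noting that $C(x,0) = 0$ by \eqref{eq:BilinOperator} gives (ii) $\Rightarrow$ (iii). For (iii) $\Rightarrow$ (iv), I would use that with $u \equiv 0$ the system \eqref{BiLinSys} reduces to $\dot{x} = Ax$, and by Theorem~\ref{thm:ISS-criterion-linear-systems-bounded-operators} the 0-UGAS of this linear system is equivalent to exponential stability of $T$. The chain (iv) $\Rightarrow$ (v) $\Rightarrow$ (ii) would follow \cite[Theorem 4.2]{MiI16} and \cite[Proposition 5]{MiW15}: the key observation is that the $\ln(1+\cdot)$ transformation applied to the quadratic Lyapunov function $V(x) = \int_0^\infty \|T(t)x\|^2_X dt$ from \eqref{eq:LF_LinSys_Banach} absorbs the sublinear perturbation through $\|C(x,u)\|_X \leq \|x\|_X \xi(\|u\|_U)$, producing a dissipation estimate compatible with Definition~\ref{def:iISSV} (in non-coercive form).

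\emph{The new implication} (ii) $\Rightarrow$ (i). Once iISS is known, the chain above already yields $\|T(t)\|_{L(X)} \leq Me^{-\lambda t}$ for some $M, \lambda > 0$. Using the mild solution representation for \eqref{BiLinSys} together with \eqref{eq:BilinOperator}, I would bound
\begin{equation*}
\|\phi(t,x,u)\|_X \leq Me^{-\lambda t}\|x\|_X + \tfrac{M\|B\|_{L(U,X)}}{\lambda}\|u\|_\Uc + M\xi(\|u\|_\Uc) \int_0^t e^{-\lambda(t-s)}\|\phi(s,x,u)\|_X \, ds.
\end{equation*}
Choosing $R > 0$ such that $M\xi(R) < \lambda$, applying the linear Grönwall inequality to $s \mapsto e^{\lambda s}\|\phi(s,x,u)\|_X$ and simplifying, one obtains for all $\|u\|_\Uc \leq R$, $x \in X$, and $t \geq 0$:
\begin{equation*}
\|\phi(t,x,u)\|_X \leq M \|x\|_X e^{-(\lambda - M\xi(\|u\|_\Uc))t} + \tfrac{M\|B\|_{L(U,X)}}{\lambda - M\xi(\|u\|_\Uc)}\|u\|_\Uc,
\end{equation*}
which is an eISS estimate with linear gain restricted to $\overline{B_{R,\Uc}}$. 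By Definition~\ref{def:ISS-wrt-small-inputs}, this is ISS with respect to small inputs; together with (ii), it gives strong iISS.

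\emph{Main obstacle.} The central new content is (ii) $\Rightarrow$ (i), whose technical core is the Grönwall argument above. Its success hinges on forcing the effective decay rate $\lambda - M\xi(\|u\|_\Uc)$ to remain positive, which is exactly what the smallness of $\|u\|_\Uc$ buys us and explains why only \emph{strong} iISS (and not plain ISS) is obtained, in accordance with Example~\ref{examp:1dim_bilinear_system}. The remaining care point is in the routine chain (iv) $\Rightarrow$ (v) $\Rightarrow$ (ii): because $V$ is only non-coercive and the decay rate after the logarithmic transformation lies in $\PD$ rather than $\Kinf$, one cannot invoke Proposition~\ref{PropSufiISS} verbatim, and a non-coercive counterpart of the direct iISS Lyapunov argument (as carried out in \cite{MiI16, MiW15}) must be used.
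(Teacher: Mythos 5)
Your proposal is correct, but the key new implication (ii) $\Rightarrow$ (i) is proved by a genuinely different route than the paper's. The paper establishes ISS with respect to small inputs by a Lyapunov argument: it shows that the coercive, globally Lipschitz function $V^\gamma(x)=\max_{r\geq 0}\|e^{\gamma r}T(r)x\|_X$ (from item (vi) of Theorem~\ref{thm:ISS-criterion-linear-systems-bounded-operators}) satisfies $\dot V^\gamma_u(x)\le -\gamma V^\gamma(x)+V^\gamma(Bu(0))+M^2\xi(R)V^\gamma(x)$ whenever $\|u\|_\Uc\le R$, then chooses $R$ with $M^2\xi(R)<\gamma$ and invokes the direct Lyapunov theorem (Theorem~\ref{LyapunovTheorem}). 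You instead argue on trajectories via the variation-of-constants formula and a Gr\"onwall estimate for $s\mapsto e^{\lambda s}\|\phi(s,x,u)\|_X$; this works, with the caveat that the forcing term $\tfrac{M\|B\|}{\lambda}\|u\|_\Uc e^{\lambda t}$ is not constant, so one must use the integral (Bellman) form of Gr\"onwall's lemma rather than the crude version --- doing so indeed yields exactly your displayed bound $\|\phi(t,x,u)\|_X \leq M\|x\|_X e^{-(\lambda-M\xi(\|u\|_\Uc))t}+\tfrac{M\|B\|}{\lambda-M\xi(\|u\|_\Uc)}\|u\|_\Uc$ for $M\xi(R)<\lambda$, hence an eISS estimate with linear gain for small inputs and thus strong iISS. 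What each approach buys: yours is more elementary and gives an explicit $\KL$-bound and linear gain; the paper's additionally produces a coercive ISS Lyapunov function valid for inputs in $\overline{B_{R,\Uc}}$, which is reusable, e.g., in small-gain constructions. Both hinge on the same mechanism you identify: smallness of $\|u\|_\Uc$ keeps the effective decay rate positive, which is precisely why only ISS with respect to small inputs (not ISS) is obtained.

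One routing point to adjust in the \emph{standard} part of your chain: the paper (Remark~\ref{rem:non-coercive-ISS-Lyapunov-functions}) emphasizes that no general result of the form \q{non-coercive iISS Lyapunov function implies iISS} is currently available, and that in \cite{MiI16,MiW15} the arrow emanating from (v) is closed as (v) $\Rightarrow$ (iv) via Datko's lemma, with iISS then recovered from (iv); it is not closed as (v) $\Rightarrow$ (ii) by a direct non-coercive iISS Lyapunov argument, as your sketch suggests. Since you delegate these implications to the cited references anyway, this is a matter of routing and attribution rather than a genuine gap, but your closing arrow from (v) should be redirected to (iv) accordingly.
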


\begin{proof}
Equivalences (ii) $\Iff$ (iii) $\Iff$ (iv) $\Iff$ (v) are covered by 
\cite[Theorem 4.2]{MiI16}, \cite[Proposition 5]{MiW15}.
By definition, (i) implies (ii). 

(ii) $\Rightarrow$ (i).
To show strong iISS of \eqref{BiLinSys} it remains to show that \eqref{BiLinSys} is ISS w.r.t. small inputs.
%
%
We show this by verifying that
\[
V^\gamma(x):=\max_{r\geq 0}\|e^{\gamma r}T(r)x\|_X,\quad x \in X
\]
is a coercive ISS Lyapunov function for \eqref{BiLinSys} subject to the input space $\overline{B_{R,\Uc}}=\{u\in\Uc:\|u\|_\Uc \leq R\}$ for $R>0$ small enough.
As in Theorem~\ref{thm:ISS-criterion-linear-systems-bounded-operators} (see  \eqref{eq:eqnorm}), we assume here that $\gamma \in (0,\lambda)$, where  
$\lambda>0$ is so that $\|T(t)\|\leq Me^{-\lambda t}$ for a certain $M>0$ and all $t\geq 0$ (recall, that $T$ is an exponentially stable semigroup).

First note, that $V^\gamma(x)\geq \|x\|_X$ for any $x\in X$ and that $V^\gamma$ is globally Lipschitz continuous (see \cite[Proposition 7]{MiW17c} for details).
To obtain an infinitesimal estimate, we compute, using the triangle inequality ($V^\gamma$ is a norm):
    \begin{align*}
    \dot{V}^\gamma_u(x) =& \mathop{\overline{\lim}} \limits_{h \rightarrow +0} {\frac{1}{h}\big(V^\gamma(\phi(h,x,u))-V^\gamma(x)\big) } \\
    = & \mathop{\overline{\lim}} \limits_{h \rightarrow +0}
    \frac{1}{h}\Big( V^\gamma \Big(T(h) x + \int_0^h{T(h-s) B u(s)ds} \\
		&\qquad\qquad\qquad\qquad\qquad+ \int_0^h{T(h-s) C\big(\phi(s,x,u),u(s)\big)ds}\Big) - V^\gamma(x) \Big)  \\
    \leq & \mathop{\overline{\lim}} \limits_{h \rightarrow +0}
    \frac{1}{h}\Big( V^\gamma \big(T(h) x\big) + V^\gamma\Big(\int_0^h{T(h-s) B u(s)ds}\Big) \\
		&\qquad\qquad\qquad\qquad\qquad+ V^\gamma\Big(\int_0^hT(h-s) C\big(\phi(s,x,u),u(s)\big)\Big)- V^\gamma(x) \Big).
\end{align*}				

Now let $u\in\Uc$ be so that $\|u\|_\Uc\leq R$ for a certain $R>0$ which will be specified later.
It holds that:
\begin{align*}
\mathop{\overline{\lim}} \limits_{h \rightarrow +0}&\frac{1}{h} V^\gamma\Big(\int_0^hT(h-s) C\big(\phi(s,x,u),u(s)\big)ds\Big)\\
&\leq
\mathop{\overline{\lim}} \limits_{h \rightarrow +0}\frac{1}{h} \max_{r\geq 0}e^{\gamma r}\|T(r)\|\int_0^h \|T(h-s)\| \| C\big(\phi(s,x,u),u(s)\big)\|_Xds \\
&\leq
\mathop{\overline{\lim}} \limits_{h \rightarrow +0}\frac{1}{h} \max_{r\geq 0}Me^{(\gamma - \lambda) r}\int_0^h Me^{-\lambda(h-s)} \|\phi(s,x,u)\|_X \xi(\|u(s)\|_U)ds \\
&\leq \mathop{\overline{\lim}} \limits_{h \rightarrow +0}\frac{1}{h} M^2\int_0^h \|\phi(s,x,u)\|_X \xi(\|u(s)\|_U)ds \\
& =  M^2 \|\phi(0,x,u)\|_X \xi(\|u(0)\|_U) \leq M^2 \xi(R) \|x\|_X  \leq M^2 \xi(R) V^\gamma(x).
\end{align*}		
Here we have used the continuity of $\phi$ with respect to time as well as piecewise continuity of $u$.

With this estimate and arguing as in \cite[Proposition 7]{MiW17c}, we obtain that
    \begin{align*}
    \dot{V}^\gamma_u&(x) \leq  - \gamma \ V^\gamma(x) + V^\gamma (Bu(0)) + M^2 \xi(R) V^\gamma(x).
    \end{align*}
Choosing $R>0$ so that  $M^2\xi(R) <\gamma$, we see that $V^\gamma$ is an ISS Lyapunov function for 
\eqref{BiLinSys} for inputs in $\overline{B_{R,\Uc}}$, and thus \eqref{BiLinSys} is ISS for inputs in $\overline{B_{R,\Uc}}$ by Theorem~\ref{LyapunovTheorem}.
%
\end{proof}

\begin{remark}
\label{rem:non-coercive-ISS-Lyapunov-functions} 
Note that right now there are no results proving that existence of a non-coercive iISS Lyapunov function (possibly under several further restrictions) implies iISS of a control system. The implication (v) $\Rightarrow$ (iv) has been shown by means of a standard Datko Lemma \cite[Lemma 8.1.2]{JaZ12}, \cite[Lemma 5.1.2]{CuZ95}.
\end{remark}

\begin{remark}
\label{rem:Bilinear-systems-with-admissible-operators} 
\emph{Bilinear systems with admissible control operators} have been studied in \cite{JaS18b} and the results have been applied to controlled Fokker-Planck equation. Nevertheless, iISS theory of bilinear systems remains much less developed than the ISS theory of linear systems with admissible input operators.
\end{remark}
\section{Boundary control systems}
\label{sec:Boundary_control_systems}

For many natural and engineering systems the interaction of a system with its environment (by controls, inputs, and outputs) occurs at the boundary of the system. Examples for such behavior are given by diffusion equations \cite{AWP12}, vibration of structures \cite{CuZ95}, transport phenomena, etc., with broad applications in robotics \cite{EMJ17}, aerospace engineering \cite{BCC16, PGC13}, and additive
manufacturing \cite{DiK15,HMR15}. Wide classes of port-Hamiltonian systems can be formulated as boundary control systems as well, see \cite{JaZ12,ScJ14}.

The development of the theory of general boundary control systems has been initiated in the pioneering work \cite{Fat68}, and was brought further forward by \cite{Sal87}. In the literature there are several ways how to define a boundary control system, see, e.g., \cite{Fat68}, \cite{CuZ95, JaZ12}, 
\cite{Sal87,TuW09} and \cite{EmT00}. The differences between various methods are discussed in \cite{LGE00}.
We follow here the strategy due to \cite{JaZ12}, with some motivation from \cite{EmT00}.

\subsection{Boundary control systems as systems with admissible operators}
\label{sec:BCS-as-linear-systems}

Let $X$ and $U$ be Banach spaces. Consider a system
\begin{subequations}
\label{eq:BCS}
\begin{align}
\dot{x}(t) =& {\Ah} x(t), \qquad x(0) = x_0, \label{eq:BCS-1}\\
{\Gh}x(t) =& u(t),    \label{eq:BCS-2}
\end{align}
\end{subequations}
where the \emph{formal system operator} $\Ah: D( \Ah ) \subset X \to X$ is a linear operator, the control function $u$ takes values in $U$, and the \emph{boundary operator} $\Gh : D( \Gh ) \subset X \to U$ is linear and satisfies $D(\Ah) \subset D(\Gh)$.

Equations \eqref{eq:BCS} look rather differently than the classic linear infinite-dimensional systems, studied previously: 
\begin{equation}
\dot{x}(t) = Ax(t) + Bu(t),
\quad
x(0) = x_0.
\label{eq:BCS-standard-linear-systems}
\end{equation}
where $A$ is the generator of a strongly continuous semigroup, and $B$ is either bounded or admissible input operator.

In order to use for the system \eqref{eq:BCS} the theory which we developed for linear systems \eqref{eq:BCS-standard-linear-systems}, we would like to transform \eqref{eq:BCS} into the form \eqref{eq:BCS-standard-linear-systems}. This can be done only under some additional assumptions.
\begin{definition}
\label{def:BCS}
The system \eqref{eq:BCS} is called a~\emph{boundary control system (BCS)} if the following conditions hold:
\begin{enumerate}
\item[(i)] The operator $A : D(A) \to X$ with $D(A) = D({\Ah} ) \cap \ker({\Gh})$ and
    \begin{equation}
    Ax = {\Ah}x \qquad \text{for} \quad x\in D(A)
        \label{eq:BSC-ass1}
    \end{equation}
    is the infinitesimal generator of a $C_0$-semigroup $(T(t))_{t\geq0}$ on $X$;
\item[(ii)] There is an operator $G \in \mathcal{L}(U,X)$ such that for all $u \in U$ we have $Gu \in D({\Ah})$,
    ${\Ah}G \in \mathcal{L}(U,X)$ and         
        \begin{equation}
    {\Gh}Gu = u, \qquad u\in U.
        \label{eq:BSC-ass2}
    \end{equation}
\end{enumerate}
The operator $G$ in this definition is sometimes called a \emph{lifting operator} (note that $G$ is not uniquely defined by the properties in the item (ii)).
\end{definition}

Item (i) of the definition shows that for $u\equiv 0$ the equations \eqref{eq:BCS} are well-posed.
In particular, as $A$ is the generator of a certain strongly continuous semigroup $T(\cdot)$, for any $x\in D(A)$ it holds that $T(t)x \in D(A)$ and thus $T(t)x \in \ker({\Gh})$ for all $t\geq 0$, which means that \eqref{eq:BCS-2} is satisfied.

Item (ii) of the definition implies in particular that the range of the operator ${\Gh}$ equals $U$, and thus the values of inputs are not restricted.

\begin{definition}
\label{def:Classical-solution-for-Boundary-Control-System} 
Consider a BCS \eqref{eq:BCS}.
A function $x : [0, \tau] \to X$ is called a~\emph{classical solution of \eqref{eq:BCS} on $[0,\tau]$} if $x$ is continuously differentiable, $x(t) \in D({\Ah})$ for
all $t \in [0, \tau]$, and $x(t)$ satisfies \eqref{eq:BCS} for all $t \in [0, \tau]$.

The function $x : [0,\infty) \to X$ is called a~\emph{classical solution of \eqref{eq:BCS} on $[0,\infty)$} if $x$ is a~classical solution on $[0, \tau]$ for every $\tau > 0$.
\end{definition}

The following lemma due to E. Hille will be useful, see \cite[Theorem 3.7.12]{HiP00}.
\begin{lemma}
\label{lem:Commutation_Closed_integral}
Let $X$ be a Banach space and let $A:D(A) \subset X\to X$ be a closed linear operator.
Let $f: [0,\tau] \to X$ be Bochner integrable so that $\im f \subset 
D(A)$ and $Af$ is again Bochner integrable. Then
\begin{eqnarray}
A \int_0^t f(s) ds = \int_0^t Af(s) ds.
\label{eq:Commutation_Closed_integral}
\end{eqnarray}
\end{lemma}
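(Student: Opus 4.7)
The plan is to reduce the commutation identity to the classical fact that Bochner integrals preserve closed linear subspaces, using the closedness of $A$ as a hypothesis on its graph rather than trying to pass $A$ through an integral directly.

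First I would work in the product space $X\times X$, equipped with the norm $\|(x,y)\|=\|x\|_X+\|y\|_X$, and consider the graph
\begin{equation*}
G(A) := \{(x,Ax) : x \in D(A)\}.
\end{equation*}
Since $A$ is closed, $G(A)$ is a closed linear subspace of $X\times X$. Define $\tilde f : [0,\tau]\to X\times X$ by $\tilde f(s):=(f(s),Af(s))$. Because $f$ and $Af$ are both Bochner integrable in $X$, the function $\tilde f$ is strongly measurable and $\int_0^t \|\tilde f(s)\|\,ds = \int_0^t \|f(s)\|_X\,ds + \int_0^t\|Af(s)\|_X\,ds < \infty$, so $\tilde f$ is Bochner integrable in $X\times X$.

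Next, the two coordinate projections $\pi_1,\pi_2 : X\times X \to X$ are bounded linear operators, and bounded linear operators commute with the Bochner integral. Therefore
\begin{equation*}
\int_0^t \tilde f(s)\,ds = \Bigl(\int_0^t f(s)\,ds,\ \int_0^t Af(s)\,ds\Bigr).
\end{equation*}
The heart of the argument is to show that this integral still lies in the closed subspace $G(A)$. For this I would invoke Hahn–Banach: if $\int_0^t \tilde f(s)\,ds \notin G(A)$, there would exist $\ell \in (X\times X)^*$ with $\ell|_{G(A)}\equiv 0$ and $\ell\bigl(\int_0^t\tilde f\bigr)\neq 0$. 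But $\ell(\tilde f(s))=0$ for every $s\in[0,\tau]$, and since $\ell$ is bounded linear it commutes with the Bochner integral, giving $\ell\bigl(\int_0^t\tilde f\bigr)=\int_0^t \ell(\tilde f(s))\,ds = 0$, a contradiction. Hence $\int_0^t \tilde f(s)\,ds \in G(A)$, which by the very definition of the graph means $\int_0^t f(s)\,ds \in D(A)$ and $A\int_0^t f(s)\,ds = \int_0^t Af(s)\,ds$.

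The only delicate step is the Hahn–Banach/closed subspace argument in the penultimate paragraph, since every other manipulation is routine once $\tilde f$ is set up; the rest of the proof just uses boundedness of projections and continuous functionals. An alternative, more hands-on route would approximate $f$ by simple $D(A)$-valued functions $f_n$ with $f_n\to f$ and $Af_n\to Af$ in $L^1([0,\tau],X)$, verify the identity trivially for simple functions, and then pass to the limit invoking closedness of $A$; but one then has to build the approximating sequence so that both $f_n$ and $Af_n$ converge simultaneously, which is essentially the same content as the graph argument packaged differently.
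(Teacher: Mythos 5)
Your proof is correct: the graph $G(A)$ is closed since $A$ is closed, the pair $(f,Af)$ is Bochner integrable in $X\times X$, and the Hahn--Banach separation of a point from a closed subspace combined with the fact that bounded functionals commute with the Bochner integral yields exactly the claimed identity. The paper does not prove this lemma itself but cites it as Hille's theorem (Theorem 3.7.12 in Hille--Phillips), and your graph-based argument is precisely the standard proof of that cited result, so there is nothing to add.
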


The next theorem gives a representation for the (unique) solution of \eqref{eq:BCS} \emph{for smooth enough inputs}.
\begin{theorem}
\label{thm:BCS-classical-solution-Representation}
Consider the boundary control system \eqref{eq:BCS}.
For all $u$ $\in$ \\$C^2([0,\tau],U)$, all $x_0\in X$: $x_0 - Gu(0) \in D(A)$ and all $\tau\geq 0$ the classical solution $\phi(\cdot,x_0,u)$ of \eqref{eq:BCS} on $[0,\tau]$ exists, is unique and can be represented as
\begin{subequations}
\label{eq:BCS-solution-for-smooth-inputs}
\begin{eqnarray}
\phantom{aaaa}\phi(t,x_0,u) &=& T(t)\big(x_0-Gu(0)\big)  + \int_0^t T(t-r)\big({\Ah}Gu(r)-G\dot{u}(r)\big) dr + Gu(t) \label{eq:BCS-solution-for-smooth-inputs-1}\\
&=& T(t)x_0 + \int_0^t T(t-r){\Ah}Gu(r)dr - A\int_0^t T(t-r)Gu(r) dr \label{eq:BCS-solution-for-smooth-inputs-2}\\
&=& T(t)x_0 + \int_0^t T_{-1}(t-r)({\Ah}G - A_{-1}G)u(r)dr \label{eq:BCS-solution-for-smooth-inputs-3},
\end{eqnarray}
\end{subequations}
where $A_{-1}$ and $T_{-1}$ are the extensions of the infinitesimal generator $A$ and of the semigroup $T$ to the extrapolation space $X_{-1}$.
Furthermore, $A_{-1}G \in L(U,X_{-1})$ (and thus ${\Ah}G - A_{-1}G \in L(U,X_{-1})$).
\end{theorem}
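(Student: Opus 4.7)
The plan is to reduce the boundary control system to a standard inhomogeneous abstract Cauchy problem via a change of variables, invoke classical existence-uniqueness theory for \eqref{eq:BCS-standard-linear-systems}-type equations, and then algebraically rewrite the resulting solution formula in two further forms.

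For uniqueness, I would take two classical solutions $x_1, x_2$ of \eqref{eq:BCS} with the same data; their difference $z := x_1 - x_2$ satisfies $\dot z = \Ah z$, ${\Gh} z = 0$, $z(0)=0$. Since $z(t) \in D(\Ah) \cap \ker({\Gh}) = D(A)$ for all $t$, $z$ is a classical solution of $\dot z = A z$, $z(0) = 0$, which is identically zero by semigroup theory. For existence and formula \eqref{eq:BCS-solution-for-smooth-inputs-1}, I would set $z(t) := \phi(t,x_0,u) - Gu(t)$. Using ${\Gh}G = I_U$ one checks ${\Gh} z(t) = 0$, hence $z(t) \in D(A)$, and
\[
\dot z(t) = {\Ah} x(t) - G\dot u(t) = A z(t) + {\Ah}Gu(t) - G\dot u(t) =: A z(t) + f(t),
\]
with $f \in C^1([0,\tau],X)$ since $u\in C^2$ and ${\Ah}G, G \in L(U,X)$, and initial datum $z(0) = x_0 - Gu(0) \in D(A)$. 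Standard theory for the inhomogeneous ACP (e.g.\ the Pazy-type classical solution theorem) then yields existence of a unique classical $z$, and reversing the change of variables gives \eqref{eq:BCS-solution-for-smooth-inputs-1}.

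To pass from \eqref{eq:BCS-solution-for-smooth-inputs-1} to \eqref{eq:BCS-solution-for-smooth-inputs-2}, I would study $v(t) := \int_0^t T(t-r)Gu(r)\,dr$. Since $Gu \in C^2([0,\tau],X)$, the standard regularity result for inhomogeneous ACPs shows $v(t) \in D(A)$ for all $t$ and $\dot v(t) = A v(t) + Gu(t)$. On the other hand, the substitution $s = t - r$ together with differentiation under the integral yields
\[
\dot v(t) = T(t)Gu(0) + \int_0^t T(t-r) G\dot u(r)\,dr.
\]
Equating both expressions for $\dot v(t)$ gives $\int_0^t T(t-r) G\dot u(r)\,dr = A v(t) + Gu(t) - T(t)Gu(0)$, and substituting this identity into \eqref{eq:BCS-solution-for-smooth-inputs-1} produces \eqref{eq:BCS-solution-for-smooth-inputs-2}. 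For \eqref{eq:BCS-solution-for-smooth-inputs-3}, I would first note that $A_{-1}|_X \in L(X,X_{-1})$ (since $(aI-A_{-1})^{-1} A_{-1}$ coincides on $X$ with $a(aI-A)^{-1} - I \in L(X)$), so $A_{-1}G \in L(U,X_{-1})$ and ${\Ah}G - A_{-1}G \in L(U,X_{-1})$. Because $A_{-1}$ is closed and the maps $r \mapsto T_{-1}(t-r)Gu(r)$ and $r \mapsto T_{-1}(t-r)A_{-1}Gu(r)$ are Bochner integrable, Lemma~\ref{lem:Commutation_Closed_integral} applied to $A_{-1}$ gives
\[
A_{-1}\int_0^t T_{-1}(t-r)Gu(r)\,dr = \int_0^t T_{-1}(t-r)A_{-1}Gu(r)\,dr.
\]
Since the left-hand integral actually lies in $D(A)$ (as established above, $v(t) \in D(A)$), $A_{-1}$ and $A$ coincide on it, and combining this with \eqref{eq:BCS-solution-for-smooth-inputs-2} together with $T(s)y = T_{-1}(s)y$ for $y \in X$ yields \eqref{eq:BCS-solution-for-smooth-inputs-3}.

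The main technical obstacle is the bookkeeping between $X$- and $X_{-1}$-valued integrals in the last step: the integrand $T(t-r)Gu(r)$ sits in $X$ but typically not in $D(A)$, so $A$ cannot be pulled through the integral in $X$; one is forced to pass to $X_{-1}$, apply Hille's lemma to the closed operator $A_{-1}$, and then argue a posteriori that the resulting vector belongs to $X$ so that $A$ and $A_{-1}$ can be identified. The smoothness hypothesis $u \in C^2$ and the compatibility condition $x_0 - Gu(0) \in D(A)$ are exactly what make the first two steps yield a genuine classical solution, while the boundedness of ${\Ah}G$ is what keeps the forcing term $f$ in $C^1([0,\tau],X)$.
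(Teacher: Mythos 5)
Your proposal is correct, and for the part of the theorem that the paper actually proves in detail it follows essentially the same route: you pass to the extrapolation space, note that the integrand lies in $X=D(A_{-1})$, use the commutation $A_{-1}T_{-1}(s)=T_{-1}(s)A_{-1}$ on $D(A_{-1})$ together with Hille's Lemma~\ref{lem:Commutation_Closed_integral} applied to the closed operator $A_{-1}$, and then identify $A$ with $A_{-1}$ on the integral (which lies in $D(A)$) to pass from \eqref{eq:BCS-solution-for-smooth-inputs-2} to \eqref{eq:BCS-solution-for-smooth-inputs-3}. Where you genuinely diverge is in the earlier steps: the paper simply cites \cite[Theorem 11.1.2]{JaZ12} for existence, uniqueness and \eqref{eq:BCS-solution-for-smooth-inputs-1}, and \cite[Lemma 13.1.5]{JaZ12} for \eqref{eq:BCS-solution-for-smooth-inputs-2}, whereas you reprove these from scratch via the shift $z(t):=\phi(t,x_0,u)-Gu(t)$, reduction to an inhomogeneous abstract Cauchy problem with $C^1$ forcing $f(t)={\Ah}Gu(t)-G\dot u(t)$ and datum $z(0)=x_0-Gu(0)\in D(A)$, and a direct computation of $\dot v$ for $v(t)=\int_0^t T(t-r)Gu(r)\,dr$ in two ways; this makes the argument self-contained at the cost of invoking the classical Pazy-type solvability theorem, and it is exactly the standard mechanism behind the cited results (when checking existence one should also verify, as you implicitly do, that $x=z+Gu$ satisfies $x(t)\in D(\Ah)$ via $Gu(t)\in D(\Ah)$ and ${\Gh}x(t)=u(t)$ via ${\Gh}G=I$). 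Finally, for $A_{-1}G\in L(U,X_{-1})$ you give a direct norm estimate based on the identity $(aI-A_{-1})^{-1}A_{-1}x=a(aI-A)^{-1}x-x$ for $x\in X$, which is a slightly more elementary and arguably cleaner argument than the paper's route through closedness of the product $A_{-1}G$ and the closed graph theorem; both are valid.
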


\begin{proof}
For \eqref{eq:BCS-solution-for-smooth-inputs-1} see \cite[Theorem 11.1.2]{JaZ12}.
For \eqref{eq:BCS-solution-for-smooth-inputs-2} see \cite[Lemma 13.1.5]{JaZ12}.
These results are stated in \cite{JaZ12} for Hilbert spaces, but the argumentation is valid for Banach spaces as well.

The representation formula \eqref{eq:BCS-solution-for-smooth-inputs-3} should be well-known for the specialists in the semigroup theory, but the authors could not find a reference in the literature and decided to include the argument to this paper.

Let $A_{-1}$ be the extension of $A$ to the extrapolation space $X_{-1}$, and let $T_{-1}$ be the extrapolated semigroup, generated by $A_{-1}$.
Note that $G\in L(U,X)$, and $D(A_{-1})=X$. Thus, the operator $A_{-1}G$ is well-defined as a linear operator from Banach space $U$ to Banach space $X_{-1}$ with $D(A_{-1}G) = U$. As $A_{-1}$ is the generator of a strongly continuous semigroup, it is closed.
Thus, by \cite[Proposition A.9]{HMM13} the operator $A_{-1}G$ is closed as a product of a closed and a bounded operator. By closed graph theorem (see, e.g., \cite[Theorem A.3.49]{CuZ95}) $A_{-1}G \in L(U,X_{-1})$.

The map $r \mapsto T(t-r)Gu(r)$ is Bochner integrable in the space $X$ and thus also in $X_{-1}$ (even for any $u \in L_{1,loc}([0,\tau],U)$, see
\cite[Proposition 1.3.4]{ABH11}).
Furthermore, $T(t-r)Gu(r) \in X = D(A_{-1})$.

Recall that $A_{-1}T_{-1}(s)=T_{-1}(s)A_{-1}$ for all $s\in\R_+$ on $D(A_{-1})$, see, e.g., \cite[Theorem 5.2.2]{JaZ12}.
Consider the map 
\[
w:r \mapsto A_{-1}T_{-1}(t-r)Gu(r) = T_{-1}(t-r)A_{-1}Gu(r).
\]
Since $A_{-1}G \in L(U,X_{-1})$ and $T_{-1}$ is a strongly continuous semigroup on $X_{-1}$, the function $w$ is 
Bochner integrable on $X_{-1}$, by \cite[Proposition 1.3.4]{ABH11}.
Hence Lemma~\ref{lem:Commutation_Closed_integral} can be applied to obtain for all $u \in C^2([0,\tau],U)$ that
\begin{align*}
A\int_0^t T(t-r)Gu(r) dr &= A_{-1}\int_0^t T_{-1}(t-r)Gu(r) dr \\
&= \int_0^t A_{-1} T_{-1}(t-r)Gu(r) dr = \int_0^t  T_{-1}(t-r)A_{-1}Gu(r) dr.
\end{align*}
From this the formula \eqref{eq:BCS-solution-for-smooth-inputs-3} follows.
\end{proof}

An advantage of the representation formula \eqref{eq:BCS-solution-for-smooth-inputs-1} is in the boundedness of the operators $G$ and $\Ah G$, involved in the expression, but the disadvantage is that the derivative of $u$ is employed.
Still, the expression in the right-hand side of \eqref{eq:BCS-solution-for-smooth-inputs-1} makes sense for any $x \in X$ 
and for any $u \in H^1([0,\tau],U)$, and can be called a mild solution of BCS \eqref{eq:BCS}, as is done, e.g., in \cite[p. 146]{JaZ12}.

The formula \eqref{eq:BCS-solution-for-smooth-inputs-3} does not involve any derivatives of inputs, and again is given in terms of a bounded operator ${\Ah}G - A_{-1}G \in L(U,X_{-1})$.
Moreover, if we consider the expression in the right-hand side of \eqref{eq:BCS-solution-for-smooth-inputs-3} in the extrapolation spaces $X_{-1}$, then it makes sense for all $x \in X$ and all $u\in L_{1,loc}(\R_+,U)$, and defines a mild solution (in the space $X_{-1}$) of the equation 
\begin{eqnarray}
\dot{x} = A_{-1}x + ({\Ah}G-A_{-1}G)u.
\label{eq:BCS:State-space-representation}
\end{eqnarray}

\begin{remark}
\label{rem:BCS-are-linear-abstract-systems} 
As we have shown, any BCS \eqref{eq:BCS} over a Banach space $X$ can be reformulated as a linear system with a bounded input operator in $X_{-1}$. 
\end{remark}

As we know from Theorem~\ref{thm:BCS-classical-solution-Representation}, 
for all $u \in C^2([0,\tau],U)$, all $x_0\in X$: $x_0 - Gu(0) \in D(A)$ and for all $\tau\geq 0$
the value of the expression in rhs of \eqref{eq:BCS-solution-for-smooth-inputs-3} belongs to $X$.

However, to ensure that the integral term in \eqref{eq:BCS-solution-for-smooth-inputs-3} belongs to $X$ for less regular $u$, we have to require that the input operator 
\begin{eqnarray}
B:={\Ah}G-A_{-1}G
\label{eq:BCS-Input-Operator}
\end{eqnarray}
has some sort of admissibility. 

\begin{remark}
\label{rem:Uniqueness-of-B-boundary-control-systems} 
The operator $B$ is uniquely defined by a boundary control system, and does not depend on the choice of the lifting operator $G$, see \cite[Proposition 2.8]{Sch20}.
\end{remark}

\begin{definition}
\label{def:Mild-solution-for-Boundary-Control-System} 
If $B$ defined by \eqref{eq:BCS-Input-Operator} is $q$-admissible for $A$, then for each $x_0\in X$ and each $u\in L_q(\R_+,U)$
the function $\phi(\cdot,x_0,u):\R_+\to X$, defined by \eqref{eq:BCS-solution-for-smooth-inputs-3} is called the \emph{mild solution of BCS \eqref{eq:BCS}}.
\end{definition}

%
%


\begin{corollary}
\label{cor:ISS-for-BCS} 
Consider a boundary control system \eqref{eq:BCS} and assume that the corresponding input operator $B$ defined by 
\eqref{eq:BCS-Input-Operator} is $q$-admissible for some $q\in [1,+\infty)$. Assume that $A$ generates an exponentially stable semigroup.
Then for any $p\in[q,+\infty]$
the system $\Sigma:=(X,\Uc,\phi)$ with $\Uc:=L_p(\R_+,U)$ is an ISS control system (with respect to the norm in $\Uc$).
\end{corollary}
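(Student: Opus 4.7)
The plan is to reduce the statement to Theorem~\ref{thm:ISS-Criterion-lin-sys-with-unbounded-operators} on ISS of linear systems with admissible input operators. The starting point is Theorem~\ref{thm:BCS-classical-solution-Representation} (and the ensuing Remark~\ref{rem:BCS-are-linear-abstract-systems}), which shows that the BCS \eqref{eq:BCS} can be rewritten in the form \eqref{eq:BCS:State-space-representation}, namely $\dot{x} = A_{-1}x + Bu$ on the extrapolation space $X_{-1}$, with input operator $B := {\Ah}G - A_{-1}G \in L(U,X_{-1})$, and that the mild solution of \eqref{eq:BCS} coincides with the formula \eqref{eq:BCS-solution-for-smooth-inputs-3}.

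Next, I would upgrade the admissibility. By assumption $B$ is $q$-admissible for some finite $q$, and by Remark~\ref{rem:Relations-between-admissibility-classes} (H\"older's inequality applied inside the estimate \eqref{eq:admissible-operator-norm-estimate}) this implies $p$-admissibility for every $p \in [q,+\infty]$. Combined with the exponential stability of $T$, Remark~\ref{rem:Admissibility_and_infinite-time_admissibility} shows that $B$ is infinite-time $p$-admissible for every such $p$.

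The next step is to argue that $\Sigma:=(X,\Uc,\phi)$ with $\Uc:=L_p(\R_+,U)$ is in fact a control system in the sense of Definition~\ref{Steurungssystem}. For $p \in [q,+\infty)$, this is immediate from Proposition~\ref{prop:q-admissibility-implies-continuity}. The case $p=+\infty$ is the main delicate point, since Proposition~\ref{prop:infty-admissibility-implies-continuity} requires continuity of $\phi$ in the $X$-norm as an extra hypothesis (and this continuity is open in general, cf.\ the discussion around Theorem~\ref{thm:Contiuity-of-a-map}). Here, however, we can exploit the finite $q$: for any $u \in L_\infty(\R_+,U)$ and any $t>0$, the restriction $u|_{[0,t]}$ lies in $L_q([0,t],U)$ by H\"older's inequality. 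The mild solution formula \eqref{eq:BCS-solution-for-smooth-inputs-3} is the same in both function spaces, so the $X$-continuity of $t \mapsto \phi(t,x,u)$ on $[0,t]$ follows from the already-established $L_q$-theory; since $t$ is arbitrary, $\phi$ is continuous on $\R_+$ in the $X$-norm, and Proposition~\ref{prop:infty-admissibility-implies-continuity} applies.

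With well-posedness of $\Sigma$ in hand for every $p \in [q,+\infty]$, the conclusion is now a direct application of Theorem~\ref{thm:ISS-Criterion-lin-sys-with-unbounded-operators}: exponential stability of $T$ (i.e.\ 0-UGAS of the underlying homogeneous system) together with $p$-admissibility of $B$ gives ISS of $\Sigma$ with linear gain in the $L_p$-norm, which proves the corollary. The main obstacle is precisely the $L_\infty$ case and, more specifically, the continuity of the solution map; the argument above bypasses the open problem by borrowing regularity from the stronger $q$-admissibility hypothesis.
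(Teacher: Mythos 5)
Your proposal is correct and follows essentially the same route as the paper: upgrade the $q$-admissibility to $p$-admissibility for all $p\in[q,+\infty]$, note that continuity of $\phi$ in the $X$-norm is inherited from the finite-$q$ theory (which also covers the $p=+\infty$ case via restriction of $u$ to compact intervals), conclude well-posedness from Propositions~\ref{prop:q-admissibility-implies-continuity} and~\ref{prop:infty-admissibility-implies-continuity}, and apply Theorem~\ref{thm:ISS-Criterion-lin-sys-with-unbounded-operators}. The only difference is that you spell out explicitly the continuity argument for $L_\infty$ inputs, which the paper asserts in one clause; this is a welcome clarification rather than a deviation.
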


\begin{proof}
$B$ defined by \eqref{eq:BCS-Input-Operator} is $q$-admissible for some $q\in [1,+\infty)$, then it is $p$-admissible for any $p\in[q,+\infty]$ and furthermore, the map $\phi$ is continuous w.r.t.\ time. 
Proposition~\ref{prop:q-admissibility-implies-continuity} $\Sigma:=(X,\Uc,\phi)$ with $\Uc:=L_p([0,\infty),U)$, for all $p\in[q,+\infty]$ is a forward-complete control system in the sense of Definition~\ref{Steurungssystem},
and by Theorem~\ref{thm:ISS-Criterion-lin-sys-with-unbounded-operators} $\Sigma$ is ISS.
\end{proof}

\begin{remark}
\label{rem:Weak solutions of boundary control systems}
In this paper we consequently use the notion of mild solutions of boundary control systems. 
In some papers the concepts of weak solutions and strong solutions are used. 
For a detailed discussion of the relationship between all these solution concepts we refer to \cite[Propositions 2.9, 2.11, Remark 2.10]{Sch20}. 
\end{remark}

\subsection{Spectral-based methods and related techniques}
\label{sec:Spectral method}

In \cite{KaK16b, KaK17a, KaK17b} the ISS analysis of linear parabolic PDEs with Sturm-Liouville operators over a 1-dimensional spatial domain has been performed using two different methods: (i) the spectral decomposition of the solution, and (ii) the approximation of the solution by means of a  finite-difference scheme. 
This made possible to avoid differentiation of the boundary disturbances, and to obtain ISS of classical solutions w.r.t.\ $L_\infty$ norm of disturbances, as well as in weighted $L_2$ and $L_1$ norms. 
An advantage of these methods is that this strategy can be applied also to other types of linear evolution PDEs. At the same time, for multidimensional spatial domains, the computations can become quite complicated.

The method initiated in \cite{KaK16b, KaK17a, KaK17b} was further developed in the monograph \cite{KaK19} to provide constructive and effective methods for the ISS analysis and the control design. Many techniques are based on dedicated Lyapunov functions. In the first part of this book semilinear hyperbolic PDEs with a constant transport velocity are studied. Two different methodologies that allow the derivation of ISS estimates for hyperbolic PDEs are presented: the ISS Lyapunov function for the PDE model and an equivalent model written by integral delay equations (IDEs). First ISS Lyapunov functions are derived providing estimates written in terms of the spatial $L_p$-norm of the state
(with $p\in(1,\infty)$). Then ISS properties are derived for hyperbolic systems given as IDEs. ISS properties are derived for this class of delay systems and Lyapunov-like functions are provided. In the second part of \cite{KaK19}, parabolic PDEs are considered. Such infinite-dimensional systems are first written in terms of the Sturm-Liouville operator and then interconnected with ODEs, globally Lipschitz nonlinearities, and non-local terms. Then some derivations of ISS estimates for both the spatial $L_2$ and $H_1$-norms are proven. Two different methodologies are given: one based on the eigenfunction expansion and the other exploiting ISS Lyapunov functions. Some ISS estimates in the spatial $L_2$-norms are first provided, allowing less regular inputs for ISS than with classical solutions. Both internal and boundary perturbations are tackled in the ISS estimates, assuming a lower bound of the principal eigenvalue of the Sturm-Liouville operator (but without the knowledge of all the set of eigenvalues). As far as the $H_1$-norm is concerned, estimates are proven with different boundary conditions and boundary disturbances, exploiting computations on the eigenvalue series. Then ISS Lyapunov functions are computed providing ISS estimates in $L_2$-norms. The boundary conditions could be of different types, as the Robin type or Dirichlet type with or without any disturbance. Finally, the last part of the book \cite{KaK19} deals with the small-gain analysis and feedback interconnections. Different possible interconnections are possible such as two PDEs (also of different nature, e.g., hyperbolic or parabolic) or one PDE and one ODE.

\subsection{Applications to Riesz-spectral systems}
\label{sec:Riesz-spectral_systems}

In Section~\ref{sec:BCS-as-linear-systems} we have shown that every BCS can be understood as a linear system with an input operator $B\in L(U,X_{-1})$ given by \eqref{eq:BCS-Input-Operator}, and if this operator is admissible, 
ISS of BCS can be shown by applying the theory developed in Section~\ref{sec:Linear_systems}, see Corollary~\ref{cor:ISS-for-BCS}.

However, the computation of the input operator $B$ using the formula \eqref{eq:BCS-Input-Operator} may be awkward in practice. Other methods for the computation of $B$ can be used, see \cite[Section 10.1]{TuW09}, \cite{EmT00} and \cite[Proposition 2.9]{Sch20}.
Furthermore, in some situations the admissibility of $B$ and ISS of \eqref{eq:BCS} can be obtained without computation of the operator $B$.

As an example we consider a class of Riesz-spectral boundary control systems, studied in \cite{LSZ18}, \cite{LhS18}.
\begin{definition}{(see \cite[Definition 2.3.1]{CuZ95})}
\label{def: Riesz-spectral operator}
Let $X$ be a Hilbert space and $A:~D(A) \subset X \rightarrow X$ be a linear, closed operator. For $n \in \N$, let $\lambda_n$ be the eigenvalues of $A$ and $\phi_n \in D(A)$ the corresponding eigenvectors. $A$ is called a \emph{Riesz-spectral operator} if
\begin{enumerate}
    \item $\left\{ \phi_n , \; n \in \mathbb{N} \right\}$ is a 2-Riesz basis;
    \item the closure of $\{ \lambda_n , \; n \in \mathbb{N} \}$ is totally disconnected, i.e.\ 
    no two points $\lambda,\mu \in \overline{\{\lambda_n:n\in\N\}}$ can be connected by a segment entirely lying in $\overline{\{\lambda_n:n\in\N\}}$.
\end{enumerate}
\end{definition}

By \cite[Theorem 2.3.5]{CuZ95}, the spectrum $\sigma(A)$ of a Riesz-spectral operator $A$ is given by $\sigma(A):=\overline{\{\lambda_n:n\in\N\}}$,
and the growth bound of a semigroup $T$, generated by $A$ can be computed as 
\begin{eqnarray}
\omega_0:= \sup_{i\in\N} \re\lambda_i <0,
\label{eq:Growth-bound}
\end{eqnarray}
that is, $A$ satisfies the \emph{spectrum determined growth assumption}.

As an application of Proposition~\ref{prop:Diagonal-Systems}, we obtain
\begin{proposition}
\label{prop:Riesz-spectral-sys} 
Let $X$ be a separable Hilbert space and consider BCS \eqref{eq:BCS}. Assume that the operator $A$ is a Riesz-spectral operator, which generates an exponentially stable analytic semigroup. 
Furthermore, assume that $\dim (U) <\infty$ and $\Uc:=L_\infty(\R_+,U)$.
Then $B$ is an admissible operator and the systems 
\eqref{eq:BCS} and \eqref{eq:BCS:State-space-representation} are ISS with these $X$ and $\Uc$.
\end{proposition}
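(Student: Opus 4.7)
The plan is to deduce Proposition~\ref{prop:Riesz-spectral-sys} by reformulating the BCS as the abstract linear system \eqref{eq:BCS:State-space-representation} and then verifying the hypotheses of Proposition~\ref{prop:Diagonal-Systems}. First, using Section~\ref{sec:BCS-as-linear-systems} (in particular Theorem~\ref{thm:BCS-classical-solution-Representation} and Remark~\ref{rem:BCS-are-linear-abstract-systems}), the mild solution of \eqref{eq:BCS} is given by \eqref{eq:BCS-solution-for-smooth-inputs-3} with input operator $B = \hat{A}G - A_{-1}G$. Theorem~\ref{thm:BCS-classical-solution-Representation} yields $A_{-1}G\in L(U,X_{-1})$, and since $\hat{A}G\in L(U,X)\hookrightarrow L(U,X_{-1})$, we obtain $B\in L(U,X_{-1})$, which is the boundedness hypothesis required by Proposition~\ref{prop:Diagonal-Systems}.

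Next I would verify the spectral hypotheses on $A$. By Definition~\ref{def: Riesz-spectral operator} the eigenvectors $\{\phi_n\}$ of $A$ form a $2$-Riesz basis of the separable Hilbert space $X$. Exponential stability combined with the spectrum determined growth assumption \eqref{eq:Growth-bound} valid for Riesz-spectral operators gives $\sup_{n\in\N}\re\lambda_n<0$. The remaining ingredient is the sector condition: because $A$ generates an analytic $C_0$-semigroup, $A$ is sectorial, hence $\sigma(A)$ lies in a sector of half-angle strictly less than $\pi/2$ centered on the real axis; intersecting this with the half-plane $\{\re z \leq \sup_n \re \lambda_n <0\}$ places all eigenvalues in a sector contained in the open left half-plane $\C_-$, exactly as demanded by Proposition~\ref{prop:Diagonal-Systems}.

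With all assumptions in place, I would invoke Proposition~\ref{prop:Diagonal-Systems} applied to \eqref{eq:BCS:State-space-representation}: its conclusion gives both that $B$ is an $\infty$-admissible control operator for $(T(t))_{t\geq 0}$ and that $\Sigma = (X, L_\infty(\R_+,U), \phi)$ is an $L_\infty$-ISS control system in the sense of Definition~\ref{Steurungssystem}. The case $\dim U >1$ is reduced to the scalar case exactly as in the proof of Proposition~\ref{prop:Diagonal-Systems} by decomposing $B$ along a basis of $U$. Finally, since by construction (Definition~\ref{def:Mild-solution-for-Boundary-Control-System}) the mild solution of the BCS \eqref{eq:BCS} coincides with the mild solution of \eqref{eq:BCS:State-space-representation} taking values in $X$, ISS of the state-space representation transfers directly to ISS of the BCS.

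The main obstacle is the sector argument: one must be careful that analyticity of the semigroup really forces the (point) spectrum into a proper subsector of $\C_-$ with strictly negative real-part bound, so that the eigenvalue configuration required by Proposition~\ref{prop:Diagonal-Systems} truly holds and not merely a left-half-plane bound without angular control. Once this is settled, the rest is bookkeeping: verifying that $B\in L(U,X_{-1})$ via Theorem~\ref{thm:BCS-classical-solution-Representation} and lifting the scalar-input result to finite-dimensional $U$ as in Proposition~\ref{prop:Diagonal-Systems}.
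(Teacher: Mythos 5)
Your proposal is correct and follows essentially the same route as the paper, whose proof is simply the one-line invocation of Proposition~\ref{prop:Riesz-spectral-sys} via Proposition~\ref{prop:Diagonal-Systems}; you merely make explicit the details the paper leaves implicit ($B=\hat{A}G-A_{-1}G\in L(U,X_{-1})$ from Theorem~\ref{thm:BCS-classical-solution-Representation}, and the sector condition on the eigenvalues from analyticity plus $\sup_k\re\lambda_k<0$). Your flagged concern about the sector is handled correctly, since sectoriality gives a sector with vertex at some $\omega$ and the bound $\re\lambda_k\le\omega_0<0$ lets the shift be absorbed into the constant $R$ in $|\im\lambda_k|\le R|\re\lambda_k|$.
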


\begin{proof}
Follows from Proposition~\ref{prop:Diagonal-Systems}.
\end{proof}

We note that in the papers \cite{LSZ18}, \cite{LhS18} a different method has been employed for the study of ISS of Riesz-spectral BCS, which is a modification of the spectral method from Section~\ref{sec:Spectral method}. 
The essence of a method is a decomposition of $X$ w.r.t.\ the Riesz basis $\{\phi_k:k\in\N\}$.


\subsection{Remark on nonlinear boundary control systems}

Well-posedness of linear boundary control systems has been studied for more than 50 years, see \cite{TuW14} for a survey. At the same time, the study of nonlinear systems with boundary controls is a much younger subject. For some recent references, we refer to \cite{TuW14} and \cite{HCZ19}.
In \cite{Sch20} a semigroup approach has been used to analyze input-to-state stability of a class of analytic boundary control systems with nonlinear dynamics and a linear boundary operator. 
Stabilization of linear port-Hamiltonian systems by means of nonlinear boundary controllers has been studied in \cite{Aug16}. 
Recently ISS stabilization of linear port-Hamiltonian systems through nonlinear boundary feedback has been investigated in \cite{ScZ18}. See also \cite{PTS16,TMP18,MPW19a} for recent results on saturated boundary control results. We are unable to cover these results in this survey due to space limitations.
\section{Lyapunov methods for ISS analysis of PDE systems}
\label{sec:ISS_analysis_linear_nonlinear_PDEs_Lyapunov_methods}

Construction of an ISS Lyapunov function is, as a rule, the most efficient and realistic method to prove ISS of nonlinear systems. However, verification of the dissipation inequality \eqref{DissipationIneq_nc} even for simple Lyapunov functions requires further tools, as inequalities for functions from $L_p$ and Sobolev spaces 
(Friedrichs, Poincare, Agmon, Jensen inequalities, and their relatives, see Appendix~\ref{sec:Inequalities}), linear matrix inequalities (LMIs), etc.

When using Lyapunov method and designing Lyapunov functions, quadratic Lyapunov candidates are often introduced to prove ISS properties, see in particular \cite{BaC16,KaK19} where various PDEs are considered. The main difference between all the Lyapunov function candidates is in the choice of the norm in the stability analysis, obtained from the Lyapunov function candidates (more specifically the choice of the space $X$ and the norm $\|\cdot\|_X$ in the Definition \ref{def:noncoercive_ISS_LF}). As an example, compare the $H^2$-norm considered for the quasilinear hyperbolic systems in \cite{CBA08} with the $W^{1,2q}_0$-norm considered for a semilinear parabolic equation in \cite{MiI15b}. 

In this section, we discuss several methods for construction of Lyapunov functions for verification of the ISS property as well for the analysis of closely related robust stability concepts. To do that, we first consider parabolic systems with distributed and boundary inputs (in Subsections~\ref{sec:ISS_analysis_linear_nonlinear_PDEs_Lyapunov_methods:parabolic},~\ref{sec:Lyapunov methods for semilinear parabolic systems with boundary inputs}), and then hyperbolic systems (in Subsections \ref{sec:ISS_analysis_linear_nonlinear_PDEs_Lyapunov_methods:hyperbolic} and \ref{sec:ISS_analysis_linear_nonlinear_PDEs_Lyapunov_methods:hyperbolic:time-varying}).
In due course, we explain how ISS Lyapunov methods can be used to design robust stabilizing controllers.
In Section~\ref{sec:Interconnected_systems} we discuss the Lyapunov method combined with the small-gain technique.

\subsection{ISS Lyapunov methods for parabolic systems}
\label{sec:ISS_analysis_linear_nonlinear_PDEs_Lyapunov_methods:parabolic}

Throughout this section, as motivated and studied in \cite{MaP11}, we consider PDEs of the form
\begin{equation}
\label{rossi}
\begin{array}{rcl}
\frac{\partial x}{\partial t}(t,z) & = & \frac{\partial^2 x}{\partial z^2}(t,z) 
+ \Delta(t,z) \frac{\partial x}{\partial z}(t,z) 
 + f(x(t,z)) + u(t,z) ,
\end{array}
\end{equation} 
with $z \in [0,1]$ and $x(t,\cdot) \in X:= L_2([0,1], \mathbb{R}^n)$ for all $t \geq 0$, 
where\linebreak $\Delta:\R_+\times [0,1] \rightarrow \mathbb{R}$ is continuous and bounded in norm, where $f:\mathbb{R}^n\rightarrow \mathbb{R}^n$ is a continuously differentiable function,  
and $u$ belongs to the space $\Uc$ of continuous in space and piecewise-continuous and right-continuous in time
 functions 
($u$ is typically unknown and represents disturbances). 

Under made assumptions the system \eqref{rossi} can be represented in a form~\eqref{InfiniteDim} for suitably defined $A$ and $f$, and 
it naturally gives rise to a control system $\Sigma:=(X,\Uc,\phi)$ as in Definition~\ref{Steurungssystem}, where $\phi$ is the (mild) solution map of \eqref{rossi}.

Similarly to Definition \ref{def:noncoercive_ISS_LF}, let us introduce the notions of weak and exponential Lyapunov functions for undisturbed systems, that we will consider in this subsection (see also \cite[Def.\/ 3.62]{LGM99}).
\begin{definition}
\label{def:Lyapu}
A continuous function $V:X \to \R_+$ is called a \emph{(coercive) weak Lyapunov function} for (\ref{rossi}), if there are functions $\psi_1,\psi_2 \in \Kinf$, such that
\begin{equation}
\psi_1(\|x\|_X) \leq V(x) \leq \psi_2(\|x\|_X), \quad \forall x \in X
\end {equation}
and the Lie derivative of $V$ along the trajectories of (\ref{rossi}) satisfies
\begin{equation}
\dot{V}_u(x) \leq 0
\end{equation}
for all $x \in X$ and when $u$ is identically equal to zero.
The function $V$ is said to 
be a {\em (coercive) exponential Lyapunov function} for (\ref{rossi}), if, additionally, for $u \equiv 0$, there exists a $\lambda_1>0$ such that, for all solutions of (\ref{rossi}), for all $t \geq 0$,
$$
\dot{V}_u(x(t)) \leq - \lambda_1 V(x(t)) .
$$
\end{definition}

\begin{remark}
Let us recall that, having a weak Lyapunov function, asymptotic stability of the undisturbed system can be often established via the celebrated Barba-shin-Krasovskii-LaSalle invariance principle (see, e.g.,\ \cite[Theorem 3.64]{LGM99}). Moreover, when an exponential Lyapunov function is known for (\ref{rossi}), then (\ref{rossi}) is 0-UGAS.
\end{remark}

\subsubsection{Constructions of Lyapunov functions}
\label{prob}

In this section, we give several constructions of Lyapunov functions for 
the system 
\begin{equation}
\label{1}
\begin{array}{rcl}
\frac{\partial x}{\partial t}(t,z) & = & \frac{\partial^2 x}{\partial z^2}(t,z) + f(x(t,z)), \quad z \in (0,1),
\end{array}
\end{equation} 
where $x\in X:= L_2([0,1],\R^n)$ and $f$ is a nonlinear function of class $C^1$. Furthermore, we suppose that the boundary conditions satisfy the assumption:
\begin{Ass}
\label{A2}
The boundary conditions are such that, for all $t\geq 0$,
\begin{equation}
\label{3}
\begin{array}{rcl}
& \mbox{either} \quad
|x(t,1)| \left|\frac{\partial x}{\partial z}(t,1)\right|
= |x(t,0)| \left|\frac{\partial x}{\partial z}(t,0)\right| = 0 , &\\
& \mbox{or} \quad
x(t,1) = x(t,0) \;  \mbox{and} \;  \frac{\partial x}{\partial z}(t,1) = 
\frac{\partial x}{\partial z}(t,0) .&
\end{array}
\end{equation}  
\end{Ass}

The problem of the proof of the existence of solutions of (\ref{1}) under Assumption~\ref{A2} 
is an important issue that has been tackled in the literature depending on the regularity of the function $f$. Consider, e.g.,\ \cite[Chap. 15]{taylor2013partialIII} 
for local (in time) existence of solution for sufficiently small (with respect to the existence time) and smooth function $f$. The global (in time) existence of solutions holds as soon as $f$ is globally Lipschitz (see \cite[Chap. 6]{Paz83}
among other references). When $f$ is superlinear, the finite escape time phenomenon 
may occur (see for instance 
\cite[Chap. 5]{bebernes2013mathematical} or \cite{merle1997stability}). In this section, we do not consider this issue and the results presented here are valid, as long as there exists a solution.

\paragraph{Weak Lyapunov function for the system (\protect\ref{1})}
\label{f3m}

To prepare the construction of coercive exponential Lyapunov functions of the forthcoming
sections, we recall how a weak Lyapunov function
can be constructed for the system (\ref{1}) under Assumption~\ref{A2}.

For the construction of a coercive exponential Lyapunov function, the following assumption is useful:
\begin{Ass}
\label{A1}
There is a symmetric positive definite matrix $Q$ such that 
the function $W_1$ defined by, for all $\xi\in \mathbb{R}^n$,
\begin{equation}
\label{2}
\begin{array}{rcl}
 W_1(\xi)  & := & - \frac{\partial V}{\partial \xi}(\xi) f(\xi),
\end{array}
\end{equation}  
with $V(\xi) = \frac{1}{2} \xi^\top Q \xi$, is nonnegative.
\end{Ass}

Some comments on Assumptions \ref{A2} and  \ref{A1} follow.
\begin{remark}
{\em 1.} Assumption \ref{A1} is equivalent to claiming 
that $V$ is a weak Lyapunov function for the ordinary 
differential equation 
\begin{equation}
\label{gk3}
\dot{\xi} = f(\xi)
\end{equation}  
with $\xi \in \mathbb{R}^n$.
This implies that \eqref{gk3} is globally stable. 

{\em 2.} Assumption \ref{A2} is satisfied in particular if the Dirichlet or Neumann 
conditions or the periodic conditions, i.e. $x(t,0) = x(t,1)$ and 
$\frac{\partial x}{\partial z}(t,0) = \frac{\partial x}{\partial z}(t,1)$ 
for all $t\geq 0$ (see \cite{chen1989convergence}), are satisfied.
 
{\em 3.} Since $Q$ is positive definite, there exist two positive real values $q_1$ 
and $q_2$ such that, for all $\xi \in \mathbb{R}^n$, 
\begin{equation}
\label{qt}
q_1 |\xi|^2 \leq V(\xi) \leq q_2 |\xi |^2 .
\end{equation}  
The constants $q_1$ and $q_2$ will be used in the constructions of coercive exponential 
Lyapunov functions we shall perform later. \end{remark}

The construction we perform below is given in \cite{MaP11}
\begin{lemma}
\label{lemma0} 
Under Assumptions \ref{A2} and \ref{A1}, the function
\begin{equation}
\label{ae1}
\begin{array}{rcl}
\widetilde V(x) & = & \displaystyle\int_{0}^{1} V\big(x(z)\big) dz
\end{array}
\end{equation}  
is a weak Lyapunov function whose derivative 
along the trajectories of (\ref{1}) satisfies
\begin{equation}
\label{9}
\begin{array}{rcl}
\dot {\widetilde V}(x) & = & 
- \displaystyle\int_{0}^{1} \frac{\partial x^\top}{\partial z}(t,z)  Q \frac{\partial x}{\partial z}(t,z) dz
- \displaystyle\int_{0}^{1}  W_1\big(x(t,z)\big) dz .
\end{array}
\end{equation}  
\end{lemma}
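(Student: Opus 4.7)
The plan is to compute $\dot{\widetilde V}$ directly by differentiating under the integral, substituting the PDE \eqref{1} with $u\equiv 0$ (the claim concerns the undisturbed dynamics only), performing one integration by parts in $z$, and invoking Assumption~\ref{A2} to kill the resulting boundary term.

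First I would note that since $V(\xi)=\frac{1}{2}\xi^\top Q\xi$ with $Q$ symmetric, the gradient is $\frac{\partial V}{\partial \xi}(\xi)=\xi^\top Q$. For a classical solution of \eqref{1}, differentiation under the integral sign gives
\begin{equation*}
\dot{\widetilde V}(x) \;=\; \int_0^1 x(t,z)^\top Q\,\frac{\partial x}{\partial t}(t,z)\,dz \;=\; \int_0^1 x^\top Q\,\frac{\partial^2 x}{\partial z^2}\,dz \;+\; \int_0^1 x^\top Q\,f(x)\,dz.
\end{equation*}
Integration by parts in $z$ on the first term yields
\begin{equation*}
\int_0^1 x^\top Q\,\frac{\partial^2 x}{\partial z^2}\,dz \;=\; \Big[x(t,z)^\top Q\,\frac{\partial x}{\partial z}(t,z)\Big]_{z=0}^{z=1} \;-\; \int_0^1 \frac{\partial x^\top}{\partial z}\,Q\,\frac{\partial x}{\partial z}\,dz,
\end{equation*}
while by the definition \eqref{2} of $W_1$, the second integral equals $-\int_0^1 W_1(x(t,z))\,dz$. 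Under Assumption~\ref{A2} the boundary bracket vanishes in either alternative: in the first case the Cauchy--Schwarz bound $|x^\top Q\,\partial_z x|\le \|Q\|\,|x|\,|\partial_z x|$ combined with $|x(t,i)|\,|\partial_z x(t,i)|=0$ for $i\in\{0,1\}$ forces it to be zero; in the periodic case the endpoint values of both $x$ and $\partial_z x$ coincide and the bracket telescopes. Assembling these identities gives exactly \eqref{9}.

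It then remains to verify that $\widetilde V$ is a weak Lyapunov function in the sense of Definition~\ref{def:Lyapu}. Integrating \eqref{qt} pointwise over $[0,1]$ yields the coercive bounds $q_1\|x\|_X^2 \leq \widetilde V(x) \leq q_2\|x\|_X^2$, and in \eqref{9} both terms on the right are nonpositive, the first because $Q$ is positive definite and the second because $W_1\ge 0$ by Assumption~\ref{A1}. The main technical subtlety is that the formal computation above is valid pointwise only for classical solutions with $x(t,\cdot)\in H^2$ compatible with the boundary data, whereas \eqref{1} is a priori only guaranteed to generate mild solutions in $X=L_2([0,1],\R^n)$. The standard remedy is to carry out the calculation on a dense class of smooth, compatible initial data and then interpret $\dot{\widetilde V}$ in the Dini-derivative sense of \eqref{ISS_LyapAbleitung}, extending \eqref{9} to general initial data by parabolic smoothing and continuity of $\widetilde V$ on $X$.
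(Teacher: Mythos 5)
Your computation is correct and is exactly the argument behind the paper's statement (the paper simply defers to the construction in \cite{MaP11}, which proceeds by the same differentiation under the integral, integration by parts, cancellation of the boundary bracket via Assumption~\ref{A2}, and identification of $x^\top Q f(x)$ with $-W_1(x)$). Your closing remark on the coercive bounds from \eqref{qt} and on handling mild solutions by a density argument in the Dini-derivative sense is the appropriate way to make the formal calculation rigorous, so nothing essential is missing.
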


\paragraph{Coercive exponential Lyapunov function for the system (\protect\ref{1}): first result}
\label{secs2}

In this paragraph, we show that the function $\widetilde V$ given in (\ref{ae1}) is a coercive exponential Lyapunov function for (\ref{1}) when this system is associated with special families of boundary conditions or when $W_1$ is larger than a positive definite quadratic function. 
In \cite{MaP11}, the following result is proven: 
\begin{theorem}
\label{theorem9}
Assume that the system (\ref{1}) satisfies Assumptions \ref{A2} and \ref{A1} 
and that one of the following properties is satisfied:

(i) there exists a constant $\underline{\alpha} > 0$ such that, for all $\xi \in \R^n$,
$$
W_1(\xi) \geq \underline{\alpha} |\xi|^2 ,
$$
 
(ii) $x(t,0) = 0$ for all $t \geq 0$, 

(iii) $x(t,1) = 0$ for all $t \geq 0$.

\noindent
Then the function $\widetilde V$ given in (\ref{ae1}) is a coercive exponential Lyapunov function for the system (\ref{1}).
\end{theorem}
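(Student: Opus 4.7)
The plan is to build the proof directly on top of Lemma \ref{lemma0}, which already gives the identity
\begin{equation*}
\dot{\widetilde V}(x) = - \int_{0}^{1} \frac{\partial x^\top}{\partial z}(t,z)\, Q\, \frac{\partial x}{\partial z}(t,z)\, dz - \int_{0}^{1} W_1\big(x(t,z)\big)\, dz.
\end{equation*}
Coercivity of $\widetilde V$ is immediate from (\ref{qt}): integrating over $[0,1]$ yields $q_1 \|x\|_{X}^{2} \le \widetilde V(x) \le q_2 \|x\|_{X}^{2}$, with $X = L_2([0,1],\R^n)$. It therefore remains to produce, in each of the three cases, a constant $\lambda_1>0$ and an estimate $\dot{\widetilde V} \le -\lambda_1 \widetilde V$.

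Case (i) is the simplest and I would handle it first. By hypothesis $W_1(\xi) \ge \underline{\alpha}|\xi|^2$, so
\begin{equation*}
\dot{\widetilde V}(x) \le -\int_0^1 W_1(x(t,z))\,dz \le -\underline{\alpha}\,\|x\|_X^2 \le -\frac{\underline{\alpha}}{q_2}\,\widetilde V(x),
\end{equation*}
where the first inequality uses that $Q$ is positive definite (so the gradient term is nonpositive) and the last one uses the upper bound in (\ref{qt}). This gives $\lambda_1 = \underline{\alpha}/q_2$.

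Cases (ii) and (iii) are handled together by exploiting the vanishing of $x(t,\cdot)$ at an endpoint to feed a Friedrichs-type Poincar\'e inequality into the gradient term. Since $W_1 \ge 0$ by Assumption \ref{A1}, the $W_1$-term may be discarded, and with $\lambda_{\min}(Q) > 0$ denoting the smallest eigenvalue of $Q$ we obtain
\begin{equation*}
\dot{\widetilde V}(x) \le -\lambda_{\min}(Q) \int_0^1 \left|\frac{\partial x}{\partial z}(t,z)\right|^2 dz.
\end{equation*}
For any $w \in H^1([0,1],\R^n)$ with $w(0)=0$ (or symmetrically $w(1)=0$), the Friedrichs inequality gives $\int_0^1 |w(z)|^2 dz \le \tfrac{4}{\pi^2}\int_0^1 |w'(z)|^2 dz$. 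Applying this componentwise to $x(t,\cdot)$ and then using the upper bound $\widetilde V(x) \le q_2 \|x\|_X^2$ yields
\begin{equation*}
\dot{\widetilde V}(x) \le -\lambda_{\min}(Q)\,\frac{\pi^2}{4}\,\|x\|_X^2 \le -\frac{\lambda_{\min}(Q)\,\pi^2}{4\, q_2}\,\widetilde V(x),
\end{equation*}
so $\lambda_1 = \lambda_{\min}(Q)\pi^2/(4 q_2)$ suffices.

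The main obstacle, if any, is a bookkeeping one rather than a conceptual one: one must confirm that the Friedrichs inequality is applicable under Assumption \ref{A2} combined with condition (ii) or (iii), i.e.\ that the trajectory $x(t,\cdot)$ lives in the regularity class where the inequality holds (which is part of the standing well-posedness setting for mild/classical solutions of (\ref{1}) indicated just after Assumption \ref{A2}). Once that regularity is in hand, the rest is a direct assembly of the three estimates above.
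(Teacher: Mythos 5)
Your proof is correct and follows essentially the same route as the paper's (which defers to \cite{MaP11}): coercivity from \eqref{qt}, the dissipation identity of Lemma~\ref{lemma0}, the quadratic lower bound on $W_1$ in case (i), and Poincar\'e's inequality \eqref{Wirtinger_Variation_Ineq} applied componentwise to absorb the gradient term in cases (ii) and (iii). The explicit decay rates $\underline{\alpha}/q_2$ and $\lambda_{\min}(Q)\pi^2/(4q_2)$ are valid, and your regularity caveat matches the paper's standing assumption that the results hold as long as a (sufficiently smooth) solution exists.
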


\paragraph{Coercive exponential Lyapunov function for the system (\protect\ref{1}): second result}
\label{secs3}

One can check easily that Assumptions \ref{A2} and \ref{A1} alone do not 
ensure that the system (\ref{1}) admits the zero solution as an asymptotically 
stable solution.\footnote{
More precisely, we can construct examples of 
systems (\ref{1}) which are not asymptotically stable when Assumption \ref{A1} is satisfied and 
Assumption \ref{A2} holds with the Neumann boundary conditions. For example the system $\frac{\partial x}{\partial t}=\frac{\partial ^2 x}{\partial z^2}$ where $x(t,z)\in \mathbb{R}$ with Neumann boundary conditions at $z=0$ and $z=1$ admits all constant functions as solutions and thus it is not asymptotically stable in $L_2$-norm.}
Therefore an extra assumption must be introduced to guarantee that a coercive exponential Lyapunov function exists. 
In Section~\ref{secs2} we have exhibited simple conditions which ensure that $\widetilde V$ is a coercive exponential Lyapunov function. In this section, we introduce 
a new assumption, less restrictive than the condition (i) of 
Theorem \ref{theorem9},  which ensures that a coercive exponential Lyapunov
function different from $\widetilde V$ can be constructed. \\

\begin{Ass}
\label{A3} 
There exist a nonnegative function $M:\R^n \rightarrow \R$ of class $C^2$, and a continuous function $W_2:\R^n \rightarrow \R$ such that 
\begin{equation}
\label{2bis}
M(0) = 0 \; , \; \frac{\partial M}{\partial \xi}(0) = 0  ,
\end{equation}  
\begin{equation}
\label{2las}
\frac{\partial M}{\partial \xi}(\xi) f(\xi) \leq - W_2(\xi) 
\; , \; \forall \xi \in \mathbb{R}^n ,
\end{equation}  
\begin{equation}
\label{las}
\left|\frac{\partial^2 M}{\partial \xi^2}(\xi)\right| \leq \frac{q_1}{2}
 \; , \; \forall \xi \in \mathbb{R}^n ,
\end{equation}  
and there exists a constant $q_3 > 0$ such that 
$W_1 + W_2$ is positive definite and
\begin{equation}
\label{qtrois}
W_1(\xi) + W_2(\xi) \geq q_3 |\xi|^2  \; , \; \forall \xi \in \mathbb{R}^n : |\xi| \leq 1 ,
\end{equation}  
where $W_1$ is the function defined in (\ref{2}).
\end{Ass}

We are ready to state the following result (see \cite{MaP11} for a proof): 
\begin{theorem}
\label{theorem1}
Under Assumptions \ref{A1} to \ref{A3}, there exists a function $k \in\Kinf$, of class $C^2$ such that $k'$ is positive, 
$k''$ is nonnegative and the function 
\begin{equation}
\label{60}
\overline{V}(x) = \displaystyle\int_{0}^{1} k\Big(V\big(x(z)\big) + M\big(x(z)\big)\Big) dz
\end{equation}  
is a coercive exponential Lyapunov function for (\ref{1}).
\end{theorem}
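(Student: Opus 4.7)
Set $N(\xi):=V(\xi)+M(\xi)$, so that $\overline V(x)=\int_0^1 k(N(x(z)))\,dz$. The plan is to differentiate $\overline V$ along classical solutions of (\ref{1}) and integrate the diffusion term by parts, which yields
\begin{align*}
\dot{\overline V}(x) ={}& \bigl[k'(N(x))\,\nabla N(x)^\top \partial_z x\bigr]_{0}^{1} -\int_0^1 k''(N(x))\bigl(\nabla N(x)^\top \partial_z x\bigr)^2 dz \\
& {}-\int_0^1 k'(N(x))\,\partial_z x^\top \nabla^2 N(x)\,\partial_z x\,dz +\int_0^1 k'(N(x))\,\nabla N(x)^\top f(x)\,dz.
\end{align*}
The boundary term vanishes in each case of Assumption~\ref{A2}: under periodic conditions both factors agree at $z=0$ and $z=1$; in the remaining cases each endpoint already contributes zero.

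Next I would process each volume term. Since $k''\ge 0$, the second integral is nonpositive. From (\ref{qt}) the inequality $V(\xi)=\tfrac12\xi^\top Q\xi\ge q_1|\xi|^2$ forces $Q\ge 2q_1 I$; combined with the Hessian bound $|\nabla^2M|\le q_1/2$ of (\ref{las}), this gives $\nabla^2 N=Q+\nabla^2M\ge \tfrac{3q_1}{2}I$, so the Hessian integral is at most $-\tfrac{3q_1}{2}\int_0^1 k'(N(x))|\partial_z x|^2\,dz$. For the drift, Assumption~\ref{A1} supplies $\nabla V(\xi)^\top f(\xi)\le -W_1(\xi)$ and (\ref{2las}) supplies $\nabla M(\xi)^\top f(\xi)\le -W_2(\xi)$, so $\nabla N^\top f\le -(W_1+W_2)$. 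Collecting,
\begin{equation*}
\dot{\overline V}(x)\;\le\;-\int_0^1 k'(N(x))\bigl[W_1(x)+W_2(x)\bigr]\,dz.
\end{equation*}

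The core step, which I expect to be the main obstacle, is to design $k$ so that this bound becomes $-\lambda_1\overline V(x)$ for a uniform $\lambda_1>0$. Since $M\ge 0$ and $M(0)=0=\nabla M(0)$ with $|\nabla^2 M|\le q_1/2$, Taylor's formula gives $q_1|\xi|^2\le N(\xi)\le (q_2+q_1/4)|\xi|^2$. Combined with (\ref{qtrois}), this yields $W_1(\xi)+W_2(\xi)\ge c_0\, N(\xi)$ whenever $|\xi|\le 1$, where $c_0:=q_3/(q_2+q_1/4)$. Pick $r_0>0$ small enough that $N(\xi)\le r_0$ forces $|\xi|\le 1$, and introduce $\omega(r):=\inf\{W_1(\xi)+W_2(\xi):N(\xi)=r\}$, which is continuous and strictly positive on $(0,\infty)$ because the level sets of $N$ are compact and $W_1+W_2$ is positive definite. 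I would define $k(r)=r^2$ on $[0,r_0]$, so that $k'(r)\cdot c_0 r\ge 2c_0 k(r)$ holds automatically there, and extend $k$ on $(r_0,\infty)$ by a $C^2$-mollification of a solution of the differential inequality $k'(r)\omega(r)\ge \lambda_1 k(r)$, while maintaining $k'>0$ and $k''\ge 0$. Arranging a $C^2$, convex interpolation globally while coping with the possible decay of $\omega$ at infinity is the delicate point; if needed, one can combine this with an a priori $L^\infty$-bound on the solution deduced from the weak Lyapunov function $\widetilde V$ of Lemma~\ref{lemma0} together with parabolic smoothing, so that the inequality needs to hold only on a bounded range of $r$.

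Finally, for the coercivity bounds, convexity of $k$ with $k(0)=0$ and Jensen's inequality, together with $N(\xi)\ge q_1|\xi|^2$, yield
\begin{equation*}
\overline V(x)\ge k\Bigl(\int_0^1 N(x(z))\,dz\Bigr)\ge k\bigl(q_1\|x\|_X^2\bigr)=:\psi_1(\|x\|_X)\in\Kinf.
\end{equation*}
The upper bound $\overline V(x)\le\psi_2(\|x\|_X)$ with $\psi_2\in\Kinf$ follows from the estimate $N(\xi)\le(q_2+q_1/4)|\xi|^2$ together with the controlled growth of $k$ fixed in the construction above. This shows that $\overline V$ is a coercive exponential Lyapunov function for (\ref{1}) with rate $\lambda_1$, as claimed.
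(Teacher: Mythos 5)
The survey does not reproduce a proof of Theorem~\ref{theorem1} (it defers to \cite{MaP11}), so your proposal has to stand on its own. Its skeleton is the standard one and is correct as far as it goes: the integration by parts, the vanishing of the boundary term under Assumption~\ref{A2} (which indeed uses $\nabla M(0)=0$, so that $\nabla(V+M)(0)=0$), discarding the $k''$-term by convexity, the bound $\nabla^2(V+M)\ge\tfrac{3q_1}{2}I$ obtained from \eqref{qt} and \eqref{las}, the drift estimate $\nabla(V+M)^\top f\le-(W_1+W_2)$, the two-sided bound $q_1|\xi|^2\le (V+M)(\xi)\le(q_2+q_1/4)|\xi|^2$, and the quadratic choice of $k$ near zero together with \eqref{qtrois} are all fine, as is the Jensen argument for the lower coercivity bound.

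The genuine gap is exactly at the crux you flag yourself: the global construction of $k$ and the coercivity upper bound. First, the continuity of $\omega(r)=\inf\{W_1(\xi)+W_2(\xi):\,(V+M)(\xi)=r\}$ does not follow from compactness of level sets; one has to work with, e.g., a positive nonincreasing minorant and then carry out the $C^2$, convex gluing with the quadratic piece --- this is the actual content of the proof, not a remark. Second, your fallback is inadmissible: Lemma~\ref{lemma0} yields only an $L_2$ a priori bound, any sup-norm bound obtained via parabolic smoothing depends on the individual trajectory, whereas the exponential Lyapunov property requires one fixed $k$ and one $\lambda_1$ uniform over all solutions; constant-in-$z$ states (admissible under the Neumann/periodic alternatives of Assumption~\ref{A2}, with the gradient term vanishing identically and $|x|$ arbitrarily large) show that the pointwise inequality $k'(N(\xi))\,(W_1+W_2)(\xi)\ge\lambda_1 k(N(\xi))$ must hold for all $\xi$, not on a solution-dependent range of $r$. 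Third, and most seriously, the upper bound $\overline V(x)\le\psi_2(\|x\|_X)$ does not ``follow from the controlled growth of $k$'': Assumptions \ref{A1}--\ref{A3} only force $W_1+W_2$ to be positive definite and quadratically bounded below near the origin, so $\omega$ may decay at infinity, and then your own differential inequality $k'\omega\ge\lambda_1 k$ forces $k$ to grow superlinearly (of the order $\exp(\lambda_1\int \mathrm{d}r/\omega)$). For such $k$, $\int_0^1 k\big((V+M)(x(z))\big)\,dz$ is not dominated by any function of $\|x\|_{L_2}$ and can even be infinite for some $x\in L_2([0,1],\R^n)$, so the claimed $\psi_2$-bound cannot be obtained along the route you sketch. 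Resolving this tension (a sharper design of $k$, use of the discarded gradient term, or the precise solution class and coercivity notion used in \cite{MaP11}) is precisely the nontrivial part of the proof that your proposal leaves open.
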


\begin{remark} Assumption \ref{A3} seems to be restrictive. In fact, it can 
be significantly relaxed. Indeed, if the system 
\begin{equation}
\label{1bd}
\dot{\xi} = f(\xi)
\end{equation}
is locally exponentially stable and  satisfies one of Matrosov's conditions 
which ensure that a coercive exponential Lyapunov 
function can be constructed then one can construct a function $M$ which satisfies 
Assumption \ref{A3}. 
For constructions of coercive exponential Lyapunov functions under Matrosov's conditions,
the reader is referred to \cite{MalisoffMazenc:book:09}.
\end{remark}

\subsubsection{ISS property for a family of PDEs}
\label{inde}

In the previous section, we presented a construction of Lyapunov functions for parabolic PDEs without 
uncertainties and without convection term. In this section, we show how this technique 
can be used to estimate the impact of uncertainties on the solutions of 
PDEs with a convection term and uncertainties of the form \eqref{rossi} with $\Delta(t,z):=D_1 +v(t,z)$:
\begin{eqnarray}
\frac{\partial x}{\partial t}(t,z)  =  \frac{\partial^2 x}{\partial z ^2}(t,z) 
+ \big[D_1 + v(t,z)\big]\frac{\partial x}{\partial z}(t,z) + f\big(x(t,z)\big) + u(t,z),
\label{34}
\end{eqnarray} 
where $D_1$ is a constant matrix, $v$ is an unknown continuous matrix function
and $u\in\Uc$, where $X:=L_2([0,1],\R^n)$ and $\Uc$ is as defined in the beginning of the Section~\ref{sec:ISS_analysis_linear_nonlinear_PDEs_Lyapunov_methods:parabolic}.

To cope with the presence of a convection term and the uncertainty $v$, 
we introduce the following assumption: 
\begin{Ass}
\label{B9}  
There exists a nonnegative real number 
$\delta$ such that 
\begin{equation}
\label{isis}
|v(t,z)| \leq \frac{\delta}{\|Q\|} \; , \; \forall z \in [0,1] \; , t \geq 0 , 
\end{equation} 
where $Q$ is the  symmetric positive definite matrix in Assumption \ref{A1}.
Moreover, the matrix $Q D_1$ is symmetric.
\end{Ass}
Moreover, we replace Assumption \ref{A3} by a more restrictive assumption: 
\begin{Ass}
\label{C3} 
There exists a nonnegative function $M : \R^n\rightarrow\R$ such 
that, for all $\xi \in \mathbb{R}^n$,
\begin{equation}
\label{gis}
\begin{array}{rcl}
M(0) = 0 \; , \quad 
\frac{\partial M}{\partial \xi}(\xi) f(\xi) & = & - W_2(\xi) ,
\end{array}
\end{equation}  
where $W_2$ is a nonnegative function and there exist $c_a > 0$, $c_b > 0$ and $c_c > 0$ 
such that, for all $\xi \in \mathbb{R}^n$,  the inequalities
\begin{equation}
\label{a3b}
\left|\frac{\partial M}{\partial \xi}(\xi)\right| \leq c_a |\xi|
\; , \;  
\left|\frac{\partial^2 M}{\partial \xi ^2}(\xi)\right| \leq c_b ,
\end{equation}  
\begin{equation}
\label{14}
|\xi|^2 \leq c_c [W_1(\xi) + W_2(\xi)] ,
\end{equation}  
where $W_1$ is the function defined in (\ref{2}), are satisfied.
\end{Ass}

\begin{remark}
If $f$ is linear and $\dot{\xi} = f(\xi)$ is exponentially stable, then 
Assumption~\ref{C3} is satisfied with a positive definite quadratic function 
as function $M$.
\end{remark}

We are ready to state the following result (see \cite{MaP11} for a proof).
\begin{theorem}
\label{theorem6}
Assume that the system (\ref{34}) satisfies Assumptions \ref{A1}, \ref{B9}
and \ref{C3} and is associated with boundary conditions satisfying
\begin{equation}
\label{dirichlet}
x(t,1) = x(t,0) \; \ \mbox{ and } \; \ \frac{\partial x}{\partial z}(t,1) = 
\frac{\partial x}{\partial z}(t,0) \; , \; \forall t \geq 0.
\end{equation}  
Then the function 
\begin{equation}
\label{60bis}
\overline{V}(x) = \displaystyle\int_{0}^{1} \big[K V(x(z)) + M(x(z))\big] dz
\end{equation}  
with 
\begin{equation}
\label{tres}
K = \max\left\{1, \frac{2 c_b}{q_1}, \frac{8 c_c c_a^2 (\|D_1\| + 1)^2}{q_1}\right\}
\end{equation}  
is a coercive ISS Lyapunov function satisfying, along the trajectories of (\ref{34}), 
\begin{equation}
\label{1isi}
\dot{\overline{V}}(x) \leq - \lambda_1\overline{V}(x(t,z)) 
+ \lambda_2 \displaystyle\int_{0}^{1} |u(t,z)|^2 dz
\end{equation}  
for some positive constants $\lambda_1$, $\lambda_2$, provided that $\delta$ in
Assumption \ref{B9} satisfies 
\begin{equation}
\label{tres2}
\delta \leq \min\left\{\|Q\|, \frac{\sqrt{q_1}}{2\sqrt{2 c_c K}}\right\} .
\end{equation}  
\end{theorem}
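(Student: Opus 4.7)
The plan is to verify directly that $\overline{V}$ defined by \eqref{60bis} satisfies a dissipation inequality of the form \eqref{1isi}. Coercivity is essentially free: Assumption~\ref{A1} gives $q_1|\xi|^2\leq V(\xi)\leq q_2|\xi|^2$, and the inequalities \eqref{gis}--\eqref{a3b} on $M$ together with $M(0)=0$, $\tfrac{\partial M}{\partial \xi}(0)=0$ yield $0\leq M(\xi)\leq \tfrac{c_b}{2}|\xi|^2$, so that $K q_1 \|x\|_{L_2}^2 \leq \overline{V}(x)\leq (K q_2 + c_b/2)\|x\|_{L_2}^2$. Because mild solutions need not be smooth enough to differentiate $\overline V$ pointwise, I would carry out the computation on classical solutions of \eqref{34} and extend to mild solutions by a standard density argument (cf.\ Remark~\ref{rem:Computation-of-dotV}).

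Next, I differentiate $\overline{V}$ along a smooth trajectory:
\[
\dot{\overline{V}}(x)=\int_0^1\Bigl[K x^\top Q+\tfrac{\partial M}{\partial \xi}(x)\Bigr]\Bigl[\tfrac{\partial^2 x}{\partial z^2}+(D_1+v)\tfrac{\partial x}{\partial z}+f(x)+u\Bigr]dz.
\]
I would then process the four groups separately. The diffusion terms, after integrating by parts and using the periodic boundary conditions \eqref{dirichlet} (boundary contributions cancel), produce $-K\int_0^1\tfrac{\partial x^\top}{\partial z}Q\tfrac{\partial x}{\partial z}\,dz-\int_0^1\tfrac{\partial x^\top}{\partial z}\tfrac{\partial^2 M}{\partial \xi^2}(x)\tfrac{\partial x}{\partial z}\,dz$; combining $Q\geq q_1 I$ with $|\tfrac{\partial^2 M}{\partial\xi^2}|\leq c_b$ and the choice $K\geq 2c_b/q_1$ in \eqref{tres} upper-bounds this by $-\tfrac{Kq_1}{2}\int_0^1|\tfrac{\partial x}{\partial z}|^2dz$. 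The $D_1$-convection against $Kx^\top Q$ is exact: symmetry of $QD_1$ (Assumption~\ref{B9}) gives $Kx^\top QD_1\tfrac{\partial x}{\partial z}=\tfrac{K}{2}\tfrac{\partial}{\partial z}(x^\top QD_1 x)$, whose integral vanishes by periodicity. For the reaction terms, Assumption~\ref{A1} and \eqref{gis} give $-K\int_0^1 W_1(x)dz-\int_0^1 W_2(x)dz$, which combined with \eqref{14} and $K\geq 1$ is at most $-\tfrac{1}{c_c}\int_0^1|x|^2dz$.

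The main obstacle is absorbing the cross terms involving $v$ and the $D_1$-convection against $\tfrac{\partial M}{\partial\xi}$. Using $|v|\leq\delta/\|Q\|$, $|\tfrac{\partial M}{\partial\xi}(\xi)|\leq c_a|\xi|$, and Young's inequality $ab\leq\varepsilon a^2+\tfrac{1}{4\varepsilon}b^2$, each such term splits into a piece bounded by a multiple of $\int|\tfrac{\partial x}{\partial z}|^2dz$ (to be absorbed in $\tfrac{Kq_1}{2}\int|\tfrac{\partial x}{\partial z}|^2dz$) and a piece bounded by a multiple of $\int|x|^2dz$ (to be absorbed in $\tfrac{1}{c_c}\int|x|^2dz$). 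The condition $\delta\leq\|Q\|$ in \eqref{tres2} ensures $\|D_1\|+\delta/\|Q\|\leq\|D_1\|+1$, so the Young coefficient from the $\tfrac{\partial M}{\partial\xi}\cdot(D_1+v)\tfrac{\partial x}{\partial z}$ term is controlled by $c_a(\|D_1\|+1)$; balancing $\varepsilon$ so that its $\int|\tfrac{\partial x}{\partial z}|^2dz$-share equals $\tfrac{Kq_1}{4}$ produces an $\int|x|^2dz$-share of order $c_a^2(\|D_1\|+1)^2/(Kq_1)$, and the lower bound $K\geq 8c_cc_a^2(\|D_1\|+1)^2/q_1$ in \eqref{tres} is precisely what makes this remainder at most $\tfrac{1}{4c_c}\int|x|^2dz$. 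A completely parallel Young estimate for the $v$-dependent term against $Kx^\top Q$ contributes an $\int|x|^2dz$-share of order $K\delta^2/q_1$, and the second bound on $\delta$ in \eqref{tres2} ensures this is at most $\tfrac{1}{4c_c}\int|x|^2dz$.

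Finally, the input terms are handled by Young's inequality: $\bigl|(Kx^\top Q+\tfrac{\partial M}{\partial\xi}(x))u\bigr|\leq\tfrac{1}{4c_c}|x|^2+\lambda_2|u|^2$ for some $\lambda_2>0$ depending on $K$, $\|Q\|$, $c_a$. Collecting everything, one obtains
\[
\dot{\overline{V}}(x)\leq-\tfrac{Kq_1}{4}\!\int_0^1\!\bigl|\tfrac{\partial x}{\partial z}\bigr|^2 dz-\tfrac{1}{4c_c}\!\int_0^1|x|^2dz+\lambda_2\!\int_0^1|u|^2dz,
\]
and discarding the (nonpositive) gradient term and using the upper bound $\overline{V}(x)\leq(Kq_2+c_b/2)\|x\|_{L_2}^2$ converts the $-\tfrac{1}{4c_c}\|x\|_{L_2}^2$ into $-\lambda_1\overline{V}(x)$ with $\lambda_1=\tfrac{1}{4c_c(Kq_2+c_b/2)}>0$, yielding \eqref{1isi}. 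Together with the coercivity bounds, this shows $\overline{V}$ is a coercive ISS Lyapunov function.
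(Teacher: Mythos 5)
Your computation is correct and follows essentially the same route as the proof the paper delegates to \cite{MaP11}: differentiate $\overline V$ along (sufficiently regular) solutions, integrate by parts using the periodic boundary conditions \eqref{dirichlet}, cancel the $D_1$-term against $Kx^\top Q$ via symmetry of $QD_1$, bound the reaction terms through $W_1+W_2$ and \eqref{14}, and absorb the $v$-, $D_1$-versus-$\frac{\partial M}{\partial \xi}$- and $u$-cross terms by Young's inequality, with your budgeting reproducing exactly the constants in \eqref{tres} and \eqref{tres2}. The small slack in your constants (the paper's normalization actually gives $\xi^\top Q\xi \ge 2q_1|\xi|^2$, and the final gradient coefficient $-\tfrac{Kq_1}{4}$ is slightly generous for your stated $\varepsilon$-allocation, though the term is discarded anyway) only makes the estimates easier and does not affect the conclusion.
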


\subsection{Lyapunov methods for semilinear parabolic systems with boundary inputs}
\label{sec:Lyapunov methods for semilinear parabolic systems with boundary inputs}

In this section, we show how the Lyapunov method can be applied for the analysis of semilinear parabolic systems with \emph{boundary inputs of Neumann type}. To the knowledge of authors, the first results of this kind have been reported in \cite{ZhZ18}. We present the method from \cite{ZhZ18} on a representative example, which shows the essence of the technique. 
Consider the Ginzburg-Landau equation (see \cite[p. 276]{ZhZ18})
\begin{eqnarray}
\frac{\partial x}{\partial t}(t,z) = \mu \frac{\partial^2 x}{\partial z^2}(t,z) + a x(t,z) - x^3(t,z),\quad t>0,\ z\in(0,1),
\label{eq:Semilin-parabolic-Neumann-input}
\end{eqnarray}
where $\mu>0$ is the diffusion coefficient. We investigate \eqref{eq:Semilin-parabolic-Neumann-input} subject to the Neumann boundary input at $z=0$ and to Dirichlet boundary condition at $z=1$:
\begin{subequations}
\label{eq:Neumann-input-parabolic-PDE}
\begin{eqnarray}
\frac{\partial x}{\partial z}(t,0) &=& u(t) ,\quad t>0, \label{eq:Neumann-input-parabolic-PDE-1}\\
x(t,1)&=& 0, \quad t>0. \label{eq:Neumann-input-parabolic-PDE-2}
\end{eqnarray}
\end{subequations}

For consistency with the original paper we assume that $u \in \Uc:=C^2(\R_+,\R)$, and the solutions we understand in a classical sense, 
but the norm in the state space will be chosen as $L_2(0,1)$-norm
and the norm in the input space $\Uc$ will be chosen as $L_\infty(\R_+)$-norm.
The use of classical solutions calls for adaptation of the ISS concept, and hence we require for the ISS property the validity of the estimate \eqref{iss_sum} only for smooth enough $x$.
The precise definition of the solution concept, state space as well as the proof of well-posedness of the PDE model \eqref{eq:Semilin-parabolic-Neumann-input}, \eqref{eq:Neumann-input-parabolic-PDE}
can be found in \cite{ZhZ18}.

In order to derive conditions for ISS of 
\eqref{eq:Semilin-parabolic-Neumann-input}, \eqref{eq:Neumann-input-parabolic-PDE}, we use the Lyapunov function candidate
\begin{eqnarray}
V(x):=\|x\|^2_{L_2(0,1)} = \int^1_0 x^2(z) dz.
\label{eq:L2_LF_Linear_diffusion}
\end{eqnarray}


First let us compute the Lie derivative of $V$ for $x$ smooth enough:
\begin{eqnarray}
\label{eq:Partial-integration}
\dot{V}(x) &=& 2\int^1_0 x(z)\frac{\partial x}{\partial t}(z) dz =  2 \int^1_0 x(z) \Big( \mu \frac{\partial^2 x}{\partial z^2}(z) + a x(z) - x^3(z)\Big) dz \nonumber\\
&=&  2\mu \Big(\frac{\partial x}{\partial z}(z)x(z)\Big)\Big|_{z=0}^{z=1} - 2\mu \int^1_0 \Big( \frac{\partial x}{\partial z}(z)\Big)^2 dz + 2aV(x) - 2 \int^1_0 \big(x^2(z)\big)^2dz.
\end{eqnarray}

Using \eqref{eq:Neumann-input-parabolic-PDE} as well as Jensen's inequality \eqref{ineq:Jensen} for the last term, we obtain 
\begin{eqnarray*}
\dot{V}(x)
 &\leq & 2\mu |u| |x(0)|  - 2\mu \int^1_0 \Big( \frac{\partial x}{\partial z}(z)\Big)^2 dz + 2aV(x) - 2 V^2(x).
\end{eqnarray*}
Using Cauchy's inequality \eqref{ineq:Cauchy-with-eps} and afterwards Agmon's inequality \eqref{ineq:Agmon} (here we make use of \eqref{eq:Neumann-input-parabolic-PDE-2})  we obtain for any $\varepsilon>0$:
\begin{eqnarray}
\hspace{10mm} 2\mu |u| |x(0)| \leq \varepsilon |x(0)|^2 + \frac{\mu^2}{\varepsilon} |u|^2
 \leq \varepsilon \Big(\|x\|^2_{L_2(0,1)} + \Big\|\frac{dx}{dz}\Big\|^2_{L_2(0,1)}\Big) 
+ \frac{\mu^2}{\varepsilon} |u|^2. 
\label{eq:Estimate-boudnary-term}
\end{eqnarray}
Using this estimate in \eqref{eq:Partial-integration} and rearranging the terms we obtain
\begin{eqnarray}
\label{eq:Partial-integration-2}
\dot{V}(x) 
&\leq& (\varepsilon - 2\mu )\int^1_0 \Big( \frac{\partial x}{\partial z}(z)\Big)^2 dz + (\varepsilon + 2a)V(x) - 2 V^2(x) + \frac{\mu^2}{\varepsilon} |u|^2 
\end{eqnarray}
Assuming that $\varepsilon < 2\mu$, we can use Poincare's inequality \eqref{Wirtinger_Variation_Ineq} for the first term to obtain that
\begin{eqnarray}
\label{eq:Partial-integration-3}
\dot{V}(x) 
&\leq& (\varepsilon - 2\mu )\frac{\pi^2}{4}\int^1_0 x^2(z) dz + (\varepsilon + 2a)V(x) - 2 V^2(x) + \frac{\mu^2}{\varepsilon} |u|^2 \nonumber \\
&\leq& \Big((\varepsilon - 2\mu )\frac{\pi^2}{4} + \varepsilon + 2a\Big)V(x) - 2 V^2(x) + \frac{\mu^2}{\varepsilon} |u|^2.
\end{eqnarray}
To ensure that the dissipation inequality holds, we have to assume that \linebreak
 $(\varepsilon - 2\mu )\frac{\pi^2}{4} + \varepsilon + 2a \leq 0$. As $\varepsilon>0$ can be chosen arbitrarily small, we obtain the following sufficient condition for ISS of the system 
\eqref{eq:Semilin-parabolic-Neumann-input}, \eqref{eq:Neumann-input-parabolic-PDE}:
\begin{eqnarray}
a< \frac{\mu\pi^2}{4}.
\label{eq:Obtained-criterion}
\end{eqnarray}
The term $- 2 V^2(x)$ in \eqref{eq:Partial-integration-3} shows that outside of the neighborhood of the equilibrium the convergence rate of the system \eqref{eq:Semilin-parabolic-Neumann-input}, \eqref{eq:Neumann-input-parabolic-PDE}
is faster than exponential.

\subsubsection{Tightness of obtained estimates}
\label{sec:Tightness-of-results} 

Linearizing the system \eqref{eq:Semilin-parabolic-Neumann-input}-\eqref{eq:Neumann-input-parabolic-PDE} for $u\equiv 0$ near the equilibrium, one can see that (we omit the standard computations), for
$a$ and $\mu$ as in \eqref{eq:Obtained-criterion}, the linearized system is exponentially stable, and for 
$a$ and $\mu$ satisfying
\begin{eqnarray}
a> \frac{\mu\pi^2}{4}
\label{eq:Obtained-criterion-unstable}
\end{eqnarray}
the system \eqref{eq:Semilin-parabolic-Neumann-input}-\eqref{eq:Neumann-input-parabolic-PDE} is unstable (even locally near the equilibrium).
This can be done, e.g.,\ by analyzing the spectral properties of the corresponding Laplacian operator, similarly to \cite[Section 1.3]{Hen81}.

Obtained results we summarize into the following proposition:
\begin{proposition}
\label{prop:Semilinear-Parabolic-system-with-Dirichlet-boundary-input} 
Consider the system \eqref{eq:Semilin-parabolic-Neumann-input}-\eqref{eq:Neumann-input-parabolic-PDE}
with $X:=L_2(0,1)$ and $\Uc:=C^2(\R_+,\R)$ (with the supremum norm).

If the condition \eqref{eq:Obtained-criterion} holds, then \eqref{eq:Semilin-parabolic-Neumann-input}-\eqref{eq:Neumann-input-parabolic-PDE} is ISS.

If the condition \eqref{eq:Obtained-criterion-unstable} holds, then 
\eqref{eq:Semilin-parabolic-Neumann-input}-\eqref{eq:Neumann-input-parabolic-PDE} is not 0-UAS.
\end{proposition}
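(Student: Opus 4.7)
The sufficiency part of the claim is essentially proved in the derivation preceding the statement, so my plan is simply to consolidate those computations into a dissipation inequality and then invoke Theorem~\ref{LyapunovTheorem}. Concretely, I would take $V(x) := \|x\|_{L_2(0,1)}^2$, note that $V$ is coercive with $\psi_1(r)=\psi_2(r)=r^2$, and reproduce the chain
\[
\dot V(x) \le \bigl((\varepsilon - 2\mu)\tfrac{\pi^2}{4} + \varepsilon + 2a\bigr)V(x) - 2 V^2(x) + \tfrac{\mu^2}{\varepsilon}|u|^2
\]
from the excerpt, where integration by parts uses \eqref{eq:Neumann-input-parabolic-PDE-1}, Agmon's inequality uses \eqref{eq:Neumann-input-parabolic-PDE-2}, and Poincar\'e's inequality is applied to $\int_0^1 (\partial_z x)^2\,dz$. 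Under \eqref{eq:Obtained-criterion}, I can pick $\varepsilon\in(0,2\mu)$ so small that the prefactor $\lambda:=-[(\varepsilon-2\mu)\pi^2/4 + \varepsilon + 2a]$ is strictly positive, yielding
\[
\dot V(x) \le -\lambda V(x) - 2V^2(x) + \tfrac{\mu^2}{\varepsilon}|u|^2 \le -\lambda V(x) + \sigma(\|u\|_{\Uc}),
\]
with $\sigma(r) := \tfrac{\mu^2}{\varepsilon}r^2\in\Kinf$. Thus $V$ is a coercive ISS Lyapunov function in dissipative form (Definition~\ref{def:noncoercive_ISS_LF}), and Theorem~\ref{LyapunovTheorem} gives ISS.

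For the instability part, my plan is a standard linearization-plus-spectrum argument. Linearizing \eqref{eq:Semilin-parabolic-Neumann-input}--\eqref{eq:Neumann-input-parabolic-PDE} at the zero equilibrium with $u\equiv 0$ produces $\partial_t x = \mu\partial_{zz}x + a x$ with mixed boundary data $\partial_z x(t,0)=0$, $x(t,1)=0$. I would diagonalize the underlying self-adjoint operator $A_0 := \mu\partial_{zz} + aI$ on the domain determined by these mixed conditions: its eigenfunctions are $\phi_k(z) = \cos\bigl(\tfrac{(2k-1)\pi}{2}z\bigr)$, $k\ge 1$, with eigenvalues $\lambda_k = a - \mu\bigl(\tfrac{(2k-1)\pi}{2}\bigr)^2$. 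Under \eqref{eq:Obtained-criterion-unstable} the principal eigenvalue $\lambda_1 = a - \mu\pi^2/4$ is strictly positive, so $A_0$ generates an analytic semigroup with a non-trivial unstable spectral subspace spanned by $\phi_1$.

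From this it remains to transfer linear instability to the nonlinear system. Since the nonlinearity $x\mapsto -x^3$ is smooth with vanishing Fr\'echet derivative at $0$ in $L_2$ (in an appropriate interpolation space where the cubic is locally Lipschitz), the principle of linearized instability for analytic semilinear equations (see, e.g., Henry~\cite{Hen81}, or equivalently an unstable-manifold / invariant-cone argument around $0$) applies: there exist initial data $x_0$ with $\|x_0\|_{L_2}$ arbitrarily small such that $\|\phi(t,x_0,0)\|_{L_2}$ escapes every fixed neighborhood of $0$. This directly contradicts the $0$-UAS estimate in Definition~\ref{def:0-UAS}.

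The genuinely routine part is the sufficiency calculation (already in the text), and the genuinely delicate part is the transfer from spectral instability of $A_0$ to non-$0$-UAS of the full nonlinear PDE in the $L_2$-norm; this requires verifying the hypotheses of an unstable-manifold theorem for the cubic semilinear heat equation in an appropriate fractional power space and then projecting back to $L_2$. Once these two pieces are in place, the two implications of the proposition follow.
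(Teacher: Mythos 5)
Your proposal is correct and matches the paper's own route: the ISS part is exactly the Lyapunov computation preceding the proposition (the dissipation inequality for $V(x)=\|x\|_{L_2(0,1)}^2$ combined with Theorem~\ref{LyapunovTheorem}), and the instability part is the same linearization-plus-spectral argument the paper sketches with reference to \cite[Section 1.3]{Hen81}, where your mixed Neumann--Dirichlet eigenvalues $a-\mu\bigl(\tfrac{(2k-1)\pi}{2}\bigr)^2$ correctly identify the threshold $a=\mu\pi^2/4$. Your explicit caveat about working in a fractional power space and projecting back to $L_2$ is a fair account of the details the paper omits as ``standard computations.''
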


Thus our analysis (motivated by \cite{ZhZ18}) provides fairly tight results for ISS of the system \eqref{eq:Semilin-parabolic-Neumann-input}-\eqref{eq:Neumann-input-parabolic-PDE}. This shows that the Lyapunov method combined with the sharp versions of inequalities for the $L_p$ spaces, as Poincare's, Agmon's inequalities, etc., is an efficient method for analyzing ISS of semilinear systems with boundary inputs and for computation of the precise uniform decay rate of the solutions of a system.

Note that the choice of applied inequalities depends on the type of the system which is to be analyzed. 
Friedrich's and Poincare's inequality are tailored for the Laplacian operator, which is what we need in our case.
At the same time, for sharp analysis of, e.g., Kuramoto-Sivashinskiy equation other types of inequalities can be used, see, e.g.,\ \cite{LiK01}.
Derivation of such inequalities uses spectral properties of the linear operators which generate the semigroups of the systems which we analyze. 
This is one of the ways how \emph{linear} operator theory helps us to study stability of \emph{nonlinear} systems.

\subsubsection{Generalizations} 

This strategy has been extended in \cite{ZhZ18} for a class of systems of the form 
\begin{eqnarray}
\frac{\partial x}{\partial t} = \mu \frac{\partial^2 x}{\partial z^2} + f\Big(t, z, x, \frac{\partial x}{\partial z}\Big),\quad 
t >0,\ z \in [0,1].
\label{eq:Semilin-parabolic-General}
\end{eqnarray}
with the boundary conditions
\begin{subequations}
\label{eq:General-BC-parabolic-PDE}
\begin{align}
a_0 &x(t, 0) + b_0 \frac{\partial x}{\partial z}(t,0) = u (t), \quad t>0,   \label{eq:General-BC-parabolic-PDE-1}\\
a_1 &x(t, 1) + b_1 \frac{\partial x}{\partial z}(t,1) = 0 ,\quad t>0.  \label{eq:General-BC-parabolic-PDE-2}
\end{align}
\end{subequations}

Under certain assumptions on the coefficients $a_0,a_1,b_0,b_1 \in\R$ and provided \linebreak
$|f(t, z, x, \frac{\partial x}{\partial z})\cdot x(t,z)|$ has at most quadratic growth, the conditions for ISS of the system \eqref{eq:Semilin-parabolic-General}, \eqref{eq:General-BC-parabolic-PDE} in $X:=L_2(0,1)$ have been derived, see \cite[Theorems 4,5]{ZhZ18}.

A key step in the treatment of the Ginzburg-Landau equation was tackling of a boundary input $u$, which was done using Agmon's inequality, where we have essentially used that $x(1,t) = 0$ for all $t > 0$. 
In order to allow for more general types of boundary conditions \eqref{eq:General-BC-parabolic-PDE}, in \cite[Lemma 1]{ZhZ18} the following inequality has been used:
\begin{lemma}
\label{lem:ZhZ18-inequality}
Suppose that $x \in C^1([a,b],\R)$. Then the following inequalities hold: 
\begin{eqnarray}
\max_{z\in[a,b]}|x(z)|^2\leq \frac{2}{b-a} \|x\|^2_{L_2([a,b])} + (b-a)\Big\|\frac{dx}{dz}\Big\|^2_{L_2(a,b)}.
\label{eq:ZhZ18-inequality}
\end{eqnarray}
\end{lemma}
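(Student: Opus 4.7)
The plan is to establish the pointwise bound by combining the mean value theorem for integrals with the fundamental theorem of calculus and a weighted Cauchy–Schwarz (Young) inequality; this is the standard $H^1 \hookrightarrow L^\infty$ Sobolev embedding on a bounded interval, made quantitative with sharp constants.

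First, by the mean value theorem for integrals applied to $x^2 \in C([a,b],\R)$, there exists $y^* \in [a,b]$ such that
\begin{equation*}
x(y^*)^2 = \frac{1}{b-a}\int_a^b x(s)^2\,ds = \frac{1}{b-a}\|x\|_{L_2(a,b)}^2.
\end{equation*}
Then, for arbitrary $z\in[a,b]$, the fundamental theorem of calculus applied to $x^2$ (which is $C^1$ since $x\in C^1$) gives
\begin{equation*}
x(z)^2 = x(y^*)^2 + 2\int_{y^*}^z x(s)\,x'(s)\,ds \leq \frac{1}{b-a}\|x\|_{L_2(a,b)}^2 + 2\left|\int_a^b x(s)\,x'(s)\,ds\right|.
\end{equation*}

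Next, I apply the Cauchy–Schwarz inequality to bound the cross term by $\|x\|_{L_2(a,b)}\|x'\|_{L_2(a,b)}$, and then Young's inequality in the weighted form $2AB \leq \tfrac{1}{b-a}A^2 + (b-a)B^2$ (with $A=\|x\|_{L_2(a,b)}$, $B=\|x'\|_{L_2(a,b)}$) to obtain
\begin{equation*}
2\|x\|_{L_2(a,b)}\|x'\|_{L_2(a,b)} \leq \frac{1}{b-a}\|x\|_{L_2(a,b)}^2 + (b-a)\|x'\|_{L_2(a,b)}^2.
\end{equation*}
Substituting this into the previous estimate yields
\begin{equation*}
x(z)^2 \leq \frac{2}{b-a}\|x\|_{L_2(a,b)}^2 + (b-a)\|x'\|_{L_2(a,b)}^2,
\end{equation*}
and since this bound is independent of $z$, taking the supremum over $z\in[a,b]$ finishes the proof.

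There is no real obstacle here; the only subtlety is choosing the right weighting in Young's inequality so that the constants $\tfrac{2}{b-a}$ and $b-a$ come out precisely as stated (any other split would give a weaker inequality). Using the mean value theorem to pick the reference point $y^*$ rather than a generic one is what allows the leading constant to be $2$ rather than a larger number; integrating the identity $x(z)^2 = x(y)^2 + 2\int_y^z xx'\,ds$ in $y$ over $[a,b]$ and dividing by $b-a$ would give the same bound via a slightly longer route.
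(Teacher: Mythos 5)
The paper does not prove this lemma at all -- it simply cites it as Lemma~1 of the reference [ZhZ18] -- so there is no in-paper argument to compare against; your proposal supplies a correct, self-contained proof of the standard quantitative $H^1\hookrightarrow L^\infty$ bound, and the choice of weight $\varepsilon=\tfrac{1}{b-a}$ in Young's inequality indeed reproduces the stated constants exactly. One small slip should be fixed in the write-up: after the fundamental theorem of calculus you bound $2\int_{y^*}^{z} x(s)x'(s)\,ds$ by $2\bigl|\int_a^b x(s)x'(s)\,ds\bigr|$, which is not valid in general (the integral of $xx'$ over a subinterval can exceed in modulus the integral over all of $[a,b]$; e.g.\ $\int_a^b xx'\,ds=\tfrac12\bigl(x(b)^2-x(a)^2\bigr)$ can vanish while the partial integral does not). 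The correct intermediate bound is $2\int_{y^*}^{z} x x'\,ds\leq 2\int_a^b |x(s)||x'(s)|\,ds$, to which Cauchy--Schwarz applies exactly as you intend, so the rest of the argument and the final constants are unaffected. Your closing remark about the averaging route (integrating the identity $x(z)^2=x(y)^2+2\int_y^z xx'\,ds$ in $y$ and dividing by $b-a$) is also correct and avoids invoking the mean value theorem; either version is a legitimate proof of \eqref{eq:ZhZ18-inequality}.
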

Its advantage is that the estimate in the right-hand side of \eqref{eq:ZhZ18-inequality} does not depend on the values of $x$ at the boundary, but just on the $L_2$-norm of $x$ and $\frac{dx}{dz}$.

\subsubsection{Remarks on Dirichlet boundary inputs}
\label{sec:Dirichlet-Boundary-input}

An important limitation of the method discussed in this section is that it cannot be applied to the parabolic systems with Dirichlet boundary inputs (i.e. the case $b_0=b_1=0$ is not allowed), even if the system is linear. To see the difficulties arising on this way consider the equation \eqref{eq:Semilin-parabolic-Neumann-input} with the boundary conditions \eqref{eq:Neumann-input-parabolic-PDE-2} and 
\begin{eqnarray}
x(t,0) = u(t) ,\quad t>0 
\label{eq:Dirichlet-input-parabolic-PDE}
\end{eqnarray}
instead of \eqref{eq:Neumann-input-parabolic-PDE-1}.

Doing the same steps as in \eqref{eq:Partial-integration}, we obtain that 
\[
\Big(\frac{\partial x}{\partial z}(z)x(z)\Big)\Big|_{z=0}^{z=1} = - \frac{\partial x}{\partial z}(0)u \leq \Big|\frac{\partial x}{\partial z}(0)\Big| |u|
\]
and in contrast to the example which we have treated above, we cannot bound the value $|\frac{\partial x}{\partial z}(0)|$ by the $H_1$-norm of $x$, which breaks the argumentation.

Looking at this problem from the functional-analytic point of view, we obtain that the input operator of the linearized version of \eqref{eq:Semilin-parabolic-Neumann-input}, \eqref{eq:Neumann-input-parabolic-PDE} (after recasting it as an abstract linear system \eqref{eq:Linear_System}) is 2-admissible, while the input operator of a system with a Dirichlet input \eqref{eq:Semilin-parabolic-Neumann-input}, \eqref{eq:Neumann-input-parabolic-PDE-2}, \eqref{eq:Dirichlet-input-parabolic-PDE} is not 2-admissible, which makes the analysis more challenging.

\ifFinal
We refer to \cite{Sch20} for analysis of the relationship of admissibility of the input operators and the regularity of the trace operator for parabolic systems. Also in \cite{Sch20} one can find the generalization of the method from \cite{ZhZ18} to a more general class of semilinear boundary control systems.
\fi

Currently there are no coercive Lyapunov functions known for the linearized system \eqref{eq:Semilin-parabolic-Neumann-input}, \eqref{eq:Neumann-input-parabolic-PDE-2}, \eqref{eq:Dirichlet-input-parabolic-PDE},
however using Corollary~\ref{cor:Self-adjoint-A} it is possible to construct a non-coercive one, see Remark~\ref{rem:Analyticity-of-a-self-adjoint-generator}.

\subsubsection{Remarks on Burgers' equation}
\label{sec:Burgers-eq}

Consider the Burgers' equation 
\begin{eqnarray}
\frac{\partial x}{\partial t}(t,z) = \mu \frac{\partial x^2}{\partial z^2}(t,z) - \nu x(t,z)\frac{\partial x}{\partial z}(t,z),\quad t>0,\ z\in(0,1),
\label{eq:Burgers-viscous}
\end{eqnarray}
where $\mu>0$ is the diffusion coefficient and $\nu>0$, subject to boundary conditions

\begin{subequations}
\label{eq:Burgers-Dirichlet-input}
\begin{eqnarray}
x(t,0) &=& 0 ,\quad t>0, \label{eq:Burgers-Dirichlet-input-1}\\
x(t,1)&=& u(t), \quad t>0. \label{eq:Burgers-Dirichlet-input-2}
\end{eqnarray}
\end{subequations}

Burgers' equation is known first of all as a simplification of the Navier-Stokes equations. However, it has also important applications to aerodynamics \cite{Col51}. An extensive treatment of Burgers' equation and its generalizations, one can find in \cite{Sac87}.

It is easy to show that \eqref{eq:Burgers-viscous}-\eqref{eq:Burgers-Dirichlet-input}
is 0-UGAS in $L_2(0,1)$-norm, by showing that $V(x) = \int_0^1x^2(z)dz$ is a strict Lyapunov function for
\eqref{eq:Burgers-viscous}-\eqref{eq:Burgers-Dirichlet-input}.
At the same time, ISS analysis of this system is much more involved. 

Using De Giorgi iteration method in \cite[Theorems 4,5]{ZhZ19} it was shown that 
\eqref{eq:Burgers-viscous}-\eqref{eq:Burgers-Dirichlet-input}
has a so-called ISS with respect to small inputs property (terminology is taken from \cite{CAI14}),
as the ISS property in \cite[Theorems 4,5]{ZhZ19} is verified under condition that
$\sup_{t\geq 0}|u(t)| \leq \frac{\mu}{\nu}$.

\begin{openprob}
\label{op:Burgers}
Is it possible to prove ISS of Burgers' equation \eqref{eq:Burgers-viscous}-\eqref{eq:Burgers-Dirichlet-input} with $X:=L_2(0,1)$ and $u \in PC_b(\R_+,\R)$ (or for another input space) without restrictions on the magnitude of inputs? 
\end{openprob}

\subsection{ISS Lyapunov methods for stabilization of stationary hyperbolic systems}
\label{sec:ISS_analysis_linear_nonlinear_PDEs_Lyapunov_methods:hyperbolic}

We consider the feedback boundary control for the class of linear hyperbolic PDEs described by the equation
\begin{subequations}\label{eq:sysHyp}
\begin{equation}\label{eq:sysHyp:dyna}
\frac{\partial x}{\partial t} (t,z) +\Lambda \frac{\partial x}{\partial z} (t,z) = 0,
\end{equation}
where $z \in [0,1]$ is the spatial variable, and $t \in \R_+$ is the time variable. The matrix $\Lambda$ is assumed to be diagonal and positive definite. We call $x:[0,1] \times \R_+ \rightarrow \R^n$ the state trajectory.
The initial condition is defined as
\begin{equation}
x(0,z) = x^0(z), \quad z \in (0,1)
\end{equation}
\end{subequations}
for some function $x^0 : (0,1) \rightarrow \R^n$.
The value of the state $x$ is controlled at the boundary $z=0$ through some input $u: \R_+ \to \R^{m}$ so that
\begin{equation}\label{eq:initCond}
x (t,0) = H x(t,1) + B u(t),
\end{equation}
where $H \in \R^{n \times n}$ and $B \in \R^{n \times m}$ are constant matrices.
The system~\eqref{eq:sysHyp}-\eqref{eq:initCond} forms a class of 1-D boundary controlled hyperbolic PDEs, for which several fundamental results can be found in \cite{BaC16}.

We consider the case when only the measurement of the state $x$ at the boundary point $z=1$ is available for control purposes at each $t \ge 0$, and this measurement is subject to some bounded disturbance.
We thus denote the output of the system by
\begin{equation}\label{eq:defOut}
y(t) = x(t,1) + d(t),
\end{equation}
where the disturbance $d \in {L}_\infty(\R_+,\R^n)$ may arise due to low resolution of the sensors, uncertain environmental factors, or errors in communication.

\subsubsection{Problem formulation}
\label{sec:prelim}

We are interested in designing a feedback control law $u$ as a function of the output measurement $y$, that is $u = \mathcal{F}(y)$ for some operator $\mathcal{F}$, which makes the closed-loop system robust with respect to the measurement disturbances in the sense of {\em disturbance-to-state stability (DSS)}\label{pageref:DSS}, which means that there are some constants $a, c, M_{x^0} > 0$, and some $\gamma\in\Kinf$ such that
\begin{equation}\label{eq:maxEstGen}
\max_{z\in [0,1]} \vert x(t,z) \vert \le c\, e^{-at} M_{x^0} + \gamma \left( \|d_{[0,t]}\|_\infty \right), \qquad t\geq 0.
\end{equation}

Here $\|d_{[0,t]}\|_\infty:=\esssup_{s\in[0,t]}|d(s)|$, and for given $z$ and $t$, $\vert x(t,z) \vert$ denotes the usual Euclidean norm of $x(t,z) \in \R^n$. The constant $M_{x^0}$ is such that it depends on some norm associated with the function $x^0$ and possibly the initial state chosen for the dynamic compensator $u$.


The DSS property ensures that for any initial conditions and in the absence of disturbances, that is for $d \equiv 0$, the maximum norm of 
$x$ (with respect to the spatial variable) decreases exponentially in time with a uniform decay rate. In the presence of nonzero disturbances, that is $d \not\equiv 0$, the maximum value of $x$ over $[0,1]$, at each time $t \ge 0$, is bounded by the maximum norm of the disturbance over the interval $[0,t]$ and an exponentially decaying term due to the initial condition of the system. 
    
\begin{remark}
\label{rem:ISS-and-DSS} 
In case if $u$ is a static controller, and $M_{x^0}$ is a $\K$-function of the $C$-norm of the state, DSS becomes precisely  exponential ISS. 
If $u$ is a dynamic controller with the internal state $\eta$, the constant $M_{x^0}$ in the definition of DSS may depend on the initial state of the controller as well, and thus DSS property guarantees a stability only of the $x$-component of the full state $(x,\eta)$, and does not tell anything about the stability of the controller subsystem.
Furthermore, the constant $M_{x^0}$ in the definition of DSS depends possibly on another type of a norm than $C$-norm and 
thus DSS is related to a kind of stability with respect to two measures concept, see, e.g.,\ \cite{TeP00}.
Hence, in general, DSS can be understood as a kind of robust stability with respect to partial state and two measures.
\end{remark}

\paragraph{Using static controllers}

In case there is no perturbation, that is, if $d \equiv 0$, one typically chooses $u(t) = K y(t)$ such that the closed-loop boundary condition
\begin{equation}\label{eq:initCondLoop}
x(t,0) = (H+BK) x(1,t)
\end{equation}
satisfies a certain dissipative condition.
This control law yields asymptotic stability of the system with respect to ${H}_2$-norm \cite{CBA08}, or ${C}^1$-norm \cite{CoronBastin15}, depending on the dissipativity criterion imposed on $H+BK$.
In the presence of perturbations $d \not \equiv 0$, one has to modify the stability criteria as the asymptotic stability of the origin can no longer be established.

One finds the Lyapunov stability criteria with ${L}_2$-norm and dissipative boundary conditions in \cite{BastinCoronAndreaIFAC08}. Lyapunov stability in ${H}_2$-norm for nonlinear systems is treated in \cite{CBA08}. Thus, the construction of Lyapunov functions in ${H}_2$-norm for the hyperbolic PDEs with static control laws can be found in the literature.

\paragraph{Need for dynamic controllers}

For hyperbolic systems, when seeking robust stabilization with measurement errors, one could see that the results in \cite{EspiGira16} provide robust stability of $x(\cdot,t)$ in ${L}_2((0,1),\R^n)$ space by using static controllers and piecewise continuous solutions. However, the DSS estimate \eqref{eq:maxEstGen} requires stability in ${C}([0,1],\R^n)$ space equipped with maximum norm, and the stabilization of the closed-loop system in $L_2$ norm does not provide us with this estimate.
Therefore we choose $X:={H}_1((0,1),\R^n)$ as the state space for the open-loop system. This choice is motivated by Agmon's inequality
\eqref{theorem:Agmon}

\begin{remark}
Agmon's inequality \eqref{theorem:Agmon} allows to get the bounds on ${C}$-norm of the state $x$ in terms of its ${H}_1$-norm. This fact is frequently used in the ISS literature, see, e.g.,\ \cite[Remark~4]{MiI15b}.
The second advantage of the $H^1$-norm (in contrast to $C$-norm) is that it is much easier to construct and differentiate the Lyapunov functions if their argument belongs to the $H^1$-space.
\end{remark}

One can obtain the estimate \eqref{eq:maxEstGen} from the Agmon's inequality \eqref{ineq:Agmon}, by ensuring that the control input $u$ is chosen such that for each $t \ge 0$:
\begin{itemize}
\item The solution $x(t)$ belongs to ${H}_1((0,1),\R^n)$;
\item It holds that $\vert x(t,0)\vert$ and $\| x(t) \|_{{H}_1((0,1),\R^n)}$ are bounded by the size of the disturbance $\|d_{[0,t]}\|_\infty$ plus some exponentially decreasing term in time.
\end{itemize}

To achieve these objectives, the use of static controllers of the form $u(t) = K y(t)$, will result in solutions $x$ which are not differentiable with respect to spatial variable due to (possibly discontinuous) disturbances, and hence the solutions are not contained in ${H}_1((0,1),\R^n)$. To remedy this problem, we propose the use of dynamic controllers for stabilization, which smoothen the discontinuity effect of the perturbations
 (see \cite{TPT16} for the use of a static controller).

\paragraph{Using dynamic controller for ${H}_1$-regular solutions}

\begin{figure}[ht!]
\centering
\begin{tikzpicture}[xscale = 0.5, yscale = 0.6, circuit ee IEC, every info/.style={font=\footnotesize}]
\draw (-0.5,0) node [rectangle, rounded corners, draw, minimum height =0.65cm, text centered] (sys) 
{$ \left\{ \begin{aligned}& \partial_t X (z,t) +\Lambda \partial_z X (z,t) = 0\\ &X(0,t) = HX(1,t) + Bu(t) \end{aligned}\right .$};
\draw (0,-5) node [rectangle, rounded corners, draw, minimum height =1cm, text centered] (obs) 
{\small $\left\{ \begin{aligned} \dot \eta (t)& = R\eta(t)  + S y(t) \\ u(t)&= K \eta (t)\end{aligned}\right .$};
\coordinate (upSamp) at ([xshift=1.5cm]sys.east);
\coordinate (bl) at ([xshift=-4cm]obs.west);
\coordinate (tr) at ([xshift=1.5cm]upSamp);
\coordinate (ftr) at ([xshift=2.5cm]tr);
\coordinate (br) at ([yshift=-5cm]tr);
\coordinate (tl) at ([yshift=5cm]bl);
\coordinate (upr) at ([yshift=-1.25cm]tr);
\coordinate (downr) at ([yshift=-3.5cm]tr);
\coordinate (upl) at ([yshift=-1.25cm]tl);
\coordinate (downl) at ([yshift=-3.5cm]tl);
\draw (tr) node [circle, draw, minimum height =0.65cm] (sum) {};

\draw [thick, ->] (sys.east) --node[anchor=south] {$X(1,t)$} (sum.west);
\draw [thick,->] (ftr) node[anchor=south east] {$d(t)$}-- (sum.east);
\draw [thick,-] (sum.south)--(br);
\draw [thick,->] (br) -- node[anchor=north] {$\qquad\qquad y(t) = X(1,t) + d(t)$} (obs.east);
\draw [thick, -] (obs.west) -- (bl);
\draw [thick,-] (bl)--(tl);
\draw [thick,->] (tl) -- (sys.west);
\end{tikzpicture}
\caption{Control architecture used for stabilization of hyperbolic system in the presence of disturbances.}
\label{fig:loopQuantHyp}
\end{figure}

For system class \eqref{eq:sysHyp}, \eqref{eq:initCond}, \eqref{eq:defOut}, we are interested in designing control inputs $u$ that are absolutely continuous functions of time, so that their derivative is defined a.e. in Lebesgue sense.
For such inputs, we seek a solution $x\in {C}\big([0,T], {H}_1((0,1),\R^n)\big)$.

More precisely, we consider the problem of designing a dynamic controller with ODEs, which has the form
\begin{subequations}\label{eq:contGen}
\begin{align}
\dot \eta(t) &= R \eta (t) + S y(t), \label{eq:contGena}\\
u(t) & = K \eta(t), \label{eq:contGenb}
\end{align}
\end{subequations}
where the matrices $R \in \R^{n \times n}$, $S \in \R^{n \times n}$, and $K\in \R^{m \times n}$, need to be chosen appropriately.
Intuitively speaking, by using such a controller, the discontinuities of the output $y$ are integrated via equation \eqref{eq:contGena} which results in $u$ being absolutely continuous.
The addition of a dynamic controller introduces a coupling of ODEs and PDEs in the closed loop which makes the analysis more challenging. 

The result on existence and uniqueness of solutions for the closed-loop system \eqref{eq:sysHyp}, \eqref{eq:initCond}, \eqref{eq:defOut}, \eqref{eq:contGen}, is formally developed in Section~\ref{sec:cont}, resulting in certain regularity of the closed-loop solutions, which is important to obtain appropriate estimates.
Afterward, in Section~\ref{sec:lyap}, we use Lyapunov function based analysis to design the parameters of the controller \eqref{eq:contGen}, and derive conditions on the system and controller data which establish the DSS estimate \eqref{eq:maxEstGen}.

\subsubsection{Existence of solutions}\label{sec:cont}

The objective of this section is to present a result on existence and uniqueness of solutions for the closed-loop system \eqref{eq:sysHyp}, \eqref{eq:initCond}, \eqref{eq:defOut}, \eqref{eq:contGen}.
Solvability of hyperbolic PDEs is a well-studied topic.
For the intermediate results, we refer the reader to \cite[Appendix~A]{BaC16} and \cite[Chapter~3]{CuZ95}.
In \cite{BaC16}, the authors first present results with ${H}_1$-regularity for the autonomous system with $u = 0$.
The results for ODE coupled with hyperbolic PDE with $d=0$ with ${L}_2$ regularity are also proven.
However, in these works, with $d \in {L}_\infty([0,\tau],\R^n)$, which introduces certain discontinuities, the solutions with ${H}_1$-regularity are not discussed.
On the other hand, the well-posedness results are presented for systems with dynamics described by infinitesimal generators of
strongly continuous semigroups.

In this section, we present a result on well-posedness of the ODE-PDE coupled system, 
due to 
\cite{TPT17}, based on the results described in \cite[Appendix~A]{BaC16} and \cite[Chapter~3]{CuZ95}.
To do so, we start by constructing the operator ${A}$ as follows:
\begin{subequations}
\begin{gather}
\begin{split}
D({A}) := \Bigg \{ (\varphi,\eta) \in {H}_1\big((0,1),\R^n\big) \times \R^n;
 \  \begin{pmatrix} \varphi (0) \\ \eta \end{pmatrix} = \begin{bmatrix} H & BK \\ 0 & I\end{bmatrix} \begin{pmatrix}\varphi(1) \\ \eta \end{pmatrix} \Bigg\},
\end{split} \\
{A} \begin{pmatrix} \varphi \\ \eta \end{pmatrix} := \begin{pmatrix}- \Lambda \varphi_z \\ R \eta \end{pmatrix}.
\end{gather}
\end{subequations}

Next, we introduce the {\em perturbation operator} $B$ as:
\[
{B} : {H}_1\big((0,1),\R^n\big) \times \R^n\to {H}_1\big((0,1),\R^n\big) \times \R^n,
\qquad
{B} \begin{pmatrix} \varphi \\ \eta\end{pmatrix} =  \begin{pmatrix} 0 \\ S \varphi (1) \end{pmatrix}.
\]

Using these operators ${A}$ and ${B}$, and letting $\tilde{x} = (x, \eta)^\top$, one can write the closed-loop system \eqref{eq:sysHyp}, \eqref{eq:initCond}, \eqref{eq:defOut} and \eqref{eq:contGen} as follows:
\begin{subequations}\label{eq:deInfDim}
\begin{gather}
\dot{\tilde{x}} = {A} \tilde{x} + {B} \tilde{x} + \widetilde d,\\
\tilde{x}(0) = \tilde{x}^0 \in D({A}),
\end{gather}
\end{subequations}
where $\widetilde d = \begin{pmatrix} 0 \\ S \, d \end{pmatrix}$.
We now prove a result on the well-posedness of system \eqref{eq:deInfDim}. Because $d$ is possibly discontinuous, the classical solutions (where $\dot x$ is continuous) do not exist, and one must work with the notion of a mild solution \cite[Definition~3.1.6]{CuZ95}.

%

The well-posedness of \eqref{eq:deInfDim} is proven in \cite{TPT17} and uses some results of \cite[Chapter~3]{CuZ95}. To invoke these results, it is successively proven that
\begin{enumerate}
\item the operator ${A}$ generates a strongly continuous semigroup;
\item The operator ${B}$ is a bounded linear operator;
\item The operator ${A} + {B}$ generates a strongly continuous semigroup.
\end{enumerate}

This is the sketch of proof of the following existence and uniqueness result (see \cite{TPT17} for a complete proof):
\begin{theorem}\label{thm:sol}
For a given $\tau > 0$, and $d \in {L}_\infty([0,\tau],\R^n)$, there is a unique {\em mild} solution to system \eqref{eq:deInfDim} on $[0,\tau]$. Equivalently, for each $(x^0, \eta^0) \in {H}_1\big((0,1),\R^n\big) \times \R^n$, the closed-loop system \eqref{eq:sysHyp}, \eqref{eq:initCond}, \eqref{eq:defOut} and \eqref{eq:contGen} has a unique {\em mild} solution in the space 
${C}\big([0,\tau],{H}_1\big((0,1),\R^n\big) \times \R^n\big)$.
\end{theorem}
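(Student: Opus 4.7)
The plan is to follow the three-step strategy indicated after the statement, recasting the closed loop in the semigroup framework on the product Hilbert space $\mathcal{H} := H_1((0,1),\R^n) \times \R^n$ and then invoking a standard bounded-perturbation argument together with a variation-of-constants representation for the forcing $\widetilde d$. First, I would verify that $(A, D(A))$ is the generator of a $C_0$-semigroup on $\mathcal{H}$. The domain $D(A)$ encodes precisely the compatibility condition $\varphi(0) = H\varphi(1) + BK\eta$, which is linear in $(\varphi,\eta)$ and makes $D(A)$ a closed subspace of $\mathcal{H}$. The transport part $-\Lambda \varphi_z$ together with the ODE part $R\eta$ decouples in the generator equation, so the semigroup can be constructed explicitly: along characteristics $z - \Lambda t$ with the boundary condition producing a delay-type feedback with gain $H$ (a linear hyperbolic problem whose $C_0$-semigroup generation on $H_1$ is classical, see the references in \cite{BaC16} and \cite[Chap.~3]{CuZ95}), and $\eta(t) = e^{Rt}\eta^0$. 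Verifying strong continuity and closedness is routine once the characteristic representation is written down; the only delicate point is checking that iterates of the boundary feedback preserve $H_1$-regularity, which follows from the inclusion $(\varphi, \eta) \in D(A)$ being preserved by the flow.

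Second, the operator $B$ sends $(\varphi,\eta) \mapsto (0, S\varphi(1))$. The trace map $\varphi \mapsto \varphi(1)$ is continuous from $H_1((0,1),\R^n)$ into $\R^n$ (e.g.\ by Agmon's inequality \eqref{ineq:Agmon} applied on $[0,1]$, or directly by the fundamental theorem of calculus), and $S$ is a matrix, so $B \in L(\mathcal{H})$. This is the step where the choice of state space $H_1$ rather than $L_2$ is essential: on $L_2$ the point evaluation $\varphi(1)$ is not defined, so $B$ would fail to be bounded.

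Third, the bounded perturbation theorem \cite[Thm.~3.2.1]{CuZ95} yields that $A+B$ generates a $C_0$-semigroup $(\widetilde T(t))_{t\ge 0}$ on $\mathcal{H}$. Finally, because $d \in L_\infty([0,\tau],\R^n)$, the inhomogeneity $\widetilde d(t) = (0, S\,d(t))$ lies in $L_\infty([0,\tau], \mathcal{H}) \subset L_1([0,\tau], \mathcal{H})$, so the variation-of-constants formula
\begin{equation*}
\widetilde x(t) = \widetilde T(t)\widetilde x^0 + \int_0^t \widetilde T(t-s)\widetilde d(s)\, ds
\end{equation*}
defines the unique mild solution in $C([0,\tau], \mathcal{H})$ in the sense of \cite[Def.~3.1.6]{CuZ95}. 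Uniqueness is automatic from the semigroup representation. Unpacking coordinates, this gives $(x,\eta) \in C([0,\tau], H_1((0,1),\R^n) \times \R^n)$ satisfying the closed-loop system in the mild sense.

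The main obstacle I expect is the first step: showing that the transport operator on $H_1$ with the non-local, state-dependent boundary coupling $\varphi(0) = H\varphi(1) + BK\eta$ generates a $C_0$-semigroup. One must check that the characteristic flow, combined with the feedback $H$ and the ODE gain $BK$, produces trajectories in $D(A)$ (in particular, that the compatibility condition is propagated so that $\varphi(\cdot,t) \in H_1$ for all $t$, not merely piecewise), and that the resolvent $(\lambda I - A)^{-1}$ is well-defined and bounded for $\lambda$ with sufficiently large real part, which reduces to invertibility of a matrix-valued symbol involving $e^{-\lambda \Lambda^{-1}}$ and $H, BK, R$. Once this generation step is established, the remaining two steps are essentially automatic.
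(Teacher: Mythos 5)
Your proposal is correct and follows essentially the same route as the paper's argument: the paper likewise proceeds by showing that $A$ generates a $C_0$-semigroup on $H_1\big((0,1),\R^n\big)\times\R^n$, that the trace-based perturbation $B$ is a bounded linear operator (exactly where the $H_1$ choice is needed), and that $A+B$ therefore generates a $C_0$-semigroup by bounded perturbation, after which the mild solution for $d\in L_\infty$ is given by the variation-of-constants formula. The only difference is that you sketch the generation step for $A$ (characteristics, resolvent) explicitly, whereas the paper defers these details to the cited references.
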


\begin{remark}
In \cite{TPT17}, it is dealt in the previous result with weak solutions and not mild solutions. For relations between both notions, refer to \cite[Section 3.1]{CuZ95}.
\end{remark}

\begin{remark}
The so-called compatibility conditions on the initial condition \linebreak
$(x^0,\eta^0)$, required for ${H}_1$-regularity, are imposed by requiring that $(x^0,\eta^0)$ belongs to $D({A})$. Such a condition is essential and hence the choice of $\eta^0$ depends upon $x^0$.
It is noted that in \cite{CBA08}, the authors propose {two} compatibility conditions for the initial state because they seek solution $x \in {H}_2 \big((0,1),\R^n\big)$. We only need solutions where $x$ is ${H}_1$-regular, so only one such condition appears in our analysis.
\end{remark}



\subsubsection{Closed loop and stability analysis}\label{sec:lyap}

As a solution to the problem formulated in Section~\ref{sec:prelim}, we now provide more structure for the controller dynamics and study the stability of the closed-loop system.
The conditions on the system parameters that guarantee stability are then provided by constructing a Lyapunov-function.

\paragraph{Control architecture and closed loop}

The controller that we choose for our purposes is described by the following equations:
\begin{subequations}\label{eq:contLin}
\begin{align}
\dot \eta (t) & = -\alpha (\eta(t)- y(t)) \notag \\
& =  -\alpha \, \eta(t) + \alpha x(t,1) + \alpha d(t), \label{eq:contLina}\\
\eta(0)&=\eta^0,   \label{eq:init:eta}\\
u(t) & = K \eta(t), \label{eq:contLinb}
\end{align}
\end{subequations}
where $\eta^0 \in \R^n$ is the initial condition for the controller dynamics.
This corresponds to choosing $R = -\alpha\, I$, and $S = -R$ in \eqref{eq:contGena}.
Note that the controller \eqref{eq:contLin} is an ISS system w.r.t.\ inputs $x(t,1)$ and $d$ (as a linear finite-dimensional system with $\alpha>0$).

The conditions on the constant $\alpha >0$, and the matrix $K \in \R^{m\times n}$ will be stated in the formulation of Theorem~\ref{thm:mainISS}.

The closed-loop system is given by equations
\begin{subequations}\label{eq:closed:loop:X}
\begin{gather}
x_t (t,z) +\Lambda x_z (t,z) = 0, \quad t >0,\ z\in(0,1), \label{eq:closed:loop:X:dynamics}\\
x(0,z)=x^0(z), \quad z\in [0,1],\\
x(t,0) = H x(t,1) + BK \eta (t).\label{eq:closed:loop:X:bd}
\end{gather}
\end{subequations}

%

\paragraph{Stability result}

Let us now state a disturbance-to-state stability (DSS) result for the closed-loop dynamics \eqref{eq:contLin}, \eqref{eq:closed:loop:X}. 

Let $\mathcal{D}_+^n$ denote the set of diagonal positive definite matrices in $\mathbb{R}^n$. 
For scalars $\mu>0$ and $0 < \nu < 1$, let $\rho := e^{-\mu} - \nu^2$; let $F:= BK$, and $Q:=F^\top D^2 F$ for $D \in \mathcal{D}_+^n$; and finally, let $G:= H^\top D^2 F$.
We denote by $\Omega$ the symmetric matrix
\[
\Omega:=
\begin{bmatrix}
\rho \beta_1 D^2 & -\beta_1(G + Q) & 0\\
* & 2 \alpha \beta_3 - (\beta_1+\alpha^2\beta_2) Q  & \beta_3 I+ \alpha \beta_2 G \\
* & * & (\rho D^2 + Q + G+G^\top)\beta_2
\end{bmatrix}
\]
in which $\alpha, \beta_1, \beta_2, \beta_3$ are some positive constants, and $*$ denotes the transposed matrix block. 
The following result is proven in \cite{TPT17}.

\begin{theorem}\label{thm:mainISS}
Assume that there exist scalars $\mu,\nu > 0$, a matrix $D \in \mathcal{D}_+^{n}$, the gain matrix $K$, and the positive constants $\alpha,\beta_1,\beta_2,\beta_3$ in the definition of $\Omega$ such that
\begin{subequations}\label{eq:gainCond}
\begin{gather}
\| D(H+BK) D^{-1} \| \le \nu < 1, \label{eq:gainConda}\\
\Omega > \zeta I, \label{eq:gainCondb}
\end{gather}
\end{subequations}
for some scalar $\zeta > 0$.
Then, {\color{blue}there is  $\gamma\in\K$ so that} the closed-loop system \eqref{eq:contLin}, \eqref{eq:closed:loop:X} satisfies the DSS estimate \eqref{eq:maxEstGen} with
\begin{equation}\label{eq:defMX0}
M_{x^0} := \|x^0 \|^2_{{H}_{1}((0,1),\R^n)} + \vert \eta^0 - x(0,1)\vert^2.
\end{equation}
\end{theorem}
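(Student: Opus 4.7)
The plan is to construct a strict Lyapunov function for the closed-loop coupled ODE--PDE system \eqref{eq:contLin}, \eqref{eq:closed:loop:X} that dominates the square of the $H_1$-norm of $x$ together with a coupling term with $\eta$, and then to invoke Agmon's inequality to convert the resulting $H_1$-bound into a bound on $\max_z|x(t,z)|$ as required by \eqref{eq:maxEstGen}. By Theorem~\ref{thm:sol}, the closed loop admits mild solutions in $C([0,\tau],H_1((0,1),\R^n)\times\R^n)$ for every compatible initial data and every $d\in L_\infty$, so the forthcoming energy estimates can be carried out first for smooth data satisfying the compatibility condition $(x^0,\eta^0)\in D(A)$ and then extended by density.

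First I would propose the Lyapunov function candidate
\begin{equation*}
W(x,\eta) := \beta_1\!\int_0^1\! x^\top D^2 x\, e^{-\mu z}\,dz + \beta_2\!\int_0^1\! x_z^\top D^2 x_z\, e^{-\mu z}\,dz + \beta_3\,|\eta - x(t,1)|^2,
\end{equation*}
so that $W$ is equivalent, up to constants depending on $D,\mu,\beta_i$, to $\|x\|_{H_1}^2+|\eta-x(t,1)|^2$; in particular $W(x^0,\eta^0)$ is proportional to the quantity $M_{x^0}$ of \eqref{eq:defMX0}. I would then differentiate each summand along \eqref{eq:closed:loop:X:dynamics} using $x_t=-\Lambda x_z$ and integration by parts in $z$: the first two integrals yield (i) an interior dissipation of the form $-\mu\int_0^1(\cdot)^\top\Lambda D^2(\cdot)e^{-\mu z}dz$, and (ii) boundary contributions at $z=0$ and $z=1$. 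Differentiating the boundary law \eqref{eq:closed:loop:X:bd} in time and substituting \eqref{eq:contLina} gives $x_z(t,0)=\Lambda^{-1}H\Lambda\,x_z(t,1)-\alpha\Lambda^{-1}BK(\eta-x(t,1)-d)$, which is precisely how the disturbance $d$ enters the derivative of the second integral; the derivative of $\beta_3|\eta-x(t,1)|^2$ contributes $2\beta_3(\eta-x(t,1))^\top(-\alpha(\eta-x(t,1)-d)+\Lambda x_z(t,1))$, generating in particular the $2\alpha\beta_3$ entry visible in the $(2,2)$ block of $\Omega$.

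The heart of the argument is to collect all boundary contributions at $z=0$, after eliminating $x(t,0)$ via the boundary relation $x(t,0)=Hx(t,1)+BK\eta$, into a single quadratic form in the augmented vector $\xi:=(x(t,1),\eta,x_z(t,1))\in\R^{3n}$. The positive constant $\rho=e^{-\mu}-\nu^2$ that appears on the diagonal of $\Omega$ is exactly the margin bought by hypothesis \eqref{eq:gainConda}: with $\|D(H+BK)D^{-1}\|\le\nu<1$, the $x(t,0)$-weighted boundary term is absorbed into the $x(t,1)$-weighted one with a contraction factor $\nu^2$, competing against the geometric weight $e^{-\mu}$ coming from the exponential profile. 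All $d$-dependent cross terms are handled by Young's inequality, producing a disturbance contribution of size $c_1|d(t)|^2$. The cumulative estimate then takes the form
\begin{equation*}
\dot W \le -\,\xi^\top \Omega\,\xi \;-\; c_\mu(\beta_1,\beta_2)\,\|x\|_{H_1}^2 \;+\; c_1\,|d(t)|^2,
\end{equation*}
and the hypothesis $\Omega>\zeta I$ from \eqref{eq:gainCondb} yields $\dot W\le -\lambda W + c_2\,\|d_{[0,t]}\|_\infty^2$ for suitable $\lambda,c_2>0$. The comparison principle delivers $W(t)\le e^{-\lambda t}W(0)+(c_2/\lambda)\|d_{[0,t]}\|_\infty^2$, and Agmon's inequality \eqref{ineq:Agmon} applied componentwise gives $\max_{z\in[0,1]}|x(t,z)|^2\le C\,\|x(t)\|_{H_1}^2\le C'\,W(t)$. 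Taking square roots and using $\sqrt{a+b}\le\sqrt a+\sqrt b$ produces exactly \eqref{eq:maxEstGen} with $M_{x^0}$ proportional to \eqref{eq:defMX0} and $\gamma$ linear.

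The main obstacle is the careful bookkeeping in the second step, so that the cross terms between $x(t,1)$, $\eta$, and $x_z(t,1)$ assemble into precisely the matrix $\Omega$ stated in the theorem, and so that the free parameters $\mu,\nu,\beta_1,\beta_2,\beta_3$ can be tuned simultaneously to satisfy both \eqref{eq:gainConda} and \eqref{eq:gainCondb}. This balancing is delicate because the same coefficients appear in several blocks of $\Omega$; the LMI formulation \eqref{eq:gainCondb} is exactly the compact way of encoding that such a simultaneous tuning is possible.
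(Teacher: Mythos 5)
Your plan coincides with the paper's own proof (carried out in detail in \cite{TPT17}): the Lyapunov functional you propose is exactly \eqref{eq:defLyap} with $P_1=\beta_1 D^2$, $P_2=\beta_2 D^2$, $P_3=\beta_3 I$, and the argument proceeds the same way — time differentiation with $x_t=-\Lambda x_z$ and integration by parts, absorption of the $z=0$ boundary term via \eqref{eq:gainConda} (yielding the margin $\rho=e^{-\mu}-\nu^2$), assembly of the remaining boundary/coupling terms into the quadratic form $\Omega$ handled by \eqref{eq:gainCondb}, the resulting $\dot V\le-\sigma V+\chi|d|^2$, and Agmon's inequality to pass from the $H_1$-bound to \eqref{eq:maxEstGen} with $M_{x^0}$ as in \eqref{eq:defMX0}. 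Only a bookkeeping slip to fix when executing it: differentiating \eqref{eq:closed:loop:X:bd} in time and using \eqref{eq:contLina} gives $x_z(t,0)=\Lambda^{-1}H\Lambda\,x_z(t,1)+\alpha\Lambda^{-1}BK\bigl(\eta-x(t,1)-d\bigr)$, i.e.\ with a plus sign on the second term.
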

With our design of a controller we obtain besides DSS also further important properties of a closed-loop system.

{\begin{remark}
\label{rem:CICS-property} 
Due to the cocycle (also called semigroup) property, the conditions we impose on the system to obtain estimate \eqref{eq:maxEstGen}, also ensure that if $d(t) \to 0$, then $\max_{z\in [0,1]} \vert x(t,z) \vert$ also converges to zero as $t\to\infty$.
\end{remark}

\begin{remark}
It must be noted that, if the initial condition of the closed-loop system $x^0, \eta^0$ is chosen such that $\|x^0 \|_{{H}_{1}((0,1),\R^n)} = 0$ and $\eta^0 = 0$, then $M_{x^0} = 0$.
Indeed, since $x^0 \in {H}_1((0,1),\R^n)$ implies that $x^0$ is continuous, and $\|x^0\|^2_{{L}_2((0,1),\R^n)} = 0$ implies that $x^0 = 0$ almost everywhere on $[0,1]$, we must have $x(0,1) = 0$.
\end{remark}

\begin{remark}[DSS implies ISpS-like property]
For $M_{x^0}$ given in \eqref{eq:defMX0}, we have
\begin{align*}
M_{x^0}  \le  \|x^0\|^2_{{H}_{1}((0,1),\R^n)} + \vert x^0(1) \vert^2 + \vert \eta^0 \vert^2 
 \le \max_{z\in[0,1]} \vert x^0(z) \vert^2 + \|x^0\|_{{H}_{1}((0,1),\R^n)}^2 + \vert \eta^0 \vert^2
\end{align*}
Substituting this bound on $M_{x^0}$ in \eqref{eq:maxEstGen}, our DSS estimate leads to
\begin{multline}
\label{eq:quasi-ISpS}
\max_{z\in[0,1]} \vert x(z) \vert \le c \,e^{-at} \max_{z\in[0,1]} \vert x^0(z) \vert^2 + \gamma (\|d_{[0,t]} \|_\infty) \\+ c \,e^{-at} \left( \|x^0\|^2_{{H}_1((0,1),\R^n)} + \vert \eta^0\vert^2 \right).
\end{multline}
The estimate \eqref{eq:quasi-ISpS} is not exactly an input-to-state practical stability (ISpS) property as defined in Definition~\ref{Def:ISpS_wrt_set} (as it is also defined in \cite{JTP94}), because the offset in \eqref{eq:quasi-ISpS} depends on the initial condition size (it is constant in Definition~\ref{Def:ISpS_wrt_set}). Furthermore, the estimate  \eqref{eq:quasi-ISpS} gives only an estimate of a norm of a partial state (more precisely $x$, and not $\eta$).
\end{remark}

\begin{openprob}
In the statement of Theorem~\ref{thm:mainISS}, condition \eqref{eq:gainConda} requires 
$\inf_{D\in\mathcal{D}_+^n} \| D (H+BK) D^{-1} \| < 1$ which also appears in the more general context of nonlinear systems \cite{CBA08} when analyzing stability with respect to ${H}_2$-norm. However, the condition \eqref{eq:gainCondb} is introduced in \cite{TPT17} to compensate for the lack of proportional gain in the feedback law. It definitely restricts the class of systems that can be treated with this approach and relaxing this condition or obtaining different criteria is a topic of further investigation.
\end{openprob}

\begin{remark}
The authors in \cite{TPT17} do not provide a precise characterization of the parameters of system~\eqref{eq:sysHyp} for which \eqref{eq:gainCond} admits a solution.
For instance, assume that \eqref{eq:gainConda} holds with $K = 0$.
In that case, the matrix $\Omega$ simplifies greatly as $Q = G = 0$.
Using the Schur complement, one can immediately find the constants $\alpha, \beta_1, \beta_2,\beta_3$ that result in $\Omega$ being positive definite, and hence satisfying \eqref{eq:gainCondb}.
By applying the continuity argument for solutions of matrix inequalities with respect to parameter variations, the solution to \eqref{eq:gainCondb} will also hold for $K \neq 0$, but sufficiently small.
\end{remark}

The proof of Theorem~\ref{thm:mainISS} in \cite{TPT17} is made by construction of an exponential Lyapunov function
$V:{H}_1\big((0,1),\R^n\big) \times \R^n \to \R_+$ so that, along the trajectories of (\ref{eq:contLin})-(\ref{eq:closed:loop:X}), it holds that
\begin{equation}
\dot V (X(t),\eta(t)) \le -\sigma V(X(t),\eta(t)) +\chi \vert d(t)\vert^2,
\end{equation}
for some constant $\sigma,\chi > 0$.
This function has been constructed in \cite{TPT17} in the form
\begin{equation}
\label{eq:defLyap}
\begin{array}{rcl}
V(x,\eta) &:=& 
\underbrace{\int_0^1 x(z)^\top P_1 x(z) e^{-\mu z} dz}_{V_1(x)} 
+
\underbrace{\int_0^1 \frac{\partial x}{\partial z}(z)^\top P_2 \frac{\partial x}{\partial z}(z) e^{-\mu z}dz}_{V_2(x)} 
\\
&&+
\underbrace{\big(\eta {-} x(1)\big)^\top P_3 \big(\eta {-} x(1)\big)}_{V_3(x,\eta)}, 
\end{array}\end{equation}
where $P_1$ and $P_2$ are certain diagonal positive definite matrices and $P_3$ is some symmetric positive definite matrix. 

Let us denote the spectrum of a matrix $M \in\R^{n\times n}$ by $\sigma(M)$ and let 
$\lambda_{\min}(M):=\min_{\lambda\in\sigma(M)}|\lambda|$ and $\lambda_{\max}(M):=\max_{\lambda\in\sigma(M)}|\lambda|$.

Defining $\underline c_P:= \min_{i = 1,2,3}\{\lambda_{\min}(P_i)\}$, 
$\overline c_P:=\max_{i = 1,2,3}\{\lambda_{\max}(P_i)\}$ we see that, for all $x\in {H}_1((0,1),\R^n)$, and $\eta \in \R^n$,
\begin{multline}\label{eq:lyapPropBnd}
\underline c_P (\|x\|_{{H}_1((0,1),\R^n)}^2 {+} \vert \eta-x(1)\vert^2) \le V (x,\eta) \le 
\overline c_P (\|x\|_{{H}_1((0,1),\R^n)}^2 {+} \vert \eta-x(1)\vert^2).
\end{multline}

The rest of the proof of Theorem~\ref{thm:mainISS} is based on precise computations of the time-derivative of $V$ along the solutions to (\ref{eq:contLin})-(\ref{eq:closed:loop:X}). See \cite{TPT18,TPT17} for a complete proof and an application to quantized control. In this latter reference, 
practical stability of the system is established, and ultimate bounds on the state trajectory are derived in terms of the quantization error.

To conclude this section, let us note that a Lyapunov function with a strict decrease property has been used. It is also possible to consider a non-decreasing property combined with a LaSalle invariance as done recently in \cite{chen2017stabilization} to stabilize the string equation in the presence of disturbances.

\subsection{ISS Lyapunov methods for time-varying hyperbolic systems}
\label{sec:ISS_analysis_linear_nonlinear_PDEs_Lyapunov_methods:hyperbolic:time-varying}

\subsubsection{Basic definitions and notions}
\label{fm}

%

Throughout this section, we consider linear partial differential equations of the form
\begin{equation}
\label{hyper:eq:general}
\frac{\partial x}{\partial t}(t,z) + \Lambda (t,z)\frac{\partial x}{\partial z}(t,z)
= F(t,z) x(t,z) + d(t,z), \quad z \in (0,1),\ \  t >0,
\end{equation} 
and $\Lambda(t,z)=\diag\big(\lambda_1(t,z),\ldots, \lambda_n(t,z)\big)$ is a diagonal 
matrix in $\mathbb{R}^{n\times n}$ whose $m$ first diagonal terms are nonnegative and 
the $n - m$ last diagonal terms are nonpositive (we will say that the hyperbolicity assumption holds, when additionally the $\lambda_i$'s are never vanishing). We assume that the 
function $d$ is a disturbance of class $C^1$, $F$ is a periodic 
function with respect to $t$ of period $\tau$ and of class $C^1$, $\Lambda$ is 
a function of class $C^1$, periodic with respect to $t$ of period $\tau$.

The boundary conditions are written as
\begin{equation}
\label{boundary:hyper:eq:general}
\left(\begin{array}{c}
x_+(t,0)
\\
x_-(t,1)\end{array}\right)
 = K\left(\begin{array}{c}
x_+(t,1)
\\
x_-(t,0)\end{array}\right),
\end{equation}
where $
x = \left(\begin{array}{c} 
x_+
\\
x_-
\end{array}
\right)
$, 
$x_+ \in \mathbb{R}^m$, $x_- \in \mathbb{R}^{n - m}$, and $K \in \mathbb{R}^{n \times n}$ is a constant 
matrix.

The initial condition is
\begin{equation}
\label{init:eq}
x(0,z) = x^0(z),\; \quad z \in (0,1) ,
\end{equation}
where $x^0$ is a function $[0,1]: \rightarrow \mathbb{R}^n$ of class $C^1$ satisfying the usual zero-order and one-order compatibility conditions (see \cite{PrM12} for a precise expression).

As proved in \cite{kmit:classical:solvability:2008}, if the function $\Lambda$ is 
of class $C^1$ and satisfies the hyperbolicity 
assumption, if the function $d$ is of class $C^1$, and if the initial condition 
is of class $C^1$ and satisfies the 
compatibility conditions,
there 
exists a unique classical solution of 
the system (\ref{hyper:eq:general}), with the boundary 
conditions (\ref{boundary:hyper:eq:general}) and the initial 
condition (\ref{init:eq}), defined for all $t\geq 0$.

\subsubsection{ISS Lyapunov functions for hyperbolic systems}
\label{linear:hyper}

Before stating the main theoretical result of the section, some comments are needed.
Since, in the case where the system (\ref{hyper:eq:general}) is such that $m < n$, we 
can replace $x(t,z)$ by 
$\left(
\begin{array}{c}
x_+(t,z)
\\
x_-(1 - z,t)
\end{array}
\right)$
we may assume without loss of generality 
that $\Lambda$ is diagonal and positive semidefinite. Then the boundary conditions 
(\ref{boundary:hyper:eq:general}) become
\begin{equation}
\label{boundary:hyper:eq}
x(t,0) = K x(t,1) .
\end{equation}

Following what has been assumed for parabolic 
equations in Section \ref{sec:ISS_analysis_linear_nonlinear_PDEs_Lyapunov_methods:parabolic}, it might seem natural 
to consider the case where $\dot x= F(t,z)x$ possesses some stability properties for any $z$. 
On the other hand, there is no reason to believe that this property is always needed. 

For a matrix $M\in \R^{n\times n}$ denote by $\sym(M)$ the symmetric part of $M$, that is $\sym(M):=\frac{1}{2}(M+M^\top)$. 
We continue to denote by $\|M\|$ the norm of the matrix, induced by Euclidean norm in $\R^n$.

The above remarks lead us to introduce the following assumption: 
\begin{Ass}
\label{C3.1}
For all $t \geq 0$ and for all $z \in [0,1]$, all the entries of the diagonal matrix $\Lambda(t,z)$ are
nonnegative. 

There exist a diagonal positive definite matrix $Q$, a real 
number $\alpha \in (0,1)$, a continuous real-valued function $r$, periodic 
of period $\tau > 0$ such that 
\begin{equation}
\label{fin1}
B := \displaystyle\int_{0}^{\tau} \left[\frac{\max\{r(m),0\}}{\|Q\|} + \frac{\min\{r(m),0\}}{\lambda_{\min} (Q)}\right] dm >0, 
\end{equation}
where $\lambda_{\min} (Q)$ is the smallest eigenvalue of $Q$.
Moreover, for all $t \geq 0$ and for all $z \in [0,1]$, the following inequalities are satisfied:
\begin{eqnarray}
\label{1hype}
&
\sym\left(\alpha Q \Lambda(t,1) - K^\top Q \Lambda(t,0) K\right) \geq 0 ,
\\
\label{hy2pe}
&
\sym\left(Q \Lambda(t,z)\right) \geq r(t) I  ,
\\
\label{hype}
&
\sym\left(Q \frac{\partial \Lambda}{\partial z}(t,z) + 2 Q F(t,z)\right) \leq 0.
\end{eqnarray}
\end{Ass}

Let us introduce the function
\begin{equation}
\label{perf}
q(t) = \mu \left[\frac{\max\{r(t), 0\}}{\|Q\|} + \frac{\min\{r(t), 0\}}{\lambda_Q}\right] - \frac{\mu B}{2 \tau} ,
\end{equation}
where $Q$ and $r$ are the matrix and the function in Assumption \ref{C3.1}.

We are ready to state the following results (see  \cite[Theorem 1]{PrM12} for a proof of this result):
\begin{theorem}
\label{theo:linear}
Assume that the system (\ref{hyper:eq:general}) with the boundary 
conditions (\ref{boundary:hyper:eq}) satisfies Assumption \ref{C3.1}. 
Let $\mu$ be any real number such that 
\begin{equation}
\label{coml}
0 < \mu \leq - \ln(\alpha) .
\end{equation}
Then the function 
$V : [0,\infty)\times L_2(0,1)  \rightarrow \mathbb{R}$ defined, for all $x \in L_2(0,1)$ 
and $t \geq 0$, by
\begin{equation}
\label{def:Lyapunov:hyper}
V(t,x) = \exp\left(\frac{1}{\tau} \int_{t - \tau}^{t}\int_\ell^{t} q(m) dm d\ell\right)
\displaystyle\int_0^1 x(z)^\top Q x(z) e^{-\mu z} dz,
\end{equation}
where $q$ is the function defined in (\ref{perf}), is an (exponential) ISS Lyapunov function for the 
system (\ref{hyper:eq:general}) with the boundary conditions (\ref{boundary:hyper:eq}).
\end{theorem}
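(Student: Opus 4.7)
The plan is to verify that the candidate \eqref{def:Lyapunov:hyper} satisfies the coercivity estimate \eqref{LyapFunk_1Eig_LISS} and a dissipation inequality of the type \eqref{DissipationIneq_nc} with a linear decay rate in $V$, so that $V$ qualifies as an exponential ISS Lyapunov function in the $L_2$-norm. Throughout, I will work with classical solutions (which exist globally under the regularity assumptions of Section~\ref{fm}) and finish by density/standard arguments.

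First I would establish coercivity. Write $V(t,x) = e^{P(t)} W(x)$ with
\[
P(t) := \frac{1}{\tau}\int_{t-\tau}^{t}\!\!\int_\ell^{t} q(m)\,dm\,d\ell,
\qquad
W(x) := \int_0^1 x(z)^\top Q x(z)\, e^{-\mu z}\,dz.
\]
Since $q$ is continuous and periodic of period $\tau$, $P(t)$ is bounded between two finite constants, and since $Q \succ 0$ is diagonal while $e^{-\mu z}\in[e^{-\mu},1]$, the function $W$ is equivalent to $\|x\|_{L_2(0,1)}^2$. This yields $\psi_1(\|x\|_{L_2})\leq V(t,x)\leq \psi_2(\|x\|_{L_2})$ with $\psi_i\in\Kinf$ quadratic.

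Next I would compute $\dot P$. By Leibniz's rule,
\[
\dot P(t) = q(t) - \frac{1}{\tau}\int_{t-\tau}^{t} q(m)\,dm.
\]
Because $r$ (and hence $q$) has period $\tau$, the moving average equals $\frac{1}{\tau}\int_0^\tau q(m)\,dm$. Using the definitions of $q$ and $B$ in \eqref{perf} and \eqref{fin1}, this integral evaluates to $\mu B - \mu B/2 = \mu B/2$, so $\dot P(t) = q(t) - \mu B/(2\tau)$. This cancellation is the whole point of the time-varying prefactor.

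Next I would differentiate $W$ along trajectories of \eqref{hyper:eq:general}, \eqref{boundary:hyper:eq}. Using $Q\Lambda=\Lambda Q$ (both diagonal) and integrating the transport term by parts,
\begin{align*}
\dot W &= -\Big[x^\top Q\Lambda(t,\cdot)x\, e^{-\mu z}\Big]_0^1
+ \int_0^1 x^\top\!\Big(Q\tfrac{\partial\Lambda}{\partial z}+2QF\Big)x\,e^{-\mu z}dz \\
&\quad - \mu\int_0^1 x^\top Q\Lambda x\, e^{-\mu z}dz
+ 2\int_0^1 x^\top Q d\, e^{-\mu z}dz.
\end{align*}
Using $x(t,0)=Kx(t,1)$, the boundary term equals $-x(t,1)^\top\big[e^{-\mu}Q\Lambda(t,1)-K^\top Q\Lambda(t,0)K\big]x(t,1)$. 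The constraint $0<\mu\leq-\ln\alpha$ gives $e^{-\mu}\geq\alpha$, and combined with $Q\Lambda(t,1)\succeq 0$ and the symmetry assumption \eqref{1hype}, this boundary term is $\leq 0$. The interior quadratic-form integral is $\leq 0$ by \eqref{hype}. For the $-\mu\int x^\top Q\Lambda x e^{-\mu z}$ term, I would apply the pointwise bound from \eqref{hy2pe}: when $r(t)\geq 0$ use $x^\top Qx\leq \|Q\|\,|x|^2$, and when $r(t)<0$ use $x^\top Qx\geq\lambda_{\min}(Q)|x|^2$, to obtain
\[
-\mu\int_0^1 x^\top Q\Lambda x\,e^{-\mu z}dz \;\leq\; -\Big(q(t)+\frac{\mu B}{2\tau}\Big) W(x),
\]
by the very definition of $q$. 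Finally, Young's inequality handles the disturbance: for any $\eps>0$,
\[
2\int_0^1 x^\top Q d\,e^{-\mu z}dz \;\leq\; \eps W(x) + \eps^{-1}\|Q\|\int_0^1 |d(t,z)|^2 e^{-\mu z}dz.
\]

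Combining the pieces, $\dot V = \dot P\, V + e^{P(t)}\dot W$, and the choice of $P$ yields the cancellation $\dot P - (q(t)+\mu B/(2\tau)) = -\mu B/\tau$, so
\[
\dot V_d(x) \;\leq\; -\Big(\frac{\mu B}{\tau}-\eps\Big) V(t,x) + C_\eps\,\|d(t,\cdot)\|_{L_2(0,1)}^2
\]
for a constant $C_\eps>0$, after absorbing the bounded factor $e^{P(t)}$. Picking $\eps<\mu B/\tau$ (possible since $B>0$) gives a strict exponential dissipation inequality, which together with the coercivity bound establishes the ISS-Lyapunov property.

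The main obstacle is the sign management of the boundary term: one must carefully exploit that $\mu\leq-\ln\alpha$ gives $e^{-\mu}\geq\alpha$ in the \emph{right} direction, combined with the positive semidefiniteness of $Q\Lambda(t,1)$ and the symmetrization in \eqref{1hype}. The bookkeeping that aligns $\dot P(t)$ with the residual $q(t)$ arising from the interior integral (so that they cancel up to the strict negative margin $-\mu B/\tau$) is where the periodicity of $r$ and the positivity of $B$ in \eqref{fin1} are genuinely used.
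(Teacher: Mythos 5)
Your proposal is correct: the coercivity bound, the Leibniz computation of $\dot P$ using the $\tau$-periodicity of $q$, the integration by parts with the boundary term controlled via \eqref{1hype} and $e^{-\mu}\geq\alpha$, the interior term via \eqref{hype}, the case distinction on the sign of $r(t)$ matching the definitions \eqref{perf}--\eqref{fin1}, and the Young-type estimate for $d$ all fit together exactly as needed to give $\dot V \leq -(\mu B/\tau - \eps)V + C_\eps\|d(t,\cdot)\|^2_{L_2}$. The paper itself defers the proof to \cite{PrM12}, and your argument is essentially the same weighted-$L_2$ Lyapunov computation with the time-periodic prefactor used there, so no further comparison is needed.
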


\begin{remark}
{\em 1.} Assumption \ref{C3.1} does not imply that for all fixed $z \in [0,1]$, 
the ordinary differential equation $\dot x = F(t,z) x$ is stable. 
See \cite{MaP11} for an example where this system is unstable.

{\em 2.} Let us emphasize that in Assumption \ref{C3.1} we need that the matrix $Q$ is diagonal, but we do not need that
$r$ is a nonnegative function.  

{\em 3.} Assumption \ref{C3.1} holds when, in the 
system (\ref{hyper:eq:general}), $\Lambda(t,z)$ is constant, $F(t,z)$ 
is constant, $d(t,z) = 0$  for all $z \in [0,1]$ and 
$t \geq 0$ and the boundary condition (\ref{boundary:hyper:eq:general}) 
satisfies
$$
\sym\left( Q \Lambda - K^\top Q \Lambda K\right) \geq 0 \; ,
\quad 
\sym\left( Q f\right) \leq 0
$$
for a suitable symmetric positive definite matrix $Q$. Therefore Theorem \ref{theo:linear} 
generalizes the sufficient conditions of 
\cite{DiagneBastinCoron:automatica:11} for the exponential stability of linear hyperbolic systems 
of balance laws (when $F$ is diagonally marginally stable), to the 
time-varying case and to the semilinear perturbed case (without assuming that $F$ is diagonally marginally stable).

{\em 4.} The Lyapunov function $V$ defined in (\ref{def:Lyapunov:hyper}) is a 
time-varying function, periodic of period $\tau$. 
In the case where the system is time-invariant,  one can choose a constant 
function $q$, and then it is obtained a time-invariant function (\ref{def:Lyapunov:hyper}) 
which is a quite usual Lyapunov function candidate in the context of the stability analysis 
of PDEs (see, e.g., \cite{Coron:Euler:98,CBA08,XuSallet:COCV:2002}). 

{\em 5.} Note that the notion of ISS Lyapunov functions for time-varying periodic systems has not been defined so far. This notion is employed in Theorem \ref{theo:linear} and is similar to what is done for stationary systems (see Definition \ref{def:noncoercive_ISS_LF}). The precise definition is thus skipped in the context of Theorem \ref{theo:linear}.
\end{remark}

\begin{remark}
\label{rem:Saint-Venant equation} 
The results in this section have been applied in \cite{PrM12} for the design of a stabilizing boundary feedback
for the shallow water equation.
\end{remark}

\section{Interconnected systems}
\label{sec:Interconnected_systems}

The analysis of interconnected systems is one of the cornerstones of the mathematical control theory.
Complexity of large-scale nonlinear systems makes a direct stability analysis of such systems ultimately challenging. 
Small-gain theorems help to overcome this obstruction and to study stability of a complex network based on the knowledge of the stability properties of its components.
Classical results of this nature within linear input-output theory are summarized in \cite{DeV09}.

The first nonlinear ISS small-gain theorems have been shown for couplings of two ODE systems in fundamental works \cite{JTP94, JMW96}, and these results have been generalized to the interconnections of an arbitrary finite number of nonlinear ODE systems in \cite{DRW07, DRW10}, {\color{blue} see also \cite{KaJ11} for small-gain theorems in terms of vector Lyapunov functions.}
These theorems guarantee input-to-state stability of an interconnected system, provided all subsystems are ISS and the interconnection structure, described by gains, satisfies the small-gain condition. Together with the ISS Lyapunov functions method, small-gain theorems are one of the main tools developed within the ISS theory for analysis and control of networks.

Recently these results have been fully extended to finite couplings of abstract infinite-dimensional systems 
and some results on couplings of an infinite number of systems are available.
In this section, we give an overview of these results.

\subsection{Interconnections of control systems}
\label{sec:Interconnections_definition}

Let us define the interconnections of abstract control systems. In our exposition, we follow \cite{Mir19b}, which is based in turn on the methodology introduced in \cite[Definition 3.3]{KaJ07}.

Let $(X_i,\|\cdot\|_{X_i})$, $i=1,\ldots,n$ be normed vector spaces endowed with the corresponding norms.
Define for each  $i=1,\ldots,n$ the normed vector space
\begin{eqnarray}
\phantom{aaaaa} X_{\neq i}:=X_1 \times \ldots \times X_{i-1} \times X_{i+1} \times \ldots \times X_n, \quad 
\|x\|_{X_{\neq i}} : = \Big(\sum_{j=1,\ j\neq i}^n\|x_j\|^2_{X_j}\Big)^{\frac{1}{2}}.
\label{eq:X_neq_i}
\end{eqnarray}
Let control systems $\Sigma_i:=(X_i,PC_b( \R_+,X_{\neq i})\tm \Uc,\bar{\phi}_i)$ be given and assume that each $\Sigma_i$ possesses the BIC property.
We call $X_{\neq i}$ the \emph{space of internal input values} and $PC_b( \R_+,X_{\neq i})$ the \emph{space of internal inputs} for a system $\Sigma_i$. This choice of the input space is natural as the trajectories of subsystems have to be continuous, and the space of inputs has to satisfy the concatenation axiom.

The norm on $PC_b( \R_+,X_{\neq i})\tm \Uc$ we define as
\begin{align}
\|(v,u)\|_{PC_b( \R_+,X_{\neq i})\tm \Uc} := \Big(\sum_{j \neq i}\|v_j\|^2_{PC_b( \R_+,X_{j})} + \|u\|^2_{\Uc}\Big)^{\frac{1}{2}}.
\label{eq:Norm_Full_Input}
\end{align}
Define also the normed vector space which will be the state space for the coupled system
\begin{eqnarray}
X:=X_{1}\times\ldots\times X_{n}, \qquad \|x\|_X := \Big(\sum_{i=1}^n\|x_i\|^2_{X_i}\Big)^{\frac{1}{2}},
\label{eq:State_Space_Full_System}
\end{eqnarray}
and assume that there is a map $\phi=(\phi_1,\ldots,\phi_n):D_\phi \to X$, defined over a certain domain $D_{\phi} \subseteq \R_+ \tm X \times \Uc$
so that for each $x=(x_1,x_2,\ldots,x_n) \in X$, each $u\in\Uc$ and all $t\in\R_+$ so that $(t,x,u)\in D_{\phi}$
and for every $i=1,\ldots,n$, it holds that 
\begin{eqnarray}
\phi_i(t,x,u) = \bar{\phi}_i\big(t,x_i,(v_i,u)\big),
\label{eq:Sigma_i_models_i_th_mode_of_Sigma}
\end{eqnarray}
where
\[
v_i(t) = (\phi_1(t,x,u),\ldots,\phi_{i-1}(t,x,u),\phi_{i+1}(t,x,u),\ldots,\phi_{n}(t,x,u)).
\]

\begin{definition}
\label{def:Interconnection} 
Assume that $\Sigma:=(X,\Uc,\phi)$ is a control system with the state space $X$, input space $\Uc$ and satisfying the BIC property.
Then $\Sigma$ is called \emph{a (feedback) interconnection} of systems $\Sigma_1,\ldots,\Sigma_n$.
\end{definition}

In other words, condition \eqref{eq:Sigma_i_models_i_th_mode_of_Sigma} means that if the modes $\phi_j(\cdot,x,u)$, $j\neq i$ of the system $\Sigma$ will be sent to $\Sigma_i$ as the internal inputs (together with an external input $u$), and the initial state will be chosen as $x_i$ (the $i$-th mode of $x$), then the resulting trajectory of the system $\Sigma_i$, which is $\bar{\phi}_i(\cdot,x_i,v,u)$ will coincide with the trajectory of the $i$-th mode of the system $\Sigma$ on the interval of existence of $\phi_i$.


This definition of feedback interconnections does not depend on a particular type of control systems which are coupled, and applies to large-scale systems, consisting of heterogeneous components as PDEs, time-delay systems, ODE systems, etc.
The definition also applies to different kinds of interconnections, e.g.,\ both for in-domain and boundary interconnections of PDE systems.

\subsection{Small-gain theorems in a trajectory formulation}
\label{sec:ISS-SGT-Trajectory-Formulation}

Consider $n$ forward complete ISS systems $\Sigma_i:=(X_i,PC_b( \R_+,X_{\neq i}) \tm \Uc,\bar{\phi}_i)$, $i=1,\ldots,n$, where all $X_i$, $i=1,\ldots,n$ and $\Uc$ are normed  linear spaces. 

ISS property is introduced in Section~\ref{sec:Systems-and-ISS-def} in terms of the norm of the whole input, and this is not suitable for the consideration of coupled systems, as we are interested not only in the collective influence of all inputs over a subsystem but in the influence of particular subsystems over a given subsystem.
It is much more suitable to reformulate the ISS property in the following way:
\begin{lemma}{\cite{Mir19b}}
\label{lem:ISS_reformulation_n_systems}
A forward complete system \emph{$\Sigma_i$ is ISS (in summation formulation)} if there exist $\gamma_{ij}, \gamma_i \in \K\cup\{0\},\ j=1,\ldots,n$ and $\beta_i \in \KL$, such that for all initial values $x_i \in X_i$, all internal inputs 
$w_{\neq i} := (w_1,\ldots,w_{i-1}, w_{i+1},\ldots,w_n) \in PC_b( \R_+,X_{\neq i})$, all external inputs $u \in\Uc$
and all $t \in\R_+$ the following estimate holds:
\begin{eqnarray}
\label{eq:ISS_n_sys_sum}
\|\bar{\phi}_i\big(t,x_i,(w_{\neq i}, u)\big)\|_{X_i}  \leq
 \beta_i\left(\left\|x_i\right\|_{X_i},t\right) + \sum_{j\neq i}\gamma_{ij}\left(\left\|w_j\right\|_{[0,t]}\right) + \gamma_i\left(\left\|u\right\|_{\Uc}\right).
\end{eqnarray}
\end{lemma}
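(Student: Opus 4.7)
The plan is to establish the equivalence between the original ISS estimate \eqref{iss_sum} (applied to the combined input $(w_{\neq i},u)$ with the norm \eqref{eq:Norm_Full_Input}) and the summation estimate \eqref{eq:ISS_n_sys_sum}. Both implications are essentially manipulations of comparison functions, with the only subtle ingredient being an appeal to causality to reduce sup-norms on $\R_+$ to sup-norms on $[0,t]$.

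First I would dispatch the easy direction: summation estimate implies ISS. Given \eqref{eq:ISS_n_sys_sum}, observe that for each $j\neq i$ we have $\|w_j\|_{[0,t]}\leq\|w_j\|_{PC_b(\R_+,X_j)}\leq\|(w_{\neq i},u)\|_{PC_b(\R_+,X_{\neq i})\tm\Uc}$ and $\|u\|_{\Uc}\leq\|(w_{\neq i},u)\|_{PC_b(\R_+,X_{\neq i})\tm\Uc}$, directly from \eqref{eq:Norm_Full_Input}. Setting $\gamma(r):=\gamma_i(r)+\sum_{j\neq i}\gamma_{ij}(r)$ (a $\K$-function, which is then dominated by any $\Kinf$-majorant) and $\beta:=\beta_i$ yields an estimate of the form \eqref{iss_sum} for $\Sigma_i$.

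For the converse (ISS implies summation), start from $\|\bar\phi_i(t,x_i,(w_{\neq i},u))\|_{X_i}\leq\beta_i(\|x_i\|_{X_i},t)+\gamma(\|(w_{\neq i},u)\|_{PC_b\tm\Uc})$ with some $\gamma\in\Kinf$. Using the elementary bound $\bigl(\sum_{k=1}^Nr_k^2\bigr)^{1/2}\leq\sum_{k=1}^Nr_k$ one has $\|(w_{\neq i},u)\|_{PC_b\tm\Uc}\leq\sum_{j\neq i}\|w_j\|_{PC_b(\R_+,X_j)}+\|u\|_{\Uc}$, and then by monotonicity together with the standard inequality $\gamma(\sum_{k=1}^N r_k)\leq\sum_{k=1}^N\gamma(Nr_k)$ (which follows from $\sum r_k\leq N\max r_k$) one splits
\[
\gamma(\|(w_{\neq i},u)\|_{PC_b\tm\Uc})\leq\sum_{j\neq i}\gamma\bigl(n\|w_j\|_{PC_b(\R_+,X_j)}\bigr)+\gamma\bigl(n\|u\|_{\Uc}\bigr).
\]
It remains to replace $\|w_j\|_{PC_b(\R_+,X_j)}$ by $\|w_j\|_{[0,t]}$. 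For this I would invoke causality: by the axiom of causality for $\Sigma_i$, the value $\bar\phi_i(t,x_i,(w_{\neq i},u))$ is unchanged if each $w_j$ is replaced by an input $\tilde w_j\in PC_b(\R_+,X_j)$ that agrees with $w_j$ on $[0,t]$. Choosing $\tilde w_j$ to be the constant extension after time $t$ (e.g., $\tilde w_j(s):=w_j(s)$ for $s\leq t$ and $\tilde w_j(s):=w_j(t)$ for $s>t$, which is right-continuous, piecewise continuous and bounded) yields $\|\tilde w_j\|_{PC_b(\R_+,X_j)}=\|w_j\|_{[0,t]}$. Setting $\gamma_{ii}:=0$, $\gamma_{ij}(r):=\gamma(nr)$ for $j\neq i$, and $\gamma_i(r):=\gamma(nr)$ delivers \eqref{eq:ISS_n_sys_sum}.

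The only place where anything beyond routine comparison-function manipulation enters is the causality step used to localize the internal-input norms to $[0,t]$; this is the main technical ingredient, but it is an immediate consequence of the causality axiom in Definition~\ref{Steurungssystem} combined with the fact that $PC_b$ admits the required right-continuous constant extensions. No completeness or forward-completeness considerations are needed beyond what is already assumed, and the external input $\|u\|_{\Uc}$ requires no localization because $\|u\|_{\Uc}$ already appears directly on the right-hand side of \eqref{eq:ISS_n_sys_sum}.
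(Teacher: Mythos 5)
Your proposal is correct, and it follows the same standard route as the argument the paper relies on (the paper itself gives no proof, deferring to \cite{Mir19b}): the equivalence between the ISS estimate with respect to the combined input norm \eqref{eq:Norm_Full_Input} and the summation form \eqref{eq:ISS_n_sys_sum} is obtained exactly by the weak triangle inequality for $\K$-functions, $\gamma(\sum_k r_k)\leq\sum_k\gamma(Nr_k)$, together with the causality axiom applied to the constant right-continuous extension of the internal inputs beyond time $t$, which localizes their sup-norms to $[0,t]$. The only ingredients you needed to check — that the extension stays in $PC_b(\R_+,X_j)$ and that forward completeness makes the causality axiom applicable for all $t$ — are handled correctly, so there is no gap.
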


We collect all the internal gains in the matrix $\Gamma^{ISS}:=(\gamma_{ij})_{i,j=1,\ldots,n}$, which we call the \emph{gain matrix}. For a given gain matrix $\Gamma^{ISS}$ define the operator $\Gamma^{ISS}_{\oplus}:\R^n_+\to\R^n_+$ by
\begin{equation}
\Gamma^{ISS}_{\oplus}(s) := \Big(
  \sum_{j=1}^n \gamma_{1j}(s_j),\ldots,\sum_{j=1}^n \gamma_{nj}(s_j)
\Big)^\top,   \quad s=(s_1,\ldots,s_n)^\top\in\R^n_+.
\label{eq:ps_gamma}
\end{equation}

Furthermore, for $\alpha_i \in \Kinf, i=1,\ldots,n$ define $D:\R_+^n\to\R_+^n$ by
\begin{equation}
  \label{eq:definition von D}
  D (s_1,\ldots,s_n)^\top := \big((\id+\alpha_1)(s_1),\ldots,(\id+\alpha_n)(s_n)\big)^\top.
\end{equation}

A fundamental role will be played by the following operator conditions:
\begin{definition}
\label{def:SGC} 
We say that a nonlinear operator $A:\R^n_+ \to \R^n_+$ satisfies
\begin{itemize}
    \item the \emph{small-gain condition}, if 
  \begin{equation}
    A(s)\not\geq s,\qquad\forall s\in\R^n_+\setminus\{0\}.
  \label{eq:SGC}
  \end{equation}
    \item the \emph{strong small-gain condition}, if there is a map $D$ as in
  \eqref{eq:definition von D}, such that
  \begin{equation}
    (A\circ D)(s)\not\geq s,\qquad\forall s\in\R^n_+\setminus\{0\}.    
  \label{eq:Strong_SGC}
  \end{equation}
\end{itemize}
\end{definition}

Now we can formulate the small-gain sufficient condition for ISS of networks of nonlinear ISS systems.
\begin{theorem}[ISS Small-gain theorem, \cite{MiW18b}]
\label{thm:ISS_SGT} 
Let $\Sigma_i:=(X_i,PC_b( \R_+,X_{\neq i}) \tm \Uc,\bar{\phi}_i)$, $i=1,\ldots,n$ be control systems, where all $X_i$, $i=1,\ldots,n$ and $\Uc$ are normed  linear spaces.
Assume that $\Sigma_i$, $i=1,\ldots,n$ are forward complete systems, satisfying the ISS estimates as in Lemma~\ref{lem:ISS_reformulation_n_systems}, and that the interconnection $\Sigma=(X,\Uc,\phi)$ is well-defined and possesses the BIC property.

If $\Gamma^{ISS}_\oplus$ satisfies the strong small gain condition \eqref{eq:Strong_SGC}, then $\Sigma$ is ISS.    
\end{theorem}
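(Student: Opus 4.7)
The plan is to verify the three properties from item (iii) of the ISS superposition theorem (Theorem~\ref{thm:UAG_equals_ULIM_plus_LS}): uniform local stability (ULS), bounded reachability sets (BRS), and the bounded uniform limit property (bULIM). Since each $\Sigma_i$ is ISS, it is in particular ULS, CEP and BRS by Theorem~\ref{thm:UAG_equals_ULIM_plus_LS}, and these properties will be lifted to the interconnection $\Sigma$ with the help of the strong small-gain condition on $\Gamma^{ISS}_\oplus$.

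First I would extract from the strong small-gain condition a monotone path of decay, i.e. an unbounded continuous strictly increasing function $\sigma=(\sigma_1,\dots,\sigma_n):\R_+\to\R_+^n$ such that $\Gamma^{ISS}_\oplus(\sigma(r)) < \sigma(r)$ componentwise for all $r>0$. Such a path exists under \eqref{eq:Strong_SGC} by the standard construction for monotone operators on $\R_+^n$ (Perron--Frobenius-like arguments for monotone aggregations). The path $\sigma$ will serve as the backbone for a scalar comparison argument: informally, if $\|\phi_i(t,x,u)\|_{X_i} \leq \sigma_i(r(t))$ for a suitable scalar function $r(\cdot)$ encoding the current collective \q{size} of the trajectory together with the input, then the ISS estimates of the subsystems force $r(\cdot)$ to decay toward a value determined solely by $\gamma_i(\|u\|_\Uc)$.

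Next I would prove uniform global stability of $\Sigma$, which in particular yields ULS and BRS. To do this, I apply \eqref{eq:ISS_n_sys_sum} with $t=0$ substituted in $\beta_i$ and use the supremum over $[0,t]$ via causality (Proposition~\ref{prop:Causality_Consequence}) to obtain a system of componentwise inequalities on $s_i(t):=\sup_{\tau\in[0,t]}\|\phi_i(\tau,x,u)\|_{X_i}$. In vector form this reads $s(t) \leq \beta_0(\|x\|_X) + \Gamma^{ISS}_\oplus(s(t)) + \gamma(\|u\|_\Uc)$, and the strong small-gain condition allows one to invert (id $-\Gamma^{ISS}_\oplus$) in an appropriate monotone sense and solve for $s(t)$ in terms of $\|x\|_X$ and $\|u\|_\Uc$. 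This gives UGS, from which ULS and BRS follow immediately under Assumption~\ref{ass:always-forward-complete} and the BIC property of $\Sigma$.

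The main obstacle will be establishing the bULIM property. Here I would proceed by induction on the \q{layer} defined by the decay path $\sigma$: starting from the UGS bound, one uses the UAG part of the ISS property of each $\Sigma_i$ (which follows from Theorem~\ref{thm:UAG_equals_ULIM_plus_LS}) to drive the norms $\|\phi_i(t,x,u)\|_{X_i}$ below successively smaller levels dictated by $\sigma$, uniformly in time and over bounded balls of initial conditions and of inputs. At each step, the reduction produced by the small-gain inequality $\Gamma^{ISS}_\oplus(\sigma(r)) < \sigma(r)$ provides a strict contraction in the scalar coordinate $r$, and after finitely many iterations one reaches the $\gamma(\|u\|_\Uc)$-dominated residual. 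The delicate point is uniformity: one must ensure that the times $\tau_i(\varepsilon,r)$ produced by the UAG estimates of the individual subsystems can be chained together into a single time $\tau(\varepsilon,r)$ that does not depend on the particular $(x,u)$, which is guaranteed by the BRS property ensuring all intermediate trajectories remain in a common bounded ball. Once bULIM, ULS and BRS are established for $\Sigma$, Theorem~\ref{thm:UAG_equals_ULIM_plus_LS} yields the desired ISS property of the interconnection.
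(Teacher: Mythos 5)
Your proposal is correct and follows essentially the same route as the paper: establish UGS for the interconnection via the small-gain inequality on the running suprema (using the strong small-gain condition to invert $\id-\Gamma^{ISS}_\oplus$ in a monotone sense), then establish a uniform asymptotic-type property by iterating the subsystems' asymptotic gain estimates along a decay path, and conclude with the ISS superposition theorem (Theorem~\ref{thm:UAG_equals_ULIM_plus_LS}). The only cosmetic difference is that you invoke item (iii) of the superposition theorem (bULIM, ULS, BRS) whereas the paper's sketch verifies UGS and UAG and uses item (ii); this does not change the substance of the argument.
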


\begin{proof}
The idea of the proof is to verify the UGS and the UAG properties for the interconnection, and then use the ISS Superposition Theorem~\ref{thm:UAG_equals_ULIM_plus_LS} to show ISS of the interconnection. For ODE systems this proof strategy has been proposed in \cite{DRW07}. However, in the ODE case it was sufficient to show UGS and a non-uniform asymptotic gain property and then use the characterizations of ISS for ODEs shown in \cite{SoW96}. For infinite-dimensional systems, one has to prove the UAG property, which is a more difficult task.
\end{proof}

\subsection{Small-gain theorems in a Lyapunov formulation}
\label{sec:ISS-SGT-Lyapunov-Formulation}

As in most cases ISS of nonlinear systems is verified by the construction of an appropriate ISS Lyapunov function,
it is a natural desire, to use in the formulation of the small-gain theorems the information about the ISS Lyapunov functions for subsystems instead of using ISS estimates for subsystems.
In this section, we show that this is possible and moreover, this results in a method for the construction of ISS Lyapunov functions for the overall system if ISS Lyapunov functions for the subsystems are known.

Consider $n \in\N$ forward complete systems $\Sigma_i:=(X_i,PC_b(\R_+,X_{\neq i}) \tm \Uc,\bar{\phi}_i)$, $i=1,\ldots,n$, where all $X_i$, $i=1,\ldots,n$ and $\Uc$ are normed  linear spaces. Let also $\Sigma = (X,\Uc,\phi)$ be a well-defined interconnection of $\Sigma_i$, $i=1,\ldots,n$, as explained in Section~\ref{sec:Interconnections_definition}.

Furthermore, assume that all $\Sigma_i$, $i=1,\ldots,n$ are ISS and we know corresponding coercive ISS Lyapunov functions
 $V_i:X_i \to \R_+$, i.e. continuous functions for which there exist functions $\psi_{i1},\psi_{i2}\in\Kinf$, $\chi \in \K$ and positive definite function $\alpha_{i}$, such that
\[
\psi_{i1}(\|x_{i}\|_{X_{i}})\leq V_{i}(x_{i})\leq\psi_{i2}(\|x_{i}\|_{X_{i}}),\quad\forall x_{i}\in X_{i}
\]
and for all $x_{i}\in X_{i}$, all $x_{\neq i} = (x_1,\ldots,x_{i-1},x_{i+1},\ldots,x_n) \in X_{\neq i}$,
all $v_i \in PC_b(\R_+,X_{\neq i})$ with $v_i(0)=x_{\neq i}$ and all $u\in\Uc$ the following implication holds
\begin{equation}
\label{GainImplikationNSyst}
V_i(x_{i})\geq\max\{ \max_{j=1}^{n}\chi_{ij}(V_{j}(x_{j})),\chi_{i}(\|u\|_{\Uc})\} \ \Rightarrow\ \dot{V}_{i,v_i,u}(x_i)\leq-\alpha_{i}(V_{i}(x_{i})),
\end{equation}
where
\begin{eqnarray}
\dot{V}_{i,v_i,u}(x_{i})=\mathop{\overline{\lim}}\limits _{t\rightarrow+0}\frac{1}{t}(V_i(\bar{\phi}_{i}(t,x_{i},(v_i,u)))-V_i(x_{i})).
\label{eq:Dini-derivative_definition}
\end{eqnarray}
In analysis of the interconnection $\Sigma$, we are primarily interested in internal inputs of a specific form.
Pick arbitrary $x \in X$ and $u\in\Uc$ and define for $i=1,\ldots,n$ the following quantities:
\begin{eqnarray}
\phi_i:=\phi_i(\cdot,x,u),\quad \phi_{\neq i} = (\phi_1,\ldots,\phi_{i-1}, \phi_{i+1},\ldots,\phi_n).
\label{eq:phi_neq_i}
\end{eqnarray}
As $\Sigma$ is a well-defined interconnection, the equality 
\eqref{eq:Sigma_i_models_i_th_mode_of_Sigma} holds with the input
$\phi_{\neq i}$, containing the modes of the coupled system.
Consequently, for $v_i:= \phi_{\neq i}$ the Lie derivative \eqref{eq:Dini-derivative_definition} takes form:
\begin{eqnarray}
\dot{V}_{i,u}(x_{i}):=\dot{V}_{i,\phi_{\neq i},u}(x_{i}) = \mathop{\overline{\lim}}\limits _{t\rightarrow+0}\frac{1}{t}\Big(V_i\big(\phi_{i}(t,x,u)\big)-V_i(x_{i})\Big).
\label{eq:Dini-derivative_interconnected_case}
\end{eqnarray}

In the following we exploit the implication form as in \eqref{GainImplikationNSyst} and assume, that for all $i=1,\ldots,n$ for Lyapunov function $V_i$ of the $i$-th system the gains $\chi_{ij}$, $j=1,\ldots,n$ and $\chi_i$ are given.

Gains $\chi_{ij}$ characterize the interconnection structure of subsystems. 
Let us introduce the gain operator $\Gamma_\otimes:\R_{+}^{n}\rightarrow\R_{+}^{n}$ defined by 
\begin{equation}
\label{operator_gamma}
\Gamma_\otimes(s):=\left(\max_{j=1}^{n}\chi_{1j}(s_{j}),\ldots,\max_{j=1}^{n}\chi_{nj}(s_{j})\right),\ s\in\R_{+}^{n}.
\end{equation}

We recall the notion of $\Omega$-path (see \cite{DRW10,Rue10}), useful for investigation of stability of interconnected systems and for construction of a Lyapunov function of the whole interconnection.
\begin{definition} 
A function $\sigma=(\sigma_{1},\dots,\sigma_{n})^{T}:\R_{+}^{n}\rightarrow\R_{+}^{n}$,
where $\sigma_{i}\in\K_{\infty}$, $i=1,\ldots,n$ is called an \textit{$\Omega$-path},
if it possesses the following properties:
\begin{enumerate}
    \item $\sigma_{i}^{-1}$ is locally Lipschitz continuous on $(0,\infty)$;
    \item for every compact set $P\subset(0,\infty)$ there are finite
constants $0<K_{1}<K_{2}$ such that for all points of differentiability
of $\sigma_{i}^{-1}$ we have 
\begin{align*}
0<K_{1}\leq(\sigma_{i}^{-1})'(r)\leq K_{2},\quad\forall r\in P ;
\end{align*}
\item It holds that
\begin{align}
\label{sigma cond 2}
\Gamma_\otimes(\sigma(r))<\sigma(r),\ \forall r>0.
\end{align}
\end{enumerate}
\end{definition}

\begin{definition}
\label{def:DGC} 
We say that $\Gamma_\otimes$ satisfies the \emph{small-gain condition} if for all $s\in\R_{+}^{n}\backslash\left\{ 0\right\}$ it holds that
\begin{align}
\label{smallgaincondition}
\Gamma_\otimes(s)\not\geq s \quad \Iff \quad \exists i:  (\Gamma_\otimes(s))_i < s_i.
\end{align}
\end{definition}

\begin{remark}
\label{rem:SGC-and-omega-paths} 
As shown in \cite[Theorem 5.2]{DRW10}, if $\Gamma_\otimes$ satisfies the small-gain condition, then there is an $\Omega$-path.
An explicit construction of an $\Omega$-path with a bit weaker regularity properties is given in 
 \cite[Proposition 2.7 and Remark 2.8]{KaJ11}. 
\end{remark}

%
%
%
%

Now we can state a theorem, that provides sufficient conditions
for input-to-state stability of a network, consisting of $n$ ISS subsystems.
\begin{theorem}
\label{thm:main1} 
Consider a well-posed interconnection $\Sigma$ of $n\in\N$ control systems $\Sigma_i$ with $i=1,\ldots,n$ and assume that 
all $\Sigma_i$ are ISS with corresponding ISS Lyapunov functions $V_i$ and internal gains $\chi_{ij}$.
If the corresponding operator $\Gamma_\otimes$ defined by \eqref{operator_gamma}
satisfies the small-gain condition \eqref{smallgaincondition}, then
the whole system $\Sigma$ is ISS and possesses an ISS Lyapunov function defined by 
\begin{align}
\label{NeuLyapFun}
V(x):=\max_{i}\big\{\sigma_{i}^{-1}(V_{i}(x_{i}))\big\},
\end{align}
 where $\sigma=(\sigma_{1},\ldots,\sigma_{n})^{T}$ is an $\Omega$-path. The Lyapunov gain of the whole system is 
\[
\chi(r):=\max_{i}\sigma_{i}^{-1}(\chi_{i}(r)).
\]
\end{theorem}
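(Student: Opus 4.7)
My plan is to verify that the candidate $V(x):=\max_{i}\sigma_{i}^{-1}(V_{i}(x_{i}))$ is a coercive ISS Lyapunov function for $\Sigma$ in the implication form, and then invoke Theorem~\ref{LyapunovTheorem} to conclude ISS. First, since the small-gain condition \eqref{smallgaincondition} holds, I would appeal to Remark~\ref{rem:SGC-and-omega-paths} to obtain an $\Omega$-path $\sigma=(\sigma_1,\ldots,\sigma_n)^\top$ satisfying $\Gamma_\otimes(\sigma(r))<\sigma(r)$ for all $r>0$. The coercivity of $V$ is routine: using the bounds $\psi_{i1}(\|x_i\|_{X_i})\leq V_i(x_i)\leq \psi_{i2}(\|x_i\|_{X_i})$, setting $\psi_1(r):=\min_i \sigma_i^{-1}\circ\psi_{i1}(r/\sqrt{n})$ and $\psi_2(r):=\max_i \sigma_i^{-1}\circ\psi_{i2}(r)$ yields $\psi_1(\|x\|_X)\leq V(x)\leq \psi_2(\|x\|_X)$ for all $x\in X$, with $\psi_1,\psi_2\in\Kinf$.

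The core of the argument is the dissipation inequality. Fix $x\in X\backslash\{0\}$ and $u\in\Uc$, and assume $V(x)\geq \chi(\|u\|_\Uc)$ with $\chi(r):=\max_j \sigma_j^{-1}(\chi_j(r))$. Let $I(x):=\{i : \sigma_i^{-1}(V_i(x_i))=V(x)\}$ denote the set of active indices and let $r:=V(x)$. For any $i\in I(x)$ one has $V_i(x_i)=\sigma_i(r)$ and $V_j(x_j)\leq\sigma_j(r)$ for $j\neq i$, so monotonicity of $\chi_{ij}$ and the $\Omega$-path property give
\begin{equation*}
\max_{j}\chi_{ij}(V_j(x_j))\leq\max_{j}\chi_{ij}(\sigma_j(r))=(\Gamma_\otimes(\sigma(r)))_i<\sigma_i(r)=V_i(x_i).
\end{equation*}
Moreover, $V_i(x_i)=\sigma_i(V(x))\geq \sigma_i(\sigma_i^{-1}(\chi_i(\|u\|_\Uc)))=\chi_i(\|u\|_\Uc)$. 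Hence the hypothesis of the implication \eqref{GainImplikationNSyst} is satisfied for the $i$-th subsystem along the internal input $\phi_{\neq i}(\cdot,x,u)$, yielding $\dot V_{i,u}(x_i)\leq -\alpha_i(V_i(x_i))$ for every active index $i\in I(x)$.

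It remains to pass this bound through the $\max$ to obtain a bound on $\dot V_u(x)$. Here I would use the general fact that for continuous functions $h_i$ composed with continuous trajectories, the right-hand upper Dini derivative of $\max_i h_i(\cdot)$ at $t=0$ is bounded above by $\max_{i\in I(x)} D^+ h_i(\cdot)|_{t=0}$; this follows directly from the definition of $\limsup$ together with continuity, which guarantees that on a sufficiently small interval $[0,\delta)$ the maximum is attained at an index from $I(x)$. Applying this to $h_i:=\sigma_i^{-1}\circ V_i$ and using local Lipschitz continuity of $\sigma_i^{-1}$ on $(0,\infty)$ together with $V(x)>0$, I obtain
\begin{equation*}
\dot V_u(x)\leq \max_{i\in I(x)} (\sigma_i^{-1})'(V_i(x_i))\cdot\dot V_{i,u}(x_i)\leq -\max_{i\in I(x)}(\sigma_i^{-1})'(\sigma_i(r))\,\alpha_i(\sigma_i(r)) =: -\alpha(V(x)),
\end{equation*}
where $\alpha$ is positive definite thanks to the positivity of $(\sigma_i^{-1})'$ and $\alpha_i$ on $(0,\infty)$. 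This establishes that $V$ is a coercive ISS Lyapunov function in implication form with Lyapunov gain $\chi$, and Theorem~\ref{LyapunovTheorem} yields ISS of $\Sigma$.

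The main obstacle is the last step: justifying the $\max$-rule for the Dini derivative in an infinite-dimensional setting where $\dot V_{i,u}$ is defined only via $\limsup$ along mild solutions and $\sigma_i^{-1}$ is merely locally Lipschitz. Care is needed to handle points $x$ where $V_i(x_i)=0$ for some (non-active) indices, to control the behavior of $(\sigma_i^{-1})'$ near zero, and to ensure measurability/continuity of the relevant quantities along the (only continuous, not differentiable) trajectories $t\mapsto \phi(t,x,u)$. These technicalities are precisely where the infinite-dimensional proof departs from the finite-dimensional arguments of \cite{DRW10}, and they are essential for the reduction to the coercive Lyapunov characterization of Theorem~\ref{LyapunovTheorem}.
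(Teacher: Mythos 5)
Your proposal is correct and follows essentially the same route as the paper, whose proof simply defers to \cite[Theorem 4]{DaM13} and \cite{DRW10}: there, too, the max-type Lyapunov function built from an $\Omega$-path is shown to satisfy an implication-form dissipation inequality via the Dini-derivative max-rule for the active indices, and ISS of $\Sigma$ then follows from the direct Lyapunov theorem. Two small repairs: in your final display the decay rate should be $-\min_{i}\,(\sigma_i^{-1})'(\sigma_i(r))\,\alpha_i(\sigma_i(r))$ (minimum rather than maximum, taken over all $i$ so that it depends only on $V(x)$), and, as you yourself note, the pointwise derivative $(\sigma_i^{-1})'$ must be replaced by the lower Lipschitz bound $K_1$ on the relevant compact set from the $\Omega$-path definition, since $\sigma_i^{-1}$ is only locally Lipschitz.
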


\begin{proof}
This an extension of \cite[Theorem 4]{DaM13}, where this result has been formulated for semilinear systems of the form \eqref{InfiniteDim}, which is based on the results in \cite{DRW10}.
\end{proof}

\subsection{Interconnections of integral ISS systems}
\label{sec:ISS-SGT-iISS-couplings}

Small-gain theory can also be developed to study couplings of integral input-to-stable systems (please see Section~\ref{sec:iISS} for the definitions). The obtained results and the challenges encountered on this way (e.g.,\ the insufficiency of max-type Lyapunov functions, which we have used in Section~\ref{sec:ISS-SGT-Lyapunov-Formulation}), have been excellently explained in \cite{Ito13}.
The formulations and proofs of small-gain theorems for couplings of iISS systems can be extended to the infinite-dimensional setting without radical changes, see \cite{MiI15b}. Nevertheless, the application of these theorems for particular examples is more involved than in the ISS case, as although $L_p$ theory works fine for ISS systems, for iISS systems, it is not always satisfactory and often Sobolev state spaces should be used \cite{MiI15b}. 

In this section we state an iISS small-gain theorem for the semilinear system 
\begin{align}
\label{eq:interconnection}
\begin{array}{l}
\dot{x}_i(t)=A_ix_i(t)+f_i(x_1,x_2,u) , \quad i=1,2, \\
x_i(t)\in X_i , \quad u\in \Uc , 
\end{array}
\end{align}
where $X_i$ is a state space of the $i$-th subsystem, and $A_i:D(A_i) \subset X_i \to X_i$ is a generator of a strongly continuous semigroup over $X_i$. 
Let $X:=X_1\times X_2$ which is the space of $x=(x_1, x_2)$, 
and the norm on $X$ is defined as 
$\|\cdot\|_X=\|\cdot\|_{X_1}+\|\cdot\|_{X_2}$. 
In this section, we assume that 
there exist continuous functions $V_i:X_i \to \R_+$, 
$\psi_{i1},\psi_{i2} \in \Kinf$, $\alpha_i \in \K$, $\sigma_i \in \K$  and $\kappa_i \in \K\cup\{0\}$ 
for $i=1,2$ such that 
\begin{equation}
\label{LyapFunk_1Eigi}
\psi_{i1}(\|x_i\|_{X_i}) \leq V_i(x_i) \leq \psi_{i2}(\|x_i\|_{X_i}), 
\quad \forall x_i \in X_i
\end{equation}
and system \eqref{eq:interconnection} satisfies 
\begin{equation}
\label{GainImplikationi}
\dot{V}_i(x_i) \leq -\alpha_i(\|x_i\|_{X_i})
 + \sigma_i(\|x_{3-i}\|_{X_{3-i}}) + \kappa_i(\|u(0)\|_U)
\end{equation}
for all $x_i\in X_i$, $x_{3-i}\in X_{3-i}$ and $u\in \Uc$, 
where the Lie derivative of $V_i$ corresponding to the inputs 
$u\in \Uc$ and $v\in PC_b( \R_+,X_{3-i})$ with $v(0)=x_{3-i}$ 
is defined by
\begin{equation}
\label{LyapAbleitungi}
\dot{V}_i(x_i)=\mathop{\overline{\lim}} \limits_{t \rightarrow +0} {\frac{1}{t}\Big(V_i\big(\phi_i(t,x_i,v,u)\big)-V_i(x_i)\Big) }.
\end{equation}

To present a small-gain criterion for the interconnected 
system \eqref{eq:interconnection} whose components are not 
necessarily ISS, we make use of 
a generalized expression of inverse mappings on the set of 
extended non-negative numbers $\overline{\R}_+:=[0,\infty]$. 
For $\omega\in\K$, define the function $\omega^\ominus$: 
$\overline{\R}_+\to\overline{\R}_+$ as 
\[
\omega^\ominus(s)=\sup \{ v \in \R_+ : s \geq \omega(v) \}=
\begin{cases}
\omega^{-1}(s) & \text{, if } s\in \text{Im}\ \omega, \\ 
+\infty & \text{, otherwise. } 
\end{cases}
\]
A function $\omega\in\K$ is extended to 
$\omega$: $\overline{\R}_+\to\overline{\R}_+$ as 
$\omega(s) := \sup_{v\in\{y\in\R_+ \, : \, y \leq s\}} \omega(v)$. 
These notations are useful for presenting the following result succinctly. 

\begin{theorem}
\label{theorem:intercon}
Suppose that 
\begin{align}
\label{eq:alpsig}
\displaystyle\lim_{s\rightarrow\infty}\!\alpha_i(s)=\infty
\ \mbox{or} \ 
\lim_{s\rightarrow\infty}\!\sigma_{3-i}(s)\kappa_i(1)<\infty
\end{align}
is satisfied for $i=1,2$. 
If there exists $c>1$ such that 
\begin{align}
\label{eq:sg}
\psi_{11}^{-1}\circ\psi_{12}\circ
\alpha_1^\ominus\circ c\sigma_1\circ
\psi_{21}^{-1}\circ\psi_{22}\circ
\alpha_2^\ominus\circ c\sigma_2(s)
\leq s
\end{align}
holds for all $s\in\R_+$, then system \eqref{eq:interconnection} is iISS. 
Moreover, if additionally $\alpha_i\in\Kinf$ for $i=1,2$, then system \eqref{eq:interconnection} is ISS. 
Furthermore, one can construct $\lambda_i\in\K$ (see \cite[Theorem 6]{MiI15b} for details) so that
\begin{align}
\label{eq:Vsum}
V(x)=\int_0^{V_1(x_1)}\lambda_1(s)ds + \int_0^{V_2(x_2)}\lambda_2(s)ds
\end{align}
is an iISS (ISS) Lyapunov function for \eqref{eq:interconnection}.
\end{theorem}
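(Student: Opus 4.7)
The plan is to construct the function $V$ in \eqref{eq:Vsum} as a (non-coercive-style) iISS Lyapunov function for the interconnected system and then invoke Proposition~\ref{PropSufiISS}. First, I would check that the coupled system \eqref{eq:interconnection} gives rise to a well-posed control system $\Sigma = (X,\Uc,\phi)$ in the sense of Definition~\ref{Steurungssystem} on the product space $X = X_1 \times X_2$; this follows from Assumption~\ref{Assumption1}-type regularity implicit in \eqref{LyapAbleitungi}, and guarantees the BIC property needed to apply Proposition~\ref{PropSufiISS}.

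The main work is the construction of the weights $\lambda_1,\lambda_2 \in \K$ that make $V(x) = \int_0^{V_1(x_1)} \lambda_1(s)\,ds + \int_0^{V_2(x_2)} \lambda_2(s)\,ds$ dissipative. Differentiating formally along trajectories and inserting \eqref{GainImplikationi} yields
\begin{align*}
\dot V_u(x) \;\leq\; & -\lambda_1(V_1(x_1))\alpha_1(\|x_1\|_{X_1}) + \lambda_2(V_2(x_2))\sigma_2(\|x_1\|_{X_1}) \\
& -\lambda_2(V_2(x_2))\alpha_2(\|x_2\|_{X_2}) + \lambda_1(V_1(x_1))\sigma_1(\|x_2\|_{X_2}) \\
& +\big(\lambda_1(V_1(x_1))\kappa_1 + \lambda_2(V_2(x_2))\kappa_2\big)(\|u(0)\|_U).
\end{align*}
Using $\psi_{i1}(\|x_i\|_{X_i}) \leq V_i(x_i) \leq \psi_{i2}(\|x_i\|_{X_i})$, one can re-express the positive and negative terms in each pair as functions of $V_1, V_2$ alone. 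The small-gain inequality \eqref{eq:sg}, with the strict constant $c > 1$, is exactly what is needed to pick $\lambda_1,\lambda_2 \in \K$ so that on the diagonal region $\{V_1 \sim V_2\}$ one of the negative terms dominates the corresponding cross term by a definite margin, and symmetrically off-diagonal. This is the classical Ito-type construction from the ODE iISS small-gain theorem, transferred verbatim to the infinite-dimensional setting as in \cite[Theorem~6]{MiI15b}; the $\lambda_i$ can be chosen explicitly in terms of $\psi_{ij}, \alpha_i, \sigma_i$ and $c$.

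The condition \eqref{eq:alpsig} is used precisely to rule out the degenerate situation where both $\alpha_i$ saturate at a finite level while $\sigma_{3-i}\kappa_i$ also do so, which would destroy the possibility of making $\lambda_i$ satisfy the required growth compatible with \eqref{eq:sg}. Under \eqref{eq:alpsig}, the weighted dissipation inequality collapses to
\[
\dot V_u(x) \;\leq\; -\alpha(\|x\|_X) + \sigma(\|u(0)\|_U), \qquad x\in X,\ u \in \Uc,
\]
for some $\alpha \in \PD$ and $\sigma \in \K$, with the lower bound of $V$ in terms of $\|x\|_X$ obtained from $\psi_{i1}$ and the lower integration bound $\lambda_i > 0$. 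Thus $V$ is a coercive iISS Lyapunov function for $\Sigma$, and Proposition~\ref{PropSufiISS} gives iISS.

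For the ISS claim, if both $\alpha_i \in \Kinf$, then the $\alpha$ obtained above inherits the $\Kinf$ property after careful bookkeeping (since neither decay rate saturates), so that the very same $V$ satisfies a dissipation inequality in the ISS sense \eqref{DissipationIneq_nc}, and Theorem~\ref{LyapunovTheorem} gives ISS of $\Sigma$. The main obstacle throughout is the explicit construction of $\lambda_1,\lambda_2$: one must translate the composite inequality \eqref{eq:sg}, which mixes four different $\K$-functions via $\ominus$-inverses, into pointwise comparisons between $\lambda_1(V_1)\sigma_1(\|x_2\|_{X_2})$ and $\lambda_2(V_2)\alpha_2(\|x_2\|_{X_2})$ (and symmetrically). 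Handling this carefully so that both the positivity of $\alpha$ and, in the $\Kinf$ case, its radial unboundedness are preserved, is the technical core of the argument.
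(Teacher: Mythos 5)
Your overall route is the same one the paper takes: the survey gives no self-contained proof but, exactly as you propose, reduces the theorem to the construction of weights $\lambda_1,\lambda_2\in\K$ making the sum-form functional \eqref{eq:Vsum} an iISS (resp.\ ISS) Lyapunov function --- this is the Ito-type construction carried out in \cite[Theorem 6]{MiI15b} --- after which iISS follows from Proposition~\ref{PropSufiISS} and ISS from Theorem~\ref{LyapunovTheorem}. Your formal differentiation of $V$, the insertion of \eqref{GainImplikationi}, the coercivity argument via $\psi_{i1}$ and the fact that $r\mapsto\int_0^r\lambda_i(s)\,ds$ is unbounded, and the bookkeeping showing the decay rate is of class $\Kinf$ when both $\alpha_i\in\Kinf$ are all consistent with that construction.

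One point in your sketch is off, and it sits precisely where the hypothesis \eqref{eq:alpsig} does its work. The supply term you obtain, $\lambda_1(V_1(x_1))\kappa_1(\|u(0)\|_U)+\lambda_2(V_2(x_2))\kappa_2(\|u(0)\|_U)$, is state dependent; to arrive at an iISS dissipation inequality the factors $\lambda_i(V_i(x_i))$ must be eliminated, which forces $\lambda_i$ to be chosen bounded whenever $\kappa_i\not\equiv 0$. Reconciling a bounded $\lambda_i$ with the small-gain domination of the cross terms is possible only if either $\alpha_i$ is unbounded or the gain $\sigma_{3-i}$ (through which $x_i$ acts on the other subsystem) saturates; that is what \eqref{eq:alpsig} encodes, the excluded degeneracy being ``$\alpha_i$ bounded, $\sigma_{3-i}$ unbounded, $\kappa_i\neq 0$''. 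Your reading --- that \eqref{eq:alpsig} rules out the case where $\alpha_i$ saturates while $\sigma_{3-i}\kappa_i$ also saturates --- is inverted: simultaneous saturation is the benign case. Correspondingly, the assertion that the weighted inequality ``collapses'' to $\dot V_u(x)\le-\alpha(\|x\|_X)+\sigma(\|u(0)\|_U)$ under \eqref{eq:alpsig} glosses over the only genuinely delicate step (boundedness of the $\lambda_i$ versus the growth that the small-gain condition \eqref{eq:sg} demands of them); if you intend to carry out the construction rather than cite \cite[Theorem 6]{MiI15b}, this is the step you must make explicit.
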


\begin{remark}
\label{rem:Simplified_SGC} 
Condition \eqref{eq:sg} can be called an iISS small-gain condition. On the first glance it may seem a bit technical, but it simplifies considerably if $V_i(x_i) = \psi_i(\|x_i\|_{X_i})$, for some $\psi_1,\psi_2\in\Kinf$. In this case $\psi_{11}=\psi_{12}=\psi_i$, $i=1,2$ and \eqref{eq:sg} takes the form
\begin{align}
\label{eq:sg_2}
\alpha_1^\ominus\circ c\sigma_1\circ
\alpha_2^\ominus\circ c\sigma_2(s)
\leq s.
\end{align}
The term $\alpha_i^\ominus\circ \sigma_i$ can be interpreted as a Lyapunov gain of the $i$-th subsystem.
This interpretation justifies the name \q{small-gain condition} for \eqref{eq:sg}.
\end{remark}

\begin{remark}
\label{rem:Decay-rates-in-iISS-formulation} 
In Theorem~\ref{theorem:intercon} we required that the decay rates of the iISS Lyapunov functions $\alpha_i$ are $\K$-functions. 
It was shown in \cite[Theorem 1]{CAI14} that existence of an iISS Lyapunov function with such a decay rate implies not only iISS but also strong iISS. This result can be transferred similarly to infinite-dimensional systems.
Thus, in Theorem~\ref{theorem:intercon} we implicitly assume that the subsystems are not only iISS, but strongly iISS.
\end{remark}

In small-gain theory for networks of ISS systems, the Lyapunov function is usually constructed in the maximization form, see \cite{DaM13, DRW10}, etc. The use of the summation form \eqref{eq:Vsum} for systems which are not necessarily ISS is motivated by the limitation of the maximization form and clarified in \cite{IDW12} for finite-dimensional systems. 

Having Theorem~\ref{theorem:intercon} in mind, the following strategy can be used to study ISS/iISS of coupled systems: first to construct ISS or iISS Lyapunov functions for subsystems, next to verify the small-gain condition and to apply small-gain results to justify ISS or iISS of the interconnection respectively.

However, even construction of Lyapunov functions for subsystems can be a complex task, and not only in view of a higher complexity in dealing with Lyapunov functions in infinite dimensions, but also due to the necessity to choose the state spaces in a right way and to match them with the state and input spaces for other subsystems. 
In particular, for construction of iISS and ISS Lyapunov functions for some classes of nonlinear parabolic systems, it is necessary to exploit Sobolev spaces as state spaces. An example of using Theorem~\ref{theorem:intercon}
together with Proposition~\ref{ConverseLyapunovTheorem_BilinearSystems} and other results on constructions of ISS Lyapunov functions to study the stability of coupled highly nonlinear parabolic systems one can find in \cite{MiI15b}.

\subsection{Cascade interconnections}
\label{sec:Cascade interconnections}

Consider an interconnection of two evolution equations of the form \eqref{eq:interconnection}, where the right-hand side $f_2$ does not depend on $x_1$. In other words, the second subsystem does not depend on the dynamics of the first system, but it influences the dynamics of the first system.
Such interconnections are called \emph{cascade interconnections}. 

The fact that a \emph{cascade interconnection of two input-to-state stable ODE systems is itself ISS}, was already shown in \cite{Son89}, and can be obtained as a trivial consequence of Theorem~\ref{thm:ISS_SGT} for cascade interconnections of an arbitrary finite number of systems of a rather general nature.
In contrast to this, a cascade interconnection of two iISS systems is iISS only under some additional conditions, see \cite{AAS02, ChA08}. If these conditions are not met, a cascade interconnection of two iISS systems is not necessarily iISS, as shown in \cite[Example 1]{AAS02}. The failure of this important property was one of the motivations to introduce the strong iISS property in \cite{CAI14}. In \cite{CAI14b} it was shown that cascade interconnections of strongly iISS ODE systems are again strongly iISS.

\subsection{Example}
\label{sec:Example-for-SGT}

In this section, we show how the iISS small-gain theorem can be applied for the analysis of stability of coupled parabolic PDEs.
Consider the system 
\begin{equation}
\label{GekoppelteNonLinSyst}
\left
\{
\begin{array}{l} 
{\frac{\partial x_1}{\partial t}(z,t) = \frac{\partial^2 x_1}{\partial z^2}(z,t) + x_1(z,t)x^4_2(z,t),} \\[1ex]
{x_1(0,t) = x_1(\pi,t)=0,} \\[1ex]
{\frac{\partial x_2}{\partial t}(z,t) =  \frac{\partial^2 x_2}{\partial z^2}(z,t) + ax_2(z,t) 
- bx_2(z,t) \Big(\frac{\partial x_2}{\partial z}(z,t)\Big)^{\!\!2} + \Big( \frac{x^2_1(z,t)}{1+x^2_1(z,t)} \Big)^{\!\frac{1}{2}}}
, \\[1ex]
{x_2(0,t) = x_2(\pi,t)=0.} 
\end{array}
\right.    
\end{equation}
defined on the region $(z,t) \in (0,\pi) \times (0,\infty)$. 
The state spaces for subsystems we choose as $X_1:=L_2(0,\pi)$ for $x_1(\cdot,t)$ and $X_2:=H^1_0(0,\pi)$ for $x_2(\cdot,t)$.

Our aim is to analyze for which values of parameters $a,b$ the overall system is 0-UGAS.
It may be difficult to find a Lyapunov function ensuring this property directly, thus we exploit the iISS small-gain theorem. We omit most of the computations and refer to \cite{MiI15b} for the full analysis.
\begin{itemize}
    \item[(i)] $x_1$-subsystem is a generalized bilinear system as in    \eqref{BiLinSys}, and thus it is iISS and possesses an iISS Lyapunov function, given by
\begin{eqnarray}
V_1(y_1):=\ln\Big(1+\|y_1\|^2_{L_{2}(0,\pi)} \Big),
\label{eq:V1}
\end{eqnarray}
with the corresponding Lie derivative:
\begin{eqnarray}
\dot{V}_1(y_1) \leq - \frac{2\|y_1\|_{L_2(0,\pi)}^2}{1+\|y_1\|_{L_2(0,\pi)}^2}  + 8  \|y_2\|^4_{H^1_{0}(0,\pi)}.
\label{eq:Z1dot_Final}
\end{eqnarray}

\item[(ii)] $x_2$-subsystem is ISS, which can be proved by construction of an ISS Lyapunov function
\begin{eqnarray}
V_2(y_2)=  \int_0^{\pi}{\Big(\frac{\partial y_2}{\partial l}(z)\Big)^2  dz} = 
 \|y_2\|^2_{H^1_0(0,\pi)},
\label{eq:LF_H10_Example}
\end{eqnarray}
satisfying for any $\omega>0$ the dissipation inequality
\begin{align}
\dot{V}_2(y_2) \leq  -2(1-a-\frac{\omega}{2})\|y_2\|^2_{H^1_0(0,\pi)}
-\frac{2b}{3\pi}\|y_2\|^4_{H^1_0(0,\pi)}
{+} \frac{\pi}{\omega}\Big( 
\frac{\|y_1\|^2_{L_{2}(0,\pi)}}{1+\|y_1\|^2_{L_{2}(0,\pi)}}\Big) . 
\label{eq:x2iss}
\end{align}

\item[(iii)] 
Now we collect the findings of (i) and (ii).
Assume that $a<1$ and $b\geq 0$. 
For the space $X=L_2(0,\pi)\times H^1_0(0,\pi)$, 
the Lyapunov functions defined as \eqref{eq:V1} and 
\eqref{eq:LF_H10_Example} for the two subsystems satisfy 
\eqref{LyapFunk_1Eigi} 
with the class $\K_\infty$ functions 
$\psi_{11}=\psi_{12}: s \mapsto \ln(1+s^2)$ and $\psi_{21}=\psi_{22}: s \mapsto s^2$. 
Due to \eqref{eq:Z1dot_Final} and \eqref{eq:x2iss}, 
we have \eqref{GainImplikationi} for 
\begin{align}
&
\alpha_1(s)=\frac{2s^2}{1+s^2} 
, \quad  
\sigma_1(s)=8s^4
, \quad  
\kappa_1(s)=0,
\\
&
\alpha_2(s)=2\left(1\!-\!a\!-\!\frac{\omega}{2}\right)\!s^2 + \frac{2b}{3\pi}\!s^4
, \hspace{1ex} 
\sigma_2(s)=\frac{\pi}{\omega}\left(\!\frac{s^2}{1+s^2}\!\right), \;
\kappa_2(s)=0, 
\end{align}
defined for any $\omega\in(0,2(1-a)]$. 
For these functions, condition \eqref{eq:sg} holds 
for all $s\in\R_+$ if and only if 
\begin{align}
\frac{12c^2\pi^2}{b\omega}\left(\!\frac{s^2}{1+s^2}\!\right) 
\leq 
\frac{2s^2}{1+s^2} 
, \quad \forall s\in\R_+
\label{eq:exsg}
\end{align}
is satisfied. Thus, there exists $c>1$ such that \eqref{eq:sg} holds 
if and only if ${6\pi^2}/{b}<\omega$ holds. 
Combining this with $\omega\in(0,2(1-a)]$, $a<1$ and $b\geq 0$, 
Theorem \ref{theorem:intercon} establishes UGAS
of $x=0$ for the whole system \eqref{GekoppelteNonLinSyst} when 
\begin{align}
a+\frac{3\pi^2}{b}<1 , \quad  b\geq 0. 
\label{eq:expibassum}
\end{align}

\end{itemize}

\subsection{Interconnections of an infinite number of systems}
\label{sec:ISS-SGT-infinite-interconnections}

Stability and control of infinite interconnections have received significant attention during the last decades. In particular, a large body of literature is devoted to spatially invariant systems consisting of an infinite number of components, interconnected with each other by means of the same pattern \cite{BPD02, BaV05, BeJ17, CIZ09}, etc.
Input-to-state stability theory can be applied to general infinite interconnections with nonlinear components without the assumption of the spatial invariance, which is the subject of an active ongoing research, see recent papers \cite{KMS19, DMS19a, DaP17,Mir19d,NMK20b}.

\textbf{Infinite networks with linear gains.} In \cite{KMS19} tight small-gain conditions for networks consisting of a countably infinite number of finite-dimensional continuous-time systems have been developed.
The main assumption in \cite{KMS19} is that each subsystem is exponentially ISS with respect to internal and external inputs and possesses an exponential ISS Lyapunov function, which is given in dissipative form. 
The associated gain functions reflecting the interaction with neighbors are assumed to be linear. The corresponding gain operator, which collects all the information about the internal gains and is denoted by $\Psi$, is given in a sum form and hence is linear as well.
Note that for infinite networks $\Psi$ acts in an infinite-dimensional space, in contrast to couplings of $N\in\N$ systems of arbitrary nature (possibly infinite-dimensional), studied in Section~\ref{sec:ISS-SGT-Lyapunov-Formulation}. 

The main result of \cite{KMS19} states that \emph{if $r(\Psi) < 1$, then the whole interconnection is exponentially ISS and it is possible to construct a \emph{coercive} exponential ISS Lyapunov function for the overall network as a weighted sum of ISS Lyapunov functions of subsystems}.
This result is a non-trivial generalization of \cite[Proposition 3.3]{DIW11} from finite networks to infinite networks. 
The argument for finite networks in~\cite{DIW11} is based on the Perron-Frobenius theorem.
However, existing infinite-dimensional versions of the Perron-Frobenius theorem including the Krein-Rutman theorem \cite{KrR48}, are \emph{not} applicable to infinite couplings as they require at least quasi-compactness of the gain operator, which is a quite strong assumption.
To overcome this obstacle, in \cite{KMS19} alternative methods of the spectral theory of linear positive operators in ordered Banach spaces are applied in \cite{KMS19}.

In \cite{NMK20b} the small-gain theorem from \cite{KMS19} has been extended to address exponential input-to-state stability with respect to closed sets, which in turn was applied to the stability analysis of infinite \emph{time-varying} networks, to \emph{dynamic weighted average consensus} in infinite multi-agent systems, as well as to the design of \emph{distributed observers} for infinite networks.

\textbf{Infinite networks with nonlinear gains.} In contrast to the case of linear gains, described in the previous passage, there are no decisive small-gain results for infinite networks with nonlinear gains. Next we describe some partial results, which reveal the challenges arising on this way.
In~\cite{DaP19} it is shown that a countably infinite network of ISS systems is ISS, provided that the gain functions capturing the influence from the neighboring subsystems are all less than identity which is a rather conservative condition.
By means of examples, it is shown in~\cite{DMS19a} that \emph{classic max-form small-gain conditions developed for finite-dimensional systems~\cite{DRW10} do not imply ISS of the interconnection in the case of infinite networks, even for linear ones}.
To address this issue, more restrictive \emph{robust strong small-gain conditions} are developed in~\cite{DMS19a}.
While the small-gain theorems in~\cite{DMS19a, DaP19} are formulated in terms of ISS Lyapunov functions, a trajectory-based small-gain theorem for infinite networks is provided in~\cite{Mir19d}, where the key role is played by a kind of \q{monotone invertibility} of $\id - \Gamma$, where $\Gamma$ is the gain operator.

\begin{openprob}
\label{ob:infinite-interconnections}
Small-gain results presented in this section use various types of the small-gain conditions: spectral small-gain condition \cite{KMS19},
robust strong small-gain condition \cite{DMS19a}, existence of a so-called $\Omega$-path for a gain operator $\Gamma$ \cite{DMS19a}, as well as 
\q{monotone invertibility} of $\id - \Gamma$ \cite{Mir19d}.
The relations between these conditions are known for finite networks, but are unclear for infinite networks. Understanding of these relationships and using of this knowledge to obtain ISS small-gain theorems for infinite networks with nonlinear gains is a challenging open problem.
\end{openprob}

\section{Input-to-state stability of time-delay systems}
\label{sec:TDS}

The study of ISS of time-delay systems has been initiated in 1998 in a seminal paper \cite{Tee98}, and now it is a rich theory with a broad range of applications. In this section, we give a rather brief overview of the ISS theory of retarded time-invariant differential equations, with an emphasis on the relationship between the ISS theory of delay systems and the ISS theory for general infinite-dimensional systems, described previously. Such important topics as ISS of time-variant retarded differential equations, systems with varying delays, neutral differential equations as well as iISS theory of delay systems are outside of the scope of this survey. 
Applications of ISS theory to robust control of delay systems are not covered here as well, please see, e.g., \cite{KMM16} for some results in this direction.

\subsection{Retarded differential equations}

We consider \emph{retarded differential equations} of the form
\begin{eqnarray}
\dot{x}(t)=f(x_t,u_t), \quad t>0,
\label{eq:time-delay}
\end{eqnarray}
where we denote $x_t(s):=x(t+s)$, $u_t(s)=u(t+s)$, $s\in[-T_d,0]$, for all $t\geq 0$,
and $T_d>0$ is the fixed (maximal) time-delay.

For \eqref{eq:time-delay} we choose for a certain $n\in\N$ the state space $X:= C([-T_d,0],\R^n)$, endowed with the usual supremum norm, defined for any $x\in X$ by 
\[
\|x\|_X:=\sup_{t\in[-T_d,0]}|x(t)|.
\]

We assume here that $U:=\R^m$ and that input $u$ belongs to the space
$\Uc:=L_\infty([-T_d,+\infty),U)$ of globally essentially bounded, measurable functions\\  $u:[-T_d,+\infty) \to U$. The norm of $u \in \Uc$ is given by
$\|u\|_{\Uc}:=\esssup_{t \geq -T_d}|u(t)|$. 

\begin{definition}
\label{def:Solution-of-TDS-system} 
We say that $\zeta \in C([-T_d,\tau],\R^n)$, $\tau>0$ is a \emph{solution of \eqref{eq:time-delay} on $[-T_d,\tau)$} subject to an initial condition $x \in X$ and an input $u\in\Uc$, if $\zeta$ is absolutely continuous on $[-T_d,\tau]$, satisfies the initial condition $\zeta(s) = x(s)$ for $s\in[-T_d,0]$ and the equation $\dot{\zeta}(t) = f(\zeta_t,u_t)$ holds almost everywhere on $(0,\tau)$.

We say that $\zeta \in C([-T_d,+\infty),\R^n)$ is a \emph{solution of \eqref{eq:time-delay} on $[-T_d,+\infty)$} subject to an initial condition $x \in X$ and an input $u\in\Uc$, if $\zeta$ is a solution of \eqref{eq:time-delay} on $[-T_d,s)$ for each $s>0$.
\end{definition}

For any function $g:\R\to S$, where $S$ is an arbitrary set, define by $g|_{[a,b]}$ a restriction of $g$ to the interval $[a,b]\subset \R$, that is a function $g|_{[a,b]}:[a,b]\to S$ with $g|_{[a,b]}(s) = g(s)$, $s\in[a,b]$.

\begin{proposition}
\label{prop:TDS-control system} 
Assuming that for each $x\in X$ and each $u\in\Uc$ there exists the unique (maximal) solution $\zeta$ as in Definition~\ref{def:Solution-of-TDS-system} of \eqref{eq:time-delay} with the interval of existence $[-T_d,t^*(x,u))$, consider a map $\phi:D_\phi\to X$ by setting $D_\phi:=\cup_{x\in X,\ u\in\Uc}[0,t^*(x,u))\times\{(x,u)\}$, and $\phi(t,x,u):=\zeta|_{[t-T_d,t]}$, $t\in[0,t^*(x,u))$. 
Then $\Sigma:=(X,\Uc,\phi)$ is a control system according to Definition~\ref{Steurungssystem}.
\end{proposition}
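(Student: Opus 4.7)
The plan is to verify, one by one, the three structural requirements (state space, input space, transition map) of Definition~\ref{Steurungssystem} and the four axioms ($\Sigma$\ref{axiom:Identity})--($\Sigma$\ref{axiom:Cocycle}). The state space $X = C([-T_d,0],\R^n)$ with the sup-norm is a Banach space, so requirement (i) is immediate. For the input space $\Uc = L_\infty([-T_d,+\infty),U)$ (identified with a suitable subset of $\{u:\R_+\to U\}$ via the convention that the ``memory'' on $[-T_d,0]$ is attached to each element of $\Uc$), shift-invariance follows at once from the translation-monotonicity of the essential supremum, $\|u(\cdot+\tau)\|_\Uc \le \|u\|_\Uc$ for every $\tau\ge0$; the concatenation axiom is standard, since the piecewise-defined function in \eqref{eq:Composed_Input} is again essentially bounded and measurable with norm at most $\max\{\|u_1\|_\Uc,\|u_2\|_\Uc\}$.

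Once $\phi$ is defined through the maximal solutions $\zeta$ as in the statement, the identity axiom ($\Sigma$\ref{axiom:Identity}) is immediate: by Definition~\ref{def:Solution-of-TDS-system} one has $\zeta(s)=x(s)$ for $s\in[-T_d,0]$, so $\phi(0,x,u) = \zeta|_{[-T_d,0]} = x$. For causality ($\Sigma$\ref{axiom:Continuity} in the numbering of the paper, i.e.\ the second axiom), observe that if two inputs coincide on $[0,t]$ and share the same pre-history on $[-T_d,0]$ (which is part of the input data), then the retarded right-hand side $f(\zeta_s,u_s)$ with $s\in[0,t]$ depends only on values of the input on $[-T_d,t]$; hence the two restricted Cauchy problems are identical and the assumed uniqueness forces $\phi(t,x,u)=\phi(t,x,\tilde u)$, and maximal existence up to $t$ is preserved. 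Continuity in time ($\Sigma$\ref{axiom:Continuity}) reduces to the claim that the map $t\mapsto \zeta|_{[t-T_d,t]}$ is continuous from $[0,t^*(x,u))$ into $(X,\|\cdot\|_X)$; this follows from the fact that $\zeta\in C([-T_d,\tau],\R^n)$ for every $\tau<t^*(x,u)$, hence is uniformly continuous on each such compact interval, so for $t_n\to t$ one obtains $\sup_{s\in[-T_d,0]}|\zeta(t_n+s)-\zeta(t+s)|\to 0$.

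The cocycle axiom ($\Sigma$\ref{axiom:Cocycle}) is the most delicate step and will be the main obstacle. The strategy is: given $x\in X$, $u\in\Uc$ and $t,h\ge0$ with $[0,t+h]\times\{(x,u)\}\subset D_\phi$, set $\eta:=\phi(t,x,u)$ and $v:=u(t+\cdot)\in\Uc$ (the latter inclusion being a consequence of the already-verified shift-invariance), and let $\tilde\zeta$ be the unique maximal solution associated with $(\eta,v)$. Define the candidate shifted trajectory $\hat\zeta(s):=\zeta(t+s)$ for $s\ge -T_d$. Then $\hat\zeta$ is absolutely continuous, satisfies $\hat\zeta(s)=\zeta(t+s)=\eta(s)$ for $s\in[-T_d,0]$, and a direct computation using the identities $(\hat\zeta)_r = \zeta_{t+r}$ and $v_r = u_{t+r}$ for $r\ge 0$ gives $\dot{\hat\zeta}(r) = \dot\zeta(t+r) = f(\zeta_{t+r},u_{t+r}) = f((\hat\zeta)_r,v_r)$ for almost every $r\in[0,h]$. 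Thus $\hat\zeta$ solves the same initial-value problem as $\tilde\zeta$; uniqueness (which is part of the standing hypothesis of the proposition) yields $\tilde\zeta\equiv\hat\zeta$ on their common interval of existence, which in particular contains $[0,h]$ since $\phi(t+h,x,u)$ is defined. Restricting to $[h-T_d,h]$ gives $\phi(h,\eta,v)=\phi(t+h,x,u)$, as required. The only subtlety worth flagging is to make sure that $v$ inherits from $u$ the pre-history on $[-T_d,0]$ that is consistent with $\eta$, which is exactly the relation $v(s)=u(t+s)$ for $s\in[-T_d,0]$ built into the definition of $v$.
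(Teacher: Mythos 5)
Your proposal is correct and follows exactly the route the paper takes: the paper's proof is literally "direct verification of the axioms of Definition~\ref{Steurungssystem}," and you carry out that verification, with the cocycle property handled by the standard shifted-solution-plus-uniqueness argument. You in fact supply details (the identification of inputs on $[-T_d,\infty)$ with the abstract input space, and the pre-history issue in causality) that the paper glosses over.
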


\begin{proof}
Direct verification of the axioms of Definition~\ref{Steurungssystem}. 
\end{proof}

To ensure local existence and uniqueness of solutions of \eqref{eq:time-delay} one can pose the following assumption on the nonlinearity $f$.
\begin{Ass}
\label{ass:TDS:Assumption1} 
We suppose that:
\begin{enumerate}[(i)]  
    \item $f:X \times L_\infty([-T_d,0],\R^m) \to \R^n$ is Lipschitz continuous in $x$ on bounded
subsets of $X$ and $L_\infty([-T_d,0],\R^m)$, i.e.  for
all $C>0$, there exists a $L_f(C)>0$, such that for all $ x,y \in B_C $
and for all $v \in L_\infty([-T_d,0],\R^m)$, $\|v\|_\infty \leq C$, it holds that
\begin{eqnarray}
|f(x,v)-f(y,v)| \leq L_f(C) \|x-y\|_X.
\label{eq:TDS:Lipschitz}
\end{eqnarray}
    \item $f(x,\cdot)$ is continuous for all $x \in X$.
		\item For each $u\in\Uc$ and each $x \in X$ the map $t \mapsto f(x,u_t)$ is measurable in $t$.
    \item For each $u\in\Uc$ there is a locally Lebesgue integrable function $\mu_u:\R\to\R_+$ so that $|f(0,u_t)|\leq \mu_u(t)$ for all $t\in\R_+$.
\end{enumerate}
\end{Ass}

\begin{proposition}
\label{prop:TDS-as-control-systems} 
If Assumption~\ref{ass:TDS:Assumption1} holds, then $\Sigma:=(X,\Uc,\phi)$ is a control system.
\end{proposition}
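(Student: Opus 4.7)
The plan is to invoke Proposition~\ref{prop:TDS-control system}, so the real work is to show that Assumption~\ref{ass:TDS:Assumption1} guarantees the existence and uniqueness of a (maximal) solution $\zeta \in C([-T_d, t^*(x,u)), \R^n)$ for every $(x,u)\in X\times\Uc$. Once this is done, the cocycle, identity, causality, and continuity axioms follow mechanically, exactly as in Proposition~\ref{prop:TDS-control system}.

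First I would recast \eqref{eq:time-delay} as the integral equation
\begin{equation*}
\zeta(t) = x(0) + \int_0^t f(\zeta_s, u_s)\,ds, \quad t\in[0,\tau], \qquad \zeta|_{[-T_d,0]} = x,
\end{equation*}
and set up a Picard-style contraction on the complete metric space
\[
M_{\tau,R} := \{\zeta\in C([-T_d,\tau],\R^n) : \zeta|_{[-T_d,0]}=x,\ \sup_{t\in[0,\tau]}|\zeta(t)-x(0)|\le R\},
\]
equipped with the supremum metric. Using (iii) to see that $s\mapsto f(\zeta_s,u_s)$ is measurable, (i) to bound $|f(\zeta_s,u_s)|\le L_f(C)\|\zeta_s\|_X + |f(0,u_s)|$, and (iv) to dominate the zero-input term by the locally integrable $\mu_u$, one checks that the operator
\[
(\mathcal{T}\zeta)(t):= x(0)+\int_0^t f(\zeta_s,u_s)\,ds
\]
maps $M_{\tau,R}$ into itself for $\tau$ small enough, and is a contraction there by (i). The Banach fixed-point theorem yields a unique local solution.

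Next I would extend this local solution to a maximal interval $[-T_d,t^*(x,u))$ by the usual Zorn-type gluing argument: the union of all solutions on intervals $[-T_d,\tau)$ (they agree on overlaps by local uniqueness) gives a maximal solution. Here the shift invariance and concatenation axioms of $\Uc$ are used implicitly to restart the Picard iteration from any later time with a shifted input. The map $\phi(t,x,u) := \zeta|_{[t-T_d,t]}$ is then well-defined on $D_\phi$ as in Proposition~\ref{prop:TDS-control system}, so that result finishes the argument.

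The main obstacle is a technical one rather than a conceptual one: careful use of (iii) and (iv) is needed to justify that $s\mapsto f(\zeta_s,u_s)$ is Lebesgue integrable for continuous $\zeta$ and arbitrary $u\in\Uc$, because $u$ is only essentially bounded and measurable, not continuous. Once this measurability-and-domination step is in place, everything else is a standard delay-equation argument, and the verification of the axioms of Definition~\ref{Steurungssystem} reduces to the computation already carried out in Proposition~\ref{prop:TDS-control system}.
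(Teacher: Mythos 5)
Your proposal is correct, and it ends up at the same place as the paper (reduce everything to existence and uniqueness of maximal solutions, then invoke Proposition~\ref{prop:TDS-control system}), but it gets the well-posedness step by a different route. The paper freezes $u\in\Uc$, rewrites \eqref{eq:time-delay} as the time-dependent retarded equation $\dot{x}=f_u(x_t,t):=f(x_t,u_t)$, and simply checks that $f_u$ satisfies Carath\'eodory-type conditions: items (i) of Assumption~\ref{ass:TDS:Assumption1} gives Lipschitz continuity in the state, item (iii) gives measurability in $t$, and items (i)+(iv) give the integrable domination $|f_u(x,t)|\le L_f(R)R+\mu_u(t)$ on balls; existence and uniqueness are then taken off the shelf from the classical theory of retarded functional differential equations \cite{HaV93}. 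You instead re-prove that existence and uniqueness result from scratch: the integral reformulation, the space $M_{\tau,R}$, the self-mapping and contraction estimates, and the gluing argument for the maximal solution are exactly the ingredients hidden inside the cited theorem, and the measurability-and-domination issue you single out is precisely the Carath\'eodory verification the paper performs. Your version buys self-containedness at the cost of length (and of a couple of routine points you should still spell out, e.g.\ that uniqueness of the fixed point in $M_{\tau,R}$ upgrades to uniqueness among all solutions, and that the Lipschitz constant is chosen for $C:=\max\{\|x\|_X+R,\|u\|_\Uc\}$); the paper's version buys brevity by delegating the fixed-point machinery to the standard reference.
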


\begin{proof}
Pick any $u\in\Uc$, and consider the system
\begin{eqnarray}
\dot{x} = f_u(x_t,t):=f(x_t,u_t).
\label{eq:Hilfssystem-TDS}
\end{eqnarray}
Clearly, $f_u$ is Lipschitz continuous w.r.t. the first argument (on bounded subsets of $X$).
Item (iii) of Assumption~\ref{ass:TDS:Assumption1} ensures that for each fixed $x\in X$ the map $t\mapsto f_u(x,t)$ is measurable.
Items (i) and (iv) of Assumption~\ref{ass:TDS:Assumption1} ensure that for each given $R>0$ and all $x\in B_R$ it follows that
\begin{eqnarray}
\label{eq:Caratheodory-conditions}
|f_u(x,t)| = |f(x,u_t)| &\leq& |f(x,u_t) - f(0,u_t)| + |f(0,u_t)| \leq L_f(R) R + \mu_u(t).
\end{eqnarray}
As $\mu_u$ is a locally Lebesgue integrable function, estimate \eqref{eq:Caratheodory-conditions} shows that 
the right-hand side $f_u$ of \eqref{eq:Hilfssystem-TDS} satisfies Caratheodory conditions, see \cite[Section 2.6]{HaV93},
and results in \cite[Sections 2.2, 2.6]{HaV93} show that for any $x\in X$ and any $u\in\Uc$ the equation \eqref{eq:time-delay} possesses for given initial condition $x$ and an input $u$ the unique solution.
By Proposition~\ref{prop:TDS-control system} we obtain that $\Sigma$ is a control system.
\end{proof}

An important special class of systems \eqref{eq:time-delay} are delay systems, which do not depend on past values of inputs:
\begin{eqnarray}
\dot{x}(t)=f\big(x_t,u(t)\big), \quad t>0.
\label{eq:time-delay-2}
\end{eqnarray}
For such systems the item {\color{blue}(iv)} in Assumption~\ref{ass:TDS:Assumption1} becomes redundant, as $t\mapsto \big|f\big(0,u(t)\big)\big|$ is a measurable function (as a composition of a continuous function $s\mapsto |f(0,s)|$ and a measurable function $u$.
Besides generality, one of the reasons to study systems  \eqref{eq:time-delay} is a possibility to study small-gain theorems for delay systems (for which it is necessary to consider the states of other subsystems as internal inputs).

Before we proceed to an overview of ISS results for delay systems \eqref{eq:time-delay}, let us mention that the ISS property for delay systems is usually restated in the literature in an equivalent form:
\begin{proposition}
\label{prop:ZeitVerz_ISS}
    System \eqref{eq:time-delay} is ISS if and only if there exist functions $\beta\in {\KL}$ and $\gamma\in\Kinf$, such that for every $x \in X$, every $u\in\Uc$, and all $t\in\R_+$, it holds that
\begin{align}
\label{eq:Gleichung_Verzoeg_LISS}
\left|x(t)\right|\leq \beta(\left\|x\right\|_X,t) + \gamma(\left\|u\right\|_{\infty}).
\end{align}
\end{proposition}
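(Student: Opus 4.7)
The forward implication is immediate from the definitions. The map $\phi(t,x,u)$ is the restriction of the solution to $[t-T_d,t]$, so $x(t)=\phi(t,x,u)(0)$ and hence $|x(t)|\le \|\phi(t,x,u)\|_X$. Composing with the ISS estimate \eqref{iss_sum} yields \eqref{eq:Gleichung_Verzoeg_LISS} with the same pair $(\beta,\gamma)$.

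For the converse, assume \eqref{eq:Gleichung_Verzoeg_LISS} holds. The plan is to bound $\|\phi(t,x,u)\|_X = \sup_{s\in[-T_d,0]}|x(t+s)|$ by splitting the range of $r := t+s$ into the pre-history part ($r<0$), where the initial condition controls $|x(r)|$, and the post-history part ($r\ge 0$), where \eqref{eq:Gleichung_Verzoeg_LISS} applies. I will handle two cases: (i) for $t\ge T_d$, the interval $[t-T_d,t]$ lies in $\R_+$, so monotonicity of $\beta(r,\cdot)$ in the second argument gives
\[
\|\phi(t,x,u)\|_X \le \sup_{r\in[t-T_d,t]}\beta(\|x\|_X,r) + \gamma(\|u\|_\Uc) = \beta(\|x\|_X,t-T_d) + \gamma(\|u\|_\Uc);
\]
(ii) for $t\in[0,T_d)$, split the supremum at $r=0$, using $|x(r)|\le \|x\|_X$ for $r\in[t-T_d,0]$ and $|x(r)|\le \beta(\|x\|_X,0)+\gamma(\|u\|_\Uc)$ for $r\in[0,t]$, to obtain
\[
\|\phi(t,x,u)\|_X \le \|x\|_X + \beta(\|x\|_X,0) + \gamma(\|u\|_\Uc).
\]

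The one remaining step is to merge the two case-bounds into a single $\KL$-estimate of the form \eqref{iss_sum}. I expect this to be the main technical nuisance, though routine. Setting $A(r):=r+\beta(r,0)$, which belongs to $\Kinf$, I will define
\[
\tilde\beta(r,t) := A(r)\, q(t) + \beta\!\left(r,\,\max\{t-T_d,0\}\right),
\]
where $q\in\LL$ is chosen so that $q(T_d)\ge 1$ (for instance $q(t):=2e^{-(t-T_d)/T_d}$ works, giving $q(0)=2e$, $q(T_d)=2$, and $q(t)\to 0$ monotonically). A direct check then shows that $\tilde\beta\in\KL$ and dominates both case-bounds above, producing the ISS estimate \eqref{iss_sum} with $(\tilde\beta,\gamma)$. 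The hard part, if any, is verifying that $\tilde\beta$ lies in $\KL$ (strict monotonicity in $r$ and strict decay to zero in $t$ of the sum), which follows because both summands are $\K$ in $r$ and the two $\LL$-components, $q$ and $\beta(r,\cdot)$ shifted by $T_d$, are strictly decreasing with vanishing limit at infinity.
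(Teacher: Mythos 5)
Your proof is correct: the forward direction is indeed immediate from $|x(t)|=\phi(t,x,u)(0)\le\|\phi(t,x,u)\|_X$, and for the converse the split of the window $[t-T_d,t]$ at $r=0$, the two case-bounds, and the merged bound $\tilde\beta(r,t)=A(r)q(t)+\beta(r,\max\{t-T_d,0\})$ all check out (in particular $\tilde\beta\in\KL$, since $A(r)q(\cdot)$ restores strict decay on $[0,T_d]$ where the second summand is constant). The paper itself only cites \cite[Proposition 1.4.2]{Mir12} for this fact, and the argument given there is essentially this same window-splitting construction, so your approach matches the intended proof.
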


\begin{proof}
See, e.g., \cite[Proposition 1.4.2]{Mir12}.
\end{proof}

\begin{openprob}
\label{op:ISS-characterizations-TDS} 
Although the ISS Superposition theorem (Theorem~\ref{thm:UAG_equals_ULIM_plus_LS}) is valid for time-delay systems, it is possible that for time-delay systems tighter results can be obtained. In particular, it is not known whether forward completeness and boundedness of reachability sets are equivalent properties for time-delay systems (for the ODE case such a claim is true \cite{LSW96} and for general infinite-dimensional systems it is false \cite{MiW18b}), and whether LIM and ULIM properties are equivalent notions, see \cite{MiW17e} for more detailed discussions on this topic.
\end{openprob}

\subsection{ISS Lyapunov theory for time-delay systems}
\label{sec:ISS-theory-for-TDS}

\emph{As time-delay systems \eqref{eq:time-delay} and \eqref{eq:time-delay-2} are a special case of control systems considered in this paper, all results presented for general control systems, as direct Lyapunov theorem (Theorem~\ref{LyapunovTheorem}), ISS superposition theorem (Theorem~\ref{thm:UAG_equals_ULIM_plus_LS}) and small-gain theorem in terms of trajectories (Theorem~\ref{thm:ISS_SGT}), remain valid for delay equations.
    Linear time-delay systems also fall into the class of abstract linear systems studied in Section~\ref{sec:Linear_systems} (see, e.g., \cite[Section 2.4]{CuZ95},\cite{BaB78}, \cite{BaP05}), and thus the criteria for ISS of linear systems can be applied to linear delay systems.}

At the same time, Lyapunov theory for delay systems can be substantially refined, partly due to the fact that the state of retarded differential equations changes with time in a very specific way: at a given moment of time a nontrivial change of the state $x\in X = C([-T_d,0],\R^n)$ occurs only at zero time, and the history is just \q{continuously shifted back}. 
Two different types of Lyapunov techniques are mostly used for retarded differential equations: Lyapunov-Krasovskii functionals and Lyapunov-Razumikhin functions.

\subsubsection{Lyapunov-Krasovskii functionals}

We start with a more general Lya-punov-Krasovskii methodology.
Thanks to the special kind of dynamics of retarded differential equations, one can compute the derivative of a 
continuous functional $V: X \to \R_+$ with respect to the system \eqref{eq:time-delay-2} for any $x \in X$ and any continuous input $u$ as follows (\cite[p. 1007]{PeJ06}):

\begin{eqnarray}
\dot{V}_u(x)=\mathop{\overline{\lim}} \limits_{t \rightarrow +0} {\frac{1}{t}\Big(V\big(\phi^*(t,x,u)\big)-V(x)\Big)},
\label{eq:Driver's-derivative}
\end{eqnarray}
where \qquad
$\phi^*(t,x,u)(s) = 
\begin{cases}
x(s+t), &s \in [-T_d,-t],\\
x(0) + f(x,u(0))(t+s), & s \in [-t,0].
\end{cases}
$

The expression in \eqref{eq:Driver's-derivative} is also called Driver derivative, and its importance is that it gives a possibility to compute the derivative of the control system with respect to an input without knowledge of a future trajectory of the system (which is needed for general control systems, see \eqref{ISS_LyapAbleitung}). See \cite{Pep07} for more on Driver derivatives.

\begin{definition}
\label{def:noncoercive_ISS_LK-functional}
A continuous function $V:X \to \R_+$ is called an \textit{ISS Lyapunov-Krasovskii functional} for a system \eqref{eq:time-delay}, if there exist $\psi_1,\psi_2,\alpha \in \Kinf$ and $\chi \in \K$ such that
\begin{equation}
\label{LyapFunk_1Eig_nc_ISS_LK}
\psi_1(|x(0)|) \leq V(x) \leq \psi_2(\|x\|_X), \quad \forall x \in X
\end {equation}
and for all $x \in X$, any constant input $u\in \Uc$ the Driver derivative of $V$ along the trajectories of $\Sigma$ satisfies
\begin{equation}
\label{Delay-systems-DissipationIneq}
V(x) \geq \chi(|u(0)|) \qrq \dot{V}_u(x) \leq -\alpha(|x(0)|).
\end{equation}
\end{definition}

Following \cite{PeJ06} (assumption $H_{P1}$, p. 1007), we assume that Lyapunov-Krasovskii functionals satisfy the following hypothesis:
\begin{Ass}\label{ass:10:pepe}
For any $x\in X$, any $u\in \Uc$ and the corresponding absolutely continuous solution $\phi(\cdot,x,u)$
of \eqref{eq:time-delay-2} over a maximal interval $[0,b)$ the following holds for a function $w:[0,b)\to\R_+$, defined by $w(t):=V(\phi(t,x,u))$:
\begin{itemize}
    \item $w$ is locally absolutely continuous in $[0,b)$
    \item it holds that 
\[
    \mathop{\overline{\lim}} \limits_{h \rightarrow +0} {\frac{w(t+h) - w(t)}{h}} = \dot{V}_u\big(\phi(t,x,u)\big).
\]
\end{itemize}
\end{Ass}

It has been shown in \cite{Pep07} that the second bullet of Assumption \ref{ass:10:pepe} holds almost everywhere if $V$ is Lipschitz on bounded sets (see \cite[Theorem 2]{Pep07}).

As $|x(0)|\leq \|x\|_X$ and $ -\alpha(\|x\|_X) \leq  -\alpha(|x(0)|)$, coercive Lyapunov functions introduced in Definition~\ref{def:noncoercive_ISS_LF} are also Lyapunov-Krasovskii functionals.
As the lower bound for a Lyapunov-Krasovskii functional is given in terms of $|x(0)|$, it resembles a non-coercive ISS Lyapunov function. However, the decay rate of the functional is also given in terms of $|x(0)|$, which is much weaker than the decay in terms of $V(x)$ or $\|x\|_X$, required from Lyapunov functions as defined in Definition~\ref{def:noncoercive_ISS_LF}.
Thus, Lyapunov-Krasovskii functional is a fine-tuned specifically for delay systems version of a Lyapunov function, 
which is easier to construct and to use, and at the same time existence of such a functional is sufficient for ISS of a time-delay system \eqref{eq:time-delay-2}, as the following theorem states.


\begin{theorem}
\label{thm:ISS Lyapunov-Krasovskii Theorem} 
If there is an ISS Lyapunov-Krasovskii functional for \eqref{eq:time-delay-2}, then \eqref{eq:time-delay-2} is ISS.
\end{theorem}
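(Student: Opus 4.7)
I would deduce ISS from the Lyapunov--Krasovskii dissipation by verifying uniform global stability (UGS) and uniform asymptotic gain (UAG) and then invoking the ISS superposition Theorem~\ref{thm:UAG_equals_ULIM_plus_LS}. Fix $x \in X$, $u \in \Uc$, and set $w(t) := V(\phi(t,x,u))$, which is locally absolutely continuous by Assumption~\ref{ass:10:pepe}. Combining the second bullet of Assumption~\ref{ass:10:pepe} with the implication \eqref{Delay-systems-DissipationIneq} applied to the constant input whose value equals $u(t)$ yields for almost every $t \geq 0$
\begin{equation*}
w(t) \geq \chi(|u(t)|) \ \Longrightarrow \ \dot w(t) \leq -\alpha(|x(t)|) \leq 0.
\end{equation*}
A standard comparison argument then gives
\begin{equation*}
w(t) \leq \max\bigl\{w(0),\,\chi(\|u\|_\Uc)\bigr\} \leq \max\bigl\{\psi_2(\|x\|_X),\,\chi(\|u\|_\Uc)\bigr\}, \quad t \geq 0,
\end{equation*}
and the bound $\psi_1(|x(t)|) \leq w(t)$, together with the trivial estimate $\|\phi_t\|_X \leq \max\{\|x\|_X, \sup_{s \in [0,t]} |x(s)|\}$, produces the UGS estimate \eqref{GSAbschaetzung}.

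The main obstacle is UAG. Fix $r, \eps > 0$, any initial state with $\|x\|_X \leq r$, and any $u \in \Uc$; write $v:=\chi(\|u\|_\Uc)$. By UGS, $w$ is bounded by some $W=W(r,\|u\|_\Uc)$ and the solution $x(\cdot)$ stays in a ball of radius $R=R(r,\|u\|_\Uc)$. The difficulty is that the dissipation rate $-\alpha(|x(t)|)$ is governed by the \emph{pointwise} value of the solution, not by $w$ or $\|\phi_t\|_X$: it is possible for the present value $|x(t)|$ to be small while past values still make $\|\phi_t\|_X$, and hence $w(t)$, large, so $w$ need not decrease strictly while it is large. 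I would overcome this as follows. As long as $w(t) > v$, $w$ is non-increasing and $\int_0^T \alpha(|x(s)|)\,ds \leq W$, so for any $\theta > 0$ the Lebesgue measure of $\{s \in [0,T] : |x(s)| > \theta\}$ is at most $W/\alpha(\theta)$. On the ball $B_R$ the nonlinearity $f$ is uniformly bounded by Assumption~\ref{ass:TDS:Assumption1}, hence $x$ is Lipschitz with a constant $L=L(R,\|u\|_\Uc)$. Equicontinuity combined with the measure bound produces, for every sufficiently large horizon $T = T(r,\|u\|_\Uc,\theta)$, an instant $t^\star \leq T$ such that $\sup_{s \in [t^\star-T_d, t^\star]} |x(s)| \leq \theta$, whence $w(t^\star) \leq \psi_2(\theta)$. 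Choosing $\theta$ so small that $\psi_2(\theta) \leq \max\{v, \psi_1(\eps)\}$ and re-applying the Step~1 comparison starting at time $t^\star$ forces $w(t) \leq \max\{v, \psi_1(\eps)\}$ for every $t \geq t^\star$, and therefore $|x(t)| \leq \eps + \psi_1^{-1}(v)$. Since $t^\star$ depends only on $r$, $\|u\|_\Uc$ and $\eps$, this is the UAG property with gain $\gamma := \psi_1^{-1}\circ \chi$, up to the customary comparison-function manipulations.

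Having established UGS and UAG, together with the CEP and BRS properties (both of which follow from UGS and the local well-posedness provided by Proposition~\ref{prop:TDS-as-control-systems}), the ISS superposition Theorem~\ref{thm:UAG_equals_ULIM_plus_LS} upgrades them to ISS, finishing the proof. The truly delicate step is the history--refreshing argument of the second paragraph: it is exactly there that the Lyapunov--Krasovskii philosophy, in which only the pointwise value $|x(t)|$ drives dissipation, must be reconciled with the ISS requirement of smallness of the whole history $\phi_t$, and it is also there that the finite delay $T_d$ and the uniform Lipschitz regularity of solutions enter the argument in an essential way.
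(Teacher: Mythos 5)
The paper does not actually prove this theorem: its ``proof'' is a pointer to the literature (in a weaker form to \cite{PeJ06}, and in the stated form to \cite{KLW17}, with the full argument in \cite{Kan17}). Your proposal therefore supplies an argument the survey omits, and the route you take --- establish UGS by a comparison argument along $w(t)=V(\phi(t,x,u))$, then obtain an asymptotic-gain-type property by combining the integrated dissipation $\int\alpha(|x(s)|)\,ds$ with the uniform Lipschitz bound on solutions to ``refresh'' the whole history segment, and finally invoke the superposition Theorem~\ref{thm:UAG_equals_ULIM_plus_LS} --- is essentially the classical Pepe--Jiang strategy and is sound. You correctly isolate the genuine difficulty (the decay rate is driven by $|x(t)|$ while ISS concerns $\|\phi_t\|_X$), you correctly use that the Driver derivative of \eqref{eq:time-delay-2} depends on the input only through its instantaneous value so that \eqref{Delay-systems-DissipationIneq} applies along arbitrary inputs via Assumption~\ref{ass:10:pepe}, and the reduction of the state-norm estimate to the pointwise one is consistent with Proposition~\ref{prop:ZeitVerz_ISS}.

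Two points need tightening before this is a complete proof. First, the quantity $t^\star$ you produce depends on $\|u\|_\Uc$, whereas UAG in Definition~\ref{def:UAG} requires $\tau=\tau(\eps,r)$ valid for \emph{all} $u\in\Uc$; either split off the regime $\|u\|_\Uc>r$, where the UGS estimate already yields $\|\phi(t,x,u)\|_X\le\sigma(\|u\|_\Uc)+\gamma_{UGS}(\|u\|_\Uc)$ for all $t$ and absorb this into an enlarged gain, or verify instead bULIM (where inputs are restricted to $\|u\|_\Uc\le r$) and apply item (iii) of Theorem~\ref{thm:UAG_equals_ULIM_plus_LS}. Second, the central estimate $\int_0^T\alpha(|x(s)|)\,ds\le W$ only follows if $w(s)>v$ on all of $[0,T]$, so the history-refreshing step should be run as a first-crossing/contradiction argument: either $w$ drops below $\max\{v,\psi_1(\eps)\}$ before $T$ (and the comparison argument finishes), or it does not, in which case the integral bound holds on $[0,T]$ and the measure--Lipschitz pigeonhole applies; moreover that pigeonhole needs two thresholds (a point with $|x|>\theta$ forces $|x|>\theta/2$ on an interval of length $\theta/(2L)$, and the measure bound is then used at level $\theta/2$). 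Both repairs are routine and of the ``customary manipulations'' type you acknowledge, so the proposal is correct in substance.
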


\begin{proof}
In a somewhat weaker formulation, this result has been shown in an influential paper \cite{PeJ06}.
The result which we stated was shown recently in \cite[Theorem 2]{KLW17}, for the proof see \cite[Corollary 4.17]{Kan17}.
%
\end{proof}

\begin{remark}
\label{rem:ISS-LKF-UGAS-perspective} 
For a special case of delay systems without inputs (i.e. with $\Uc:=\{0\}$), Theorem~\ref{thm:ISS Lyapunov-Krasovskii Theorem} 
boils down to the classical Lyapunov-Krasovskii theorem for UGAS property, see \cite[Theorem 2.1]{HaV93}.
\end{remark}

\begin{openprob}
\label{op:relaxed-LK-functionals} 
A question whether it is possible to weaken the requirement \eqref{Delay-systems-DissipationIneq} further to merely
\begin{equation}
\label{Delay-systems-DissipationIneq-3}
|x(0)| \geq \chi(\|u\|_\Uc) \qrq \dot{V}_u(x) \leq -\alpha(|x(0)|),
\end{equation}
remains open, see \cite{CPM17} for some preliminary results and \cite{ChP18} for the positive answer of the closely related question for the iISS property.
\end{openprob}

The Lyapunov-Krasovskii methodology gives necessary and sufficient conditions for ISS of retarded differential equations.
Converse Lyapunov-Krasovskii theorems for UGAS property of retarded equations with disturbances have been obtained in \cite{Kar06}. 

This motivated the following converse ISS Lyapunov-Krasovskii theorem for delay systems, which is a special case of \cite[Theorem 3.3]{KPJ08}:
\begin{theorem}
\label{thm:Converse-LK-theorem} 
If \eqref{eq:time-delay-2} is ISS, then there is an \textit{ISS Lyapunov-Krasovskii functional} $V:X \to \R_+$ for a system \eqref{eq:time-delay-2}, which is Lipschitz continuous on bounded sets, such that for some $\psi_1,\psi_2 \in \Kinf$ it holds that
\begin{equation}
\label{LyapFunk_1Eig_ISS_LK-coercive}
\psi_1(\|x\|_X) < V(x) \leq \psi_2(\|x\|_X), \quad \forall x \in X
\end {equation}
and there is $\chi \in \K$ such that 
for all $x \in X$, and all $u\in \Uc$ it holds that
\begin{equation}
\label{Delay-systems-DissipationIneq-converse-LK-theorem}
\|x\|_X \geq \chi(\|u\|_{\Uc}) \qrq \dot{V}_u(x) \leq -V(x).
\end{equation}
\end{theorem}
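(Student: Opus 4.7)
My plan is to mimic the Sontag--Wang reduction of ISS to 0-UGAS of an auxiliary system with disturbances, and then invoke a converse Lyapunov--Krasovskii theorem for delay systems without inputs but with disturbances.

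First, I would exploit the ISS estimate of \eqref{eq:time-delay-2}. By Proposition~\ref{prop:ZeitVerz_ISS}, ISS gives $\beta \in \KL$ and $\gamma \in \Kinf$ such that $|x(t)| \leq \beta(\|x\|_X,t) + \gamma(\|u\|_\infty)$. Set $\chi := 2\gamma$ (this will be the candidate gain in the conclusion). The key reformulation is: if one considers the \emph{auxiliary retarded system}
\begin{equation}
\label{eq:auxTDS}
\dot{y}(t) = f\bigl(y_t,\, \chi^{-1}(\|y_t\|_X)\, d(t)\bigr),
\end{equation}
with disturbances $d \in \mathcal{D} := \{d \in L_\infty(\R_+,U) : \|d\|_\infty \leq 1\}$ (and $\chi^{-1}(0)d := 0$ by convention), then ISS of \eqref{eq:time-delay-2} together with the cocycle property forces \eqref{eq:auxTDS} to be 0-UGAS uniformly over $d \in \mathcal{D}$. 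Indeed, whenever $\|y_t\|_X \geq \chi(\|\chi^{-1}(\|y_t\|_X)d(t)\|) $ the ISS bound applied on short intervals yields a $\KL$-type decay purely in terms of the initial state, uniformly in $d$.

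Second, I would invoke the converse Lyapunov--Krasovskii theorem for 0-UGAS of retarded systems with disturbances, established in \cite{Kar06} (and used in precisely this spirit in \cite{KPJ08}). Provided that the right-hand side of \eqref{eq:auxTDS} inherits enough regularity from $f$ (Lipschitz on bounded sets in the state, continuous in the disturbance), this produces a functional $V:X\to\R_+$, Lipschitz continuous on bounded subsets of $X$, satisfying the coercive sandwich bound \eqref{LyapFunk_1Eig_ISS_LK-coercive} for some $\psi_1,\psi_2 \in \Kinf$, and an exponential dissipation inequality for the auxiliary dynamics:
\begin{equation*}
\sup_{d \in \mathcal{D}}\, \dot V_d^{\mathrm{aux}}(x) \;\leq\; -V(x), \qquad x \in X,
\end{equation*}
computed via the Driver derivative \eqref{eq:Driver's-derivative} applied to \eqref{eq:auxTDS}.

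Third, I would translate this back to the original system. Pick any $x \in X$ and $u \in \Uc$ with $\|x\|_X \geq \chi(\|u\|_\Uc)$. Then $|u(0)| \leq \|u\|_\Uc \leq \chi^{-1}(\|x\|_X)$, so one may write $u(0) = \chi^{-1}(\|x\|_X)\,\bar d$ for some $\bar d \in U$ with $|\bar d|\leq 1$. Because the Driver derivative \eqref{eq:Driver's-derivative} depends on $u$ only through $u(0)$, the right-hand side $f(x,u(0))$ coincides with the right-hand side of the auxiliary system \eqref{eq:auxTDS} evaluated at $(x,\bar d)$, so $\dot V_u(x) = \dot V_{\bar d}^{\mathrm{aux}}(x) \leq -V(x)$, which is exactly \eqref{Delay-systems-DissipationIneq-converse-LK-theorem}.

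The main obstacle is the second step: producing a Lyapunov--Krasovskii functional which is simultaneously coercive in $\|\cdot\|_X$, Lipschitz on bounded sets, and strictly dissipative \emph{uniformly} over the disturbance class $\mathcal{D}$. Standard Sontag-style constructions on the state space $X = C([-T_d,0],\R^n)$ do not automatically give Lipschitz regularity, and one typically has to regularize a Yoshizawa-type candidate such as $V_0(x) := \sup_{d\in\mathcal D,\, t\geq 0} \mathrm{e}^{\lambda t}\,\omega\!\bigl(\|\phi^{\mathrm{aux}}(t,x,d)\|_X\bigr)$ (with $\omega \in \Kinf$ and $\lambda>0$ chosen so the supremum is finite by the $\KL$-lemma) via convolution in $X$ with a suitable mollifier, which is nontrivial in an infinite-dimensional state space. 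This regularization, carried out in \cite{Kar06,KPJ08}, is what makes the theorem delicate.
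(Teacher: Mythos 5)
Your route is genuinely different from the paper's. The paper proves this theorem by a direct appeal to \cite[Theorem 3.3]{KPJ08}: one checks that \eqref{eq:time-delay-2} fits the class treated there (time-invariant dynamics, trivial disturbance set $D=\{0\}$, output map $H(t,x)=x$), observes that the UIOS property of \cite{KPJ08} then coincides with the ISS property studied here, and the cited theorem already delivers a coercive Lyapunov--Krasovskii functional with the exponential implication-form dissipation inequality; the only remaining remark is that, for time-invariant systems, the \q{almost Lipschitz} regularity of \cite{KPJ08} becomes Lipschitz continuity on bounded sets. You instead reconstruct the Sontag--Wang detour: a stability margin $\chi^{-1}$, an auxiliary retarded system driven by unit-ball disturbances, robust 0-UGAS of that system, the converse theorem of \cite{Kar06}, and a back-translation through the Driver derivative. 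This is indeed the philosophy underlying the cited literature (compare the proof of Theorem~\ref{ISS_Converse_Lyapunov_Theorem} via \cite{SoW95}), so the idea is sound, but you end up re-proving intermediate steps that the paper simply imports, and those steps are exactly where your sketch has gaps.

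Concretely: (i) the claim that ISS forces your auxiliary system to be 0-UGAS uniformly over the unit ball of disturbances is not automatic from the cocycle property; it requires a genuine small-gain iteration (first uniform boundedness from $|y(t)|\le\beta(\|y_0\|_X,0)+\tfrac{1}{2}\sup_{s\le t}\|y_s\|_X$, using $\gamma\circ\chi^{-1}=\tfrac{1}{2}\,\mathrm{id}$, then uniform attractivity by iterating over successive time windows), and before that one must establish well-posedness and forward completeness of the auxiliary system at all. (ii) That well-posedness is not free: Assumption~\ref{ass:TDS:Assumption1} makes $f$ only \emph{continuous} in its input argument, so the composite right-hand side $y_t\mapsto f\bigl(y_t,\chi^{-1}(\|y_t\|_X)\,d\bigr)$ need not be Lipschitz in the state, and $\chi^{-1}=\gamma^{-1}(\cdot/2)$ for a mere $\Kinf$ gain need not be locally Lipschitz either; to invoke \cite{Kar06} you must first replace $\chi$ by a larger locally Lipschitz (or smooth) margin and verify the regularity hypotheses of that converse theorem, or work in a framework tolerating non-unique solutions. (iii) The converse theorem yields decay of $V$ along solutions of the auxiliary system; converting this into the pointwise Driver-derivative inequality for every state and every constant disturbance, which your third step uses, again needs the auxiliary system to be well posed and a Lipschitz-$V$ argument in the spirit of Assumption~\ref{ass:10:pepe} and \cite{Pep07}. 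None of these issues is fatal --- they can be repaired along the lines of \cite{SoW95,Kar06,KPJ08} --- but as written your proposal leaves them unproved, whereas the paper's one-step citation of \cite[Theorem 3.3]{KPJ08} avoids constructing the auxiliary system altogether.
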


\begin{proof}
System \eqref{eq:time-delay-2} falls within the class of systems considered in \cite{KPJ08} in we assume in \cite{KPJ08} that the delay systems are time-invariant, $D:=\{0\}$, $Y:=X$ and $H(t,x)=x$ for all $t\ge 0$ and $x\in X$.
In this case the UIOS property from \cite{KPJ08} is precisely the ISS property which we study in this section.

Now \cite[Theorem 3.3]{KPJ08} guarantees the existence of an ISS Lyapunov-Krasovskii functional with the properties as in the claim of this theorem.
Note that for time-invariant systems the functional $V$ constructed in \cite[Theorem 3.3, item (d)]{KPJ08}
does not depend on $t$, and almost Lipschitz continuity of $V$ (see \cite[Definition 2.2]{KPJ08}) becomes the Lipschitz continuity on bounded sets.
\end{proof}

\begin{remark}
\label{rem:Direct-and-converse-LK-results} 
Please note the difference between Theorems~\ref{thm:ISS Lyapunov-Krasovskii Theorem} and \ref{thm:Converse-LK-theorem}: 
In the direct ISS Lyapunov-Krasovskii Theorem~\ref{thm:ISS Lyapunov-Krasovskii Theorem} fairly mild assumptions on ISS Lyapunov-Krasovskii functionals are imposed, and the converse Theorem~\ref{thm:Converse-LK-theorem} establishes existence of a coercive ISS Lyapunov-Krasovskii functional with an exponential decay rate.
\end{remark}

%
%

\subsubsection{Lyapunov-Razumikhin functions}

Alternative Lyapunov methodology for stability analysis of delay systems is the Lyapunov-Razumikhin approach
which reduces the problem of stability analysis for time-delay systems to the stability analysis of delay-free systems, which simplifies the problem considerably.
In \cite{Tee98} the Lyapunov-Razumikhin sufficient condition for ISS of time-delay systems has been analyzed from the viewpoint of small-gain theorems.

\begin{definition}\label{DefLRfunction}
A continuous function $V:\R^n\rightarrow\R_+$ is called an \emph{ISS Lyapu-nov-Razumikhin function} for system \eqref{eq:time-delay}, if there exist functions $\psi_1,\psi_2, \gamma_1, \gamma_2\in\Kinf$ with $\gamma_1 <\id$ such that 
\begin{align*}
    \psi_1(|z|)\leq V(z) \leq \psi_2(|z|),\quad z\in\R^n,
\end{align*}
and for all $x \in X$ and all constant inputs $u \in\Uc$
\begin{eqnarray}
V(x(0))\geq \max\{\gamma_1(\|\bar{V}(x)\|),\gamma_2(|u(0)|)\} \qrq \dot{V}_u(x) \leq -\alpha(|x(0)|),
\label{eq:Implication-Razumikhin}
\end{eqnarray}
where $\bar{V}(x)(s) = V(x(s))$, $s \in[-T_d,0]$,\ \ $\|\bar{V}(x)\|:=\sup_{s\in[-T_d,0]}|V(x(s))|$ and
\[
\dot{V}_u(x):= \mathop{\overline{\lim}} \limits_{h \rightarrow +0} {\frac{1}{h}\Big(V\big(\phi(h,x,u)(0)\big) - V(x(0))\Big)}.
\]
\end{definition}

In \cite[Theorem 1]{Tee98} the following has been shown:
\begin{theorem}
\label{thm:Lyapunov-Razumikhin theorem} 
If there is an ISS Lyapunov-Razumikhin function for \eqref{eq:time-delay}, then \eqref{eq:time-delay} is ISS.
\end{theorem}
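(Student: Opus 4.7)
The plan is to verify ISS via the ISS Superposition Theorem~\ref{thm:UAG_equals_ULIM_plus_LS}, namely by establishing UGS (which in particular supplies CEP and BRS for time-delay systems with the standing regularity Assumption~\ref{ass:TDS:Assumption1}) together with the UAG property. The Razumikhin functional $V$ does not decay monotonically along trajectories, so I would work with the sliding-window majorant
\[
    W(t) := \|\bar V(x_t)\| = \sup_{s \in [t-T_d,\,t]} V\bigl(\phi(s,x,u)(0)\bigr),
\]
which plays the role a Krasovskii functional would play if one were available.

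First, for UGS: fix $x \in X$, $u \in \Uc$, and set $\mu := \gamma_2(\|u\|_\Uc)$. At any time $t_\star$ where $V(x(t_\star)) = W(t_\star) > \mu$, since $\gamma_1 < \id$ we automatically have $V(x(t_\star)) > \gamma_1(W(t_\star))$, so \eqref{eq:Implication-Razumikhin} yields $\dot V_{u}(x_{t_\star}) \leq -\alpha(|x(t_\star)|) < 0$. A standard comparison argument (analogous to the proof of the classical Razumikhin theorem in \cite{HaV93}) then shows that $W$ cannot cross upward through any level exceeding $\max\{W(0),\mu\}$, yielding
\[
    W(t) \leq \max\{\psi_2(\|x\|_X),\, \gamma_2(\|u\|_\Uc)\}, \qquad t \geq 0.
\]
Combined with $\psi_1(|\phi(t,x,u)(0)|) \leq V(\phi(t,x,u)(0)) \leq W(t)$ and a time-shift using Proposition~\ref{prop:ZeitVerz_ISS}, this gives a UGS estimate and, a fortiori, CEP and BRS.

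Second, for UAG: fix $r > 0$ and any bounded ball of initial data $\|x\|_X \leq r$, and let again $\mu := \gamma_2(\|u\|_\Uc)$. The aim is to show that $W(t)$ drops below $\mu + \eps$ after a time depending only on $r$, $\eps$, and $\gamma_2(\|u\|_\Uc)$. The Razumikhin implication ensures that on any window of length $T_d$ on which $W$ stays above $\mu$, the supremum $W$ must strictly decrease (since the local maxima of $V(x(\cdot))$ occurring in the window are instants of strict negative Driver derivative quantitatively controlled by $\alpha$). Iterating this and using $\gamma_1 < \id$ (hence $\gamma_1^{(k)}(s) \to 0$ as $k \to \infty$ for every fixed $s \geq 0$) gives, by a nonlinear small-gain iteration of Teel's type, a bound of the form $W(t) \leq \max\{\gamma_1^{(k)}(\psi_2(r)), \mu\}$ for $t$ larger than some $T(k,r)$ independent of the particular $x$ and $u$. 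Letting $k \to \infty$ yields UAG with gain $\psi_1^{-1} \circ \gamma_2$.

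The hard part will be the second step, specifically making the decay estimates on $W$ uniform in $x \in \overline{B_r}$ and in $u \in \Uc$. Concretely, quantifying how many $T_d$-windows must elapse before $W$ falls by a given factor requires combining the strict decrease furnished by \eqref{eq:Implication-Razumikhin} with a lower bound on $\alpha(|x(0)|)$ at the peaks of $V$, the latter obtained from $|x(0)| \geq \psi_2^{-1}(V(x(0)))$. An appealing alternative, avoiding ad hoc computations, is to recast the delay system as a feedback interconnection of a delay-free system (with $x(0)$ as state) driven by $\bar V(x_t)$ and $u$, and then deduce the result from the nonlinear small-gain theorem in trajectory form (Theorem~\ref{thm:ISS_SGT}) applied to the scalar loop gain $\gamma_1 < \id$, which is essentially the route taken in \cite{Tee98}.
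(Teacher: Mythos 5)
Your proposal is correct and follows essentially the paper's own route: the paper also reduces the claim to the ISS superposition theorem (Theorem~\ref{thm:UAG_equals_ULIM_plus_LS}), obtaining the required UGS and (b)UAG properties from the Razumikhin condition by citing Teel's Theorem~1 in \cite{Tee98}, which is precisely the Razumikhin/small-gain argument you sketch. The only difference is presentational — you carry out (a sketch of) the UGS and UAG verifications directly instead of invoking \cite{Tee98}, and you yourself note that the uniformity step is most cleanly handled by the small-gain route, i.e., Teel's original argument.
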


\begin{proof}
In \cite[Theorem 1]{Tee98} under the assumptions of the Theorem the UGS and bUAG properties are shown. 
ISS of \eqref{eq:time-delay} then follows from the ISS superposition Theorem~\ref{thm:UAG_equals_ULIM_plus_LS}.
\end{proof}

In \cite{Tee98} Lyapunov-Razumikhin approach has been applied to study the robustness of the ISS property of ODEs with respect to small delays at the control input.

In many situations, ISS Lyapunov-Razumikhin functions are easier to handle than the more complex Lyapunov-Krasovskii functionals. However, the Lyapunov-Razumikhin framework provides only sufficient conditions for ISS, which are not necessary, even for systems without inputs see \cite[Section 4]{TWJ12b} and \cite{GKC03}.

\subsection{Small-gain theorems: trajectory formulation}

Considerable attention has been devoted to small-gain theorems in terms of trajectories for time-delay systems.
To the knowledge of the authors, the first attempt to obtain ISS and, more generally, IOS (input-to-output stability) small-gain results for time-delay systems has been made in \cite{PTM06}. 

\begin{definition}
\label{def:AG}
We say that a control system $\Sigma=(X,\Uc,\phi)$ has the \emph{asymptotic gain (AG) property}, if there exists 
$\gamma\in\Kinf$ so that for all $x\in X$, $u\in\Uc$  
\[ \mathop{\overline{\lim}} \limits_{t\to\infty}\|\phi(t,x,u)\|_X\leq \gamma(\|u\|_{\Uc}).\]
\end{definition}

In \cite{PTM06} small-gain theorems for couplings of $2$ time-delay systems possessing UGS and AG properties have been derived, but no small-gain theorem for ISS property. As AG $\wedge$ UGS is (possibly) weaker than ISS for time-delay systems, the \emph{ISS} small-gain theorem has not been obtained in this work. Small-gain theorems for couplings of $n$ time-delay systems with AG $\wedge$ UGS properties have been obtained in \cite{TWJ09} and \cite{PDT13}.

The first ISS small-gain theorems, applicable for time-delay systems have been achieved in \cite{KaJ07}, where the small-gain theorems in terms of trajectories (in maximum formulation) have been shown for couplings of two control systems of a rather general nature, covering in particular time-delay systems. 

The obstacle that ISS is (at least potentially) not equivalent to AG $\wedge$ UGS, was overcome in \cite{TWJ12}
where ISS small-gain theorems for couplings of $n\ge 2$ time-delay systems have been obtained by using 
a Razumikhin-type argument, motivated by \cite{Tee98}. 
In this approach, the delayed state in the right-hand side of a time-delay system is treated as an input to the system, which makes the time-delay system a delay-free system with additional input. However, the transformation of time-delay systems to the delay-free form is not always straightforward.

The small-gain theorem for couplings of $n$ infinite-dimensional systems, which we discussed earlier (Theorem~\ref{thm:ISS_SGT}) is fully applicable to well-posed time-delay systems, as they are a special case of control systems in the sense of Definition~\ref{Steurungssystem}.

\subsection{Small-gain theorems: Lyapunov formulation}

Small-gain theorems for interconnections of $n$ nonlinear time-delay systems in a Lyapunov-Krasovskii and Lyapunov-Razumikhin formulation have been shown in \cite{DaN10}, and extended to impulsive time-delay systems in \cite{DKM12}.
A distinct to \cite{DaN10, DKM12} approach for stability analysis of coupled delay systems has been developed in \cite{KaJ11}, based on the use of vector Lyapunov functions. These results are proved for a broad class of infinite-dimensional systems, which encompasses time-delay systems, see \cite[Section 4.2]{KaJ11}.

\section{Applications}
\label{sec:Applications}

There are numerous applications of ISS theory to control of PDE systems. In this section, we briefly mention some of them, with an intention to show the scope of applications, rather than to be exhaustive.

The Stefan problem represents a liquid-solid phase change phenomenon which describes the time evolution of a material's temperature profile and the liquid-solid interface position. The closed-loop objective is to stabilize the interface position at the desired position for the one-phase Stefan problem without the heat loss. The problem is modeled by a 1-D heat equation defined on a time-varying spatial domain described by an ODE with a time-varying disturbance. This control problem is solved and ISS property is described in recent papers (see \cite{koga2019delay,koga2018control, koga2019input}).

The safety factor profile control is a crucial issue in tokamaks since it is about the control of the coupling between the poloidal flux diffusion
equation, the time-varying temperature profiles and an independent total plasma current control in fusion. ISS property is fundamental not only to reject external perturbations but also to study the coupling with dynamical actuators, as current control. See \cite{APW13b,AWP12,AWP13b,AWP13} for Lyapunov design control on the safety factor dynamics. See \cite{mavkov2017distributed,mavkov2017multi} with the coupling with electron
temperature profile and finally \cite{mavkov2018experimental} for real experiments on the tokamaks using Lyapunov based control and ISS properties.

As far as delay systems are concerned, the paper \cite{medina2017exponential} solves 
asymptotic stabilization of multiple delay systems by using the logarithmic norm technique combined with the \q{freezing}
method, in presence of slowly varying coefficients and nonlinear perturbations. Certain Lyapunov functions are computed for delay systems in \cite{seuret2017wirtinger}, and it could be interesting to see whether it is possible to write LMI conditions for the design of ISS Lyapunov functions, generalizing \cite{seuret2017wirtinger}.
In \cite{CDP17} the small-gain technique has been used to robustly stabilize delayed neural fields with partial
measurement and actuation. See also \cite{1911.10761.pdf,lhachemi2019An} for a recent control design with a time-varying delay.

Monotonicity methods have been applied to ISS stabilization of linear systems with boundary inputs and actuator disturbances in \cite{MKK19}. In \cite{PiO17} the variable structure control approach has been exploited to design discontinuous feedback control laws (with point-wise sensing and actuation) for ISS stabilization of linear reaction-diffusion-advection equations w.r.t. actuator disturbances. 

ISS property is instrumental for a various of control problems, as stabilization of infinite-dimensional systems (see, e.g., \cite{Event:espitia:aut:2019,karafyllis2018sampled}) or observer designs (see, e.g., \cite{observers:tac:2019}).
In \cite{KaJ11} vector small-gain theorems for wide classes of systems satisfying weak semigroup property have been proved and applied to the stabilization of the chemostat in \cite{KaJ12}.

Finally let us cite the works dealing with event-triggering for both hyperbolic PDEs \cite{EspiGira16,EGM18,ETT17} and parabolic PDEs \cite{selivanov2016distributed} where Lyapunov methods and ISS based triggering controls are designed. 

ISS properties of higher-order nonlinear infinite-dimensional systems could be also established. As an application example, consider the semi-linear partial differential
equation governing the motion of a railway track, as done in \cite{EdM19}. In this paper, the dynamics of flexible structures with a
Kelvin-Voigt damping is studied and a Lyapunov function approach is employed. The main contribution is the proof of an ISS property of mild solutions, where the external input is defined
as the force exerted on the railway track by moving trains, active dampers, or other external force.

\section{Further topics}
\label{sec:Further-topics}

In this section, we discuss rather briefly several other topics which received the attention of researchers recently.


\subsection{Strong and weak input-to-state stability}
\label{sec:WeakISS}
Consider a strongly continuous semigroup of linear bounded operators $T:=(T(t))_{t\geq 0}$ over a Banach space $X$.
Recall that $T$ is called \emph{strongly stable} if for any $x\in X$ it holds that $T(t)x\to 0$ as $t\to\infty$.
It is well-known that strong stability of $T$ is a much weaker property than exponential stability.
As there are important classes of control systems which are merely strongly stable \cite{Oos00} (and not exponentially stable), it is of interest to study the robustness of such systems with respect to external inputs in an ISS-like manner.
To this end in \cite{MiW18b} the following concept has been introduced:
\begin{definition}
\label{def:sISS}
System $\Sigma:=(X,\Uc,\phi)$ is called \emph{ strongly input-to-state stable (sISS)}, if 
$\Sigma$ is UGS and has a \emph{strong asymptotic gain (sAG) property}, which means that
there is $ \gamma \in \Kinf$ such that for all $ \eps >0$, $x\in X$ 
there is $ \tau=\tau(\eps,x) < \infty$ for which
\begin{equation}    
\label{sAG_Absch}
x\in X \ \wedge \ u\in\Uc \ \wedge \ t \geq \tau(\eps,x) \quad \Rightarrow \quad \|\phi(t,x,u)\|_X \leq \eps + \gamma(\|u\|_{\Uc}).
\end{equation}    
\end{definition}

%

ISS implies sISS, but the converse implication doesn't hold for infinite-dimensional systems in general.
In \cite[Theorem 12]{MiW18b} it was shown, that sISS property can be restated by means of an ISS-like inequality \eqref{iss_sum} with weaker properties of $\beta$.
For ODEs, the notions of sISS and ISS coincide, see \cite[Proposition 11]{MiW18b}.
Strong ISS of linear systems with unbounded input operators has been investigated in \cite{NaS18}.


Assuming in Definition~\ref{def:sISS}, that the time $\tau$ depends on $(\varepsilon,x,u)$ (and not only on $(\varepsilon,x)$),
we arrive at the so-called weak input-to-state stability property, introduced in \cite{ScZ18}, where 
nonlinear boundary controllers for a class of port-Hamiltonian systems have been constructed, which achieve weak ISS of the closed-loop system.
Characterizations of weak ISS have been reported in \cite{Sch19}.

\subsection{Input-to-state practical stability}
\label{sec:ISpS}

In some cases it is impossible (as in quantized control) or too costly to construct a feedback, ensuring ISS behavior of the closed-loop system. To address such applications, a relaxation of the ISS concept has been proposed in \cite{JTP94}, called input-to-state practical stability (ISpS, practical ISS). 

\begin{definition}
\label{Def:ISpS_wrt_set}
A control system $\Sigma=(X,\Uc,\phi)$ is called \emph{ (uniformly) input-to-state practically stable
(ISpS)}, if there are $\beta \in \KL$, $\gamma \in \Kinf$ and $c>0$
such that 
\begin {equation}
\label{isps_sum}
x\in X \ \wedge \ u\in\Uc \ \wedge \ t \geq 0 \quad \Rightarrow \quad 
\|\phi(t,x,u)\|_X \leq \beta(\|x\|_{X},t) + \gamma( \|u\|_{\Uc}) + c.
\end{equation}
\end{definition}
ISpS property is extremely useful for stabilization of stochastic control systems \cite{ZhX13}, control under quantization errors \cite{ShL12}, sample-data control \cite{NKK15}, study of interconnections of nonlinear systems by means of small-gain theorems \cite{JMW96,JTP94}, etc. 

In the context of infinite-dimensional systems ISpS has been studied in \cite{Mir19a}, where superposition theorems for ISpS property have been derived, some of which are new already for ODE systems. From these characterizations, it follows that existence of a non-coercive ISS Lyapunov function for a control system with a BRS property implies ISpS of this system.

\subsection{Lur'e systems and circle criterion}
\label{sec:Lur'e-systems}

Lur'e systems are ubiquitous in control theory since they come from the interconnection of linear systems with a static nonlinearity. Again the computation of Lyapunov functions is a fruitful machinery when combined with the S-procedure, Kalman-Yakubovich-Popov lemma and passivity arguments. Many works on Lur'e systems deal with ISS, as reviewed in \cite{JLR11}. Circle criterion provides a classical methodology to analyze the ISS property and for the computations of gains. It allows for the interplay of frequency-domain properties of the linear component with the sector conditions on the nonlinearity. Such approaches facilitate the design of stabilizing controllers with isolated nonlinearities, as nested or non-nested saturations (see  \cite{TarPriGom:06}), hysteresis and backlash operators (see \cite{TarbouriechPrieurQueinnec:Auto:10,tarbouriech:ieeetac:2014}), or quantized feedback systems (see \cite{ferrante2015stabilization,ferrante2018sensor}). 
This constructive method exploits conditions expressed in the form of linear matrix inequalities (LMI) that are numerically tractable and could be combined with optimization criterion. Not only finite-dimensional control systems could be considered in this context, but also infinite-dimensional systems (see in particular \cite{JLR11,MaP17,MCP17,MCP18,TMP18,MPW19a,PTS16}).

For linear systems (including infinite-dimensional ones), in closed-loop with Lur'e feedback laws, ISS properties are derived in \cite{GLO19, JLR08,JLR11} by exploiting the Laplace variable and the transfer function representation of this class of infinite-dimensional systems. Again some sector bounds are used with circle criterion. It allows to deal with backlash and play hysteresis. See \cite{JLR11} for an overview of these techniques and connections with ISS.

\subsection{Numerical computation of ISS Lyapunov functions}
\label{sec:Computation of ISS Lyapunov functions}

Finding an ISS Lyapunov function and a verification of the dissipation inequalities is not straightforward, especially for coupled PDE systems.
Paper \cite{AVP16} is devoted (among other problems) to numerical construction of ISS Lyapunov functions for evolution equations by means of the \emph{sum-of-squares (SOS) programming } method, provided the nonlinearities involved in the formulation of PDEs are polynomial.
Besides ISS, also other types of dissipativity conditions are studied in \cite{AVP16} as passivity and induced input-output norm boundedness.

\subsection{ISS for monotone parabolic systems}
\label{sec:Monotone_systems}

As a rule, it is much harder to analyze ISS of PDE systems with boundary inputs than ISS of PDEs with distributed inputs.
Thus, a natural desire is to \q{transform} boundary disturbances into the distributed ones. Unfortunately, this is not possible for general systems. However, as shown in \cite{MKK19}, for monotone control systems this transformation can be achieved in 2 steps: first due to monotonicity of the trajectory with respect to inputs one can estimate the solution for any inputs employing sup- and sub-solutions with constant inputs. 
Next by using the Dirichlet lifting approach one can transform a nonlinear PDE with boundary constant inputs into a related nonlinear PDE with constant distributed inputs, which makes the analysis of PDEs much simpler, as many approaches discussed in Section~\ref{sec:ISS_analysis_linear_nonlinear_PDEs_Lyapunov_methods} can be applied for a modified problem.
Please see also \cite{ZhZ19b} for some recent progress in using weak maximum principles for nonlinear parabolic systems.

\subsection{ISS of infinite-dimensional impulsive systems}
\label{sec:impulsive-systems}

Often in the modeling of real-world phenomena, one has to consider systems, which exhibit both continuous and discontinuous behavior.
A general framework for modeling of such phenomena is a hybrid systems theory
\cite{GST12,HCN06,SaP95}. Impulsive systems are hybrid systems whose state can jump only at moments, which are given in advance and do not depend on the state of the system.
Let ${\mathcal T} = \{t_1,\ t_2,\ t_3, \ldots \}$ be a strictly increasing sequence of impulse times without finite accumulation points.

Consider an impulsive system of the form
\index{system!impulsive}
\begin{equation}
\label{ImpSystem}
\left \{
\begin {array} {l}
{ \dot{x}(t)=Ax(t) + f(x(t),u(t)),\quad t \in [t_0,\infty)  \backslash {\mathcal T},} \\
{ x(t)=g(x^-(t),u^-(t)),\quad t \in {\mathcal T},}
\end {array}
\right.
\end {equation}
where $x(t) \in X$, $u(t) \in U$, $X$ and $U$ are Banach spaces, $\Uc:=PC_b(\R_+,U)$ and $A$ is the infinitesimal generator of a strongly continuous semigroup on $X$ and $f,g:X \times U \to X$.
Equations \eqref{ImpSystem} together with the sequence of impulse times ${\mathcal T}$ define an impulsive system.
The first equation of \eqref{ImpSystem} describes the continuous dynamics of the system, and the second describes the jumps of the state at impulse times.
This system is not within the class of systems, considered in Definition~\ref{Steurungssystem}, as the trajectories of the system are discontinuous, however, all the main concepts of the ISS framework can be extended to this class of systems.

If both continuous and discrete-time dynamics of \eqref{ImpSystem} are ISS, then the impulsive system is ISS uniformly with respect to the set of all impulse time sequences. 
At the same time, \emph{if either continuous or discrete-time part of \eqref{ImpSystem} is destabilizing, then the interplay between continuous and discrete-time dynamics becomes essential} and ISS can be verified only for some classes of impulse time sequences, described through \emph{dwell-time conditions}. 
In order to determine such conditions one constructs a common ISS Lyapunov function for both continuous and discrete dynamics, and studies the hybrid dynamics of the ISS Lyapunov function. In \cite{DaM13b} such analysis has been performed for systems \eqref{ImpSystem} with both exponential and non-exponential ISS Lyapunov functions, and the corresponding dwell-time-conditions have been developed.
In \cite{DKM12} ISS of impulsive time-delay systems has been studied by means of Lyapunov-Krasovskii functionals and Lyapunov-Razumikhin functions.

\section{Open problems}
\label{sec:Open-questions}
Here we outline some important open problems and perspective research fields in the ISS theory of infinite-dimensional systems and its applications to robust control. Other open questions have been introduced throughout this survey. 

\subsection{Infinite-dimensional integral ISS theory}

For finite-dimensional systems, the class of integral ISS systems is much broader than the class of ISS systems. At the same time, the counterparts of such important theoretical results in the ISS theory as the non-Lyapunov characterizations and criteria in terms of Lyapunov functions, as well as the small-gain theorems, can be established in the case of iISS systems, which has made integral ISS theory an important milestone in the development of ISS theory.
        On the other hand, the integral ISS theory for infinite-dimensional systems remains rather unexplored.
\begin{itemize}
    \item Superposition theorems (counterparts of Theorem~\ref{thm:UAG_equals_ULIM_plus_LS}) are not available (for ODEs see \cite{ASW00,Son98}).
    \item Full understanding of relations between ISS and integral ISS for linear systems with unbounded operators is still missing.
    \item ISS Lyapunov functions are by definition iISS Lyapunov functions. Thus the existence of iISS systems which are not ISS would suggest, that there are some limitations of the Lyapunov method at least for systems with unbounded input operators. Whether such limitations exist at all and how significant they are is a completely open question right now. 
    \item Properties of non-coercive iISS Lyapunov functions have not been investigated right now.
    \item Theory of couplings of infinite-dimensional iISS systems is far less complete than in the ISS case (see \cite{Ito13} for a survey of such results in ODE case).
    \item Integral ISS theory for bilinear systems with unbounded bilinear operators is in its infancy.
    \item Strong iISS concept, introduced in \cite{CAI14} for ODEs and briefly discussed in this survey (Sections~\ref{sec:iISS}, \ref{sec:Bilinear_systems}), has not been analyzed till now.
\end{itemize}

\subsection{Input-to-output stability theory}

In many cases, it is not necessary (or is a too strong requirement) to have the stability of a system with respect to the full set of state variables. This calls for the generalization of the input-to-state stability concept to the systems with outputs.

\begin{definition}
\label{def:system with outputs} 
A (time-invariant) \emph{system with outputs} is given by a system $\Sigma:=(X,\Uc,\phi)$ (where $X,\Uc$ are Banach spaces) together with
\begin{itemize}
    \item[(i)]   A normed linear space $Y$ called the \emph{measurement-value or output-value space}; and
    \item[(ii)]  A map $h : X \times \Uc \to Y$ called the \emph{measurement map}.
\end{itemize}
\end{definition}

Systems with outputs appear in various contexts in systems and control theory.
In particular, in many cases, the state of the system cannot be measured directly, and only certain functions of the state and input are available for control purposes. These functions can be treated as outputs, which gives rise to the system with outputs.

Another interest in such systems comes from the considerations of stability with respect to outputs.
\index{IOS}
\index{stability!input-to-output}
\begin{definition}
\label{def:IOS}
A forward-complete system with outputs $\Sigma$ is called \emph{input-to-output stable (IOS)}, if there exist $\beta \in \KL$ and $\gamma \in \Kinf$ 
such that for all $x \in X$, $u\in \Uc$ and all $t\geq 0$ the following holds
\begin {equation}
\label{ios_sum}
\|h(\phi(t,x,u),u)\|_Y \leq \beta(\|x\|_X,t) + \gamma(\|u\|_{\Uc}).
\end{equation}
\end{definition}
If $h(x,u)=x$, IOS boils down to the classical ISS property. Other choices for the output function can be: tracking error,
observer error, drifting error from a targeted set, etc.
In this case, IOS represents robust stability of control systems with respect to the given errors.
There are several properties, which are closely related to output stability, as partial stability, \cite{Vor05} and stability with respect to two measures, see, e.g., \cite{TeP00}.

For finite-dimensional systems the IOS theory is quite rich, see, e.g., \cite{ASW00,SoW99,TeP00}.
There is almost no result on IOS theory in the context of infinite-dimensional systems (except for delay systems). Some exceptions include the small-gain results for IOS systems in \cite{BLJ18,KaJ11}.

\subsection{ISS of fully nonlinear PDEs}

The absolute majority of PDE systems, which have been analyzed for robustness within the ISS framework, are either linear or belong to the class of semilinear systems, which can be written in the form \eqref{InfiniteDim}. In semilinear systems, the nonlinear part is assumed to be Lipschitz continuous, and in particular, it is bounded on bounded balls. Hence the \q{unbounded part} of the system \eqref{InfiniteDim} is linear.
    
    However, many important PDEs as porous medium equation \cite{Vaz07} and Navier-Stokes equations, do not fall into this category. 
This calls for looking onto a more general class of infinite-dimensional problems, which will include fully nonlinear PDEs.
For systems without inputs, the theory of nonlinear semigroups \cite{CrL71,Kat67,Min62} gives a powerful method for the unified study of such equations. General methods for investigation of ISS for systems from this class are highly desirable.

\subsection{Robust control design}

As it has been already explained, the Lyapunov approach is useful for ISS analysis of PDE systems and for the design of boundary controllers ensuring an ISS property of the closed-loop system. In this latter context, the theory and sufficient conditions, written in terms of Lyapunov functions, could be seen as a design condition of many boundary or internal robust controllers for various control problems. Related works on robust control design include $\mathcal{H}_\infty$ design as done in, e.g.,\ 
\cite{CuG86,CuZ95,robu2011simultaneous}. See also transfer functions techniques for robust control design as presented in particular in \cite{BaC16,bastin2015stability,JLR11,litrico:book:2009modeling}. Many open control problems exist in the literature for robust control designs, as the drilling problem (see \cite{roman2019robustness} and references therein), and the flexible structure control in aircraft (see in particular \cite{demourant2017new, poussot2016gust}) to point out only a few robust control problems. Use of ISS techniques for robust control design is, to a large extent, still an open problem.

\vspace{1em}

Above described problems constitute only a small portion of the problems which can be explored in the infinite-dimensional ISS theory.
In particular, \emph{ISS of time-varying infinite-dimensional systems} is in its infancy and \emph{ISS theory for infinite-dimensional discrete-time and hybrid systems} is almost unexplored.

\section{Conclusion and discussion}
\label{sec:Conclusion and Discussion}

In this paper, we outlined the state of the art in the input-to-state stability theory of infinite-dimensional systems.
It includes the results both for linear and nonlinear systems and encompasses such distinct methods as Lyapunov theory, dynamical systems, semigroup and admissibility theory, PDE theory, etc. 

Section~\ref{sec:Fundamental-properties-ISS-systems} contains a description of a rather general class of control systems and presents fundamental criteria for ISS of infinite-dimensional systems in terms of coercive and non-coercive ISS Lyapunov functions as well as ISS superposition theorems.
Powerful functional-analytic criteria for ISS of linear systems based on semigroup and admissibility theories, with a particular emphasis on linear boundary control systems, were provided in Sections~\ref{sec:Linear_systems} and \ref{sec:Boundary_control_systems}.
Next important classes of control systems were studied in Section~\ref{sec:ISS_analysis_linear_nonlinear_PDEs_Lyapunov_methods}, where it has been shown how ISS Lyapunov functions can be exploited for ISS analysis of nonlinear PDE systems with both boundary and distributed disturbances. 
In Section~\ref{sec:Interconnected_systems} sufficient conditions for stability of networks consisting of infinite-dimensional ISS systems were given. Special attention was given to delay systems in Section \ref{sec:TDS}, in which the connections between the ISS theory for delay systems and previously presented general methods were derived. In Section~\ref{sec:Applications} a short review of broad applications of ISS theory was given. Section \ref{sec:Further-topics} contains further results in infinite-dimensional ISS theory, which do not fall in the context of the previous sections.
Throughout the survey, it was emphasized that the ISS theory of infinite-dimensional systems is not a complete subject and a number of challenging questions are open, some of which are stated throughout the text and in Section~\ref{sec:Open-questions}.

\section{Appendix}
\label{sec:Appendix}

\subsection{Inequalities}
\label{sec:Inequalities}

We collect here several inequalities, used throughout the paper.

\begin{proposition}[Cauchy's inequality with $\varepsilon$]
\label{theorem:Young}
For all $a,b \geq 0$ and all $\omega,p>0$ it holds that
\begin{eqnarray}
\label{ineq:Cauchy-with-eps}
ab \leq \frac{\varepsilon}{2}a^2 + \frac{1}{2\varepsilon} b^2.
\end{eqnarray}
\end{proposition}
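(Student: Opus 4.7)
The plan is to derive the inequality from the elementary fact that the square of any real number is nonnegative. First I would introduce the auxiliary quantity $\sqrt{\varepsilon}\,a - \tfrac{1}{\sqrt{\varepsilon}}\,b$, which is well-defined since $\varepsilon>0$. Squaring it and expanding yields
$$
0 \le \Bigl(\sqrt{\varepsilon}\,a - \tfrac{1}{\sqrt{\varepsilon}}\,b\Bigr)^{2} = \varepsilon\,a^{2} - 2ab + \tfrac{1}{\varepsilon}\,b^{2}.
$$
Rearranging gives $2ab \le \varepsilon\,a^{2} + \tfrac{1}{\varepsilon}\,b^{2}$, and dividing through by $2$ produces exactly the claimed bound
$$
ab \le \tfrac{\varepsilon}{2}\,a^{2} + \tfrac{1}{2\varepsilon}\,b^{2}.
$$
The assumption $a,b\ge 0$ is in fact not needed for this algebraic argument; the inequality holds for all real $a,b$, and the given form just applies it in the nonnegative case.

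There is essentially no obstacle here: the inequality is a one-line consequence of the completed square, and it may equivalently be viewed as the AM--GM inequality applied to the pair $\varepsilon a^{2}$ and $\tfrac{1}{\varepsilon}b^{2}$, or as Young's inequality $xy \le \tfrac{x^{p}}{p} + \tfrac{y^{q}}{q}$ (with conjugate exponents $p=q=2$) applied to $x = \sqrt{\varepsilon}\,a$ and $y = \tfrac{1}{\sqrt{\varepsilon}}\,b$. I note in passing that the statement as written mentions parameters $\omega,p>0$ that do not actually appear in the inequality; this appears to be a typographical slip, and the proof only uses the parameter $\varepsilon>0$ that does appear on the right-hand side.
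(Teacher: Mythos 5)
Your proof is correct. The paper itself gives no argument for this proposition: it simply cites a standard reference (Mitrinovi\'c's book on inequalities), so your completed-square derivation
\[
0 \le \Bigl(\sqrt{\varepsilon}\,a - \tfrac{1}{\sqrt{\varepsilon}}\,b\Bigr)^{2} = \varepsilon a^{2} - 2ab + \tfrac{1}{\varepsilon}b^{2}
\]
supplies exactly the elementary one-line proof that the cited source contains, and your alternative readings (AM--GM on the pair $\varepsilon a^{2}$, $\tfrac{1}{\varepsilon}b^{2}$, or Young's inequality with $p=q=2$) are consistent with how the paper uses the estimate later (e.g.\ in the boundary-term bound for the Ginzburg--Landau example). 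Your two side remarks are also on target: the inequality indeed holds for all real $a,b$, and the parameters $\omega,p$ in the statement are vestigial --- they play no role in the inequality and appear to be a leftover from a more general Young-type formulation, with $\varepsilon$ being the only parameter actually used.
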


\begin{proof}
See \cite[p. 20]{Mit64}.
\end{proof}

%

%

\begin{proposition}[Jensen's inequality]
\label{theorem:Jensen}
For any convex $f:\R \to \R$ and any summable $x$  
\begin{eqnarray}
\label{ineq:Jensen}
 \int_0^L f(x(l)) dl \geq L f\Big( \frac{1}{L} \int_0^L x(l)dl\Big).
\end{eqnarray}
\end{proposition}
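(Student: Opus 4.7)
The plan is to reduce Jensen's inequality to the existence of a supporting line for the convex function $f$ at the mean value of $x$, and then integrate.

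First, I would set $\bar{x} := \frac{1}{L}\int_0^L x(l)\,dl$, which is well-defined since $x$ is summable on $[0,L]$. The key fact about convex functions on $\R$ is the existence of a subgradient: for every $y_0 \in \R$ there exists $c \in \R$ (any element of the subdifferential $\partial f(y_0)$, e.g.\ the left or right derivative at $y_0$, which exist because $f$ is convex on $\R$) such that
\[
f(y) \geq f(y_0) + c\,(y - y_0), \qquad \forall\, y \in \R.
\]
I would apply this inequality at $y_0 = \bar{x}$ with $y = x(l)$ for each $l \in [0,L]$, yielding the pointwise bound
\[
f(x(l)) \geq f(\bar{x}) + c\,\bigl(x(l) - \bar{x}\bigr), \qquad l \in [0,L].
\]

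Next I would integrate both sides over $[0,L]$. The right-hand side integrates to
\[
L f(\bar{x}) + c\left(\int_0^L x(l)\,dl - L\bar{x}\right) = L f(\bar{x}),
\]
by the very definition of $\bar{x}$. Hence, whenever $f\circ x$ is integrable,
\[
\int_0^L f(x(l))\,dl \geq L f(\bar{x}),
\]
which is the claim. If $f\circ x$ is not integrable, the left-hand side is interpreted as $+\infty$ (note that the pointwise lower bound $f(\bar{x}) + c(x(l)-\bar{x})$ is integrable, so only the positive part of $f\circ x$ can fail to be integrable, making the interpretation unambiguous), and the inequality holds trivially.

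The main technical point — and the only one requiring care — is the existence of the supporting affine function, which follows from convexity on $\R$ via the existence of one-sided derivatives (no smoothness assumption on $f$ is needed). Everything else is a direct computation, so I do not anticipate further obstacles.
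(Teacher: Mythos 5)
Your proof is correct: the supporting-line (subgradient) argument at the mean value $\bar{x}$, followed by integration, is the standard and complete proof of this form of Jensen's inequality, and your handling of the case where $f\circ x$ fails to be integrable is sound since the affine minorant is integrable. The paper itself gives no argument for this proposition — it only cites \cite[p.~705]{Eva10} — so there is nothing to contrast with; your self-contained proof matches the classical one found in such references.
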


\begin{proof}
See \cite[p. 705]{Eva10}.
\end{proof}

\begin{proposition}[Poincare's inequality]
\label{theorem:Wirtinger}
For every $x \in W^{2,1}(0,L)$ so that either $x(0)=0$ or $x(1)=0$ it holds that
\begin{align}
\label{Wirtinger_Variation_Ineq}
\frac{4 L^2}{\pi^2} \int_0^L{\left( \frac{\partial x(l)}{\partial l} \right)^2 dl} \geq \int_0^L {x^2(l)}dl.
\end{align}
\end{proposition}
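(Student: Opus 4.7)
The inequality is a one-sided Dirichlet Poincaré (sometimes Wirtinger) inequality, and the constant $4L^2/\pi^2$ is the reciprocal of the first eigenvalue of $-d^2/dl^2$ on $(0,L)$ with Dirichlet data at one endpoint and Neumann data at the other, i.e.\ $\lambda_1 = \pi^2/(4L^2)$. My plan is to reduce the one-sided case to the classical two-sided Dirichlet Poincaré inequality via an even reflection, which is short and avoids any direct eigenfunction computation.

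First, by the reflection $l \mapsto L-l$ it suffices to treat the case $x(0)=0$. Extend $x$ to the interval $(0,2L)$ by setting
\[
\tilde x(l) :=
\begin{cases}
x(l), & l \in [0,L],\\
x(2L-l), & l \in [L,2L].
\end{cases}
\]
Since $x \in W^{2,1}(0,L) \hookrightarrow C^1([0,L])$, the extension $\tilde x$ is continuous on $[0,2L]$, belongs to $H^1(0,2L)$ (the weak derivative is $\tilde x'(l) = x'(l)$ on $(0,L)$ and $-x'(2L-l)$ on $(L,2L)$, which is in $L^2$ even though it may jump at $l=L$), and satisfies $\tilde x(0) = x(0) = 0$ and $\tilde x(2L) = x(0) = 0$. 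Hence $\tilde x \in H^1_0(0,2L)$.

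Next, I will invoke the standard Poincaré inequality on $H^1_0(0,2L)$,
\[
\int_0^{2L} \tilde x(l)^2\, dl \ \leq\ \frac{(2L)^2}{\pi^2}\int_0^{2L} \bigl(\tilde x'(l)\bigr)^2\, dl,
\]
which follows, e.g.\ by expanding $\tilde x$ in the orthonormal basis $\{\sin(k\pi l/(2L))\}_{k\geq 1}$ of $L^2(0,2L)$ and reading off that the smallest eigenvalue of $-d^2/dl^2$ with zero boundary values is $(\pi/(2L))^2$. By symmetry of the construction we have $\int_0^{2L}\tilde x^2\,dl = 2\int_0^L x^2\,dl$ and $\int_0^{2L}(\tilde x')^2\,dl = 2\int_0^L (x')^2\,dl$, and substituting these identities yields the claimed bound with constant $4L^2/\pi^2$.

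The only real subtlety is checking that the extended function is genuinely in $H^1_0(0,2L)$ despite the possible corner at $l=L$; this is a routine verification (the jump in the classical derivative at $l=L$ produces no Dirac mass in the weak derivative because $\tilde x$ is continuous there), and I would handle it either by a direct distributional calculation or by approximating $x$ by $C^2$ functions vanishing at $0$ and passing to the limit. Everything else is essentially bookkeeping, so I do not expect any deeper obstacle.
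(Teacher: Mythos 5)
Your proof is correct. Note first that the paper itself does not prove this proposition: like the neighbouring Cauchy, Jensen and Agmon inequalities in the appendix, it is simply quoted as a classical fact (with the other items referred to standard texts), so there is no in-paper argument to compare against. Your even reflection about $l=L$ is a clean and standard way to obtain the one-sided inequality with the sharp constant: reducing to the Dirichlet Poincar\'e inequality on $(0,2L)$ with constant $(2L)^2/\pi^2$ and using the symmetry identities $\int_0^{2L}\tilde x^2 = 2\int_0^L x^2$, $\int_0^{2L}(\tilde x')^2 = 2\int_0^L (x')^2$ gives exactly $4L^2/\pi^2$. The alternative route one usually sees is a direct expansion of $x$ in the quarter-wave basis $\{\sin((2k-1)\pi l/(2L))\}_{k\ge 1}$ (equivalently, the variational characterization of the first eigenvalue of $-d^2/dl^2$ with Dirichlet condition at one end and Neumann at the other); your reflection trick buys you the ability to quote the better-known two-sided inequality instead, at the cost of the small regularity check at the corner $l=L$, which you handle correctly: continuity of $\tilde x$ at $L$ rules out a Dirac mass in the distributional derivative, and $x\in W^{2,1}(0,L)\hookrightarrow C^1([0,L])$ guarantees $x'\in L^\infty\subset L^2$ so that $\tilde x\in H^1_0(0,2L)$ (in one dimension, membership in $H^1$ together with vanishing endpoint values suffices). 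The reduction of the case $x(L)=0$ (the statement's \q{$x(1)=0$} is evidently a typo for $x(L)=0$) to $x(0)=0$ by $l\mapsto L-l$ is also fine, so there is no gap.
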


%


\begin{proposition}[Agmon's inequality]
\label{theorem:Agmon}
For all $f \in H^1(0,L)$ it holds that
\begin{align}
\label{ineq:Agmon}
\|f\|^2_{L_{\infty}(0,L)} \leq |f(0)|^2 + 2\|f\|_{L_{2}(0,L)}\Big\|\frac{df}{dl}\Big\|_{L_{2}(0,L)}.
\end{align}
\end{proposition}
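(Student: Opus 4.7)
The plan is to prove Agmon's inequality via the fundamental theorem of calculus applied to $f^2$, combined with the Cauchy--Schwarz inequality. First I would invoke the Sobolev embedding $H^1(0,L) \hookrightarrow C([0,L])$, which allows us to identify $f$ with its continuous representative, so that pointwise evaluation $f(x)$ for $x\in[0,L]$ is meaningful and $\|f\|^2_{L_\infty(0,L)} = \sup_{x\in[0,L]} |f(x)|^2$.

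Next, I would use the product rule for weak derivatives to write, for every $x\in[0,L]$,
\begin{equation*}
f(x)^2 - f(0)^2 = \int_0^x \frac{d}{dl}\bigl(f(l)^2\bigr)\,dl = 2\int_0^x f(l)\,\frac{df}{dl}(l)\,dl.
\end{equation*}
Rearranging and bounding the right-hand side in absolute value gives
\begin{equation*}
f(x)^2 \le f(0)^2 + 2\int_0^L \Bigl|f(l)\,\tfrac{df}{dl}(l)\Bigr|\,dl.
\end{equation*}
Applying the Cauchy--Schwarz inequality in $L_2(0,L)$ to the integral on the right yields
\begin{equation*}
f(x)^2 \le f(0)^2 + 2\|f\|_{L_2(0,L)}\Bigl\|\tfrac{df}{dl}\Bigr\|_{L_2(0,L)}.
\end{equation*}
Taking the supremum over $x\in[0,L]$ on the left completes the argument.

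The only step that requires a small amount of care is the justification of the product rule for $f^2$ when $f\in H^1(0,L)$; this follows from a standard approximation of $f$ by smooth functions in the $H^1$-norm and the continuity of the embedding into $C([0,L])$, so passing to the limit in the identity for smooth functions is routine. I do not anticipate any real obstacle here, as the entire argument is a one-dimensional Sobolev calculation.
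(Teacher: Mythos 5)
Your argument is correct: the fundamental theorem of calculus applied to $f^2$ (valid for the absolutely continuous representative of $f\in H^1(0,L)$) followed by Cauchy--Schwarz gives exactly \eqref{ineq:Agmon}. The paper itself offers no proof beyond citing \cite[Lemma 2.4, p. 20]{KrS08}, and the proof there is this same standard one-dimensional computation, so your route coincides with the reference's.
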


\begin{proof}
See \cite[Lemma 2.4., p. 20]{KrS08}.
\end{proof}

\subsection{Glossary}
Here we list the most important notation used in the paper.


\begin{longtable}[h]{r p{.85\textwidth} } 
\caption*{\textbf{Sets, numbers and analysis}}
\endfirsthead
\endhead
	$\N$   & Set of natural numbers 1, 2, 3,\ldots \\
	$\Z,\ \Q,\ \R,\ \C$  & Sets of integer, rational, real and complex numbers respectively\\
	$\R_+$  & Set of nonnegative real numbers \\
	$\overline{\R}_+$  & $:=[0,\infty]$ (extended set of nonnegative real numbers) \\
  $\re z$, $\im z$  & Real and imaginary parts of $z \in\C$\\
	$S^n$  & $:=\underbrace{S \times \ldots \times S}_{n \text{ times}}$, for any set $S$ \\
	$B_{r,W}$ & $:=\{u \in W: \|u\|_W < r\}$ (the open ball of radius $r$ around $0$ in the normed linear space $W$)\\
  $B_r$  & Open ball of radius $r$ around $0\in X$, i.e. $\{x\in X:\|x\|_X<r\}$\\						
	$\mathop{\overline{\lim}}$ & Limit superior\\
	$\overline{S}$ & $:=\{f\in {\cal L}: \exists \{f_k\}\subset S \mbox{ s.t. } \|f_k-f\|_{\cal L}\to 0,\ k\to\infty\}$ (the closure of $S$ in a  certain normed linear space $\cal L$).\\
  $\nabla f$  & Gradient of a function $f:\R^n \to \R$ \\
	$f \circ g$  & Composition of maps $f$ and $g$: $f\circ g (s)= f(g(s))$, for $s$ from the domain of definition of $g$, with $g(s)$ in the domain of definition of $f$  \\
	$\partial G$  &  Boundary of a domain $G$ \\	
	$\intt G$  &  Interior of a domain $G$ \\	
	$\mu$  & Lebesgue measure on $\R$ \\
	$\id$  & Identity operator \\
	$A^*$  & Adjoint of an operator $A$
\end{longtable}
%
\vspace{-6mm}
\begin{longtable}{r p{.85\textwidth} } 
\caption*{\textbf{Matrices and linear maps}}
\endfirsthead
\endhead
$I$ & Identity matrix   \\
$\mathcal{D}_+^n$ & The set of diagonal positive definite matrices in $\mathbb{R}^n$\\
$\sigma(M)$ & The \emph{spectrum} of an operator $M$\\
$\rho(M)$ & The \emph{resolvent set} of an operator $M$\\
  $x^\top$, $A^\top$ & Transposition of a vector $x \in \R^n$ and matrix $A \in \R^{n\times m}$	 \\
	$|\cdot|$  &  Euclidean norm in the space $\R^s$, $s \in \N$   \\
	$D(A)$   &  Domain of a linear operator $A$ \\
	$\|A\|$   &  $:=\sup_{x\neq0}\frac{|Ax|}{|x|}$ (the norm of $A \in \R^{n \times n}$, induced by Euclidean norm) \\
	$\lambda_{\max}(M)$ & $:=\max_{\lambda\in\sigma(M)}|\lambda|$ (the \emph{spectral radius} of a bounded operator $M$) \\
	$\lambda_{\min}(M)$ & $:=\min_{\lambda\in\sigma(M)}|\lambda|$ (for any closed $M$ with $\sigma(M)\neq\emptyset$)\\
	$\sym(M)$ & $:=\frac{1}{2}(M+M^\top)$ (the symmetric part of a matrix $M\in \R^{n\times n}$)\\
& Let $X,Y$ be two vector spaces. For a map $f:X \to Y$   \\
	$\ker (f)$ & $ :=\{x \in X: f(x) = 0\}$ is the \emph{kernel} (or \emph{nullspace}) of $f$\\
	$\im (f) $ & $ :=\{f(x):x\in X\}$ is the \emph{image} (or \emph{range}) of $f$ \\
  $l_p$  & Space of sequences $x=\{x_k\}_{k=1}^\infty$ so that $\|x\|_{l_p}<\infty$, where $p\in[1,+\infty]$\\
	$\|x\|_{l_p}$ &	$:=\Big(\sum_{k=1}^\infty |x_k|^p\Big)^{1/p}$, $p\in[1,+\infty)$\\											
	$\|x\|_{l_\infty}$ &	$:=\sup_{k=1}^\infty |x_k|$.												
\end{longtable}

\vspace{-6mm}
\begin{longtable}{r p{.85\textwidth} } 
\caption*{\textbf{Function and sequence spaces}}
\endfirsthead
\endhead
	$C(X,U)$  & Space of continuous functions from $X$ to $U$ with finite norm \\
	          & \phantom{space}  $\|u\|_{C(X,U)}:=\sup\limits_{x \in X}\|u(x)\|_U$ \\
	$PC_b(\R_+,U)$  & Space of globally bounded piecewise continuous (right-continuous) functions from $\R_+$ to $U$ with $\|u\|_{PC_b(\R_+,U)}=\|u\|_{C(\R_+,U)}$ \\
	$C(X)$  &  $:=C(X,X)$\\
  $C_0^k(a,b)$  & Space of $k$ times continuously differentiable functions $f:(a,b) \to \R$ with a support, compact in $(a,b)$ \\	
  $\PD$  & $:=\left\{\gamma \in C(\R_+):  \gamma(0)=0 \mbox{ and } \gamma(r)>0 \mbox{ for } r>0 \right\}$ \\
	$\K$  & $:=\left\{\gamma \in \PD : \gamma \mbox{ is strictly increasing}  \right\}$\\
	$\K_{\infty}$ & $:=\left\{\gamma\in\K: \gamma\mbox{ is unbounded}\right\}$\\
	$\LL$  & $:=\{\gamma \in C(\R_+): \gamma\mbox{ is strictly decreasing with } \lim\limits_{t\rightarrow\infty}\gamma(t)=0 \}$ \\
	$\KL$  & $:=\left\{\beta:\R_+\times\R_+\rightarrow\R_+: \beta \mbox{ is continuous, } \beta(\cdot,t)\in{\K},\ \forall t \geq 0, \right.$\\
	&\phantom{:=}$\left. \beta(r,\cdot)\in {\LL},\ \forall r > 0\right\}$\\					
	$L(X,U)$  & Space of bounded linear operators from $X$ to $U$ \\
	$L(X)$  & $:=L(X,X)$\\					
  $L_{\infty}(\R_+,\R^m)$  & The set of Lebesgue measurable functions with $\|f\|_{\infty}<\infty$ \\
	$\|f\|_{\infty}$		& $:=\esssup_{x\geq 0}|f(x)| = \inf_{D \subset \R_+,\ \mu(D)=0} \sup_{x \in \R_+ \backslash D} |f(x)|$  \\
	$L_p(a,b)$  & Space of $p$-th power integrable functions $f:(a,b) \to \R$ with \linebreak $\|f\|_{L_p(a,b)}<\infty$ \\
   $\|f\|_{L_p(a,b)}$ & $:=\left( \int_0^d{|f(x)|^p dx} \right)^{\frac{1}{p}}$ \\
	$W^{k,p}(a,b)$ & Sobolev space of functions $f \in L_p(a,b)$ (with the norm $\|\cdot\|_{W^{k,p}(a,b)}$), which have weak derivatives of order $\leq k$, all of which belong to $L_p(a,b)$ \\
	$\|f\|_{W^{k,p}(a,b)}$ & $:=\left( \int_0^d{\sum_{1 \leq s \leq k}\left|\frac{\partial^s f}{\partial x^s}(x)\right|^p dx} \right)^{\frac{1}{p}}$ \\	
	$W^{k,p}_0(a,b)$ & Closure of $C_0^k(a,b)$ in the norm of $W^{k,p}(a,b)$ \\
	$H^k(a,b)$ & $=W^{k,2}(a,b)$ \\
	$H^k_0(a,b)$ & $=W^{k,2}_0(a,b)$ \\
	$M(I,W)$ & $:= \{f: I \to W: f \text{ is strongly measurable} \}$\\
$L_p(I,W)$ & $:= \{f \in M(I,W): \|f\|_{L_p(I,W)}:=\Big(\int_I\|f(s)\|^p_Wds\Big)^{1/p} < \infty \}$\\
$L_\infty(I,W)$ & $:= \{f \in M(I,W): \|f\|_{L_\infty(I,W)}:=\esssup_{s\in I}\|f(s)\|_W < \infty \}$

\end{longtable}

%
\vspace{-6mm}
\begin{longtable}{l p{.55\textwidth} l} 
\caption*{\textbf{Acronyms}}
\endfirsthead
\endhead
0-UAS	  &  Uniform asymptotic stability at zero& Definition~\ref{def:0-UAS}, p.~\pageref{def:0-UAS}\\

AG	  & Asymptotic gain property &  p.~\pageref{def:AG}\\

BIC	  &  Boundedness-implies-continuation property  & Definition~\ref{def:BIC}, p.~\pageref{def:BIC}\\

BRS	  &  Boundedness of reachability sets& Definition~\ref{def:BRS}, p.~\pageref{def:BRS}\\

bULIM	  &  Uniform limit property on bounded sets & Definition~\ref{def:bULIM}, p.~\pageref{def:bULIM}\\
CEP	  &  Continuity at equilibrium point & Definition~\ref{def:RobustEquilibrium_Undisturbed}, p.~\pageref{def:RobustEquilibrium_Undisturbed}\\
DSS	  &  Disturbance-to-state stability & Definition~\ref{pageref:DSS}, p.~\pageref{pageref:DSS}\\
eISS	  & Exponentially input-to-state stability & Definition~\ref{def:eISS}, p.~\pageref{def:eISS}\\


iISS	  &  Integral input-to-state stability & Definition~\ref{def:iISS}, p.~\pageref{def:iISS}\\
IOS	  &   Input-to-output stability & Definition~\ref{def:IOS}, p.~\pageref{def:IOS}\\
ISpS	  & Input-to-state practical stability  & Definition~\ref{Def:ISpS_wrt_set}, p.~\pageref{Def:ISpS_wrt_set}\\
ISS	  & Input-to-state stability & Definition~\ref{def:ISS}, p.~\pageref{def:ISS}\\

LIM	  &  Limit property & Definition~\ref{def:LIM}, p.~\pageref{def:LIM}\\
LISS &  Local input-to-state stability & Definition~\ref{def:LISS}, p.~\pageref{def:LISS}\\

ODE, ODEs	  & Ordinary differential equation(s) & \\
PDE, PDEs & Partial differential equation(s) & \\

sAG	  &  Strong asymptotic gain property& Definition~\ref{def:sISS}, p.~\pageref{def:sISS}\\
sISS	  & Strong input-to-state stability & Definition~\ref{def:sISS}, p.~\pageref{def:sISS}\\


UAG	  &  Uniform asymptotic gain property& Definition~\ref{def:UAG}, p.~\pageref{def:UAG}\\

ULS	  &  Uniform local stability& Definition~\ref{def:ULS}, p.~\pageref{def:ULS}\\
ULIM	  &  Uniform limit property & Definition~\ref{def:bULIM}, p.~\pageref{def:bULIM}\\

0-UGAS	  &  Uniform global asymptotic stability at zero& Definition~\ref{def:0-UGAS}, p.~\pageref{def:0-UGAS}\\
UGS	  &  Uniform global stability & Definition~\ref{def:ULS}, p.~\pageref{def:ULS}\\

\end{longtable}

\bibliographystyle{abbrv}
\bibliography{Mir_LitList_NoMir,MyPublications}

\end{document}